\documentclass[makeidx]{amsart}
\usepackage{tikz-cd}
\usepackage[active]{srcltx}
\usepackage{atveryend}
\makeatletter
\let\origcitation\citation
\AtEndDocument{\def\mycites{}
  \def\citation#1{\g@addto@macro\mycites{#1^^J}\origcitation{#1}}}
\AtVeryEndDocument{\newwrite\citeout\immediate\openout\citeout=\jobname.cit
  \immediate\write\citeout{\mycites}\immediate\closeout\citeout}
\makeatother

\usepackage{enumerate}
\usepackage{cancel}
\usepackage{mathdots}
\usepackage{epsfig}
\usepackage{hyperref}
\usepackage{amsmath}
\usepackage{amssymb}
\usepackage{graphics}
\usepackage[mathscr]{euscript}
\hyphenation{anti-dif-fer-en-ti-a-tion  anti-de-riv-a-tive} 
\newtheorem{Proposition}{Proposition}

  \newtheorem{Corollary}[Proposition]{Corollary}
  \newtheorem{Lemma}[Proposition]{Lemma}
  
  \newtheorem{Theorem}[Proposition]{Theorem}
 
 \newtheorem{Definition}[Proposition]{Definition}
 
  \newtheorem{Observation}[Proposition]{Observation}
 \newtheorem*{Notational Convention 1}{Notational Convention 1}
  \newtheorem*{Notational Convention 2}{Notational Convention 2}
\newtheorem*{Cautionary Note*}{Cautionary Note}
\newtheorem*{Main Theorem*}{Main Theorem}
\newtheorem*{Illustrations of Main Theorem*}{Illustrations of Main Theorem}

 \newtheorem{Note}[Proposition]{Note}
 \newtheorem*{Convention*}{Convention}

\DeclareFontFamily{U}{fsy}{}
\DeclareFontShape{U}{fsy}{m}{n}{<->s*[.9]psyr}{}
\DeclareSymbolFont{der@m}{U}{fsy}{m}{n}
\DeclareMathSymbol{\der}{\mathord}{der@m}{182}

\def\No {\mathbf{No}}

\def\CC{\mathbb{C}}
\def\ZZ{\mathbb{Z}}
\def\NN{\mathbb{N}}

\newcommand{\N}{{\mathbb N}}

\newcommand{\R}{{\mathbb R}}
\def \RR{{\mathbb R}}
\def \CC{{\mathbb C}}
\def\RR{\mathbb{R}}

\title[Integration on the surreals]{Integration on the surreals}
\author {Ovidiu Costin}
\address{Mathematics Department\\
The Ohio State University\\
231 w 18th Ave,
Columbus, OH 43210}
\email{costin@math.ohio-state.edu}

\author{Philip Ehrlich}
\address{Department of Philosophy\\
Ohio University\\
Athens, OH 45701}
\email{ehrlich@ohio.edu}

\begin{document}

\begin{abstract} 
Conway's real closed field $\No$ of surreal numbers is a sweeping generalization of the real numbers and the ordinals to which a number of elementary functions such as log and exponentiation have been shown to extend. The problems of identifying significant classes of functions that can be so extended and of defining integration for them have proven to be formidable. In this paper we address this and
related unresolved issues by showing that extensions to $\No$, and thereby integrals, exist for most functions arising in practical applications. In particular, we show they exist for a large subclass of the \emph{resurgent functions}, a subclass that contains the functions that at $\infty$ are semi-algebraic, semi-analytic, analytic,  meromorphic, and Borel summable as well as solutions to nonresonant linear and nonlinear meromorphic systems of ODEs or of difference equations. By suitable changes of variables we deal with arbitrarily located singular  points.  We further establish a sufficient condition for the theory to carry over to ordered exponential subfields of $\No$ more generally and illustrate the result with structures familiar from the surreal literature. The extensions of functions and integrals that concern us are constructive in nature, which permits us to work in NBG less the Axiom of Choice (for both sets and proper classes). Following the completion of the positive portion of the paper, it is shown that the existence of such constructive extensions and integrals of substantially more general types of functions (e.g. smooth functions) is obstructed by considerations from the foundations of mathematics.

\bigskip
\noindent
 \textbf{Keywords: surreal numbers, surreal integration, divergent asymptotic series, transseries}. 
 
 \smallskip
 \noindent \textbf{MSC classification numbers: Primary  03H05,12J15, 34E05, 03E15; Secondary 03E25, 03E35, 03E75}

\end{abstract}

\maketitle

\tableofcontents

\section{Introduction}\label{Intro}

In his seminal work \emph{On Numbers and Games} \cite{CO1, CO2}, J. H. Conway introduced  the system $\No$ of \emph{surreal numbers}, a strikingly inclusive real closed field containing the reals and the ordinals.  In addition to its inclusive structure as an ordered field,  \textbf{No} has a rich 
\emph{simplicity hierarchical} or \emph{s-hierarchical} structure, that depends upon
its structure as a \emph{lexicographically ordered full binary tree} and arises
from the fact that the sums and products of any two members of the tree are the simplest possible
elements of the tree consistent with  \textbf{No}'s structure as an ordered group and an ordered
field, respectively, it being understood that  $x$ is \emph{simpler than}  $y$  (written $x<_s y$) just in case  $x$ is a
predecessor of  $y$ in the tree \cite{EH4, EH5, EK0}. 

An important subsequent advance in the theory of surreal numbers was the extension from the reals to $\No$ of the exponential function by Kruskal and Gonshor \cite{CO2,GO}. The Kruskal-Gonshor exponential function  $\exp$,  like Conway's field operations on $\No$, is inductively defined in terms of $\No$'s simplicity hierarchical structure making use of the fact that for each pair of subsets $L$ and $R$ of $\bf{No}$ for which every member of $L$ precedes every member of $R$, there is a \emph{simplest} member of $\bf{No}$, denoted  $$ \{L \,| \,R\},$$ lying between the members of $L$ and the members of $R$. Conway \cite[pages 27, 225, 227]{CO2} refers to such definitions as \emph{genetic definitions}.\footnote{At present there is no universally accepted formal theory of Conway's loosely defined conception of a genetic definition in the literature, though \cite{FO}, \cite{Fornasiero08}, \cite{RSAS} and most recently \cite{B} have made contributions toward the development of such a theory. Nevertheless, following Conway, in our informal remarks we freely refer to certain inductive definitions as ``genetic''.\label{f1}}

There has been a longstanding
program, initiated by Conway, Kruskal and Norton, to develop analysis on $\mathbf{No}$, starting with a genetic definition of integration. In the case of Kruskal, it was motivated in large part by the broader goal of providing a new foundation for asymptotic analysis which would include new and more general tools for resumming divergent series and for solving complicated differential equations. However, the initial attempts at defining integration, in particular the genetic definition proposed by Norton \cite[page 227]{CO2}, turned out,  as Kruskal discovered, to have fundamental flaws  \cite[page 228]{CO2}. Despite this disappointment, the search for a theory of surreal integration has continued (see \cite{FO} and \cite{RSAS}), but has heretofore remained largely open.\footnote{By contrast, work of Berarducci and Mantova \cite{BM,BM2}, Aschenbrenner, van den Dries and van der Hoeven \cite{ADH2}, Bagayoko \cite{BV}, Bagayoko, van der Hoeven and Kaplan \cite{BHK}, Bagayoko and van der Hoeven \cite{BH1,BH2} and others (e.g. \cite{BHM,vdDE2,vvK,SC}) has made significant progress toward viewing the surreals as an ordered differential field. This work aims to bring a robust theory of asymptotic differential algebra to all of $\No$. Unlike the present work, which is concerned with derivations on surreal functions, the former work is concerned with derivations on surreal numbers. }In this paper, using a new approach, we construct a theory of integration that is of sufficiently wide applicability for most practical cases, pose questions about possible extensions of the theory, and elucidate the nature of the obstructions to a far more general extension.

In real analysis and mathematical physics, the asymptotic expansions at $\infty$ of solutions to nontrivial equations as well as perturbation expansions with respect to small parameters almost invariably have zero radius of convergence.  One of the simplest examples of a divergent series is $$\sum_{k=0}^{\infty}k!x^{-k-1}, \ x\to \infty,$$  a formal solution of the differential equation $y'+y=x^{-1}$, whose general solution is related to the antiderivatives of $e^x/x$ by $y(x)=e^{-x}\int^x e^s s^{-1}ds$. The problem of uniquely assigning functions to divergent expansions in a way that preserves such operations as addition, multiplication, differentiation, integration and composition is a very important and difficult one. A partial solution was provided by Borel summation; however, its domain of applicability is insufficient for many problems of interest in pure and applied analysis. Even for handling relatively common problems in analysis, a satisfactory solution had to wait until the work of \'Ecalle (see \cite{Ecalle1,Ecalle2}) which introduced (among other things) the notions of \emph{resurgent functions, resurgent transseries} and \emph{\'Ecalle-Borel summation} for overcoming the limitations of Borel summation. In $\mathbf{No}$, on the other hand, for all surreal $x>\infty$,  $\sum_{k=0}^{\infty}k!x^{-k-1}$ (and in fact, any formal series in powers of $1/x$ with real coefficients more generally) is \emph{absolutely convergent in the sense of Conway} (see \S\ref{S5})\footnote{In the context of discussions of $\mathbf{No}$, such references to ``$\infty$'' refer to the gap in $\mathbf{No}$ separating the positive finite surreals and positive infinite surreals. Similar references to ``$-\infty$'' are understood analogously. In our discussion, ``$\infty$'' and ``$-\infty$'' refer to both gaps and limits depending on context.\label{f2}} and therewith by comparatively simple means defines a unique function for all infinite surreal $x$. Accordingly, the question naturally arises as to whether building on absolute convergence in the sense of Conway and the ideas of \'Ecalle, we can find a theoretically satisfying way of extending functions and their integrals past $\infty$ or, more generally, past a singularity at which asymptotic expansions do not exist or are divergent? As we alluded to above, in this paper we provide a qualified affirmative answer to this question.

Making real progress towards solving the above-said integration problem, and more generally in interpreting divergent expansions by means of surreal analysis, requires finding a property-preserving operator (see Definition \ref{*E})
that extends the members of a wide body of important classical functions from $\R$ to $\No$. In turn, the existence of such an extension operator provides a theoretically satisfying and widely applicable definition of integration: in particular, the integral of an extension from $\R$ to $\No$ of a function on the reals can be defined to be the extension  of its  integral from $\R$ to $\No$.

Any such theory would have to keep in mind that functions whose
behavior can be described in terms of exponentials and logarithms are
remarkably ubiquitous. Indeed, as G. H. Hardy noted in 1910:

\begin{quote}

No function has presented itself in analysis the laws of whose
increase, in so far as they can be stated at all, cannot be stated,
so to say, in logarithmic-exponential terms. \cite[1st Edition, page 35; 2nd Edition, page 32]{Hardy} \footnote{Note that powers fall in this category since $x^a=e^{a\ln x}$.}$^{,}$\footnote{The work of \'Ecalle on transseries, and resurgent transseries in particular, sheds important  light on Hardy's observation. The system of transseries, which consists of formal series built up from $\RR$ and a variable $x> \RR$ using  powers, exponentiation, logarithms and infinite sums, is the closure of formal power series under a wide range of algebraic and analytical operations \cite{ADH1,CostinT, Edgar}. The subspace of resurgent transseries consists of those transseries which, loosely speaking, have origins in natural problems in analysis (see \S \ref{SecEBRT} as well as \cite{Ecalle1,Ecalle2}). There is compelling mathematical evidence, albeit thus far no rigorous proof, that resurgent transseries are also closed under the known algebraic and analytical operations. Moreover, they are associated with resurgent functions by means of \'Ecalle-Borel summation. These facts provide a theoretical basis for Hardy's observation that, in practice, functions whose asymptotic behavior can be described in logarithmic-exponential terms are the only ones that arise naturally as solutions of problems in analysis. It should be noted, however, that unlike the asymptotic expansions used at the time of Hardy's cited writings, 
the infinite sums of logarithmic-exponential terms occurring in \'Ecalle's theory include  sums of \emph{countable transfinite length $> \omega$.}}
\end{quote}
\noindent
Accordingly, developing a satisfactory theory of integration on the
surreals would require building on the exponential ordered field $(\No, \exp)$ of surreal numbers.

Against this backdrop, in the pages that follow, we show that an extension operator $\mathsf{E}$ as described above, and thereby extensions of integrals from $\RR$ to $\No$, exist for a large subclass $\mathcal{F}_\mathcal{R}$ of resurgent functions, which is related via \'Ecalle-Borel summation to a corresponding subclass $\mathbb{T}_\mathcal{R}$ of resurgent transseries, which contains all real functions that at $\infty$ are semi-algebraic, semi-analytic, analytic, and functions with divergent but Borel summable series (see \S \ref{SecTEB}), as well as solutions of nonresonant linear or nonlinear meromorphic systems of ODEs or of difference equations. As such, most classical special functions, such as Airy, Bessel, Ei, erf, Gamma, and Painlev\'e transcendents, are covered by our analysis.\footnote{Integration for functions with convergent expansions has been studied in the context of the non-Archimedean ordered field of left-finite power series with real coefficients and rational exponents in \cite{SB} and \cite{S}. In addition, for the category of semi-algebraic sets and semi-algebraic functions on arbitrary real-closed fields a full Lebesgue measure and integration theory has been developed in \cite{Kaiser 1} and \cite{Kaiser 2}.  See also \cite{Kaiser 3} for integration and measure theory on certain non-Archimedean ordered fields whose value groups have finite Archimedean rank, as well as \cite{Bo} for various positive and negative results on integration in general non-Archimedean fields.}

The definitions of the extension operator $\mathsf{E}$ and corresponding antidifferentiation and integral operators $\mathsf{A}_{\bf No}$  and $$ \int_{x}^{y} f:=\mathsf{A}_{\bf No}(f)(y)-\mathsf{A}_{\bf No}(f)(x)$$
 
\noindent
given below are \emph{not} genetic in Conway's sense (see Footnote \ref{f1}).  However, unlike Norton's aforementioned definition of integration which was found to be intensional \cite[page 228]{CO2}, ours are shown to depend solely on the values of the functions involved. $\mathsf{A}_{\bf No}$ is defined making use of an antidifferentiation operator $\mathsf{A}$ on $\mathbb{T}_\mathcal{R}$, which in turn is defined using an antidifferentiation operator $\mathsf{A}_{\mathbb{T}}$ on the exponential ordered field ${\mathbb{T}}$ of transseries (Proposition \ref{DMM1}). $\mathsf{A}$ has the property: for real $f$ the restriction to reals of $\mathsf{A}f$ has limit zero at $\infty$ whenever the limit exists. 
This can be viewed as the natural condition for an integral with an endpoint at $\infty$.

All of the members of $\mathcal{F}_\mathcal{R}$ are {\em resurgent at} $\infty$ (see Definition \ref{defEE}). Following our treatment of the just-said extension, antidifferentiation and integral operators based on $\mathcal{F}_\mathcal{R}$ or $\mathsf{E}(\mathcal{F}_\mathcal{R})$ we show by means of a simple change of variables argument that substantial extensions of those operators can be obtained building on a set of functions $\mathcal{F}_\mathcal{R}^*$ extending $\mathcal{F}_\mathcal{R}$ that contains functions that are resurgent at arbitrary points. 

More generally, the original portions of the paper consist of the following.
In \S\ref{MainDef} we introduce the definitions of \emph{extension, antidifferentiation and integral operators} and prove a preliminary result about the existence of integral operators. In \S\ref{Sconstr} we outline the difficulties of defining extensions and integration of functions, and our strategy for overcoming them. Following this, to prepare the way for the proof of the main antidifferentiation theorem, in \S\ref{SecEBRT2} we establish the requisite results concerned with resurgent functions, resurgent transseries and \'Ecalle-Borel summability. The definitions of the extension and antidifferentiation operators $\mathsf{E}$ and $\mathsf{A}_{\bf No}$, together with  proofs of the main antidifferentiation theorem (Theorem \ref{TAntidiff}) are given in \S\ref{CTSN}, along with mention of the uniqueness of $\mathsf{E}$ and $\mathsf{A}_{\bf No}$, the proofs of which are left for a separate paper. This is followed in \S\ref{EA} by the above-mentioned constructions of extensions of $\mathsf{E}$, $\mathsf{A}_{\bf No}$ and the corresponding integral operator, and in \S\ref{ILL} by illustrations of the antidifferentiation and/or extension theorems for exp, the exponential integral Ei, the imaginary error function erfi, the Airy functions Ai and Bi, the log-gamma function, the Gamma function and Jacobi's elliptic function $\theta_3$. In \S\ref{CTSN}(2), a substantially shorter and simplified version of the proof of the main extension theorem is provided for the proper subclass $\mathcal{F}_{conv}$ of $\mathcal{F}_{\mathcal{R}}$ consisting of all functions that, at $\infty$, have convergent series in integer or fractional powers of $1/x$ or more generally have convergent transseries. By a result of van den Dries \cite{VdDries}, these include the semi-analytic functions at $\infty$. In \S\ref{GEN} we generalize our main results by showing that \emph{closure} under absolute convergence in the sense of Conway is a sufficient condition for the theory of extension, antidifferentiation and integral operators outlined above to carry over to ordered exponential subfields of $(\No, \exp)$, and we illustrate the result with substructures of $(\No, \exp)$ that are familiar from the literature. Following this, in \S\ref{OQ} we  raise a problem and state two open questions that naturally arise from material in the preceding sections.

To help keep the paper self-contained we include three preparatory sections: \S\ref{S4} offers an overview of some basic ingredients of surreal theory; and \S\ref{SecTEB} offers an overview of transseries as well as those aspects of Borel summability theory that provide background for the preliminary discussion of \'Ecalle-Borel summability in \S\ref{SecEBRT}, which in turn provides background for \S\ref{SecEBRT2} and \S\ref{CTSN}.

In writings on surreal numbers it is customary to work in NBG (von Neumann-Bernays-G\"odel set theory with the Axiom of Global Choice (see, for example, \cite{ME}). However, in \S\ref{MainDef}-\S\ref{GEN}, which constitutes the positive portion of the paper, we need only work in ${\rm NBG}^-$  (NBG less the Axiom of Choice for both sets and proper classes), since the extensions of functions and integrals that concern us there have an explicitly ``constructive'' nature.

Whereas Kruskal hoped to appeal to Conway's notion of absolute convergence to construct new foundations for asymptotic analysis grounded in a robust theory of surreal integration and function extensions more generally, our theory is more modest in its potential scope, limiting its attention to a broad subclass of resurgent functions that arises in most applied settings. In fact, there are reasons to believe that deep hurdles lay in the way of realizing the lofty analytic goals of Kruskal. Indeed, in Section \S\ref{Neg} we reverse course and show that the existence of extensions and integrals for substantially more general classes of functions (e.g. the class of smooth functions) cannot be proved in ${\rm NBG}^-$, and is in fact obstructed by considerations from the foundations of mathematics.\footnote{Some of the material in \S\ref{MainDef}-\S\ref{ILL} of the present paper is a revised and substantially expanded version of material from the positive portion of the arXiv preprint \cite{CEF}. Further set-theoretic impediments to the realization of Kruskal's program are contained in the negative portion of \cite{CEF} and remain to be revised and expanded by Harvey Friedman and the first author.}

\section{Surreal numbers}\label{S4}

This section provides an overview of the basic concepts of the theory of surreal numbers, including the normal forms of surreal numbers, the aforementioned notion of absolute convergence in the sense of Conway and exponentiation. With the exception of Propositions \ref{sur7} and \ref{Pcop} and Notational Convention 1, which are concerned with absolute convergence in the sense of Conway (see Section \ref{S5}), all of the material in this section is known from the literature. 

To avoid possible confusion, we note that here and henceforth we follow the convention of \emph{excluding} $0$ from the set $\NN$ of natural numbers.

\subsection{The algebraico-tree-theoretic structure of ${\bf No}$}\label{PreS}

\begin{sloppypar}
There are a variety of constructions of the surreal numbers (e.g. \cite[pages 4-5, 15-16, 65]{CO2}, \cite{AE1,AE2,EH7,EH1}, \cite[page 242]{EH4}), each with its own virtues. For the sake of brevity, here we adopt the construction based on Conway's {\it sign-expansions} \cite[page 65]{CO2}, an approach which has been made popular by Gonshor \cite{GO}.
\end{sloppypar}
In accordance with this approach, a {\it surreal number} is 
a function $x : \lambda \to \{-,+\}$ where $\lambda$ is an ordinal called the \emph{length} of $x$.
The class $\No$ of surreal numbers so defined carries a canonical linear ordering $<$ as well as a  canonical partial
ordering $<_s$ defined by the conditions: $x< y$ if and only if $x$ is
(\emph{lexicographically}) less than $y$ with respect to the linear ordering on
 $\{-,+\}$, it being understood that $- < \text{\emph{undefined}}\ < +$; $x<_s y$ (read ``$x$ is \emph{simpler than} $y$'') if and only if $x$ is a proper initial segment of $y$. 

By a \emph{tree} $({A,{ < _A}})$ we mean a partially ordered class such that for each $x \in A$, the class 
${\rm pr}_A=\left\{ {y \in A: y { < _A} x} \right\}$ of \emph{predecessors} of $x$ is a \emph{set} well ordered by ${ < _A}$. The \emph{tree-rank} of $x \in A$, written `$\rho _A (x)$', is the ordinal corresponding to the well-ordered set $({\rm pr}_A (x), <_s)$. If $x, y \in A$, then $y$ is said to be an \emph{immediate successor} of $x$ if $x < _s y$ and $\rho _A (y) = \rho _A (x) + 1$; and if $(x_\alpha)_{\alpha < \beta }$ is a chain in $A$ (i.e., a subclass of $A$ totally ordered by $ < _s $), then $y$ is said to be an \emph{immediate successor of the chain} if $x_\alpha < _s y$ for all $\alpha < \beta$ and $\rho _A (y)$ is the least ordinal greater than the tree-ranks of the members of the chain. The \emph{length} of a chain $(x_\alpha)_{\alpha < \beta }$ in $A$ is the ordinal $\beta $. If each member of  $A$ has two immediate successors and every chain in $A$ of limit length (including the empty chain) has 
one immediate successor, the tree is said to be a \emph{full binary tree}.

\begin{Proposition}\label{sur1}{\rm

 $({{\bf No} \, <, <_s})$ is a lexicographically ordered full binary tree (\cite{EH5}, \cite[Theorem 11]{EH7}).}
\end{Proposition}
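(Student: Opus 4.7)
The plan is to verify directly from the sign-expansion construction that $(\mathbf{No}, <, <_s)$ satisfies the three defining features: the tree property, the two-immediate-successors property, and the immediate-successor property for every chain of limit length (including the empty chain). The lexicographic claim for $<$ is immediate from the definition.

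The key preliminary observation I would establish is that for any surreal $x : \lambda \to \{-,+\}$, the $<_s$-predecessors of $x$ are exactly the proper restrictions $\{x|_\mu : \mu < \lambda\}$. The map $\mu \mapsto x|_\mu$ is an order isomorphism from $(\lambda, \in)$ onto this class, so the predecessor class is a set well-ordered by $<_s$, proving the tree property, and moreover $\rho_{\mathbf{No}}(x) = \lambda$. This identification of tree-rank with length is the engine behind the remaining steps.

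For the binary property, I would observe that the two length-$(\lambda+1)$ extensions of a surreal $x$ of length $\lambda$ (obtained by appending $-$ or $+$) lie $<_s$-above $x$ and have tree-rank $\lambda + 1 = \rho_{\mathbf{No}}(x) + 1$; conversely, any $y$ with $x <_s y$ and $\rho_{\mathbf{No}}(y) = \lambda + 1$ has length $\lambda + 1$ and must restrict to $x$, hence coincides with one of these two. For the limit-chain property, given a chain $(x_\alpha)_{\alpha < \beta}$ of limit length $\beta$ (or the empty chain), set $\mu = \sup_{\alpha < \beta} \lambda_\alpha$, where $\lambda_\alpha$ denotes the length of $x_\alpha$, and define $y : \mu \to \{-,+\}$ to be the common extension of all the $x_\alpha$ — well defined because a $<_s$-chain is a chain of proper initial segments. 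Then $\rho_{\mathbf{No}}(y) = \mu$, and $\mu$ is the least ordinal strictly greater than every $\lambda_\alpha$; for the empty chain this degenerates to $y$ being the empty function, the unique element of tree-rank $0$. Uniqueness of $y$ in all cases follows from the uniqueness of the common extension.

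The main point requiring a little care is checking that the tree-rank of the immediate successor of a limit-length chain is exactly $\mu$ and not larger, i.e., that $\mu$ is both strictly above every $\lambda_\alpha$ and minimal with that property. This is a standard property of the supremum of an increasing sequence of ordinals of limit length, so I do not anticipate a genuine obstacle; the argument is essentially bookkeeping against the sign-expansion representation, once the identification $\rho_{\mathbf{No}}(x) = \mathrm{length}(x)$ is in hand.
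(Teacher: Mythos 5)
Your argument is correct. Note, however, that the paper itself supplies no proof of this proposition: it simply cites \cite{EH5} and \cite[Theorem 11]{EH7}, where the result is established (in the cited sources, partly in the more general axiomatic setting of lexicographically ordered binary trees and simplicity hierarchies). What you have written is the direct, self-contained verification for the sign-expansion presentation, and it is the natural one: the identification $\rho_{\mathbf{No}}(x)=\mathrm{length}(x)$ via the order isomorphism $\mu\mapsto x|_\mu$ is exactly the right engine, the two sign-appended extensions give the two immediate successors, and the supremum-of-lengths argument handles limit chains (including the empty chain, whose unique immediate successor is the empty function). Two very small points you pass over quickly: for the limit-chain step one should remark that the common extension is well defined because any two members of a $<_s$-chain agree on the smaller domain (you do say this parenthetically), and the phrase ``lexicographically ordered'' strictly speaking also asks that $<$ be a linear order compatible with the tree structure -- trichotomy and transitivity of the lexicographic order on sign sequences, with $- < \text{undefined} < +$ at the first point of difference -- which is routine but worth a sentence. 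Neither is a gap; your proof buys a reader a complete argument where the paper only offers a pointer to the literature.
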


Central to the algebraico-tree-theoretic development of the theory of surreal numbers is the following consequence of Proposition \ref{sur1}, where a subclass $B$ of an ordered class $( A,< )$  is said to be \emph{convex}, if $z\in B$ whenever $x,y\in B$ and $x < z < y$.

\begin{Proposition}\label{sur2}{\rm
Every nonempty convex subclass of $\No$ has a simplest member. In particular, if $L$ and $R$ are (possibly empty) sub\emph{sets} of ${\bf No}$ for which every member of $L$ precedes every member of $R$ (written $L<R$), there is a \emph{simplest} member of ${\bf No}$ lying between the members of $L$  and the members of  $R$ \cite[Theorem 1 and Theorem 4 (i) and (ii)]{EH5}. 
 }
\end{Proposition}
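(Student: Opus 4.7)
I would prove both claims together, with the first (simplest element of a convex class) providing the main technical content and the second reducing to it once the relevant convex class is shown to be nonempty.

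For the first claim, let $C$ be a nonempty convex subclass of $\No$, and consider
\[
D = \{w \in \No : w \text{ is an initial segment (possibly improper) of every } y \in C\};
\]
equivalently, $w \in D$ iff for all $y\in C$ either $w<_s y$ or $w=y$. The empty sign-sequence (i.e.\ $0$) lies in $D$, so $D$ is nonempty; and since $D$ is contained in the well-ordered collection of initial segments of any fixed $y_0 \in C$ together with $y_0$ itself, it is well-ordered by $<_s$.

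The crux is to show that $D$ has a $<_s$-maximum $m$ and that $m \in C$. If $D$ had no maximum, then it would be a chain of limit length, and by the full binary tree property (Proposition \ref{sur1}) this chain would admit a unique immediate successor $z$ in $\No$; concretely, $z$ is the sign-expansion whose restriction to each length occurring in $D$ agrees with the corresponding element of $D$. Each $y \in C$ extends every $w \in D$, hence also extends (or equals) $z$, placing $z \in D$ and contradicting the absence of a maximum. So $D$ has a maximum $m$. If $m \notin C$, convexity forces $C$ to lie strictly on one side of $m$, say $C > m$; each $y \in C$ then properly extends $m$ and, since $y>m$ in the lexicographic order with $m$ undefined at position $\mu = \mathrm{length}(m)$, must satisfy $y(\mu) = {+}$. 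Thus the sign-expansion obtained by appending $+$ to $m$ lies in $D$, contradicting maximality of $m$. So $m \in C$, and since $m$ is an initial segment of every element of $C$, it is the simplest member.

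For the second claim, given sets $L<R$, the class $\{z \in \No : L < \{z\} < R\}$ is convex by inspection; its nonemptiness can be verified by an analogous transfinite sign-expansion construction --- build $z$ position by position, appending $+$ when the current partial sequence fails to dominate some element of $L$ and $-$ when it fails to be dominated by some element of $R$; since $L \cup R$ is a set, the lengths of its sign-expansions are bounded by an ordinal and the construction must terminate. Applying the first claim to this convex class then yields the simplest surreal between $L$ and $R$. The principal obstacle is the limit-case argument that $D$ has a maximum: this is exactly where the \emph{full} binary tree structure of $\No$ is essential (beyond mere tree-ness), supplying the unique immediate successor of the chain $D$, which one must then verify genuinely lies in $D$ by comparing sign-expansions position by position.
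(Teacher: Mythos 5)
Your argument is correct, but you should be aware that the paper does not actually prove this proposition: it is stated as a known result, with the proof deferred to the cited reference \cite{EH5}. What you have written is, in substance, a self-contained reconstruction of the standard argument from the sign-expansion presentation: the class $D$ of common initial segments of the members of $C$ is a downward-closed, well-ordered chain; the full-binary-tree property of Proposition \ref{sur1} rules out $D$ being of limit length without a maximum (since the union of the chain would again lie in $D$); and maximality of the top element $m$ of $D$ forces $m\in C$, where it is automatically the $<_s$-minimum of $C$. That part is sound and complete. The only place the write-up is thinner than it should be is the nonemptiness of $\{z\in\No : L<z<R\}$, where two points deserve to be made explicit: first, the recursion is well-defined because the two cases (the partial sequence is $\le$ some element of $L$, or $\ge$ some element of $R$) are mutually exclusive --- if both held one would get $r\le l$ for some $r\in R$, $l\in L$, contradicting $L<R$; second, the construction does not ``terminate'' of its own accord, but rather one fixes an ordinal $\lambda$ exceeding every length occurring in $L\cup R$ and verifies that the sequence of length $\lambda$ so produced (if the process has not already landed in the class earlier) does lie strictly between $L$ and $R$, by comparing it with any offending $l\in L$ or $r\in R$ at their first point of difference and using the sign chosen at that stage to contradict $L<R$. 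Both points are routine, so I would count this as an expository rather than a mathematical gap.
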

 
 Co-opting notation introduced by Conway, the simplest member of  ${\bf No}$ lying between the members of  $L$ and the members of $R$  is denoted by $$\{L|R\}.$$ 

Following Conway \cite[page 4]{CO2}, if $x=\left\{ L|R\right\}$, we write $x^{L}$ for a
typical member of $L$ and $x^{R}$ for a typical member of $R$; $x=\left\{
a,b,c,...|d,e,f,...\right\} $ means that $x=\left\{ L|R\right\} $ where $%
a,b,c,...$ are typical members of $L$ and $d,e,f,...$ are typical 
members of $R$.      

Each  $x\in\bf No$ has a \emph{canonical representation} as the simplest member of ${\bf No}$ lying between its predecessors on the left and its predecessors on the right, i.e.$$x=\{L_{s\left ( x \right )}| R_{s\left ( x \right )}\},$$ where  $$L_{s\left( x \right)}=\left\{ {a \in {\bf No}:a <_s x\;{\rm{and}}\;a < x} \right\}\; {\rm and} \; R_{s\left( x \right)}=\left\{ {a \in {\bf No}:a <_s x\;{\rm{and}}\;x < a} \right\}.$$
 By now letting $x=\{L_{s\left ( x \right )}| R_{s\left ( x \right )}\}$ and $y=\{L_{s\left ( y \right )}| R_{s\left ( y \right )}\}$, $+ , - $ and $\cdot$ are defined by recursion as follows, where $x^L $,  $x^R$,  $y^L $ and  $y^R $ are understood to range over the members of  $L_{s\left( x \right)},R_{s\left( x \right)} ,L_{s\left( y \right)} $ and  $R_{s\left( y \right)} $, respectively.

\bigskip 
\emph{Definition of}  $x + y.$
 $$x + y = \left\{ {x^L  + y,x + y^L |x^R  + y,x + y^R } \right\}.$$
 
 \emph{Definition of}  $- x.$
 $$- x = \left\{ {- x^R | - x^L } \right\}.$$
 
 \emph{Definition of}  $xy.$
 
\begin{eqnarray*}
\!\!\!xy &=&\{x^{L}y+xy^{L}-x^{L}y^{L},x^{R}y+xy^{R}-x^{R}y^{R}| \\
&&\quad \quad \qquad \qquad \qquad \text{\quad }
x^{L}y+xy^{R}-x^{L}y^{R},x^{R}y+xy^{L}-x^{R}y^{L}\}\text{.}
\end{eqnarray*}

\medskip

Despite their cryptic appearance, the definitions of sums and products in $\No$ 
have natural interpretations that essentially assert that the sums and products of elements of $\No$ are the simplest elements of $\No$ consistent with $\No$'s structure as an ordered group and an ordered field respectively (see, for example, \cite[page 1236]{EH4}, \cite[pages 252-253]{EH5}). The constraint on additive inverses, which is a consequence of the definition
of addition \cite[page 1237]{EH5}, ensures that the portion of the surreal
number tree less than $0=\left \{ \varnothing |\varnothing \right \}$ is (in absolute value) a mirror image of the portion of the surreal number tree greater than $0$, $0$ being the simplest element of the surreal number tree (see Figure 1).

A subclass $A$ of $\No$ is said to be \emph{initial} if $x\in A$ whenever $y\in A$  and $x<_s y$. Although there are many isomorphic copies of the order field of reals in $\No$, only one is initial \cite[page 1243]{EH5}. This ordered field, which we denote $\R$, plays the role of the reals in $\No$. Similarly, while there are many subclasses $A$ of $\No$ that are well-ordered proper classes in which for all $x,y \in A$, $x<y$ if and only if $x<_s y$, only one is initial. The latter, which consists of the members of the rightmost branch of $({{\bf No} \, <, <_s})$ (see Figure 1), is identified as $\No$'s class $\mathbf{On}$ of ordinals.

The nonzero elements of $\No$ can be partitioned into equivalence classes, called \emph{Archimedean classes}, each consisting of all nonzero members $x,y$ of $\No$ that satisfy the condition: $m|x|>|y|$ and  $n|y|>|x|$ for some positive integers $m,n$. If $a$ and $b$ are members of distinct Archimedean classes and $\left |a \right |< \left |b \right |$, then we write $a \ll b$ and $a$ is said to be \emph{infinitesimal (in absolute value) relative to} $b$.

An element of  ${\bf No}$ is said to be a \emph{leader} if it is the simplest member of the positive elements of an Archimedean class of  ${\bf No}$. Since the class of positive elements of an Archimedean class of  ${\bf No}$ is convex, by the first part of Proposition \ref{sur2} the concept of a leader is well defined. There is a unique mapping--\emph{the} $\omega$\emph{-map}--from ${\bf No}$ onto the ordered class of leaders that preserves both $<$ and $<_s$. The image of $y$ under the $\omega$-map is denoted $\omega^y$, and in virtue of its order preserving nature, we have: for all $x,y\in \bf {No}$, $$ \omega^x \ll \omega^y \;\text{if and only if} \; x<y.$$

Using the $\omega$-map along with other aspects of ${\bf No}$'s $s$-hierarchical structure and its structure as a vector space over $\R$, every surreal number can be assigned a canonical ``proper name'' or \emph{normal form} that is a reflection of its characteristic
$s$--hierarchical properties. These normal forms are expressed as
sums of the form  $$\sum\limits_{\alpha  < \beta } {\omega ^{y_\alpha  } .r_\alpha} $$
where  $\beta $ is an ordinal,  $\left( {y_\alpha} \right)_{\alpha  < \beta } $ is a strictly decreasing
sequence of surreals, and  $\left( {r_\alpha  } \right)_{\alpha  < \beta } $ is a sequence of nonzero
real numbers. Every such expression is in fact the normal
form of some surreal number, the normal form of an ordinal being just its \emph{Cantor normal form} (\cite[pages 31-33]{CO2}, \cite[\S3.1 and \S5]{EH5}, \cite{EH6}).

Making use of these normal forms, Figure ~\ref{fig:first} offers a glimpse of the some of the early stages of the recursive unfolding of  ${\bf No}$.

\begin{figure}[ht!]

\centering\includegraphics[width=0.95\textwidth]{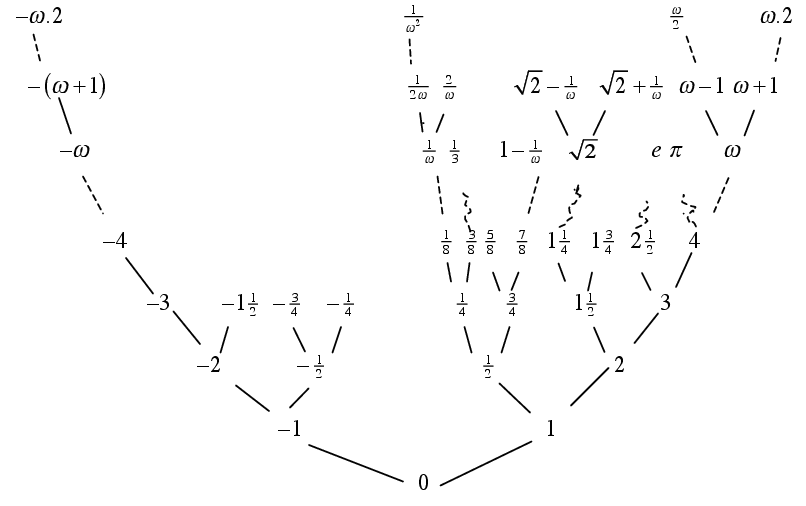}

\caption{Early stages of the recursive unfolding of ${\bf No}$}
\label{fig:first} 
\end{figure}

When surreal numbers are represented by their normal forms, order, addition and multiplication in $\No$ assume more tractable forms with the order defined lexicographically and addition and multiplication defined  as for polynomials with $\omega^{x} \omega^{y} = \omega^{x + y}$ for all $x, y\in \No$.

\begin{Definition}\label{part}
{\rm An element $x$ of an ordered field is said to be \emph{infinitesimal} if $\left| x \right| < 1/n$ for every positive integer $n$ and it is said to be \emph{infinite} if $\left| x \right| > n\cdot1$ for every positive integer $n$. Thus, in virtue of the lexicographical ordering on normal forms, a surreal number is infinite (infinitesimal) just in case the greatest exponent in its normal form is greater than (less than) $0$. As such, each surreal number $x$ has a canonical decomposition into its \emph{purely infinite part}, its \emph{real part}, and its \emph{infinitesimal part},  consisting of the portions of its normal form all of whose exponents are $>0$, $=0$, and $<0$, respectively. A surreal number, and a member of an ordered field more generally, will be said to be \emph{finite} if it is not infinite.}
\end{Definition}

\subsection{Absolute convergence in the sense of Conway}\label{S5}

There is a notion of convergence in ${\bf No}$ for sequences and series of surreals that can be conveniently expressed using normal forms supplemented with dummy terms whose coefficients are zero.  Let $x\in\No$ and for each $y\in\No$, let $r_{y}(x)$ be the coefficient of $\omega^y$ in the normal form of $x$, it being understood that $r_{y}(x)=0$, if $\omega^y$ does not occur.  Also let $\{x_{n}:n\in \N\cup\{0\}\}$ be a sequence of surreals so written. Following Siegel \cite[page 432]{SI}, we write \[x=\lim _{n\rightarrow \infty }x_{n}\]

\noindent to mean \[r_y\left ( x \right )=\lim _{n\rightarrow \infty }r_y \left ( x_{n} \right ),  \:   \text{for all}\ y\in\No,\]

\noindent and say that $\{x_{n}:n\in \N\cup\{0\}\}$ \emph{converges} to $x$. We also write \[x=\sum_{n=0}^{\infty }x_n\]

\noindent to mean the partial sums of the series converge to $x$.

Among the convergent sequences and series of surreals are those whose mode of convergence is quite distinctive. In particular, for each $y\in\bf {No}$, there is a nonnegative integer $m$ such that $r_{y}(x_n)=r_{y}(x_m)$ for all $n\geq m$. Thus, for each $y\in\bf {No}$,
 \[r_y\left ( x \right )=\lim _{n\rightarrow \infty }r_y \left ( x_{n} \right )=r_{y}(x_m),\]

\noindent where $m$ depends on $y$.
Following Conway, we call this mode of convergence \emph{absolute convergence}. 

{\begin{Notational Convention 1}
{\rm We will call the normal form to which an absolutely convergent series $\{x_{n}:n\in \N\cup\{0\}\}$ of normal forms converges the \emph{Limit} of the series and denote it using 
\begin{equation}
\mathop{\mathrm{Lim}}_{n\to \infty}x_n. \label{eq:NC1}
\end{equation}
\noindent
We use ``Limit'' as opposed to ``limit'' and ``$\mathop{\mathrm{Lim}}$'' as opposed to ``$\lim$'' to distinguish the surreal notion from its classical counterpart.}
\end{Notational Convention 1}}

Relying on the above and classical combinatorial results of Neumann (\cite[pages 206-209]{N},\cite[Lemma 3.2]{SI}, \cite[pages 260-266]{AL}), one may prove \cite[pages 432-434]{SI} the following theorem of Conway \cite[page 40]{CO2}, which is a straightforward application to $\No$ of a classical result of Neumann (\cite[page 210]{N}, \cite[page 267]{AL}).

\begin{Proposition}\label{sur5}{\rm Let $f$ be a formal power series with real coefficients, i.e. let

 \begin{equation}
    \label{eq:csoc}
 f\left ( x \right )=\sum_{n=0}^{\infty }r_{n}x^{n}
  \end{equation}
\noindent where the $r_n$'s are reals. Then $f\left ( \zeta \right )$ is absolutely convergent for all infinitesimals $\zeta$ in $\No$, i.e.,
 \begin{equation}
    \label{eq:LimS}
 f( \zeta )=\mathop{\mathrm{Lim}}_{n\to\infty}\sum_{m=0}^{n}r_{m}x^{m}.
  \end{equation}

}
 \end{Proposition}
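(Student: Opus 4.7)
The plan is to reduce the claim to the classical combinatorial lemma of Neumann cited just before the statement. If $\zeta=0$ then $f(\zeta)=r_{0}$ and there is nothing to prove, so assume $\zeta\neq 0$ and write its normal form
\[
\zeta=\sum_{\alpha<\beta}\omega^{y_{\alpha}}s_{\alpha},
\]
where the $y_{\alpha}$ are strictly decreasing and the $s_{\alpha}$ are nonzero reals. Because $\zeta$ is infinitesimal, the characterization of infinitesimals in Definition~\ref{part} forces $y_{\alpha}<0$ for every $\alpha<\beta$, so the support $S:=\{y_{\alpha}:\alpha<\beta\}$ of $\zeta$ is an anti-well-ordered subset of the negative cone of the ordered abelian group $(\No,+)$.

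The first step is to apply Neumann's lemma to $S$ inside $(\No,+)$: the sub-semigroup $T:=\bigcup_{m\in\NN\cup\{0\}}m\cdot S$ generated by $S$ is again anti-well-ordered, and each element $y\in T$ admits only finitely many representations as a finite sum (counted with multiplicity) of elements of $S$. Translating this back through the recursive definition of multiplication in terms of normal forms, the support of $\zeta^{m}$ is contained in $m\cdot S$, the coefficient of $\omega^{y}$ in $\zeta^{m}$ is a \emph{finite} $\R$-linear combination of products of the $s_{\alpha}$, and this coefficient vanishes whenever $y\notin m\cdot S$.

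The second step is to read off absolute convergence. Neumann's finiteness statement says that for each fixed $y\in\No$ the index set $\{m\in\NN\cup\{0\}:y\in m\cdot S\}$ is finite; hence only finitely many indices $m$ contribute to the coefficient of $\omega^{y}$ in any partial sum $\sum_{m=0}^{n}r_{m}\zeta^{m}$, so that coefficient stabilizes for all sufficiently large $n$. This is precisely the criterion for absolute convergence in the sense of Conway recorded in Notational Convention~1. Moreover, since the union of the supports of the partial sums lies in the anti-well-ordered class $T$, the resulting Limit is a genuine element of $\No$ and, by construction, coincides with $\mathop{\mathrm{Lim}}_{n\to\infty}\sum_{m=0}^{n}r_{m}\zeta^{m}$, giving \eqref{eq:LimS}.

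The main obstacle is the combinatorial content of Neumann's lemma itself: that the sub-semigroup generated by an anti-well-ordered set of strictly negative elements of an ordered abelian group is again anti-well-ordered, with each element possessing only finitely many decompositions into summands from the generating set. This is exactly the classical fact that the proposition imports; once it is granted, the rest of the proof is bookkeeping with normal forms and the definition of Limit.
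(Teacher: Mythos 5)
Your argument is correct and is precisely the route the paper takes: it proves the proposition by citing Neumann's classical lemma (via Siegel and Alling) that the semigroup generated by an anti-well-ordered set of strictly negative elements is anti-well-ordered with only finitely many decompositions of each element, which is exactly the finiteness statement you use to stabilize the coefficient of each $\omega^{y}$. Your write-up simply supplies the bookkeeping with normal forms that the paper leaves to the references.
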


Conway's theorem also has the following multivariate formulation \cite[page 435]{SI}.

\begin{Proposition}\label{sur6}{\rm  Let $f$ be a formal power series in $k$ variables with real coefficients, i.e. let \[f\left (x_1,...,x_k \right )\in\R[[x_1,...,x_k]].\]
\noindent Then $f\left ( \epsilon_1,..., \epsilon_k \right )$ is absolutely convergent for every choice of infinitesimals $\epsilon_1,..., \epsilon_k$ in $\No$.} 
 \end{Proposition}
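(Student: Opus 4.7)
The plan is to adapt the single-variable argument underlying Proposition \ref{sur5} by replacing the support of a single infinitesimal with the joint support of $\epsilon_1,\ldots,\epsilon_k$, and then appealing to Neumann's combinatorial lemma in precisely the form already cited in the excerpt \cite{N,SI,AL}. Let $S_i \subseteq \No^{<0}$ denote the support of the normal form of $\epsilon_i$; each $S_i$ is a reverse well-ordered set of strictly negative surreals, so $S := S_1 \cup \cdots \cup S_k$ is itself a reverse well-ordered subset of $\No^{<0}$. Writing $f$ as a formal sum $\sum_\alpha r_\alpha x^\alpha$ indexed by multi-indices $\alpha=(\alpha_1,\ldots,\alpha_k)\in(\N\cup\{0\})^k$, the task reduces to showing that for each $y \in \No$ the coefficient of $\omega^y$ in the partial sums $\sum_{|\alpha|\leq N} r_\alpha \epsilon^\alpha$ eventually stabilizes in $N$, and that the resulting limit has reverse well-ordered support.

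The two technical inputs I would assemble are the following. First, a support containment $\mathrm{supp}(\epsilon^\alpha) \subseteq nS$ for $n=|\alpha|=\alpha_1+\cdots+\alpha_k$, where $nS$ denotes the $n$-fold Minkowski sumset of $S$ with itself; this follows by induction from the basic principle (used already in the one-variable case) that the support of a product in $\No$ lies in the Minkowski sum of the supports of the factors, combined with the containment $S_i \subseteq S$. Second, Neumann's lemma applied to $S$: the union $S^* := \bigcup_{n\geq 0} nS$ is reverse well-ordered in $\No$, and for every $y \in S^*$ only finitely many finite tuples of elements of $S$ (of any length) sum to $y$; in particular the set $\{n \in \N \cup \{0\} : y \in nS\}$ is finite.

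Combining these, for each fixed $y \in \No$ the collection $\{\alpha : y \in \mathrm{supp}(\epsilon^\alpha)\}$ is finite, since $|\alpha|$ must belong to the finite set of admissible values supplied by Neumann's lemma, and for each such value only $\binom{|\alpha|+k-1}{k-1}$ multi-indices exist. Consequently the coefficient of $\omega^y$ in $\sum_\alpha r_\alpha \epsilon^\alpha$ stabilizes after finitely many partial sums, which is exactly absolute convergence in Conway's sense; and since the support of the limit is contained in the reverse well-ordered set $S^*$, the limit is a bona fide surreal number with a well-defined normal form. The main obstacle I anticipate is organizing the Neumann-style bookkeeping uniformly over all multi-indices at once, rather than handling each power separately as in the one-variable case, but this is precisely what the multivariate formulation of Neumann's lemma in \cite{N,AL,SI} is designed to supply.
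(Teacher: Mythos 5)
Your argument is correct and is essentially the same one the paper relies on: the paper gives no proof of Proposition \ref{sur6} itself but defers to \cite[page 435]{SI}, where the result is obtained exactly as you describe, by applying Neumann's combinatorial lemma to the (reverse well-ordered) union of the supports of the $\epsilon_i$ and using the Minkowski-sum containment for supports of products. The one point worth making explicit, which you do handle, is that finiteness of $\{n : y \in nS\}$ must be combined with the finiteness of the number of multi-indices of each total degree to conclude stabilization of each coefficient.
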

 
 This can also be written in the following useful form.
  
\begin{Proposition}\label{sur7}{\rm Let $\{c_{\mathbf{k}}: \mathbf{k}\in(\N\cup\{0\})^m\}$ be any multisequence  of real numbers and $h_1,...,h_m$ be  infinitesimals. Also let $\mathbf{h}^{\mathbf{k}}=h_1^{k_1}\cdots h_m^{k_m}.$ Then
  \begin{equation}
    \label{eq:sumc}
    \sum_{|\mathbf{k}|\ge 0} c_{\mathbf{k}} \mathbf{h}^{\mathbf{k}}
  \end{equation}
is a well-defined element of $\No$. 
}\end{Proposition}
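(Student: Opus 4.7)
The plan is to present Proposition \ref{sur7} as essentially a multi-index reformulation of Proposition \ref{sur6}, together with an explicit interpretation of the summation notation via Notational Convention 1.

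First, I would form the formal power series
$$F(x_1,\ldots,x_m) \;:=\; \sum_{\mathbf{k} \in (\N\cup\{0\})^m} c_{\mathbf{k}}\, x_1^{k_1}\cdots x_m^{k_m} \;\in\; \R[[x_1,\ldots,x_m]].$$
Proposition \ref{sur6} then applies directly and guarantees that $F(h_1,\ldots,h_m)$ is a well-defined element of $\No$, obtained as an absolutely convergent sum of the monomials $c_{\mathbf{k}}\mathbf{h}^{\mathbf{k}}$.

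Second, I would interpret the notation in \eqref{eq:sumc} as the Limit (per Notational Convention 1) of the partial sums $S_N := \sum_{|\mathbf{k}| \leq N} c_{\mathbf{k}}\mathbf{h}^{\mathbf{k}}$, each of which is a \emph{finite} sum in $\No$ and hence manifestly a well-defined element of $\No$. What remains is to verify that the sequence $\{S_N\}_{N \geq 0}$ converges absolutely in Conway's sense, which by the definition preceding Notational Convention 1 reduces to: for each $y \in \No$, the real coefficient $r_y(S_N)$ stabilizes as $N \to \infty$.

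Third, I would justify this stabilization using the same Neumann-type combinatorics on well-ordered supports that underlies Proposition \ref{sur6}. Since each $h_i$ is infinitesimal, the support of its normal form is a reverse well-ordered subclass of the strictly negative surreals; consequently the support of each $\mathbf{h}^{\mathbf{k}}$ lies in the corresponding sumset, and Neumann's lemma (\cite[page 210]{N}, \cite[page 267]{AL}) ensures that $\bigcup_{\mathbf{k}} \mathrm{supp}(c_{\mathbf{k}}\mathbf{h}^{\mathbf{k}})$ is a reverse well-ordered subclass of $\No$ in which each $y$-slice $\{\mathbf{k} : r_y(c_{\mathbf{k}}\mathbf{h}^{\mathbf{k}}) \neq 0\}$ is finite. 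Hence, for each $y \in \No$ there is some $N_0$ such that $r_y(S_N) = r_y(S_{N_0})$ for all $N \geq N_0$, so the Limit exists, and it necessarily coincides with $F(h_1,\ldots,h_m)$.

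The main obstacle, in principle, is the combinatorial step in the third paragraph; but this is precisely the content that already powers Proposition \ref{sur6}, which we are allowed to invoke. In practice, therefore, the proof is essentially a matter of repackaging: showing that the natural organization of the multivariate summation by total degree $|\mathbf{k}|$ yields a Conway-absolutely convergent sequence of partial sums, with Limit equal to the element $F(h_1,\ldots,h_m) \in \No$ provided by Proposition \ref{sur6}.
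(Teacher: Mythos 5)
Your proposal is correct and matches the paper's treatment: the paper gives no separate proof of Proposition \ref{sur7}, introducing it merely as a restatement of Proposition \ref{sur6} (``This can also be written in the following useful form''), which is exactly the reduction you carry out. Your additional paragraphs spelling out the Limit of degree-truncated partial sums and the Neumann-type stabilization of each coefficient are a faithful unpacking of the same underlying argument rather than a different route.
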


 The following result, in which  $\{x_{n}:n\in \N\cup\{0\}\}$ and  $\{y_{n}:n\in \N\cup\{0\}\}$ are absolutely convergent series of normal forms, collects together some elementary properties of absolute convergence in $\No$. Several are very similar to the properties of the usual limits. 

\begin{Proposition}\label{Pcop}{\rm 
Let ${\rm Lim}_{n\to \infty}x_n=x$ and ${\rm Lim}_{n\to \infty}y_n=y$, and further let $h\ll 1$, $\tau > 0$ and $a,b\in\No$. Then 
\begin{align}
  \label{eq:convprop}
(a)\ &   {\rm Lim}_{n\to \infty}(a x_n+b y_n)=a x+b y       ;\nonumber\\(b)\ & {\rm Lim}_{n\to \infty}x_ny_n=xy ;\nonumber\\ (c)\ & x\ne 0\Rightarrow {\rm Lim}_{n\to \infty} \frac{1}{x_n}=\frac{1}{x};\\ (d)\   & (\exists k)(\forall n)(|x_n|<k);\nonumber\\ (e)\ & {\rm Lim}_{n\to \infty}h^n=0 ;\nonumber\\ (f) \ & (\forall n)(|x_n|\le \tau)\Rightarrow |x|\le \tau.\nonumber
\end{align}
   } \end{Proposition}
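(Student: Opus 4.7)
My plan is to push every claim down to the level of normal-form coefficients, treating absolute convergence of $\{x_n\}$ as the assertion that for each $y\in\No$ the real-valued sequence $r_y(x_n)$ eventually stabilizes at $r_y(x)$, and similarly for $\{y_n\}$. Each of (a), (b), (c), (e), (f) then reduces to a real-number computation at a single coordinate, and Neumann's combinatorial lemma (the tool invoked just before Proposition~\ref{sur5}) guarantees that every Cauchy-type expansion $r_y(uv)=\sum_{s+t=y}r_s(u)r_t(v)$ is a finite real sum. Claim (d), by contrast, is purely set-theoretic and does not use absolute convergence at all.

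\textbf{Claims (a) and (b).} For (a), I write $r_y(ax_n+by_n)=r_y(ax_n)+r_y(by_n)$, expand each summand as the finite Cauchy sum $r_y(ax_n)=\sum_{s+t=y}r_s(a)r_t(x_n)$, and choose $N$ so large that every contributing $r_t(x_n)$ and $r_t(y_n)$ coincides with its stabilized value; the sum then equals $r_y(ax+by)$ for $n\ge N$. For (b), the bilinear Cauchy formula $r_y(x_ny_n)=\sum_{s+t=y}r_s(x_n)r_t(y_n)$ is handled identically, stabilizing coefficients from both factors simultaneously.

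\textbf{Claim (c).} Factor out the leading term: since $x\ne 0$, write $x=\omega^\beta r(1+\eta)$ with $r\in\R\setminus\{0\}$ and $\eta\ll 1$; stabilization forces $x_n$ to be of the form $\omega^\beta r(1+\eta_n)$ eventually, with $\eta_n\ll 1$ and $\mathop{\mathrm{Lim}}_n\eta_n=\eta$. Invoking Proposition~\ref{sur5} on the formal series $1/(1+t)=\sum_k(-t)^k$ at $\eta$ and at each $\eta_n$ yields absolutely convergent Conway expansions of $1/x$ and $1/x_n$, and (a) together with (b) then propagates the stabilization of $\eta_n$ term-by-term to $1/x_n$.

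\textbf{Claims (d), (e), (f).} For (d), $\{x_n:n\in\N\}$ is a set (being the image of $\N$ under a set-valued function), so by Proposition~\ref{sur2} applied to the convex class of its upper bounds there is a simplest surreal $k$ strictly greater than every $|x_n|$. For (e), since $h\ll 1$ the leading exponent $\beta$ of $h$ is strictly negative, so $h^n$ has leading exponent $n\beta$; for any fixed $y\in\No$ one has $n\beta<y$ for all sufficiently large $n$, hence $r_y(h^n)=0$ eventually, i.e.\ $\mathop{\mathrm{Lim}}_n h^n=0$. For (f), if $|x|>\tau$ then at the greatest normal-form exponent $y_0$ where $|x|$ and $\tau$ differ one has $r_{y_0}(|x|)>r_{y_0}(\tau)$, and coordinatewise stabilization of $|x_n|$ to $|x|$ (which kicks in once the sign of $x_n$ has stabilized, using $x\ne 0$ if $\tau<|x|$) propagates the strict inequality to $|x_n|$ for large $n$, contradicting $|x_n|\le\tau$.

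\textbf{Main obstacle.} The delicate point across (a)--(c) is uniform finiteness: Neumann's lemma gives a finite Cauchy sum for each $n$ separately, but to extract a common stabilization threshold $N$ one must control, uniformly in $n$, the set of contributing indices. The cleanest remedy is to observe that the existence of $\mathop{\mathrm{Lim}}x_n$ and $\mathop{\mathrm{Lim}}y_n$ as bona fide normal forms implicitly restricts the unions $\bigcup_n\mathrm{supp}(x_n)$ and $\bigcup_n\mathrm{supp}(y_n)$ to fixed reverse well-ordered sets; Neumann's lemma applied to these together with the fixed supports of $a$ and $b$ then furnishes a uniform finite index set, so that the stabilization argument goes through once and for all. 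Isolating and using this uniform support bound is the only real bookkeeping in the proof.
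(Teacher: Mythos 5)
Your route for (a)--(d) and (c) is essentially the paper's: the paper outsources (a) and (b) to Alling's treatment of summable (Neumann) families, gets (d) from the fact that no set is cofinal in $\No$, and proves (c) by exactly your leading-term factorization $x_n=r\omega^{\beta}(1+h_n)$ followed by the geometric series. Two of your other steps, however, have genuine gaps. First, your argument for (e) fails: from $\beta<0$ it does \emph{not} follow that $n\beta<y$ for all large $n$ when $y<0$. Take $\beta=-1$ and $y=-\omega$; then $n\beta=-n>-\omega=y$ for every $n$, so your leading-exponent bound never forces $r_y(h^n)=0$. The argument that actually works --- and the one underlying the paper's appeal to the Conway--Neumann convergence results (Proposition \ref{sur5} applied to $\sum_m t^m$) --- is Neumann's combinatorial lemma: for each fixed $y$ there are only finitely many $n$ for which $y$ is a sum of $n$ elements of ${\rm supp}(h)$. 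Your bound only disposes of those $y$ for which $y/\beta$ is finite. Second, in (f) the step ``propagates the strict inequality to $|x_n|$'' does not follow: a strict coefficient inequality at the single exponent $y_0$ decides the order of $|x_n|$ and $\tau$ only if their coefficients agree at \emph{every} exponent above $y_0$, and coordinatewise stabilization gives no uniform control there --- for each $n$ the two may first differ at some $w_n>y_0$ that varies with $n$. (Note also that the sign of $x_n$ need not stabilize merely because $r_\beta(x_n)$ does.) The paper instead derives (f) from (e).

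A further problem is the justification in your ``main obstacle'' paragraph. The uniform finiteness you need for (a)--(b) is indeed the crux, but your claim that the mere existence of ${\rm Lim}_{n\to\infty}x_n$ forces $\bigcup_n{\rm supp}(x_n)$ to be reverse well-ordered is false: $x_n=\omega^{-1/n}$ stabilizes coefficientwise to $0$, yet $\{-1/n:n\in\NN\}$ has no greatest element. Worse, without some such extra hypothesis the conclusion itself can fail: with $x_n=\omega^{-1/n}$ and $y_n=\omega^{1/n}$ both sequences have Limit $0$, while $x_ny_n=1$ for all $n$. So the common reverse well-ordered support, together with the requirement that each exponent lie in only finitely many supports, must be taken as part of the \emph{definition} of an absolutely convergent series of normal forms --- which is how the summable-family framework of Neumann, Alling and Siegel cited by the paper sets things up --- rather than derived from coefficientwise stabilization. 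With that hypothesis made explicit, your (a), (b) and (c) go through as written and coincide with the paper's argument; (e) and (f) still need repair as described above.
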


 \begin{proof}[Proof of Proposition \ref{Pcop}]
  
  (a) and (b) are proved in \cite[page 271]{AL}, (d) is evident since no set is cofinal with $\bf No$, (e) follows from Proposition \ref{sur2} and (f) follows from (e).
  For (c), since ${\rm Lim}_{n\to \infty}x_n\neq0$, there is a greatest $y\in\bf{No}$ such that $r_y(x_n)$ is not eventually zero. Thus, for sufficiently large $n$, $x_n=r_y\omega^{y}(1+h_n)$, where $h_n$ is infinitesimal, and, so, it suffices to establish the result for $x_n$ of the form $1/(1+h_n)$. Since $1/(1+h_n)-1=-h_n(1+h_n)^{-1}$ and ${\rm Lim}_{n\to \infty}h^n=0$ the coefficients of leaders in $h_n$ eventually vanish, and, as such, eventually vanish for $-h_n(1+h_n)^{-1}$.
  \end{proof}

\subsection{Surreal exponentiation}\label{Sexp}
As was mentioned above, $\mathbf{No}$ admits an inductively defined exponential function $\exp$. $(\No,\exp)$ is in fact an elementary extension of the exponential ordered field $(\R , e^x)$ of real numbers \cite{vdDE}. The exponential function on $\No$ was introduced by Kruskal, and reconstructed and substantially developed by Gonshor \cite[Chapter 10]{GO}. While the definition of $\exp$ is quite complicated for the general case, it reduces to the following simpler and more revealing forms for the three theoretically significant cases.

\begin{Proposition}[Gonshor \cite{GO}]
\begin{enumerate}[(i)]{\rm \item $\exp (x)=e^x$ for all $x \in \mathbb{R}$; 
 \item $\exp (x)=\sum_{\ n =0 }^{\infty }x^{n}/{n!}$ for all infinitesimal $x$;
 \item if $x$ is purely infinite, then
\[
\exp (x)\ =\ \Big \{ 0,\, (\exp x^L)(x-x^L)^{n}\, \Big|\, \frac{\exp x^R}{(x^R-x)^n}\Big \},
\]
where $x^L$ and $x^R$ range over all the purely infinite predecessors of $x$ with $x^L<x<x^R$.}

\end{enumerate}
\end{Proposition}

The significance of cases (i)--(iii) emerges from the fact that for an arbitrary surreal $x$, $\exp (x)\ =\ \exp(x_P)\cdot \exp(x_R)\cdot \exp(x_I)$, where $x_P$, $x_R$ and $x_I$ are the purely infinite, real and infinitesimal components of $x$, respectively.

Shedding further light on $\exp(x)$ when $x$ is purely infinite is:

\begin{Proposition}[Gonshor \cite{GO}]

{\rm The restriction of $\exp$ to the class of purely infinite surreal numbers is an isomorphism of ordered groups onto {\bf No}'s class $\{\omega^x:x \in {\bf No} \}$ of leaders.}
\end{Proposition}

 In subsequent sections of the paper, for the sake of simplicity, we will occasionally write $e^x$ in place of $\exp x$ for the surreal extension of the real function $e^x$. Readers seeking additional background in the theory of surreal exponentiation may consult \cite{GO, vdDE, EK, BM, BKM}.

\section{Extension, antidifferentiation and integral operators}\label{MainDef}

To introduce the requisite conceptions of extension, antidifferentiation and integral operators, we require some preliminary notions concerning intervals, extensions of functions from the reals to {\bf No} and restrictions of surreal functions to $\RR$, where $\RR$ is understood to be the canonical copy of the reals in {\bf No} (see \S\ref{PreS}).

By an \emph{interval} $I$ of an ordered class $A$ we mean a convex subclass of $A$. In addition to the more familiar types of intervals of $\RR$ and $\No$ we will consider are $(a,\infty):=\{x \in \R: x > a \}$  and $(a,{\bf On}):=\{x \in \No: x > a \}$, where $a \in \RR$. In \S\ref{EOP} a simple condition is specified under which the forthcoming developments of our theory also apply to the intervals $(-\infty, a):=\{x \in \R: x < a \}$ and $({-\bf On}, a):=\{x \in \No: x < a \}$, for  $a \in \RR$.

\subsection{Derivatives}\label{D}
To formulate the appropriate notions of extension and antidifferentiation operators, we require a generalization of the idea of a \emph{derivative of a function at a point}.
\begin{Definition}[Derivative]\label{DefDeriv}{\rm 
Let $K$ be an ordered field. A function $f$ defined on an interval around $a$ is differentiable at $a$ if there is an $f'(a)\in K$ such that $(\forall \epsilon >0 \in K)(\exists \delta >0 \in K)$ such that $$(\forall x\in K)(|x-a|<\delta\Rightarrow \displaystyle\left|\frac{f(x)-f(a)}{x-a}-f'(a)\right|<\epsilon).$$ As usual, $f'(a)$ is said to be \emph{the derivative of $f$ at $a$} and $f$ is said to be \emph{differentiable} if the derivative of $f$ exists at each point of its domain. The definition generalizes to higher order derivatives in the usual way. 
}\end{Definition}
{\rm It is straightforward to check that the derivative so defined on {\bf No} has the same {\bf local} properties (linearity, chain rule, etc.) as its real counterpart. However, because {\bf No} is disconnected, global properties such as Rolle's theorem and its consequences may fail.}

\subsection{Extension operators}
If $f$ is a function, then by  dom$(f)$ and ran$(f)$ we mean the domain and range of $f$ respectively. We define
$\lambda f$ and $f+g$ for functions $f,g$ as usual, where dom$(f+g)$ = dom$(f)\cap$dom$(g)$. 

\begin{Definition}\label{*E}
  {\rm  \ Let $I$ be an interval of $\RR$ and $J$ be an interval of {\bf No} that contains $I$. 
    
 (1) As usual, we say that $g:J\to  {\bf No}$ extends $f:I\to \RR$ if for every $x\in I$ we have $g(x)=f(x)$, and  we denote by $g|I$ the restriction of $g$ to $I$. 
  
(2) Let $\mathcal{F}$ be a set of real-valued functions defined on intervals  of $\RR$. \rm{By an \emph{extension operator} $E$ on $\mathcal{F}$ we mean a map that associates to each function $f:I\to\RR$ in $\mathcal{F}$ a function $E\, f:J\to{\bf No}$ in such a manner that
}
\begin{enumerate}[i.]
  \item for all $f\in \mathcal{F}$,  $E\,f $ is an extension of $f$;
  \item (Linearity)  for all $g,h \in \mathcal{F}$ and $C\in\RR$, $E(C g)$ = $C E\,g$ and $E(g+h) = E\,g+E\,h$;
  \item if $\beta,\lambda\in\RR$, $n\in\NN\cup \{0\}$, $g(x)=x^\beta e^{\lambda x}$ and $h(x)=x^n\log(x)$ for all $x\in I$, then $(E\,g)(x)=x^\beta e^{\lambda x}$ and $(E\,h)(x)=x^n\log(x)$ for all $x\in J$.
  \item $E\,f' =(E\,f)'$.
   \end{enumerate}
}\end{Definition}
For some important classes of problems we construct extensions that are {\em multiplicative} or, in other words, that  preserve multiplication in the following sense. 
\begin{Definition}\label{*E10}
  {\rm  An extension operator $E$ is multiplicative on an algebra of functions if for all $f$ and $g$ in the algebra we have $E(fg)=(Ef)(Eg)$}.
\end{Definition}

\subsection{Antidifferentiation and integral operators}

The following definition provides definitions of both real and surreal antidifferentiation operators.

\begin{Definition}\label{Dd2}
\rm Let $\mathcal{F}$ be a set of real-valued (surreal-valued) functions whose domains are intervals of $\RR$ ({\bf No}).  \rm{An \emph{antidifferentiation operator} on $\mathcal{F}_1\subseteq\mathcal{F}$ is a function $A:\mathcal{F}_1\to \mathcal{F}$ such that for all $f,g \in  \mathcal{F}_1$:
\begin{enumerate}[i.]
  \item \label{one} $A\,f$ is differentiable and $(A\,f)'=f$;
  \item \label{two} For any $\lambda\in\RR$ ($\lambda \in {\bf No}$), $A(\lambda f)$ = $\lambda A\,f$, $A(f+g) = A\,f+A\,g$;
  \item \label{iii.} If $y\ge x$ and $f\ge0$, then $(A\,f)(y)-(A\,f)(x)\ge0$.
  \item \label{four} $\forall n\in\N, A\,(x^n)=\frac{1}{n+1}x^{n+1}$ (the right side being the monomial in $\mathcal{F}$).
  \item \label{five} $A\,(\exp)$ equals the real
    (surreal) exponential.
    \item \label{six} If $F\in \mathcal{F}_1$ and $F'=f\in \mathcal{F}_1$, then there is a $C\in\RR$ ($C \in {\bf No}$) such that $A\,f$ exists and equals $F+C$.
\end{enumerate}
}
\end{Definition}

For suitable integrals to exist, we need the ``second half'' of the fundamental theorem of  calculus to hold. This is the motivation for the following convention.

\begin{Notational Convention 2}  

  \rm Let $A$ be an antidifferentiation operator on $\mathcal{F}_1 \subseteq \mathcal{F}$, and let $f\in \mathcal{F}_1$ and $x,y\in\No$. Define
  \begin{equation}
    \label{eq:defint1}
    \int_{x}^{y} f:=A(f)(y)-A(f)(x).
  \end{equation}
\end{Notational Convention 2}  

The following result demonstrates that the existence of an antidifferentiation operator on $ \mathcal{F}_1 \subseteq \mathcal{F}$ implies that $\int_x^y f$ is an operator on $\mathcal{F}_1$ whose properties make it worthy of the appellation ``integral operator''.

In the following proposition, $\alpha, \beta, a,b, a_{1}, a_{2}, a_{3}\in$ {\bf No}, and $f,g,fg,f\circ g,f',g'$ are understood to be elements of $\mathcal{F}_1$ on $[a,b]$, $[a_{1},a_{2}]$, $[a_{2},a_{3}]$ or $[a_{1},a_{3}]$ where applicable. In our constructions we will specify which spaces are closed under the above-said operations.
\begin{Proposition}[\emph{\rm Integral operators}]\label{existint}

{\rm Let $A$ be an antidifferentiation operator on $\mathcal{F}_1 \subseteq \mathcal{F}$. Then $\int_x^y f$ is an \emph{integral operator} on $\mathcal{F}_1$, meaning  a function of three variables, $x,y\in\No$ and $f\in \mathcal{F}_1$, with the properties: 
\begin{enumerate}[(a)]
 \item $\displaystyle \left(\int_a^x f\right)'=f$;
 \item $\displaystyle \int_a^b(\alpha f+\beta g)=\alpha \int_a^b f+\beta \int_a^b g$;
\item $\displaystyle \int_a^b f' =f(b)-f(a)$;
 
  \item  $\displaystyle \int_{a_1}^{a_2} f+\int_{a_2}^{a_3}f=\int_{a_1}^{a_3} f$;
 
  \item  $\displaystyle\int_a^b f'g=fg|_a^b-\int_a^b fg'$ if $f$ and $g$ are differentiable on $(a,b)$;
  \item \label{itemT5}   $\displaystyle\int_a^x (f\circ g)g'=\int_{g(a)}^{g(x)}f$ whenever $g\in \mathcal{F}_1$ is differentiable on $(a,x)$.
   
  \item If $f$ is a positive function and $b>a$, then $\displaystyle \int_a^b f>0$.

  \end{enumerate}
  
}\end{Proposition}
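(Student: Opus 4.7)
The plan is to verify each of the seven properties (a)--(g) directly from the definition $\int_x^y f = A(f)(y)-A(f)(x)$ of Notational Convention 2 together with the six axioms of Definition \ref{Dd2}. Properties (a)--(d) are essentially bookkeeping. For (a), the quantity $A(f)(a)$ is constant in $x$, so differentiating with respect to $x$ and invoking axiom \ref{one} gives $\bigl(\int_a^x f\bigr)' = (A(f))'(x) = f(x)$. Property (b) is immediate from axiom \ref{two}. For (c), applying axiom \ref{six} to $f'$ (which is assumed to lie in $\mathcal{F}_1$) yields $A(f') = f + C$ for some constant $C$, and $C$ cancels in $A(f')(b) - A(f')(a)$. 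Property (d) is the telescoping identity on $A(f)$ evaluated at $a_1, a_2, a_3$.

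For (e), I would observe that the product rule $(fg)' = f'g + fg'$ holds pointwise on $(a,b)$ by the standard $\epsilon$-$\delta$ argument built into Definition \ref{DefDeriv}, so applying (c) to $fg$ and splitting by (b) produces the integration-by-parts formula. For (f), the chain rule $\bigl(A(f)\circ g\bigr)' = (f\circ g)\cdot g'$ holds pointwise on $(a,b)$, again by the same local $\epsilon$-$\delta$ argument together with the identity $(A(f))' = f$ from axiom \ref{one}. Since $A(f)\circ g$ is then an antiderivative of $(f\circ g)g'$, axiom \ref{six} gives $A\bigl((f\circ g)g'\bigr) = A(f)\circ g + C$ for some constant $C$; differencing at $x$ and $a$ cancels $C$ and returns $\int_{g(a)}^{g(x)} f$.

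The one step with real content is (g), since axiom \ref{iii.} alone only produces the weak inequality $\int_a^b f \ge 0$ when $f \ge 0$. To upgrade to the strict inequality when $f>0$ on $[a,b]$ and $b>a$, I would argue by contradiction. Suppose $\int_a^b f = 0$. For every $x\in(a,b)$, axiom \ref{iii.} applied to $[a,x]$ and to $[x,b]$ gives $A(f)(x)-A(f)(a)\ge 0$ and $A(f)(b)-A(f)(x)\ge 0$; since these nonnegative quantities sum to zero, each vanishes, so $A(f)$ is constant on $[a,b]$. Then for every $x\in(a,b)$ the difference quotient $\bigl(A(f)(x)-A(f)(a)\bigr)/(x-a)$ equals $0$, and feeding this into the $\epsilon$-$\delta$ criterion of Definition \ref{DefDeriv} at the point $a$ forces $(A(f))'(a)=0$. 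But axiom \ref{one} gives $(A(f))'(a) = f(a) > 0$, a contradiction, so $\int_a^b f > 0$.

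The genuine obstacle is precisely this last step: because \textbf{No} is disconnected, one cannot invoke a global Rolle or mean value theorem to promote pointwise positivity of the derivative to global strict monotonicity, as noted after Definition \ref{DefDeriv}. The argument above sidesteps this by collapsing the hypothesis $\int_a^b f = 0$ to the statement that $A(f)$ is \emph{constant} on $[a,b]$ via two applications of the monotonicity axiom, and then extracting the contradiction purely locally at the single point $a$ from the $\epsilon$-$\delta$ definition of the derivative.
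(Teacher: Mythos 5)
Your proof is correct, and for parts (a)--(f) it follows essentially the same route as the paper: (a), (b) and (d) are read off from the definition $\int_x^y f=A(f)(y)-A(f)(x)$ and axioms i--ii of Definition \ref{Dd2}, (e) comes from the pointwise product rule plus (b) and (c), and (f) from the chain rule applied to an antiderivative of $f$. The only minor rerouting is that you obtain (c) directly from axiom vi applied to $f'$, whereas the paper derives (e) first and gets (c) as the special case $g=1$; both are equally valid given the standing assumption that $f,f',fg,\dots$ lie in $\mathcal{F}_1$. The genuine divergence is in (g): the paper disposes of it with the single remark that it ``follows from Definition \ref{Dd2}, \ref{iii.}'', but that axiom only yields the weak inequality $\int_a^b f\ge 0$, while the statement asserts strict positivity. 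Your argument actually closes this gap: assuming $\int_a^b f=0$, two applications of axiom \ref{iii.} on $[a,x]$ and $[x,b]$ force $A(f)$ to be constant on $[a,b]$, and then the $\epsilon$-$\delta$ definition of the derivative at the single point $a$ (with axiom \ref{one} giving $(A(f))'(a)=f(a)>0$) produces a contradiction, entirely locally and without any appeal to connectedness or a mean value theorem, which is exactly the obstruction the paper flags after Definition \ref{DefDeriv}. So your write-up is not only correct but, on point (g), more complete than the paper's own proof.
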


\begin{proof}
All these are straightforward. (a) follows from Definition \ref{Dd2} i, and differentiating \eqref{eq:defint1}. (b) follows from Definition \ref{Dd2}, ii. (e) follows similarly using the chain rule and, taking $g=1$,  it implies (c). (d) follows from Equation \eqref{eq:defint1}. For (f), note that since $f\in \mathcal{F}_1$ we have $f=F'$ for some $F$, and hence $(F\circ g)'=(f\circ g)g'$; the rest is a consequence of (c). And, finally, (g) follows from Definition \ref{Dd2}, \ref{iii.}.

\end{proof}

As we alluded to in the introduction, in \S\ref{CTSN} we construct a wide class of functions defined on intervals of $\RR$ of the form $(a,\infty)$, where $a$ may depend on the function, that is closed under antidifferentiation in the sense of Definition \ref{Dd2}, and which we extend in the sense of Definition \ref{*E} to surreal functions defined on $(a,{\bf On})$. By contrast, for our negative result we only retain some very basic properties of antidifferentiation and work on a space of functions with ``very good properties''. This is spelled out below in \S\ref{Neg}.

\section{Difficulties of defining extensions and integration of functions, and our strategy for overcoming them}\label{Sconstr} 

One of the sources of difficulty in extending more general classes of classical functions to {\bf No} and in defining integration for them is the fact that the topology of surreal numbers is totally disconnected, and as such processes {\em other} than the usual ``extensions by continuity'' must be employed.  A natural class of functions on which extensions and integration can be naturally defined in a way that preserves the expected properties are the analytic functions. This is due to their {\em unique representations as power series}, which at $\infty$ take the form
\begin{equation}
  \label{eq:powerseries}
 f(x)= \sum_{k=0}^\infty \frac{c_k}{x^k}
\end{equation}
where for some positive real $R$ and all $k \in \mathbb{N}\cup \{0\}$ we have $|c_k|\le R^k$; of course the series in Equation \eqref{eq:powerseries} converges for all $x\in \RR^+$ such that $x>R$. We can make use of normal forms to define  $\mathsf{E}f(x)$ for all surreal numbers greater than $R$ in a way that ensures that $\mathsf E$ preserves all operations  that are preserved by Limits (see \S\ref{S5}). 
For this, relying on Proposition \ref{sur5} and the definition of ``$\mathrm{Lim}$'' (see \S\ref{S5}), we simply write
\begin{equation}
  \label{eq:powerseries1}
{\mathsf E} f(x)= \sum_{k=0}^\infty \frac{c_k}{x^k}=\mathop{\mathrm{Lim}}_{N\to \infty}\sum_{k=0}^N \frac{c_k}{x^k}.
\end{equation}
Similarly, for all ${x \in \bf No}$ such that $x>R$ and $f$ as in Equation \eqref{eq:powerseries}, we let 
\begin{equation}
  \label{eq:powerseriesInt1}
\mathsf{A}_{\bf No}\,f (x)=c_0 x +c_1\log x -\sum_{k\ge 2}^\infty \frac{c_k}{(k-1)x^{k-1}}.
\end{equation}
Based on Proposition \ref{Pcop}, it is an easy exercise to check that $\mathsf{A}_{\bf No}$ so defined is an antidifferentiation operator on the class of functions analytic at $\infty$. In fact, for the class of functions analytic at $\infty$ and $O(x^{-2})$ for large $x$ ($c_0=c_1=0$ in \eqref{eq:powerseries}) this is an antiderivative with ``zero constant at $\infty$'' or from $\infty$. Integration is defined for $R<a\le x$ by Equation \eqref{eq:defint1}.

With obvious adaptations, these definitions, constructions and results extend to functions that are given at $\infty$ by convergent Puiseux fractional power series or, far more generally, by \emph{convergent transseries} (see \S\ref{STrans} and e.g. \cite[page 143]{Book}). 

While divergent series and transseries {\em as formal objects} can be associated  in much the same way with actual surreal functions defined on the positive infinite elements of {\bf No}, the difficulty in these cases is to pair them with functions on the \emph{finite} surreals in a unique way that, additionally, is compatible with common operations in analysis. Indeed, while in classical analysis convergent expansions correspond to a unique function, this is {\em not} the case for divergent expansions. We overcome this difficulty by using techniques of resurgent functions and \'Ecalle-Borel summation (\S\ref{CTSN}).

The following simple example based on the exponential integral Ei illustrates the non-uniqueness problem in the divergent case. The function $y(x)=e^{-x}\mathrm{Ei}(x)$ is given by
\begin{equation}
  \label{eq:defei}
  y(x)=e^{-x} \mathop{\rm PV}\int_{-\infty}^x\frac{e^s}{s}ds
\end{equation}
where PV stands for the \emph{Cauchy principal value}: for $x>0$ this is defined as the symmetric limit $\lim_{\epsilon\to 0^+}\left(\int_{-\infty}^{-\epsilon}+\int_\epsilon^x\right)$.

This $y(x)$ has the asymptotic series
\begin{equation}
  \label{eq:divser}
  y(x)\sim \sum_{k=0}^\infty \frac{k!}{x^{k+1}},\ \ x\to \infty.
\end{equation}
Since $y(x)=\sum_{k=0}^\infty  \frac{k!}{x^{k+1}}$ is well defined for all $x\in {\bf No}$ via Limits, as in  \eqref{eq:powerseries1}, it would be tempting to define the integral PV$\int_{-\infty}^x\frac{e^s}{s}ds$ for all $x>\infty$ as $e^x y(x)$. But here we face a non-uniqueness problem: for any $a\in \RR$, the function $y_a(x)=e^{-x} \mathop{PV}\int_{a}^x\frac{e^s}{s}ds$ has the {\bf same} asymptotic series as $y(x)$ given in Equation  \eqref{eq:divser}. This is because $y(x)-y_a(x)=C e^{-x}$ (where the constant $C$ is  $\mathop{PV}\int_{-\infty}^a\frac{e^s}{s}ds$) and the {\em power series} asymptotics of $e^{-x}$ for large $x$ is zero. In fact, {\em classical} asymptotic analysis cannot distinguish between $y$ and the whole family of $y_a$'s.  (Contrast this with the fact that two different analytic functions cannot share the same Taylor series). 

As a consequence of this type of non-uniqueness, in Section \S\ref{Neg} we are able to show that a linear association between functions and {\bf general} divergent series requires a relatively strong consequence of the Axiom of Choice (and as such cannot be instituted based on a specific definition, something which will be the subject of another paper). Accordingly, {\em the class of divergent series needs to be restricted!} With this in mind, as was mentioned in the introduction, we {\em limit our analysis to a proper subclass of the resurgent functions}, a subclass that appears to be wide enough to contain those functions which occur commonly in applications. As such, from a practical standpoint, our restriction appears to be relatively mild.

In \S \ref{SecEBRT} we introduce the idea of a resurgent function and the closely related idea of a resurgent transseries. The resurgent  transseries are of particular importance to us since a unique association can be carried out in a constructive fashion between the class of resurgent divergent transseries, on the one hand, and  the class of  resurgent functions, on the other. For example, the resurgent function associated with the series in \eqref{eq:divser} is $e^{-x}$Ei$(x)$. Moreover, this association preserves all the local operations with which the summation of convergent Taylor series do.  We will use the just-said association to define our desired integrals for {\em the positive infinite case} invoking a pair of isomorphisms--one between a subclass of resurgent functions and a subspace of transseries, and the other between the just-said subspace of transseries and a class of functions on {\bf No.} It is through this pair of isomorphisms (see Figure 2) that we extend resurgent functions to infinite surreals and define their integrals. Moreover, the integrals so-defined on surreal extensions of resurgent functions (as well as on transseries) have the  properties specified in Proposition \ref{existint}.

{\begin{figure}[h!]
  \begin{center}
 \begin{tikzcd}[column sep=small]
  &\framebox{Transseries} \arrow{dr}{\tau} & \\
\framebox{Resurgent functions} \arrow{rr}[shorten <= 1pt]{\tau\circ\mathrm{Tr}} \arrow{ur}{{\rm Tr}:=({\mathcal{L}\circ\mathbf{mon}\circ\mathcal{B}})^{-1}} & & \framebox{Surreal functions} 
\end{tikzcd}
 \caption{{{\em The extension operator} $\mathsf{E}$ {\em restricted to the positive infinite case} is the composition of two intermediate  isomorphisms: {\em transseriation}, i.e. ${{\rm Tr}:=(\mathcal{L}\circ\mathbf{mon}\circ\mathcal{B})^{-1}}$, from a subspace of  resurgent functions to  a subspace of transseries, where $\mathcal{L}\circ\mathbf{mon}\circ\mathcal{B}$ is {\em \'Ecalle-Borel summation}, and a map $\tau$ from the just-said subspace of transseries to surreal functions. }}
\end{center}
\end{figure}}

We remind the reader that by convention we set the point where our functions have divergent expansions to be at the gap $\infty$ (see Footnote \ref{f2}), and as such the only gap past which defining integration is difficult is $\infty$ itself.

To prepare the way for our discussion of resurgent functions and resurgent transseries, in the following section we will first review some classical results  in the theory of Borel summability and the theory of transseries and then prove a new result (Proposition \ref{PB2}) concerning the existence of antiderivatives. Like Proposition \ref{PB2}, most of the material in \S\ref{SecTEB}  from subsection 5.5 on is new.

\section{Transseries, Borel summation and Borel summable subspaces of transseries}
\label{SecTEB} Typically, Borel summability and \'Ecalle-Borel summability deal with series of the form
\begin{equation}
  \label{eq:bsumbeta}
 \tilde{f}:=\sum_{k=M}^\infty c_k x^{-k\beta},\beta>0;\  M\in\mathbb Z
\end{equation}
where the coefficients $\{c_k\}_{k\ge M}$ and $\beta$ are real. The Borel sum of a finite sum is by definition the identity. Hence,   we can assume without loss of generality that $M=1$. 
\subsection{Classical Borel summation of series}\label{BSumDef}The following definition collects together some of the basic concepts and observations we will employ in this and subsequent sections.
\begin{Definition}[Laplace transform, Borel transform, Borel sum and critical time]\label{DefBE}

{\rm  For suitable functions $F$ for which the integral exists, the \emph{Laplace transform} $\mathcal{L}F$ of $F$ is defined as: $$(\mathcal{L}F)(x)=\int_0^\infty e^{-xp}F(p)dp.$$ The \emph{(formal) inverse Laplace transform} of a series $\tilde{f}=\sum_{k=0}^\infty c_{k} x^{-(k+1)\beta}$ is defined as a term-by-term transform of the series $$\mathcal{L}^{-1}\tilde{f}=\sum_{k=0}^\infty c_k p^{-k\beta-1}/\Gamma(k\beta),$$
where $\Gamma$ is the Gamma function; if $n$ is a positive integer, $\Gamma(n)=(n-1)!$.

   The \emph{Borel transform} $\mathcal{B}\tilde{f}$ of a formal series $\tilde{f}$ given by Equation \eqref{eq:bsumbeta} with $M=1$ (see the remarks following Equation \eqref {eq:bsumbeta}) is the series obtained by taking the term-by-term inverse Laplace transform of $\tilde{f}$ in normalized form. If $\beta=1$, then $\mathcal{B}\tilde{f}$ is analytic at $p=0$; otherwise it is ramified-analytic and $\mathcal{B}\tilde{f}=p^{-1}A(p^\beta)$ where $A$ is analytic. It is often relatively easy to reduce to the case  $\beta=1$, which we will assume in the following.

        The \emph{Borel sum} of $\tilde{f}$ along $\RR^+$  exists if after taking the Borel transform $\mathcal{B}\tilde{f}$ of $\tilde{f}$ the following two conditions are satisfied:

(i) The series $\mathcal{B}\tilde{f}$ is convergent, and its sum (by abuse of notation also written $\mathcal{B}\tilde{f}$) is analytic on $\RR^+$.\footnote{Mathematically, $\mathcal{B}\tilde{f}$ is a formal series, albeit convergent, and is distinct from its sum--a germ of an analytic function--which in turn is distinct from its analytic continuation on $\RR^+$. These distinctions are typically dropped whenever no confusion is possible. For instance, we write with a tacit license that $\mathcal{B}\tilde{f}$ is analytic on $\RR^+$.}

(ii) $\mathcal{B}\tilde{f}$ has exponential bounds on $\RR^+$, i.e., $\exists \nu>0\text{ such that}\; \sup_{p> \nu}|e^{-\nu p} (\mathcal{B}\tilde{f})(p)|<\infty$.

When this is the case, the Borel sum of $\tilde{f}$ is by definition $\mathcal{LB}\tilde{f}$.

For example,

\begin{equation}
  \label{eq:EQeIEX}
  \mathcal{LB}\sum_{k=0}^\infty k!(-1)^k x^{-k-1}=\mathcal{L}F=-e^x\text{Ei}(-x);\ \ F(p):=\frac{1}{1+p}.
\end{equation}

 The coefficients $c_k$ of asymptotic series occurring in applications have at most power-of-factorial growth $c_k\sim (k!)^p$ for some (usually integer) $p$. To apply Borel summation  or the more general \'Ecalle-Borel summation to a series of a factorially divergent series, one needs to {\em normalize} the series by passing to the  power of $x$ that ensures that the growth of the coefficient of $x^{-(k+1)\beta}$ is, to leading order, $\Gamma(k\beta)$. The power of the variable with respect to which this precise factorial growth is achieved is called {\em \'Ecalle critical time}. An illustration is provided by the asymptotic series of $e^{x^2}\mathrm{erfc}(x)$ as $x\to \infty$,
      \begin{equation}
        \label{eq:erfc}
        e^{x^2}\mathrm{erfc}(x)\sim \frac{1}{\sqrt{\pi} x}-\frac{1}{2\sqrt{\pi} x^3}+\frac{3}{4\sqrt{\pi}x^3}+\cdots=x^{-1}\sum_{k=1}^\infty \frac{c_k}{x^{2k}},
      \end{equation}
      where $\pi c_k=(-1)^{k}\Gamma(k-1/2)$. To ensure that the growth of the coefficients of the series matches the power of the variable as explained, we need to change the variable to $t=\sqrt{x}$. In this example the critical time is $t=x^{1/2}$.
}\end{Definition}

A calculation shows that
  \begin{equation}
    \label{eq:convoprod}
    \mathcal{B}(\tilde{f}\tilde{g})=(\mathcal{B}\tilde{f})*(\mathcal{B}\tilde{g}),
  \end{equation}
  where $``*"$ is the \emph{Laplace convolution}
  \begin{equation}
    \label{eq:lapconv}
    (F*G)(p)=\int_0^p F(s)G(p-s)ds.
  \end{equation}

  \begin{Proposition}[The space ${S}_{\mathcal{B}}$ of Borel summable series]\label{PBorel}{\rm Let ${S}_{\mathcal{B}}$ be the space of series which are Borel summable. Then:
  
  (i) ${S}_{\mathcal{B}}$ is a differential algebra (with respect to
    formal addition, multiplication, and differentiation of power
    series), and $\mathcal{LB}$ is an isomorphism of differential algebras. 

(ii) If ${S}_c\subset
  {S}_\mathcal{B}$ denotes the differential algebra of convergent power
  series, and we identify a convergent power series with its sum, then
  $\mathcal{LB}$ is the identity on ${S}_c$.

(iii) 
For $\tilde{f}\in S_B$ and $x$ in the open right half plane,
$\mathcal{L}\mathcal{B}\tilde{f}$ is asymptotic to $\tilde{f}$ as $|x|\rightarrow\infty$.

(iv) The subspace of $S_\mathcal{B}$ consisting of series whose Borel transforms are analytic in a disk around the origin and in a nonempty open sector is closed under composition. More precisely, if $\tilde{f}$ and $\tilde{g}$ are elements of this subspace,  then so is $\tilde{f}\circ(I+\tilde{g}),$ $I$ being the identity map.

(v) Borel summation is a proper extension of the usual summation. More precisely, if $\tilde{f}=\sum_{k\ge 1} c_k x^{-k}$ converges to $f$ in a neighborhood of $\infty$, then $\mathcal{B}\tilde{f}$ is entire, exponentially bounded and $\mathcal{LB}\tilde{f}=f$.

}\end{Proposition}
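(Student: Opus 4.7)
The plan is to handle parts (i), (ii), (iii), (v) along classical lines and to treat (iv) as the principal obstacle. For (i), I would first observe that the termwise-defined map $\mathcal{B}$ is a linear bijection between $S_\mathcal{B}$ and an appropriate space of germs of analytic functions on $\RR^+$ with exponential bounds. Linearity and commutation with differentiation are immediate from the termwise definition, since the formal rule $\mathcal{B}(\tilde{f}') = -p\,\mathcal{B}\tilde{f}$ (up to the usual $1/x$ shift in normalization) translates via $\mathcal{L}$ to $(\mathcal{LB}\tilde{f})' = \mathcal{LB}(\tilde{f}')$. The multiplicative structure is the already-noted convolution identity $\mathcal{B}(\tilde{f}\tilde{g}) = (\mathcal{B}\tilde{f}) * (\mathcal{B}\tilde{g})$, combined with the classical identity $\mathcal{L}(F*G) = (\mathcal{L}F)(\mathcal{L}G)$. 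To close the loop I would verify that if $F,G$ are analytic on $\RR^+$ with $|F(p)|,|G(p)| \le C e^{\nu p}$, then so is $F*G$, via the trivial estimate $|(F*G)(p)| \le C^2 p\, e^{\nu p}$, showing $S_\mathcal{B}$ is closed under multiplication.

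For (ii) and (v), suppose $\tilde{f} = \sum_{k\ge 1} c_k x^{-k}$ converges for $|x| > R$, so that $|c_k| \le M R^k$ for some $M$. Then $\mathcal{B}\tilde{f}(p) = \sum_{k\ge 1} c_k p^{k-1}/(k-1)!$ has coefficients bounded by $MR^k/(k-1)!$, making it entire with $|\mathcal{B}\tilde{f}(p)| \le MR\, e^{R|p|}$; hence it is exponentially bounded and $\tilde{f} \in S_\mathcal{B}$. A dominated convergence argument justifies interchanging sum and integral in computing $\mathcal{LB}\tilde{f}(x)$, which yields $\sum_k c_k x^{-k} = f(x)$ for $x > R$. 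This simultaneously proves (v) and, specialized to $\tilde{f} \in S_c$, gives (ii).

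For (iii), I would use a standard truncation (Watson's-lemma-style) argument: write $\mathcal{B}\tilde{f}(p) = P_N(p) + R_N(p)$ where $P_N$ is the Taylor polynomial of $\mathcal{B}\tilde{f}$ of degree $N-1$ at $0$; applying $\mathcal{L}$ termwise to $P_N$ reproduces the first $N$ terms of $\tilde{f}$, while the exponential bound on $\mathcal{B}\tilde{f}$ together with a Cauchy estimate on a disk whose radius is bounded away from the singularities of $\mathcal{B}\tilde{f}$ yields $|\mathcal{L} R_N(x)| = O(|x|^{-N-1})$ uniformly on any right half-plane $\{\Re x > \nu' > \nu\}$. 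This is precisely the asymptotic statement.

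The main obstacle is (iv). The strategy is to use the Taylor expansion $\tilde{f}\circ(I+\tilde{g}) = \sum_{k\ge 0} \tilde{f}^{(k)} \tilde{g}^k/k!$ (which makes sense as a formal series provided $\tilde{g}$ vanishes sufficiently fast at $\infty$), and to show that the Borel transform of the right-hand side assembles into a function analytic in a common disk around $0$ and along a common open sector where $\mathcal{B}\tilde{f}$ and $\mathcal{B}\tilde{g}$ extend. The key analytic input is a uniform estimate on iterated convolutions: one shows by induction that on the common sector $(\mathcal{B}\tilde{g})^{*k}$ grows at most like $C^k e^{\nu p} p^{k-1}/(k-1)!$, which is exactly the bound needed to sum the series $\sum_k \mathcal{B}(\tilde{f}^{(k)}\tilde{g}^k)/k!$ and obtain an exponentially bounded analytic function. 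The delicate point, which I expect to be the crux, is controlling the iterated convolutions uniformly on the sector (not only on $\RR^+$) and combining this with an analytic continuation argument along rays; here I would adapt the techniques developed in the Borel summability literature (cf.\ \cite{Book}).
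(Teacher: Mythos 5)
The paper does not prove this proposition at all: its ``proof'' consists of the two citations ``(i)--(iii) and (v) are proved in \cite{Book}, p.~106; for (iv), see \cite{Sauzin-Book}, p.~159.'' Your proposal therefore does strictly more work than the paper, and the arguments you sketch for (i), (ii), (iii) and (v) are the standard ones and are correct in outline: termwise linearity and the identity $\mathcal{B}(\tilde{f}')=-p\,\mathcal{B}\tilde{f}$ for differentiation, the convolution identity together with $\bigl|(F*G)(p)\bigr|\le C^2 p\,e^{\nu p}$ for the algebra structure, the coefficient bound $|c_k|\le MR^k$ giving an entire, exponentially bounded Borel transform in the convergent case, and the Watson-type truncation for the asymptotics. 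Three small points you should still nail down. First, ``isomorphism'' in (i) requires injectivity of $\mathcal{LB}$; this is not automatic from your bijection remark about $\mathcal{B}$ alone --- you need injectivity of $\mathcal{L}$ on the relevant function space, or you can deduce it from (iii) (a function asymptotic to the zero series forces the series to vanish term by term). Second, you should note why $F*G$ is again \emph{analytic} on $\RR^+$ (Morera plus Fubini), not merely bounded. Third, in (iii) the Laplace integral only converges for $\Re x>\nu$, so the asymptotics hold as $|x|\to\infty$ within such a half-plane, which is how you correctly state it; the proposition's phrasing ``open right half plane'' should be read that way.

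For (iv) your outline identifies the correct key lemma --- the inductive bound $\bigl|(\mathcal{B}\tilde{g})^{*k}(p)\bigr|\le C^k e^{\nu p}p^{k-1}/(k-1)!$, which lets you sum $\sum_k \mathcal{B}(\tilde{f}^{(k)}\tilde{g}^k)/k!$ --- but as you acknowledge, the sectorial version of this estimate and the accompanying analytic-continuation argument are exactly the content of the cited page of \cite{Sauzin-Book}, so your treatment of (iv) is ultimately a citation dressed as a plan, which puts it on the same footing as the paper. If you intend the proof to be self-contained, (iv) is the one part that still needs to be written out; otherwise your proposal is a correct and more informative replacement for the paper's bare references.
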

\begin{proof}
Statements (i)--(iii) and (v) are proved in (\cite{Book}, p. 106); and for the proof of (iv), see (\cite{Sauzin-Book} p. 159).
\end{proof}

\noindent
{\bf Note.} {\em Borel sums are analytic for large argument $x$.}  Standard arguments from complex analysis  (e.g. combining Morera's theorem with Fubini) show that $\mathcal{LB}\tilde{f}$ is {\em real analytic} for all sufficiently large $x\in\R$.

\begin{Definition}[Borel summation]\label{Notherpoints}{\rm 
The operator of \emph{Borel summation} is defined at any point $x_0\in \RR$ (or $\CC$) by moving $x_0$ to $\infty$, performing Borel summation at $\infty$ and moving the point at $\infty$ back to $x_0$. That is, we define $(\mathcal{LB})_{x_0}=M^{-1}\circ \mathcal{LB}\circ M$ where $M$ is the M\"obius transformation $x\mapsto x_0+x^{-1}$ (see also Definition \ref {defM}). 
}\end{Definition}

On Borel summed series that are $O(x^{-2})$, we now define an  operator having some of the properties of an antidifferentiation operator in the sense of Definition \ref{Dd2}.
\begin{Definition}\label{diff}{\rm 
  Let $S_{\mathcal{B};2}$ be the space of Borel summable series that are $O(x^{-2})$. Further, let $s\in S_{\mathcal{B};2}$, $S=\mathcal{LB}s$, and $\mathsf{A}_{\mathcal{B}}S=-\int_0^\infty p^{-1}e^{-xp}(\mathcal{B}s)(p)dp$.  Asymptotic series at infinity are particular cases of transseries at infinity to which $\mathsf{A}_{\mathcal{B}}$ is successively extended in \S\ref{S53}, \S\ref{S56} and \S\ref{SecEBRT}.
}\end{Definition}
We note that by the general properties of the Laplace transform we have $(A_{\mathcal{B}}f)'=f$ and $A_{\mathcal{B}}f=O(x^{-1})$ for large $x$. Hence,  $\mathsf{A_{\mathcal{B}}}f=\int_{\infty}^x S(t)dt$.
\begin{Proposition}\label{PADpowser}{\rm 
    $\mathsf{A}_{\mathcal{B}}$, as defined in Definition \ref{diff}, is well defined  on Borel sums of real-valued series and has Properties i--iii and vi from Definition \ref{Dd2}.

}\end{Proposition}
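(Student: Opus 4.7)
The plan is to verify the defining integral converges, then dispatch Properties i, ii, iii by standard integral calculus, and finally establish Property vi via a short Borel-transform identity that is the technical heart of the statement.

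For well-definedness and Properties i--iii, take $s = \sum_{k\ge 2} c_k x^{-k}$ in $S_{\mathcal{B};2}$. Its Borel transform $\mathcal{B}s(p) = \sum_{k\ge 2} c_k p^{k-1}/(k-1)!$ begins at order $p^{1}$, so $p^{-1}\mathcal{B}s(p)$ extends analytically across $p=0$. Borel summability supplies an exponential bound $|\mathcal{B}s(p)| \le M e^{\nu p}$ on $\RR^+$, so the integral in Definition \ref{diff} converges absolutely, and in fact uniformly for $x \in [\nu+\delta,\infty)$ for every $\delta>0$. Differentiation under the integral sign, justified by the same bound via dominated convergence, yields
\[
(\mathsf{A}_{\mathcal{B}}S)'(x) = \int_0^\infty e^{-xp}\mathcal{B}s(p)\,dp = (\mathcal{LB}s)(x) = S(x),
\]
which is Property i. Property ii is immediate from the linearity of $\mathcal{B}$, $\mathcal{L}$, and the integral. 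Since $(\mathsf{A}_{\mathcal{B}}S)'=S$ is continuous and real-valued on the genuine real interval $(\nu,\infty)$, the classical mean value theorem gives Property iii.

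For Property vi, suppose $F = \mathcal{LB}\tilde{F}$ satisfies both $F$ and $f := F' \in \mathcal{F}_1 := \mathcal{LB}(S_{\mathcal{B};2})$. The crux is the termwise identity $\mathcal{B}(\tilde{F}')(p) = -p\,\mathcal{B}(\tilde{F})(p)$: writing $\tilde{F} = \sum_{n\ge 2}a_n x^{-n}$, both sides evaluate to $-\sum_{k\ge 2} a_k p^k/(k-1)!$. Substituting into Definition \ref{diff} collapses the factor $p^{-1}$ against this new factor of $p$, giving
\[
\mathsf{A}_{\mathcal{B}}(F')(x) = -\int_0^\infty p^{-1}e^{-xp}\bigl(-p\,\mathcal{B}\tilde{F}(p)\bigr)\,dp = \int_0^\infty e^{-xp}\mathcal{B}\tilde{F}(p)\,dp = F(x),
\]
so Property vi holds with $C = 0$.

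The only nontrivial point is the analysis of $\mathcal{B}s$ at the origin: it is precisely the $O(x^{-2})$ hypothesis that ensures $p^{-1}\mathcal{B}s(p)$ is integrable near $p=0$, which is why $\mathsf{A}_{\mathcal{B}}$ is defined on $S_{\mathcal{B};2}$ rather than on all of $S_{\mathcal{B}}$. The value $C = 0$ in Property vi reflects the implicit normalization ``integrate from $\infty$'' built into Definition \ref{diff}, and foreshadows the far more delicate choice of $C = C(f)$ required when the operator is extended to richer classes of transseries in \S\ref{S53}, \S\ref{S56}, and \S\ref{EBIc1}.
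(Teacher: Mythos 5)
Your proposal is correct, and the core of it — factoring $\mathcal{B}s=pH(p)$ with $H$ analytic at the origin so that $p^{-1}\mathcal{B}s$ is integrable there, using the exponential bound to get absolute convergence, differentiating under the integral sign for Property i, and citing linearity of $\mathcal{B}$, $\mathcal{L}$ and the integral for Property ii — is the same as the paper's argument. You diverge on the last two items. For Property iii the paper argues via asymptotics: positivity of the leading coefficient of $s$ gives $H(0)>0$, whence $\mathsf{A}_{\mathcal{B}}S$ is negative and increasing for large $x$; your route through $(\mathsf{A}_{\mathcal{B}}S)'=S\ge 0$ plus the mean value theorem on the real interval $(\nu,\infty)$ is more direct and actually matches the pointwise hypothesis ``$f\ge 0$'' of Definition \ref{Dd2} iii more closely than the paper's leading-coefficient reading. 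For Property vi the paper simply invokes $\mathsf{A}_{\mathcal{B}}f=\int_{\infty}^x S(t)\,dt$ and the fundamental theorem of calculus; your Borel-plane identity $\mathcal{B}(\tilde{F}')=-p\,\mathcal{B}\tilde{F}$, which cancels the $p^{-1}$ in the definition and yields $\mathsf{A}_{\mathcal{B}}(F')=F$ with $C=0$ explicitly, is a legitimate alternative and makes the normalization ``antiderivative vanishing at $\infty$'' visible by computation rather than by appeal to the remark following Definition \ref{diff}. The one step you use tacitly is that the asymptotic series of $F'$ is the termwise derivative of $\tilde{F}$, i.e.\ that $\mathcal{LB}$ commutes with differentiation; this is exactly Proposition \ref{PBorel}(i) and is worth citing at that point.
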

\begin{proof}
  If $s=O(x^{-2})$ for large $x$, then by definition, $(\mathcal{B}s)(p)=O(p)$ for small $p$. Since $\mathcal{B}s$ is analytic at zero, we have $\mathcal{B}s=pH(p)$ where $H$ is analytic at zero, and hence, $p^{-1}\mathcal{B}s=H(p)$ is analytic at zero as well.  Clearly, $\mathcal{B}s$ has analytic continuation on $\RR^+$ if and only if $H(p)$ is has analytic continuation on $\RR^+$. It is also straightforward to check that $\mathcal{B}s$ is exponentially bounded for large $p$ if and only if $H$ is exponentially bounded for large $p$. This establishes the existence of $\mathsf{A}_{\mathcal{B}}S$.

    Using the exponential bounds and dominated convergence we see that we can differentiate under the integral sign and get $(\mathsf{A}_{\mathcal{B}}S)'=S$, thereby establishing Property i of  Definition \ref{Dd2}. 

    Proposition \ref{PBorel} (iii) shows that if $s$ is positive (meaning that the coefficient of the highest power of $x$ is positive), $S$ is a positive function for large $x$. The positivity of the coefficient of the highest power of $x$ is equivalent to the positivity of $H(0)$, which in turn shows that $\mathsf{A}_{\mathcal{B}}S$ is negative and increasing for large $x$, establishing Property iii of  Definition \ref{Dd2}. Property ii of Definition \ref{Dd2}, i.e., linearity, follows from the linearity of $\mathcal{B}$ and $\mathcal{L}$, and of multiplication by $p^{-1}$. Property vi follows from the fact  noted above that $\mathsf{A}_{\mathcal{B}}f=\int_{\infty}^x S(t)dt$ and the fundamental theorem of calculus.  In fact, using the remark following Definition \ref{diff}, we have $C=0$.
\end{proof}

\subsection{Transseries: an overview}\label{STrans}
\begin{sloppypar}
 As was mentioned in the introduction, a \emph{transseries} over $\RR$ is a formal series built up from $\RR$ and a variable $x> \RR$ using  powers, exponentiation, logarithms and infinite sums.  \'Ecalle's classical construction of the ordered differential field $\mathbb{T}$ of transseries over $\RR$ is inductive, beginning with log-free transseries \cite{Ecalle1a}\footnote{Motivated by a problem of Tarski on the model theory of $(\RR, e^x)$,  Dahn and G\"oring \cite{DG} independently introduced $\mathbb{T}$ as an exponential ordered field.}. There have been  a number of alternative constructions since (e.g. \cite{ ADH1, DMM, SC, CostinT,  Joris, BM2}). For a self-contained introduction to transseries, see  \cite{Edgar}.
\end{sloppypar}
Transseries are formal series of the following form in the variables $\mu_1,\mu_2,...,\mu_n$, called {\em transmonomials}:
 \begin{equation}\label{defT}
\tilde{T}=   \sum_{\mathbf{k}>-M}c_{\mathbf{k}}\boldsymbol{\mu}^{\mathbf{k}} :=  \sum_{k_1,k_2,...,k_n>-M}c_{k_1,k_2,...,k_n}\mu_1^{k_1}\mu_2^{k_2}\cdots \mu_n^{k_n},
  \end{equation}
\noindent 
where the transmonomials are functions of $x$, the coefficients are members of $\RR$ and $M \in \ZZ$. The set of tuples of integers bounded below used as indices in \eqref{defT} are well-ordered lexicographically;  this indexation, which emphasizes the nature of the generators (transmonomials) is preferable, in the applications we are considering, to one using the corresponding ordinals.

Transseries have (exponential) \emph{heights} and {\em (logarithmic) depths} that emerge from their inductive construction, but in our discussion we will only be concerned with {\em log-free, height one}  and {\em height one, depth one} transseries, and these are characterized below in Definitions \ref{DEBstr} and \ref{NNN}, respectively. Since context should prevent confusion, we will freely write exponential and logarithmic terms in transseries using $e$ and $\log$, respectively.

In the case of transseries over $\RR$ the component terms in $\tilde{T}$ are descendingly well ordered with respect to the \emph{asymptotic order relation} $\gg$; for example, for the transseries $e^{x}+ x + \log{x} +1+ x^{-1}$ we have $e^x\gg x\gg \log x\gg 1 \gg x^{-1}$, where $a\gg b$ indicates that $a$ is large (i.e. infinitely large) compared with  $b$.

We say that a transseries $\tilde{T}$ is {\bf positive} if the largest transmonomial of $\tilde{T}$ with respect to $\gg$ has a positive coefficient, negative if $-\tilde{T}$ is positive, and $\tilde{T}=0$ if all of its coefficients are zero.

There is a striking similarity between transseries over the reals and surreal numbers written in normal form.  Aschenbrenner, van den Dries and van der Hoeven \cite{ADH3} have exhibited a canonical elementary embedding  $\iota$ of the ordered differential field $\mathbb{T}$ of transseries into $(\mathbf{No} , \partial)$ that is the identity on $\RR$ and sends $x$ to $\omega$, where $\partial$ is the derivation on $\mathbf{No}$ due to Berarducci and Mantova \cite{BM2}. By appealing to Berarducci and Mantova's construction of $\iota (\mathbb{T}):=\mathbb{R}((\omega))^{LE}$ \cite{BM2}, Ehrlich and Kaplan \cite{EK} have shown that $\mathbb{R}((\omega))^{LE}$ is initial. We will have more to say about $\mathbb{R}((\omega))^{LE}$ in \S\ref{GEN}.
   
The similarity between transseries over the reals and surreal numbers carries over to the fact that the topology generated by Conway's notion of absolute convergence (see \S\ref{S5}) is \emph{mutatis mutandis} the same as the following ``transseries topology'' in the space of transseries.

\begin{Definition}\label{defconv}{\rm 
  The \emph{transseries topology} on $\mathbb{T}$  (see \cite[p. 131]{Book},\cite{Edgar}) is defined by the following convergence notion. Let  $\sum_{\mathbf{k}>-M}c_{\mathbf{k}}^{[m]}\boldsymbol{\mu}^{\mathbf{k}}$ be a sequence of transseries, where the superscript $[m]$ designates the $m$th element of the sequence and $c_{\mathbf{k}}^{[m]}$ designates the sequences of coefficients of the $m$th element. Then,  
  \begin{equation}
    \label{eq:defc}
    \lim_{m\to\infty} \sum_{\mathbf{k}>-M}c_{\mathbf{k}}^{[m]}\boldsymbol{\mu}^{\mathbf{k}}= \sum_{\mathbf{k}>-M}c_{\mathbf{k}}\boldsymbol{\mu}^{\mathbf{k}} \text{ if and only if } \forall\mathbf{k}\,\exists n \text{ such that }\forall m>n, c_{\mathbf{k}}^{[m]}=c_{\mathbf{k}},
  \end{equation}
i.e., {\em if and only if all the coefficients eventually become  those of the limit transseries (rather than merely converge to them).}
 }\end{Definition}
 \subsection{Differentiation of transseries}\label{S-diff-trans} $\mathbb{T}$ is closed under differentiation, where differentiation of transseries is defined by induction on transseries height as termwise differentiation \cite{DMM, Joris, Book, Ecalle1a}. It is shown in \cite{Book} that the termwise differentiation of a transseries is convergent in the transseries topology.

\subsection{Integration of transseries}\label{S53} $\mathbb{T}$ is also closed under integration. More specifically, we have:

\begin{Proposition}[van den Dries, Macintyre and Marker \cite{DMM}] \label{DMM1}
{\rm There is an antidifferentiation operator on $\mathbb{T}$, henceforth $\mathsf{A}_{\mathbb{T}}$, that is unique up to an additive real constant.}
\end{Proposition}

An independent, alternative proof (in the same spirit) of the existence portion of Proposition \ref{DMM1} was later given in  \cite[p. 143, Proposition 4.221]{Book}.\footnote{The second author wishes to thank Lou van den Dries for helpful remarks on Proposition \ref{DMM1} and the relation between its proof and that of the above-mentioned result in \cite{Book}.} In the latter treatment, the operator $\mathsf{A}_{\mathbb{T}}$ is defined as the unique fixed point of a linear inhomogeneous equation whose linear part is contractive in a suitable sense (see \cite[Definition 4.186, p. 132]{Book}). While the definition is constructive, the expression of the operator is not explicit, in general.
\begin{Note}
  \rm{Although antidifferentiation in $\mathsf{A}_{\mathbb{T}}$ is unique up to a real constant, there is a natural choice of an antidifferentiation, the one whose values are transseries with zero finite part. The interpretation of this choice is that of integration from $\infty$,  the only point all one-point compactifications of $(1,\infty)$ have in common. However,  any other choice of real constant would lead to the same definite integration operator, since the integral is a difference of two antiderivatives, and the constants would cancel.}

\end{Note}
The following result collects together a number of simple consequences of the above results, taken from \cite[p. 143-144, Propositions 4.224-4.225]{Book}.
\begin{Proposition}
\label{Pintegr}{\rm 
The antidifferentiation operator $\mathsf{A}_{\mathbb{T}}$ on $\mathbb{T}$ from Proposition \ref{DMM1} has the following properties for all transseries ${\tilde{T}}, {\tilde{T}}_1$ and ${\tilde{T}}_2$:
$\mathsf{A}_{\mathbb{T}}$ is an antiderivative without constant terms, i.e., 
$$\mathsf{A}_{\mathbb{T}}{\tilde{T}} = L + s, $$
where $L$ is the purely infinite part of $\mathsf{A}_{\mathbb{T}}{\tilde{T}}$ (i.e. all terms in $L$ are $\gg 1$)  and $s$ is the small part of $\mathsf{A}_{\mathbb{T}} {\tilde{T}}$ (i.e $1\gg s$). Here $\mathsf{A}_{\mathbb{T}} {\tilde{T}}$ is written as a sum as in Equation \eqref{defT}.

\noindent Moreover,
\begin{align}
  \label{Epropint}
\mathsf{A}_{\mathbb{T}}( {\tilde{T}} _1+ {\tilde{T}} _2)=\mathsf{A}_{\mathbb{T}}{\tilde{T}} _1+\mathsf{A}_{\mathbb{T}}{\tilde{T}} _2,\nonumber\\
(\mathsf{A}_{\mathbb{T}}{\tilde{T}} )'= {\tilde{T}}, \ \mathsf{A}_{\mathbb{T}}{\tilde{T}} '= {\tilde{T}}_{\overline{0}}\nonumber,\\
\mathsf{A}_{\mathbb{T}}( {\tilde{T}} _1 {\tilde{T}} _2')= ({\tilde{T}} _1 {\tilde{T}} _2)_{\overline{0}}-\mathsf{A}_{\mathbb{T}}( {\tilde{T}} _1' {\tilde{T}} _2),\\
 \text{if\ } {\tilde{T}} _1\gg {\tilde{T}} _2,\text{then} \ \mathsf{A}_{\mathbb{T}}{\tilde{T}} _1\gg \mathsf{A}_{\mathbb{T}}{\tilde{T}} _2\nonumber,\\\nonumber
 \text{if\ }{\tilde{T}} >0 \text{ and } {\tilde{T}}\gg 1, \text{then} \ \mathsf{A}_{\mathbb{T}}{\tilde{T}} >0,
\end{align}
\noindent where ${\tilde{T}}_{\overline{0}}$ is the constant-free part of ${\tilde{T}}$, that is,
 $$\text{if\ } {\tilde{T}} =\sum_{{\bf k}\ge{\bf k}_0}c_{\bf k}\mu^{{\bf k}}, \text{\ then\ } {\tilde{T}}_{\overline{0}}=\sum_{{\bf k}\ge{\bf k}_0;{\bf k}\not =
  0}c_{\bf k}\mu^{{\bf k}}\;$$
\noindent  
and where $({\tilde{T}} _1 {\tilde{T}} _2)_{\overline{0}}$ is the transseries ${\tilde{T}} _1 {\tilde{T}} _2$ with the constant term chosen to be zero.
}\end{Proposition}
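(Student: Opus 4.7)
The plan is to construct $\mathsf{A}_{\mathbb{T}}$ by transfinite induction on the height (and within a height, on the depth) of a transseries, applying the operator termwise and then verifying that the resulting formal sum lies in $\mathcal{T}$ with respect to the transseries topology of Definition~\ref{defconv}.

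First I would dispatch the base case of log-free height-zero transmonomials (Laurent and Puiseux powers of $x$), setting $\mathsf{A}_{\mathbb{T}}(x^\alpha) = x^{\alpha+1}/(\alpha+1)$ when $\alpha \neq -1$ and $\mathsf{A}_{\mathbb{T}}(x^{-1}) = \log x$, and extending to terms $x^{-1}(\log x)^k$ by the analogous formula $(\log x)^{k+1}/(k+1)$. For a transmonomial of the form $\mu = x^\alpha e^{\tilde{L}}$ with $\tilde{L}$ purely infinite I would use iterated integration by parts, writing $\int \mu\,dx = \mu/\tilde{L}' - \int (\mu/\tilde{L}')'\,dx$, to generate an asymptotic transseries whose leading transmonomial is $\mu/\tilde{L}'$ and whose lower-order terms are all strictly dominated by this one. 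This is the natural choice since it produces no real constant of integration, reflecting the normalization $\mathsf{A}_{\mathbb{T}}\tilde{T} = L + s$ with $L$ purely infinite and $1 \gg s$.

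I would then extend to an arbitrary $\tilde{T} = \sum_{\mathbf{k} > -M} c_{\mathbf{k}}\boldsymbol{\mu}^{\mathbf{k}}$ by linearity, summing the termwise antiderivatives. The hard step is to show that the resulting formal sum is a legitimate element of $\mathcal{T}$: its support must be reverse well-ordered with respect to $\gg$, and each target transmonomial may receive contributions from only finitely many source terms. This reduces to the structural fact that $\boldsymbol{\mu}^{\mathbf{k}} \mapsto \mathsf{A}_{\mathbb{T}}\boldsymbol{\mu}^{\mathbf{k}}$ preserves asymptotic order up to a controlled shift, together with Noetherian-style arguments on the grid of exponents. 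The main obstacle is the case where integration by parts on a single exponential transmonomial already introduces infinitely many new transmonomials; one must then verify that the union of all such expansions over $\mathbf{k}$ retains the required well-orderedness. This is handled by bounding each expansion asymptotically by its leading term and exploiting that the support of $\tilde{T}$ is itself reverse well-ordered, so the overall family of output transmonomials fits into a grid of the same shape.

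Finally I would verify the listed properties. Linearity is immediate from the termwise construction, $(\mathsf{A}_{\mathbb{T}} \tilde{T})' = \tilde{T}$ follows because termwise differentiation is continuous in the transseries topology (\S\ref{S-diff-trans}), and $\mathsf{A}_{\mathbb{T}}\tilde{T}' = \tilde{T}_{\overline{0}}$ holds because any constant term has been normalized away. The integration-by-parts identity reduces to applying $\mathsf{A}_{\mathbb{T}}$ to the Leibniz rule $(\tilde{T}_1 \tilde{T}_2)' = \tilde{T}_1' \tilde{T}_2 + \tilde{T}_1 \tilde{T}_2'$ and stripping the constant. Asymptotic domination $\tilde{T}_1 \gg \tilde{T}_2 \Rightarrow \mathsf{A}_{\mathbb{T}}\tilde{T}_1 \gg \mathsf{A}_{\mathbb{T}}\tilde{T}_2$ and positivity for $\tilde{T} > 0,\ \tilde{T} \gg 1$ reduce to comparing the leading transmonomials of the antiderivatives, which for the transmonomials treated above follows from the heuristic $\int \mu \sim \mu/\mu'$ made rigorous by the construction.
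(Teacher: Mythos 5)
The paper itself gives no proof of Proposition \ref{Pintegr}: it is presented as a collection of known facts with a pointer to \cite[pp. 143--144, Propositions 4.221--4.225]{Book}, where the operator (denoted $\mathcal{P}$ there) is built exactly as you outline --- explicit antiderivatives at level zero, an integration-by-parts iteration for exponential transmonomials, extension by linearity, and a convergence check in the transseries topology. So your sketch is a faithful reconstruction of the intended argument rather than a different route. Two local slips should be fixed. First, the identity $\int \mu\,dx = \mu/\tilde{L}' - \int(\mu/\tilde{L}')'\,dx$ is vacuous as written (the right-hand side collapses by the fundamental theorem of calculus); the correct step peels off only the exponential, $\int x^{\alpha}e^{\tilde{L}}\,dx = (x^{\alpha}/\tilde{L}')e^{\tilde{L}} - \int (x^{\alpha}/\tilde{L}')'e^{\tilde{L}}\,dx$, so that each new integrand is the previous one times a factor that is $\ll 1$, and it is precisely this contraction that makes the iteration converge in the transseries topology. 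Second, the closing heuristic should be $\int\mu \sim \mu/(\log\mu)'$ (equivalently $\mu^{2}/\mu'$), not $\mu/\mu'$; with that correction the sign and domination checks for the last two lines of \eqref{Epropint} go through as you describe. The one step you assert rather than prove is the well-orderedness and finite-fiber verification for the full sum $\sum_{\mathbf{k}}c_{\mathbf{k}}\mathsf{A}_{\mathbb{T}}\boldsymbol{\mu}^{\mathbf{k}}$: saying the output ``fits into a grid of the same shape'' is the conclusion one needs, and the standard way to obtain it (as in \cite{Book}) is to recast the termwise iteration as a single fixed-point equation $\tilde{v}=\tilde{v}_0+J\tilde{v}$ with $J$ contractive on a fixed grid-based subspace, which yields convergence and uniform control of supports simultaneously. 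Finally, $\mathsf{A}_{\mathbb{T}}\tilde{T}'=\tilde{T}_{\overline{0}}$ requires, beyond ``the constant has been normalized away,'' the fact that the kernel of the derivation on $\mathcal{T}$ is exactly $\RR$, so that two antiderivatives of $\tilde{T}'$ differ by a real constant.
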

``Hands-on'' constructions of antiderivatives of special transseries that will concern us will be given in Subsection \ref{S:antider}.
\subsection{Some subspaces of transseries} 
In this section we introduce and analyze three spaces of transseries: $\mathbb{T}_{-}, \mathbb{T}_{\ell}$ and $\mathbb{T}_{+}$. Transseries in $\mathbb{T}_{-}$ occur as solutions of nonlinear ODEs, difference equations and a variety of other nonlinear problems. Transseries in  $\mathbb{T}_{+}$ arise in linear problems and  $\mathbb{T}_{\ell}$ is a space that is generated by repeated antidifferentiation. The minus subscript stands for the fact that all the arguments of the exponentials in the members of the space are nonpositive; the subscript ``$\ell$'' indicates the absence of exponentials, but possible presence of logarithms; and the plus subscript indicates that all the arguments of the exponentials in the members are positive.

The space $\mathbb{T}_{-}$ is actually a differential algebra.  Nonlinear problems rely on the algebraic structure, which we analyze. For the other two spaces we are only interested in their linear properties. The space  $\mathbb{T}_{\ell}\oplus \mathbb{T}_{-}$ is closed under antidifferentiation.

  \begin{Definition}[The space $\mathbb{T}_{-}$ of log-free, height one transseries]\label{DEBstr}
{\rm 
Let $n\in\NN$, $\boldsymbol{\beta},\boldsymbol{\lambda}$ be vectors in $\RR^n$,  with $\lambda_i>0$ for $i=1,...,n$,  and define
\begin{equation}\label{trans21}
  \tilde{T}_-= \sum_{\mathbf{k}\ge 0,l\ge 0}c_{\mathbf{k},l}x^{\boldsymbol{\beta}\cdot \mathbf{k}}e^{-\mathbf{k} \cdot \boldsymbol{\lambda} x}x^{-l}=   \sum_{\mathbf{k}\ge 0}x^{\boldsymbol{\beta}\cdot \mathbf{k}}e^{-\mathbf{k} \cdot \boldsymbol{\lambda} x}\tilde{y}_{\mathbf{k}}(x),  \end{equation}
where the $\tilde{y}_{\bf k}$ are formal power series which are $o(1/x)$ for large $x$. In applications in which $\tilde{y}_{\bf 0}$ starts with a constant, this constant can be subtracted out. To arrange that $\tilde{y}_{\bf k}=o(1/x)$ for all ${\bf k}\ne 0$ we can simply change $\beta_i$ to $\beta_i+1$ (with the effect of dividing $\tilde{y}_{\bf k}$ by $x^{|\bf k|}$).
We denote the space of such $\tilde{T}_-$ by $\mathbb{T}_{-}$.\footnote{Equation \eqref{trans21} depicts  a simple case of a level one transseries, also referred to as a {\em mixed series}. } The parameters $n$, $\boldsymbol{\lambda},\boldsymbol{\beta}$ depend on the transseries; when combining two transseries one first  needs to embed all of these  in a larger parameter space.

}\end{Definition}

The  condition in the above definition that the $\tilde{y}_{\bf k}$ are $o(1/x)$ for large $x$ is a useful convention because it ensures that the only common element of $\mathbb{T}_{\ell}$ and $\mathbb{T}_{-}$ is zero, and thereby leads to the uniqueness of decompositions expressed in Proposition \ref{T:uniquedec} and elsewhere. To achieve the same end, $o(x^{-m})$ or, equivalently, $O(x^{-(m+1)})$ could have been used for other values of $m\ge 1$. Our convention explains the choice we adopt in the sequel of writing expressions of the form  $e^{k x}$ with $k>0$ as $(e^{-x})^{-k}$, {\em as well as the fact that at times we have negative indices in sums} (see, for example, Definition \ref{D:plus}).

The condition $\lambda_i>0$ ensures that there is no infinite ascending chain of terms with respect to the asymptotic order relation.  The form expressed by Equation \eqref{trans21} is the most general type of log-free transseries occurring in usual applications.
\begin{Note}{\rm 
With $\RR^n$ replaced by $\CC^n$, Equation \eqref{trans21} represents the most general transseries solution of generic, normalized,  nonlinear systems of meromorphic ODEs. For such systems, $c_{\mathbf{k},l}$ are vectors, a generalization that can be easily dealt with. On the other hand, allowing for complex coefficients would pose various technical problems in our setting which we prefer to avoid. The term ``normalized'' refers to the fact that the exponentials are of the form $e^{-ax}$, that is, the exponents are linear in $x$. Had we started with $e^{-a x^b}$, $b\ne 1$, we would {\em normalize} the transseries by changing the variable to $t=x^b$ (also see Note \ref{Nothertimes}); it can be shown that $t$ thus defined {\em coincides} with the \'Ecalle critical time introduced in Definition \ref{DefBE}.
}\end{Note}

\begin{Proposition} \label{P:algebra} {\rm 
   The linear combination and multiplication of two transseries $\tilde{T}^{(1)}$ and  $\tilde{T}^{(2)}$ are defined as follows:   
 $$a^{(1)} \tilde{T}^{(1)}+a^{(2)}\tilde{T}^{(2)}= \sum_{\mathbf{k}\ge 0}x^{\boldsymbol{\beta}\cdot \mathbf{k}}e^{-\mathbf{k} \cdot \boldsymbol{\lambda} x}(a^{(1)} \tilde{y}^{(1)}_{\mathbf{k}}(x)+ a^{(2)} \tilde{y}^{(2)}_{\mathbf{k}}(x))$$ where $a^{(1)}$ and $a^{(2)}$ are real numbers.
 
  \begin{equation}   \label{eq:defmul}
   \tilde{T}^{(1)}\tilde{T}^{(2)}=    \sum_{\mathbf{k}\ge 0}x^{\boldsymbol{\beta}\cdot \mathbf{k}}e^{-\mathbf{k} \cdot \boldsymbol{\lambda} x}\sum_{\mathbf j=0}^{\bf k}\tilde{y}^{(1)}_{\mathbf{j}}(x)\tilde{y}^{(2)}_{\mathbf{k-j}}(x).
    \end{equation}
     With respect to these operations, $\mathbb{T}_{-}$ is a commutative algebra.
}\end{Proposition}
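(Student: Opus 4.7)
The plan is to verify closure of $\mathcal{T}_-$ under the two operations and then derive the ring axioms from those of the ring $\RR[[1/x]]$ of formal power series in $1/x$.

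First I would dispense with the linear combinations: for each fixed $\mathbf{k}$ the coefficient $a^{(1)}\tilde{y}^{(1)}_{\mathbf{k}} + a^{(2)}\tilde{y}^{(2)}_{\mathbf{k}}$ is a formal power series in $1/x$ and is $O(1/x)$, so the prescribed linear combination manifestly lies in $\mathcal{T}_-$.

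Next, and this is the substantive step, I would justify the product formula \eqref{eq:defmul}. Starting from the double sum
\begin{equation*}
\tilde{T}^{(1)}\tilde{T}^{(2)} = \sum_{\mathbf{k}_1,\mathbf{k}_2 \ge 0} x^{\boldsymbol{\beta}\cdot(\mathbf{k}_1+\mathbf{k}_2)} e^{-(\mathbf{k}_1+\mathbf{k}_2)\cdot\boldsymbol{\lambda} x}\,\tilde{y}^{(1)}_{\mathbf{k}_1}(x)\tilde{y}^{(2)}_{\mathbf{k}_2}(x),
\end{equation*}
I would collect like transmonomials via the substitution $\mathbf{k} = \mathbf{k}_1 + \mathbf{k}_2$. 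The reindexing is legitimate precisely because, for each $\mathbf{k} \in (\NN\cup\{0\})^N$, the set $\{\mathbf{j} : 0 \le \mathbf{j} \le \mathbf{k}\}$ is finite (with $\prod_i(k_i+1)$ elements), so the inner sum $\sum_{\mathbf{j}=0}^{\mathbf{k}} \tilde{y}^{(1)}_{\mathbf{j}}\tilde{y}^{(2)}_{\mathbf{k}-\mathbf{j}}$ is a finite sum of products of formal power series in $1/x$, and is therefore itself a well-defined element of $\RR[[1/x]]$. Because each factor is $O(1/x)$, every product is $O(1/x^2)$ and the finite sum remains $O(1/x)$, so the result belongs to $\mathcal{T}_-$. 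The hypothesis $\lambda_i > 0$ is what guarantees that the non-negativity constraint $\mathbf{k} \ge 0$ is preserved under the reindexing, and in turn that the resulting transseries is descendingly well-ordered in the asymptotic ordering.

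Finally I would verify the algebra axioms coefficient by coefficient. Commutativity is immediate from commutativity in $\RR[[1/x]]$ together with the involution $\mathbf{j}\leftrightarrow \mathbf{k}-\mathbf{j}$ on the summation range. Associativity follows by computing the triple product both ways: in either grouping, the coefficient of $x^{\boldsymbol{\beta}\cdot\mathbf{k}}e^{-\mathbf{k}\cdot\boldsymbol{\lambda} x}$ in $\tilde{T}^{(1)}\tilde{T}^{(2)}\tilde{T}^{(3)}$ equals the finite, symmetric sum $\sum_{\mathbf{j}_1+\mathbf{j}_2+\mathbf{j}_3=\mathbf{k}} \tilde{y}^{(1)}_{\mathbf{j}_1}\tilde{y}^{(2)}_{\mathbf{j}_2}\tilde{y}^{(3)}_{\mathbf{j}_3}$, which is parenthesization-independent. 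Distributivity over the linear combination is checked in the same termwise fashion.

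The only real obstacle is the bookkeeping in the product step, namely confirming that a double sum indexed by $(\mathbf{k}_1,\mathbf{k}_2) \in (\NN\cup\{0\})^{2N}$ can be validly reorganized as a transseries indexed by $\mathbf{k}$ with a finite convolution-type inner sum. Everything else reduces to translating the corresponding properties of the formal power series ring and of the ordered monoid of transmonomials $\{x^{\boldsymbol{\beta}\cdot\mathbf{k}}e^{-\mathbf{k}\cdot\boldsymbol{\lambda} x} : \mathbf{k}\ge 0\}$.
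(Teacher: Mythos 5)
Your proposal is correct and simply fills in the details of what the paper dismisses with the single line ``Straightforward verification'': closure of the coefficient series under linear combination, the finite convolution reindexing $\mathbf{k}=\mathbf{k}_1+\mathbf{k}_2$ for the product, and the termwise check of the algebra axioms. The only small imprecision is attributing the preservation of $\mathbf{k}\ge 0$ under reindexing to the hypothesis $\lambda_i>0$ (that is automatic from $\mathbf{k}_1,\mathbf{k}_2\ge 0$; the role of $\lambda_i>0$ is only to rule out infinite ascending chains of transmonomials), but this does not affect the argument.
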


\begin{proof}
Straightforward verification.  
\end{proof}
Repeated antidifferentiation of elements in $\mathbb{T}_-$ results in polynomials combined with logs which generate the space $\mathbb{T}_{\ell}$ below,  which needs to be adjoined to our construction.
\begin{Definition}[The Space  $\mathbb{T}_{\ell}$]\label{D:Tell}{\rm 
Let $\mathbb{T}_{\ell}$ be the space of transseries of the form
  \begin{equation}
    \label{eq:eqlog}
  \tilde{T}=\sum_{k=0}^nc_k \mathsf{A}_\mathbb{T}^k(1/x)+R,
  \end{equation}
  where $n\in\NN\cup \{0\}$, $c_k\in\RR\ (k=0,...,n)$, $\mathsf{A}_{\mathbb{T}}^k(1/x)$ is the $k$th antiderivative without constant term of $1/x$, and $R$ is a polynomial of $1/x$ without constant term in $1/x$.
}\end{Definition}

\begin{Proposition}\label{N:log}{\rm 
  $\mathbb{T}_{\ell}$ is a space of functions that coincide with their transseries, and is closed under  $\mathsf{A}_{\mathbb{T}}$ (see Definition \ref{D:Tell}).  Moreover, each element of $\mathbb{T}_{\ell}$ can be written uniquely in the form
    $$ \tilde{T}_{\ell}=P(x)\log+Q(x)+R(x)$$
    where $P, Q$ and $R$ are polynomials and $R$ has no constant term. 
  }\end{Proposition}
\begin{proof}
  Straightforward: all these are elementary functions.
\end{proof}

 \begin{Definition}[The Space $\mathbb{T}_+$]\label{D:plus}{\rm 
For $j\in\{-M,...,-1\}$, let the $\lambda_{j}$ be a descending sequence of positive reals and let the $\beta_j$ and the $c_{j,l}$ be arbitrary sequences of reals. Subject to these conditions, further let
      \begin{equation}
        \label{eq:tplus}
        \tilde{T}_+=\sum_{-M\le j\le -1;\ l\ge 1}c_{j,l}x^{\beta_j}e^{\lambda_{j} x}x^{-l}=
  \sum_{j=-M}^{-1}x^{\beta_j}e^{\lambda_{j} x}{\tilde{y}}_j(x),
      \end{equation}
 where the $\tilde{y}_j$ are  formal power series in powers of $1/x$ satisfying $\tilde{y}_j=O(1/x)$ (as is implied by the expanded form of $\tilde{T}$ in the middle term in Equation \eqref{eq:tplus}). 
We denote the space of all transseries of type $\tilde{T}_+$ by $\mathbb{T}_{+}$.} 
\end{Definition}

\noindent{\bf{Comment.}} In the rightmost expression in Equation \eqref{eq:tplus}, integer powers of $x$ can be traded between $\tilde{y_j}$ and $x^{\beta_j}$, an ambiguity which is immaterial as the middle term in \eqref{eq:tplus} shows. 
\begin{Definition}[The space $\mathbb{T}_1$ of height one, depth one transseries]\label{NNN}{\rm 
Employing the notations from Definitions \ref{DEBstr}, \ref{D:Tell} and \ref{D:plus}, henceforth we denote by $\mathbb{T}_1$ the space $\mathbb{T}_{+}\oplus \mathbb{T}_{\ell}\oplus \mathbb{T}_{-}$.} 
\end{Definition}
  \begin{Proposition}\label{T:uniquedec}{\rm 
Every $\tilde{T}\in \mathbb{T}_1$ can be written uniquely in the form $\tilde{T}=\tilde{T}_{+}+\tilde{T}_{\ell}+ \tilde{T}_{-}$ where $\tilde{T}_{+}\in \mathbb{T}_{+}$, $\tilde{T}_{\ell}\in \mathbb{T}_{\ell}$ and $\tilde{T}_{-}\in \mathbb{T}_{-}$.
 } \end{Proposition}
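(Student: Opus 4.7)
The plan is to prove both existence and uniqueness by working at the level of the canonical transmonomial expansions of the three subspaces and showing that the transmonomials appearing in $\mathcal{T}_{-,m}$, $\mathcal{T}_{m,\ell}$, and $\mathcal{T}_+$ are disjoint once $m$ is fixed. Specifically, I would first catalogue the transmonomials that can appear in each summand: elements of $\mathcal{T}_+$ involve only monomials of the form $x^{\beta_j} e^{\lambda_j x} x^{-l}$ with $\lambda_j>0$; elements of $\mathcal{T}_{m,\ell}$, using the canonical form in Note \ref{N:log}(c), involve only $x^{k}\log x$ and $x^{k}$ for integers $k\ge 0$ together with negative integer powers $x^{-j}$ for $1\le j\le m$; and elements of $\mathcal{T}_{-,m}$ involve either exponentially decaying monomials $x^{\boldsymbol{\beta}\cdot\mathbf{k}}e^{-\mathbf{k}\cdot\boldsymbol{\lambda}x}x^{-l}$ with $\mathbf{k}>0$ or, for $\mathbf{k}=0$, negative powers $x^{-j}$ with $j\ge m+1$ coming from $\tilde{y}_{\mathbf{0}}(x)=o(x^{-m})$. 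With this catalogue in hand, the three families of transmonomials are pairwise disjoint.

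Uniqueness then follows immediately: if $\tilde{T}_{-,m}+\tilde{T}_{m,\ell}+\tilde{T}_+=0$, then because the three summands involve non-overlapping transmonomials and a transseries vanishes iff every coefficient does (Definitions \ref{defT} and \ref{defconv}), each summand must individually equal zero.

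For existence I would leverage Definition \ref{NNN}, which already furnishes a direct-sum decomposition for some fixed $m_0$, and then show how to pass from the $m_0$-decomposition to the $m$-decomposition by transferring a finite block of transmonomials between the first two summands while leaving the $\mathcal{T}_+$ component unchanged. Concretely, write $\tilde{T}_{-,m_0}$ in the form of Definition \ref{DEBstr} and extract $\tilde{y}_{\mathbf{0}}(x)=\sum_{j\ge m_0+1}a_j x^{-j}$. If $m>m_0$, move $\sum_{j=m_0+1}^{m}a_j x^{-j}$ out of $\tilde{T}_{-,m_0}$ and absorb it into the $R(x^{-1})$ part of $\tilde{T}_{m_0,\ell}$, producing a new decomposition whose first summand is now $o(x^{-m})$ and whose middle summand lies in $\mathcal{T}_{m,\ell}$. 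The case $m<m_0$ is handled symmetrically, moving the relevant finite block in the opposite direction. This simultaneously shows that the overall space $\mathcal{T}_1$ is independent of the parameter $m$, recovering the assertion of Note \ref{Role-m}.

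The main obstacle I anticipate is not logical but notational: one must be careful about the latent ambiguities in the parametrizations — the trade of integer powers between $x^{\beta_j}$ and $\tilde{y}_j(x)$ in the comment following Definition \ref{D:plus}, and the representation of $\tilde{T}_\ell$ as iterated antiderivatives versus the canonical form $P(x)\log x+Q(x)+R(x^{-1})$ of Note \ref{N:log}(c). Invoking that canonical form is precisely what makes the disjointness claim rigorous, since otherwise one might worry that a polynomial term $x^k$ could slide into $\mathcal{T}_+$ via the $\tilde{y}_j$'s, or that a negative-power term $x^{-j}$ could be double-counted in $\mathcal{T}_{-,m}$ and in the polynomial $R(x^{-1})$ of $\mathcal{T}_{m,\ell}$. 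Once these canonical presentations are fixed, the remainder of the argument is essentially bookkeeping.
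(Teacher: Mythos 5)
Your proof is correct and follows essentially the same route as the paper's: the paper's one-line argument rests on the separation $\tilde{T}_{-,m}\ll\tilde{T}_{m,\ell}\ll \tilde{T}_{+}$ of the nonzero components, which is precisely the transmonomial disjointness you spell out. Your explicit block-transfer step for existence (moving the finitely many powers $x^{-j}$ with $\min(m,m_0)<j\le\max(m,m_0)$ between $\tilde{y}_{\mathbf{0}}$ and $R$) makes precise a point the paper leaves implicit, but the underlying idea is the same.
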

  \begin{proof}
  This follows from Definition \ref{NNN}, the descendingly well ordering of the 
    component terms of a transseries, and the definitions of the three subspaces, the latter of which collectively imply $\tilde{T}_{+} \cap \tilde{T}_{\ell}=\tilde{T}_{\ell} \cap \tilde{T}_{-}=\tilde{T}_{+} \cap \tilde{T}_{-}=\{0\}$ and $\tilde{T}_{+} \gg \tilde{T}_{\ell} \gg \tilde{T}_{-} $ whenever these component transseries are nonzero.
  \end{proof}

 {\bf Differentiation.} It is easy to verify that, if $\tilde{y}$ is a power series, then
               \begin{equation}
                 \label{eq:difser}
                \left( x^\beta  e^{-\lambda x}\tilde{y}\right)'=x^\beta e^{-\lambda x}\Big[(\beta x^{-1}-\lambda )\tilde{y}+\tilde{y}'\Big]
               \end{equation}
               where $\tilde{y}'$ is the termwise differentiation of $\tilde{y}$.
               \begin{Note}\label{N:aboutpositivity}\rm{
  The right side of Equation \eqref{eq:difser} is negative since $\tilde{y}$ is a series with positive coefficients and, as is the case with any asymptotic power series, $\tilde{y}'\ll \tilde{y}$. 
               }\end{Note}
         \begin{Definition}\label{DdefP}{\rm Differentiation for the $\mathbb{T}_{\ell}$ component is termwise differentiation of the constituent monomials; see also Proposition \ref{N:log}. For the other two components,  it is defined as termwise differentiation,  namely,
             \begin{multline}
               \label{eq:eqdif}
                \Big(   \sum_{j=-M}^{-1}x^{\beta_j}e^{\lambda_{j} x}{\tilde{y}}_j(x)+\sum_{\mathbf{k}\ge 0}x^{\boldsymbol{\beta}\cdot \mathbf{k}}e^{-\mathbf{k} \cdot \boldsymbol{\lambda} x}\tilde{y}_{\mathbf{k}}(x)\Big)'\\=  \sum_{j=-M}^{-1}\left(x^{\beta_j}e^{\lambda_{j} x}{\tilde{y}}_j(x)\right)'+\sum_{\mathbf{k}\ge 0}\left(x^{\boldsymbol{\beta}\cdot \mathbf{k}}e^{-\mathbf{k} \cdot \boldsymbol{\lambda} x}\tilde{y}_{\mathbf{k}}(x)\right)'\\= \sum_{j=-M}^{-1}x^{\beta_j}e^{\lambda_{j} x}\left[(\beta_jx^{-1}+\lambda_{j})\tilde{y}_j+\tilde{y}{_j}'\right]\\+\sum_{\mathbf{k}\ge 0}x^{\boldsymbol{\beta}\cdot \mathbf{k}}e^{-\mathbf{k} \cdot \boldsymbol{\lambda} x}\left[(\boldsymbol{\beta}\cdot \mathbf{k}x^{-1}-\mathbf{k} \cdot \boldsymbol{\lambda})\tilde{y}+\tilde{y}'_{\mathbf{k}}(x)\right].
              \end{multline}
              
               Differentiation of an element of $\mathbb{T}_1$ is defined as the sum of the derivatives of its  $+,\ell \;{\rm and\;} -$ components.   
                       
 }\end{Definition}   
  The infinite sums in Equation \eqref{eq:eqdif} converge  in the transserries topology; for a proof see \cite{Book}.

  \subsubsection{The definition of the operator $\mathsf{A}_{\mathbb T}$ on $\mathbb{T}_-$} \label{S:antider} We first define $\mathsf{A}_{\mathbb{T}}$ on the individual components of the transseries, namely on $t_{\mathbf{k}}=x^{\boldsymbol{\beta}\cdot \mathbf k}e^{-\mathbf k\cdot \boldsymbol{\lambda}x } \tilde{y}_{\mathbf k}(x)$
 and on $t=x^{\beta_j}e^{\lambda_jx }\tilde{y}_j(x), j\in \{-M,...,-1\}$. To this end, we solve, in transseries, the ODE $\tilde{v}'=t$. The  terms  $t_{\mathbf{k}}$
and $t$ are treated very similarly, and we analyze only $t_{\mathbf{k}}$. If ${\bf k}=0$ and $\tilde{y}_0=\sum_{l\ge 2}c_l x^{-l}$, then $\tilde{v}=-\sum_{l\ge 2}(l-1)^{-1}c_l x^{-l+1}$.  If ${\bf k}\ne 0$ then the substitution $\tilde{v}_{\mathbf{k}}(x)=x^{\boldsymbol{\beta}\cdot \mathbf k}e^{-\mathbf k\cdot \boldsymbol{\lambda}x }e^{-\mathbf k\cdot \boldsymbol{\lambda}x }w(x)$ in the ODE
                 \begin{equation}
                   \label{eq:eqtk}\tilde{v}'_{\mathbf{k}}=t_{\mathbf{k}}
                     \end{equation}
brings it to the form
               \begin{equation}
                 \label{eq:intterm}
                 w'-\mathbf k\cdot (\boldsymbol{\lambda}-\boldsymbol{\beta} x^{-1})w=y,
               \end{equation}
               which has the power series solution $w(x)=\sum_{j\ge 1}w_jx^{-j}$, where the coefficients $w_j$ are uniquely determined by the recurrence relation
               \begin{equation}
                 \label{eq:recw}
\mathbf k\cdot \boldsymbol{\lambda} c_{j+1}    -(\mathbf k\cdot \boldsymbol{\beta} - j)c_j =-\tilde{y}_{\mathbf k,j};\ \ c_1=\frac{\tilde{y}_{\mathbf k,1}}{\mathbf k\cdot \boldsymbol{\lambda}}.
               \end{equation}
             Next, we define
               \begin{equation}
                 \label{eq:defint3}
                 \mathsf{A}_{\mathbb{T}}\left(x^{\boldsymbol{\beta}\cdot \mathbf k}e^{-\mathbf k\cdot \boldsymbol{\lambda}x } \tilde{y}_{\mathbf k}(x)\right)= x^{\boldsymbol{\beta}\cdot \mathbf k}e^{-\mathbf k\cdot \boldsymbol{\lambda}x }w(x),
               \end{equation}
               where $w(x)$ is characterized as above.

 \subsubsection{The  definition of the operator $\mathsf{A}_{\mathbb T}$ on $\mathbb{T}_+$}\label{S:antider+} To define $\mathsf{A}_{\mathbb{T}}\left(x^{\beta_j}e^{\lambda_j}{\tilde{y}}_j(x)\right)$  we proceed as in \S\ref{S:antider}: we write a differential equation $x^{\beta_j}e^{\lambda_j}{\tilde{w}}_j(x)=x^{\beta_j}e^{\lambda_j}{\tilde{y}}_j(x)$, and obtain
  $$ \left(\frac{\beta_j}{x}+\lambda_j \right) \tilde{w}_j \! \left(x \right)+\frac{d}{d x}\tilde{w}_j \! \left(x \right)-\tilde{y}_j \! \left(x \right)=0.$$

\noindent  
Writing  $\tilde{y}_j=\sum_{j=1}^\infty d_j x^{-j}$, the coefficients $\{c_m\}_{m\in\NN}$ of the power series $\tilde{w}_j$ satisfy  the recurrence relation $c_{m}=\lambda_j^{-1} \left[d_m+(m-1-\beta_j)c_{m-1}\right];\ m\ge 1; c_{0}=0$. This shows existence and uniqueness of a solution with zero constant term.

Using Proposition \ref{N:log} and the results in \S\ref{S:antider} and \S\ref{S:antider+} we now extend antidifferentiation to $\mathbb{T}_1$.
               \begin{Definition}[Definition $\mathsf{A}_{\mathbb T}$ on $\mathbb{T}_1$] {\rm 
                  $\mathsf{A}_{\mathbb{T}}$ is defined by linearity  on $\mathbb{T}_1= \mathbb{T}_+\oplus\mathbb{T}_{\ell}\oplus\mathbb{T}_-$, by writing               
                \begin{equation}
                 \label{eq:genA}
 \mathsf{A}_{\mathbb{T}}\tilde{T}:= \sum_{j=-M}^{-1}\mathsf{A}_{\mathbb{T}}\left(x^{\beta_j}e^{\lambda_j}{\tilde{y}}_j(x)\right)+\mathsf{A}_{\mathbb{T}}\tilde{T}_{\ell} + \sum_{\mathbf{k}\ge 0}\mathsf{A}_{\mathbb{T}}\left(x^{\boldsymbol{\beta}\cdot \mathbf{k}}e^{-\mathbf{k} \cdot \boldsymbol{\lambda} x}\tilde{y}_{\mathbf{k}}(x)\right), 
\end{equation}
\noindent

} \end{Definition}

The infinite sum defined above in Equation \ref{eq:genA} converges in the transserries topology;  a general proof is provided in \cite{Book}.  The derivative and the antiderivative are inverses of each other.

                    \begin{Proposition}{\rm 
     Replacing the functions with elements of $\mathbb{T}_1$ everywhere in Definition \ref{Dd2}, the operator $\mathsf{A}_{\mathbb{T}}$ restricted to $\mathbb{T}_1$ satisfies the properties i--iv and vi listed there. 
               }\end{Proposition}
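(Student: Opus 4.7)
The plan is to verify properties (i), (ii), (iii), (iv) and (vi) for $\mathsf{A}_{\mathbb{T}}$ on $\mathcal{T}_1 = \mathcal{T}_{-,1} \oplus \mathcal{T}_{1,\ell} \oplus \mathcal{T}_+$ by unpacking the block-by-block construction in equation \eqref{eq:genA} and appealing to the unique decomposition from Proposition \ref{T:uniquedec}. Throughout, each property of Definition \ref{Dd2} must first be reinterpreted in the formal transseries setting: pointwise values are replaced by transmonomial coefficients, and order-theoretic comparisons are interpreted via leading terms.

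For (i), on each block $t_{\mathbf k}$ with $\mathbf k \ne 0$ the operator was obtained by explicitly solving equation \eqref{eq:intterm} via the recurrence \eqref{eq:recw}; the $\mathbf k = 0$ block is the ordinary formal antiderivative of a power series, the $\mathcal{T}_+$ blocks are handled analogously, and the $\mathcal{T}_\ell$ component is covered by Note \ref{N:log}(b). Since differentiation on $\mathcal{T}_1$ is termwise and convergent in the transseries topology (as stated after equation \eqref{eq:eqdif}), one may commute $\partial$ with the infinite sum in \eqref{eq:genA} to conclude $(\mathsf{A}_{\mathbb T}\tilde T)' = \tilde T$. Property (ii) is immediate from the fact that \eqref{eq:genA} defines $\mathsf{A}_{\mathbb T}$ by linear extension across the three summands.

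Property (iv) reduces to the observation that $x^n$ lies in the polynomial part of $\mathcal{T}_{1,\ell}$, and Definition \ref{D:Tell} together with the elementary power rule yields $\mathsf{A}_{\mathbb T}(x^n) = x^{n+1}/(n+1)$, which is again a polynomial without constant term (the canonical choice built into $\mathsf{A}_{\mathbb T}$). For (vi), given $\tilde F, \tilde T \in \mathcal{T}_1$ with $\tilde F' = \tilde T$, the difference $\mathsf{A}_{\mathbb T}\tilde T - \tilde F$ has zero derivative, and inspection of \eqref{eq:difser} and \eqref{eq:eqdif} shows that every non-constant transmonomial occurring in elements of $\mathcal{T}_1$ differentiates to something nonzero; hence the kernel of $\partial$ on the ambient transseries field is exactly $\RR$, so $\mathsf{A}_{\mathbb T}\tilde T = \tilde F + C$ for a unique $C \in \RR$.

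The hardest step is (iii), because transseries are formal objects without pointwise values, so the monotonicity statement in Definition \ref{Dd2}(iii) requires a natural reformulation. The interpretation I will adopt is: if $\tilde T > 0$ (its leading transmonomial has positive coefficient), then $\mathsf{A}_{\mathbb T}\tilde T$ is formally increasing in the sense that its derivative equals $\tilde T > 0$. With this reading, (iii) reduces to (i) together with the preservation of the sign of the leading coefficient under each of the block formulas for $\mathsf{A}_{\mathbb T}$: on $\mathcal{T}_{-,1}$ via the recurrence \eqref{eq:recw} (the leading coefficient $c_1 = \tilde y_{\mathbf k,1}/(\mathbf k\cdot\boldsymbol\lambda)$ inherits its sign from $\tilde y_{\mathbf k,1}$), on $\mathcal{T}_{1,\ell}$ via Note \ref{N:log}(c), and on $\mathcal{T}_+$ via its analogous recurrence. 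Once the correct formal analogue is fixed, every verification is routine, and the only genuine difficulty lies in identifying the right transseries formulation of monotonicity.
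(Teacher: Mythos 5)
Your proposal is, in substance, the block-by-block verification that the paper intends: the paper's own proof consists of the single sentence ``The proof is a straightforward verification,'' so the content you supply for (i), (ii), (iv) and (vi) is exactly the unpacking being left to the reader, and it is correct. Your identification of the real issue --- that property iii of Definition \ref{Dd2} is a pointwise statement and must be reformulated before it can even be asserted of formal transseries --- is also the right observation, and the reading you adopt (positivity of the derivative of $\mathsf{A}_{\mathbb{T}}\tilde{T}$) is consistent with how the paper later uses the property (compare the last clause of Proposition \ref{Pintegr} and the positivity argument in the proof of Proposition \ref{PB2}).

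One claim in your treatment of (iii) is false as stated, however: the block formulas for $\mathsf{A}_{\mathbb{T}}$ do \emph{not} preserve the sign of the leading coefficient on the purely small components. For the $\mathbf{k}=0$ block the paper's own formula sends $\tilde{y}_0=\sum_{l\ge 2}c_l x^{-l}$ to $-\sum_{l\ge 2}(l-1)^{-1}c_l x^{-l+1}$, so a positive series has a negative antiderivative (concretely, $\mathsf{A}_{\mathbb{T}}(x^{-2})=-x^{-1}$); this is precisely why the positivity clause of Proposition \ref{Pintegr} carries the hypothesis $\tilde{T}\gg 1$. Fortunately, under the interpretation of (iii) you actually adopt --- monotonicity means the derivative of $\mathsf{A}_{\mathbb{T}}\tilde{T}$ equals the positive transseries $\tilde{T}$ --- sign preservation is not needed at all and (iii) collapses into (i), so the flaw lives in a superfluous remark rather than in the logical skeleton. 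I would delete the parenthetical about $c_1=\tilde{y}_{\mathbf{k},1}/(\mathbf{k}\cdot\boldsymbol{\lambda})$ and the appeal to sign preservation on the other blocks, and either rest (iii) entirely on (i) or, if you want a statement closer to the letter of Definition \ref{Dd2} iii, restrict the positivity-of-$\mathsf{A}_{\mathbb{T}}\tilde{T}$ claim to the case $\tilde{T}\gg 1$ as in Proposition \ref{Pintegr}.
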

               \begin{proof}
                 The proof is a straightforward verification.
               \end{proof}

\subsection{Watson's Lemma} The following classical result is essential in determining the asymptotic behavior of Laplace transforms.
 \begin{Lemma}[Watson's Lemma (see, e.g., \cite{Book}, p. 31)] \label{L:Watson} {\rm Assume  that $F$ is locally integrable and exponentially bounded on $\RR^+$, $a,b>0$ and  $F(p)\sim \sum_{k=0}^\infty f_k p^{ka+b}$ for small $p>0$. Then
   \begin{equation}
     \label{eq:watson}
     \int_0^\infty e^{-xp}F(p)dp\sim\sum_{k\ge 0}\frac{f_k\Gamma(ka+b+1)}{x^{ka+b+1}}\ \ \ \text{as}\  x\to \infty.
   \end{equation}
 }\end{Lemma}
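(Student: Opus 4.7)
The plan is to follow the standard splitting argument: cut the integral at a convenient $\delta>0$ chosen so the asymptotic expansion of $F$ is controlled on $[0,\delta]$, show the tail on $[\delta,\infty)$ is exponentially small, and on $[0,\delta]$ evaluate the main terms against $e^{-xp}$ using the Gamma function.

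First I would fix $N\in\NN$ and $\delta>0$ small enough that
\[
F(p)=\sum_{k=0}^{N-1}f_k p^{ka+b}+R_N(p),\qquad |R_N(p)|\le C_N\, p^{Na+b}
\]
for $p\in[0,\delta]$, which is possible by the hypothesized asymptotic expansion of $F$ at $0$. For the tail, the exponential bound $|F(p)|\le M e^{\nu p}$ gives
\[
\left|\int_\delta^\infty e^{-xp}F(p)\,dp\right|\le M\int_\delta^\infty e^{-(x-\nu)p}dp=\frac{M\,e^{-(x-\nu)\delta}}{x-\nu},
\]
which is $o(x^{-K})$ for every $K$ as $x\to+\infty$, so it contributes nothing to the asymptotic expansion.

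For the head, I would use the identity $\int_0^\infty p^{\alpha}e^{-xp}dp=\Gamma(\alpha+1)/x^{\alpha+1}$ (valid for $\alpha>-1$) combined with the elementary estimate
\[
\int_\delta^\infty p^{\alpha}e^{-xp}dp=O\bigl(e^{-\delta x/2}\bigr)\qquad(x\to\infty),
\]
which shows that each integral $\int_0^\delta f_k p^{ka+b}e^{-xp}dp$ differs from $f_k\Gamma(ka+b+1)/x^{ka+b+1}$ by an exponentially small error. Summing for $k=0,\dots,N-1$ produces the first $N$ terms of the claimed series plus exponentially small corrections.

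It remains to control the remainder integral on $[0,\delta]$:
\[
\left|\int_0^\delta R_N(p)e^{-xp}dp\right|\le C_N\int_0^\infty p^{Na+b}e^{-xp}dp=\frac{C_N\Gamma(Na+b+1)}{x^{Na+b+1}}.
\]
Since $Na+b+1$ exceeds the orders $ka+b+1$ of all the terms already extracted, this shows
\[
\int_0^\infty e^{-xp}F(p)dp-\sum_{k=0}^{N-1}\frac{f_k\Gamma(ka+b+1)}{x^{ka+b+1}}=O\!\left(x^{-(Na+b+1)}\right),
\]
which is exactly the meaning of the asymptotic expansion \eqref{eq:watson}; as $N$ is arbitrary, this finishes the proof. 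The only mildly delicate point is the bookkeeping to see that after adding the exponentially small errors from the tail and from completing each head integral to $[0,\infty)$, the leading error is genuinely of order $x^{-(Na+b+1)}$ and not artificially inflated, but this follows immediately from $a,b>0$.
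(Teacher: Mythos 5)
Your argument is correct: this is the standard textbook proof of Watson's Lemma (split at $\delta$, exponentially small tail via the exponential bound, termwise evaluation of the head against $\int_0^\infty p^{\alpha}e^{-xp}\,dp=\Gamma(\alpha+1)/x^{\alpha+1}$, and the remainder estimate $O(x^{-(Na+b+1)})$, which is exactly the order of the first omitted term since $a>0$). The paper itself offers no proof --- it states the lemma as a classical result with a citation to the literature --- so there is nothing to compare against; your write-up supplies precisely the argument the cited reference gives.
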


  \subsection{Borel summable subspaces of transseries}\label{S:Trans-Borel-summable} 
                       
             \begin{Definition}[The Borel summable subspace $\mathbb{T}_{\mathcal B}$ of $\mathbb{T}_1$]\label{D26}
               {\rm  We say that a transseries  is \emph{Borel-summable} if all power series  $\tilde{y}_{\bf k}$ and $\tilde{y}'_j(x)$ in Equation \eqref{trans21} are Borel summable and there are positive constants $c_1,c_2,c_3$ (which may depend on $\tilde{T}$) such that for all $\mathbf{k}$ and $p\in\RR^+$ we have
                 \begin{equation}
                   \label{eq:condconv}
                   \left|(\mathcal{B} \tilde{y}_{\mathbf{k}})(p)\right|\le c_1 c_2^{|\mathbf{k}|} e^{c_3p}\  \text{and}\ \left|(\mathcal{B} \tilde{y}_{j})(p)\right|\le c_1  e^{c_3p}.
                 \end{equation}
                 In view of the summability results we rely on in the sequel, we
                 impose the \emph{nonresonance} condition
             \begin{equation}  \label{eq:nonres2}
                                         (\mathbf{k-k'})\cdot\boldsymbol{\lambda}+\lambda_i-\lambda_j=0\ \text{if and only if}\   \mathbf{k-k'}=0 \ \text{and}\ i=j\  \text{for}\ i,j\in\{-M,...,-1\}; 
                 \end{equation}
                                  that is, the condition that linear combinations  of the exponents with integer coefficients permitted by our assumptions can only vanish trivially. 
                 
                       Henceforth, by $\mathbb{T}_{\mathcal B}$ we mean the subspace of $\mathbb{T}_1$ whose members are Borel summable. We write $\mathbb{T}_{+,\mathcal{B}},\mathbb{T}_{-,\mathcal{B}}$ for the  Borel summable subspaces of $\mathbb{T}_{+},\mathbb{T}_{-}$. By Proposition \ref{N:log} (a), we may identify $\mathbb{T}_{\ell,\mathcal B}$ with $\mathbb{T}_{\ell}$  and write $\mathcal{LB}\tilde{T}_\ell=\tilde{T}_\ell$.
}\end{Definition}
For clarity of notation we do not follow the multiindex convention, but, instead,  by $|\mathbf k|$ we mean $\sqrt{k_1^2+\cdots+k_n^2}$. 
\begin{Note}
  {\rm  \begin{enumerate}
      \item[(a)] The assumption that  all power series  $\tilde{y}_{\bf k}$ in Equation \eqref{trans21} are Borel summable does {\bf not} hold, generically, for nonlinear systems of ODEs. Instead, these series are {\em resurgent}, a case we study in the next section.
      
        \item[(b)] Using \eqref{eq:nonres2} we have the linear ordering $\mathbf k_1>\mathbf k_2$ if and only if $\mathbf k_1\boldsymbol\lambda >\mathbf k_2\boldsymbol\lambda$. By the discussion at the beginning of \S \ref{STrans}, and assuming the formal power series below are nonzero, we have: if $\lambda_1>\lambda_2$, then $x^{\beta_1} e^{\lambda_1}\tilde{y}_1(x) \gg x^{\beta_2} e^{\lambda_2}\tilde{y}_2(x) $, and if $\mathbf k_1>\mathbf k_2$, then $x^{\boldsymbol{\beta}\cdot \mathbf{k}_1}e^{-\mathbf{k}_1 \cdot \boldsymbol{\lambda} x}\tilde{y}_{\mathbf{k}_1}(x)\ll x^{\boldsymbol{\beta}\cdot \mathbf{k}_2}e^{-\mathbf{k}_2 \cdot \boldsymbol{\lambda} x}\tilde{y}_{\mathbf{k}_2}(x) $.
  \end{enumerate}
 }\end{Note}
    \begin{Proposition}\label{N27}
      {\rm 
      \begin{enumerate}
      \item[(a)] There exist positive constants $c_1,c_2,c_3$ such that for all $ x>r>c_3$, all $j<0$, and all $\mathbf k$ we have  \begin{equation}
        \label{eq:converg}
        |(\mathcal{LB} \tilde{y}_{\mathbf{k}})(x)|\le c_1c_2^{|\bf k|}(x-c_3)^{-1}
        \text{\ and\,}    | (\mathcal{LB} \tilde{y}_j)(x)|\le c_1(x-c_3)^{-1}   \end{equation}
     Moreover, if $c\ne 0$ and $\tilde{y}=cx^{-m}(1+o(1))$, then  $ \mathcal{LB}[ e^{\lambda x}x^\beta \tilde{y}]=cx^{-m}e^{\lambda x}x^\beta (1+o(1))$, for some $c\in\RR^+$.
   
       \item[(b)] Let $\lambda_1,...,\lambda_n\in\RR^+$, $\beta_1,...,\beta_n\in\RR$, $\lambda=\min\{\lambda_1,...,\lambda_n\}$ and $\beta=\max\{\beta_1,...,\beta_n\}$. Also let $x_0$ be such that for all $x>x_0$ we have $c_3e^{-\lambda x}x^\beta <1$. Then,  for all $ x>\max\{c_3,x_0\}$  we have
        \begin{equation}
        \label{eq:converg}
    \sum_{\mathbf{k}>M}x^{\boldsymbol{\beta}\cdot \mathbf{k}}e^{-\mathbf{k} \cdot \boldsymbol{\lambda} x}\left|\mathcal{LB}\tilde{y}_{\mathbf{k}}\right|\le c_1(c_2x^\beta e^{-\lambda x})^{NM}\frac{1}{(1-c_2x^\beta e^{-\lambda x})^N}(x-c_3)^{-1},
  \end{equation} where $N \in \mathbb{N}$, $M\in\ZZ$. In particular, the infinite sum converges uniformly and absolutely (in the analytic sense) for $x>x_0$ if $x_0$ satisfies $c_2x_0^\beta e^{-\lambda x_0}<1$.
  \item[(c)] If, in addition,  $M'\le M$ and $\beta=\min\{\beta_0,...,\beta_{M'}\}$,
  \begin{equation}
  \label{eq:T+est}
  \sum_{j=-M'}^{-1}x^{\beta_j}e^{\lambda_{j} x}|\mathcal{LB}{\tilde{y}}_j(x)|\le (M'+1)c_1e^{-\lambda_{M'}}x^{\beta}(x-c_3)^{-1}. 
\end{equation}
\item[(d)] $\mathbb{T}_{-,\mathcal B}$ is an algebra, i.e., if $\tilde{T}^{(1)}$ and $\tilde{T}^{(2)}$ are elements of $\mathbb{T}_{-,\mathcal B}$ and $a^{(1)},a^{(2)}\in\RR$, then so are $a^{(1)} \tilde{T}^{(1)}+a^{(2)} \tilde{T}^{(2)}$ and $\tilde{T}^{(1)}\tilde{T}^{(2)}$.

\end{enumerate}
       }\end{Proposition}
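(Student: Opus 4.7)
The plan has four parts, corresponding to (a)--(d), with the overall strategy being to combine the exponential bound \eqref{eq:condconv} on the Borel transforms with elementary majorizations of Laplace integrals and geometric sums.

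For (a), I would start directly from $|(\mathcal{B}\tilde{y}_\mathbf{k})(p)| \le c_1 c_2^{|\mathbf{k}|}e^{c_3 p}$ and estimate
\[
|(\mathcal{LB}\tilde{y}_\mathbf{k})(x)| \le c_1 c_2^{|\mathbf{k}|} \int_0^\infty e^{-(x-c_3)p}\,dp = \frac{c_1 c_2^{|\mathbf{k}|}}{x-c_3}
\]
for $x>c_3$; the analogous inequality for $\mathcal{LB}\tilde{y}_j$ uses the bound with $c_2=1$. The asymptotic identity is immediate once one recalls that $\mathcal{LB}$ on a single transmonomial acts as $\mathcal{LB}[e^{\lambda x}x^{\beta}\tilde{y}] = e^{\lambda x}x^{\beta}\mathcal{LB}\tilde{y}$, so Proposition \ref{PBorel}(iii), which gives $\mathcal{LB}\tilde{y}\sim\tilde{y} = cx^{-m}(1+o(1))$, yields the claimed leading behavior.

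For (b), I would combine (a) with the estimate $x^{\boldsymbol{\beta}\cdot\mathbf{k}}e^{-\mathbf{k}\cdot\boldsymbol{\lambda}x}\le (x^{\beta}e^{-\lambda x})^{k_1+\cdots+k_N}$, valid for $x\ge 1$ since $\beta\ge\beta_i$ and $\lambda\le\lambda_i$. Using $|\mathbf{k}|\le k_1+\cdots+k_N$ (which is Cauchy--Schwarz for nonnegative integer tuples) together with the harmless assumption $c_2\ge 1$, the sum factors as a product of $N$ geometric series,
\[
\sum_{\mathbf{k}:\,k_i\ge M}(c_2 x^{\beta}e^{-\lambda x})^{k_1+\cdots+k_N} = \prod_{i=1}^N\sum_{k_i\ge M}(c_2 x^{\beta}e^{-\lambda x})^{k_i} = \frac{(c_2 x^{\beta}e^{-\lambda x})^{NM}}{(1-c_2 x^{\beta}e^{-\lambda x})^N},
\]
which converges uniformly once $c_2 x^{\beta}e^{-\lambda x}<1$, whence the claim. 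Part (c) is then a direct term-by-term application of (a) to a finite sum, bounding each $e^{\lambda_j x}$ by the largest dominant exponential among the $M'+1$ terms.

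For (d), linearity of $\mathcal{T}_{-,\mathcal{B}}$ is immediate from the linearity of $\mathcal{B}$ and $\mathcal{L}$. For closure under multiplication, Proposition \ref{P:algebra} expresses the $\mathbf{k}$-th coefficient of $\tilde{T}^{(1)}\tilde{T}^{(2)}$ as the finite sum $\sum_{\mathbf{j}\le\mathbf{k}}\tilde{y}_{\mathbf{j}}^{(1)}\tilde{y}_{\mathbf{k-j}}^{(2)}$, and \eqref{eq:convoprod} converts each product into a Laplace convolution. The elementary bound $|(F*G)(p)|\le AB\,p\,e^{c_3 p}\le AB\,\epsilon^{-1} e^{(c_3+\epsilon)p}$ (valid whenever $|F|,|G|\le A e^{c_3 p}$, for any $\epsilon>0$) absorbs the factor of $p$ from convolution into a slight inflation of $c_3$, and the at-most-polynomial-in-$|\mathbf{k}|$ count of decompositions $\mathbf{j}+(\mathbf{k-j})=\mathbf{k}$ is absorbed by a mild enlargement of $c_2$. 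The main obstacle I anticipate is in (d): one must carry out this bookkeeping uniformly in $\mathbf{k}$, verifying that the compounded polynomial factors from convolution and from the combinatorics of summation can be simultaneously absorbed without destroying the structural form of \eqref{eq:condconv}; the remaining items are essentially direct estimates.
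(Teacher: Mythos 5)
Your proposal is correct and follows essentially the same route as the paper: the same Laplace-integral majorization $\mathcal{L}(e^{c_3p})=(x-c_3)^{-1}$ for (a), the same geometric-series majorization for (b), the same largest-term estimate for (c), and the same convolution-plus-absorption-of-polynomial-factors argument for (d). The only cosmetic difference is in (d), where the paper first reduces closure under multiplication to closure under squaring via the polarization identity $2ab=(a+b)^2-a^2-b^2$ before running the convolution estimate you describe directly on the product.
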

       \begin{proof}
         For the first part of (a) we simply note that, by assumption, $|\mathcal{LB} \tilde{y}_{\bf k}|\le c_1 c_2^{|{\bf k}|}\mathcal{L}(e^{c_3 p})=c_1 c_2^{|{\bf k}|}(x-c_3)^{-1}$, while the second part follows from Proposition \ref{PBorel} and Lemma \ref{L:Watson}. For (b), by assumption and using (a), we majorize each term in the infinite sum in \eqref{eq:converg} by $c_1 (c_2 x^\beta e^{-\lambda})^{k_1+...+k_N}(x-c_3)^{-1}$ (the terms of a geometric series) and the result follows.  The proof of (c) is similar and, in fact, simpler: we estimate a finite sum in terms of its largest term.

         For (d), if the constants in the bounds expressed in \eqref{eq:condconv} for $\tilde{T}^{(i)},i=1,2$ are  $c_k^{(i)}$, where $k=1,2,3$ and $i=1,2$, and if by $c_k$ we denote $\max\{c_k^{(1)},c_k^{(2)}\}$, then a bound of the type \eqref{eq:condconv} for $\tilde{T}^{(1)}+\tilde{T}^{(2)}$ is $(c_1(|a^{(1)}|+|a^{(2)}|)c_2^{|\mathbf k|}e^{c_3 p}$.   By linearity and the polarization identity $2ab=(a+b)^2-a^2-b^2$, for the product it is enough to show that $\mathbb{T}_{\mathcal{B}}$ is closed under squaring. If $\tilde{y}$ satisfies $|\mathcal{B}\tilde{y}|\le e^{c p}$ for $p\in\RR$, then  \eqref{eq:convoprod} implies $\mathcal{B}|\tilde{y}^2|\le {B}|\tilde{y}|*{B}|\tilde{y}|\le p e^{c p}\le e^{(c+1)p}$. Then, using this inequality and estimating the number of terms in the innermost sum in Equation \eqref{eq:defmul} by the rough bound $|\mathbf k|^N$ and using  $|\mathbf k|^N \le e^{N|\mathbf k|}$, we get
         $$\left|\mathcal{B}\sum_{\mathbf j=0}^{\bf k}\tilde{y}_{\mathbf{j}}(x)\tilde{y}_{\mathbf{k-j}}(x)\right|\le c_1^2 e^{(c_3+1)p}(c^2_2 e^N)^{|\mathbf k|},$$
         from which the result follows. 
           \end{proof}
    
\begin{Definition}\label{D:LEBstr}  {\rm Let $\tilde T \in \mathbb{T}_{\mathcal B}$ and let $\lambda$, $\beta$ and $x_0$ be as in Proposition \ref{N27}.  Then the Borel sum of $\tilde{T}$ is defined as   
        \begin{equation} 
  \label{eq:BsumT}
  \mathcal{LB}\tilde{T}=\sum_{j=-M}^{-1}x^{\beta_j}e^{\lambda_j x}\mathcal{LB}{\tilde{y}}_j+\sum_{\mathbf{k}\ge 0}x^{\boldsymbol{\beta}\cdot \mathbf{k}}e^{-\mathbf{k} \cdot \boldsymbol{\lambda}_j x}\mathcal{LB}\tilde{y}_{\mathbf{k}} +\tilde{T}_{\ell},
\end{equation}
(see Proposition \ref{N:log} and Definition \ref{D26}). We note that,  by \eqref{eq:converg} and the Weierstrass M-test, the infinite series in Equation \eqref{eq:BsumT} converges uniformly and absolutely (in the analytic sense of convergence)  on the interval
$[x_0,\infty)$.}\end{Definition}

\begin{Proposition}\label{P:bijection}{\rm 
  \begin{enumerate}
  \item[(a)] If $\tilde{T}\in \mathbb{T}_{\mathcal B}$, then $\tilde{T}>0$  if and only if $\mathcal{LB}\tilde{T}>0$ for sufficiently large $x$.
    
    \item[(b)] The kernel of $\mathcal{LB}$ is zero, i.e.,  $\{\tilde{T}\in \mathbb{T}_{\mathcal{B}}: \mathcal{LB}\tilde{T}=0\}=\{0\}$.
  \end{enumerate}
  
}\end{Proposition}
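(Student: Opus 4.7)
The plan is to derive (a) from a leading-term asymptotic analysis of $\mathcal{LB}\tilde T$, and then to obtain (b) as an immediate consequence of (a) together with the trichotomy of the transseries order.

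For (a), I would first write $\tilde T=\tilde T_+ +\tilde T_{\ell}+\tilde T_{-,m}$ using Proposition \ref{T:uniquedec}, and then read off from the definition of the asymptotic order $\gg$ and the chain $\tilde T_{-,m}\ll \tilde T_{\ell}\ll \tilde T_+$ which single transmonomial $\mathfrak{m}$ of $\tilde T$ is the largest. Concretely, $\mathfrak{m}=c\,x^{\gamma}e^{\lambda x}\cdot(1+o(1))$ where $c$ is the leading real coefficient of $\tilde T$ (hence positive when $\tilde T>0$), with $\lambda=\lambda_{-1}$ and $\gamma=\beta_{-1}$ if $\tilde T_+\ne 0$, with $\lambda=0$ and $\gamma$ the degree of the polynomial/logarithmic leading term if $\tilde T_+=0\ne \tilde T_{\ell}$ (treated via Note \ref{N:log}), and with $\lambda=0$, $\gamma=-m'$ for the leading $1/x^{m'}$ term otherwise. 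The second statement of Proposition \ref{N27}(a) gives directly that $\mathcal{LB}\mathfrak{m}=c\,x^{\gamma}e^{\lambda x}(1+o(1))$, so $\mathcal{LB}\mathfrak{m}$ has the sign of $c$ for all sufficiently large real $x$.

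Next I would bound $\mathcal{LB}(\tilde T-\mathfrak{m})$ and show it is asymptotically negligible compared with $\mathcal{LB}\mathfrak{m}$. The $\mathcal{T}_+$ contribution beyond $\mathfrak{m}$ is a finite sum controlled by Proposition \ref{N27}(c), which yields a bound of order $x^{\beta'}e^{\lambda' x}(x-c_3)^{-1}$ with $\lambda'<\lambda$ (when $\tilde T_+$ is leading) or $\lambda'\le 0$ (otherwise); the $\mathcal{T}_{\ell}$ part is a polynomial-times-log expression with real coefficients and is handled term by term; the $\mathcal{T}_-$ part is bounded by Proposition \ref{N27}(b), whose estimate decays like $e^{-NM\lambda x}$ up to polynomial factors and is therefore dominated by $\mathfrak{m}$. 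Combining these three bounds with the leading asymptotic for $\mathcal{LB}\mathfrak{m}$, I get $\mathcal{LB}\tilde T=\mathcal{LB}\mathfrak{m}\,(1+o(1))$ as $x\to+\infty$, which has the sign of $c$ eventually. The converse direction (that positivity of $\mathcal{LB}\tilde T$ for large $x$ forces $\tilde T>0$) follows by contrapositive: if $\tilde T\le 0$ as a transseries then either $\tilde T=0$, in which case $\mathcal{LB}\tilde T=0$, or $\tilde T<0$, in which case the forward direction applied to $-\tilde T$ shows $\mathcal{LB}\tilde T<0$ eventually.

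Part (b) falls out by trichotomy: the transseries order on $\mathcal T_{\mathcal B}$ is total (the coefficients are real and the support is well ordered), so any $\tilde T\ne 0$ is either positive or negative, whence by (a) $\mathcal{LB}\tilde T$ is eventually positive or eventually negative and, in particular, not identically zero. Hence $\mathcal{LB}\tilde T=0$ forces $\tilde T=0$.

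The main obstacle is the tail estimate showing that the subleading part of $\mathcal{LB}\tilde T$ is genuinely dominated by the leading term uniformly in the three-way decomposition. The delicate case is when $\tilde T_+=0$ and the leading behavior is a $\mathcal T_{\ell}$-monomial (e.g.\ a power of $\log x$ times a polynomial): here one must verify that the $\mathcal T_{-,m}$ contribution, despite being an infinite sum of exponentially small terms, really is $o(x^{\gamma}\log^{k}x)$ uniformly, which requires invoking the geometric-series bound of Proposition \ref{N27}(b) on $[\max\{c_3,x_0\},\infty)$ and comparing $e^{-\lambda x}x^{\beta}$ to the polynomial-logarithmic leading term. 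Once this comparison is made rigorous the rest of the argument is routine.
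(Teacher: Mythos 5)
Your proposal is correct and follows essentially the same route as the paper: isolate the dominant component of $\tilde T$ via the decomposition and the chain $\tilde T_{-}\ll\tilde T_{\ell}\ll\tilde T_{+}$, apply Proposition \ref{N27}(a) to see that $\mathcal{LB}\tilde T$ inherits the sign of the leading coefficient for large $x$, and deduce (b) from (a) by trichotomy of the transseries order. The only difference is that you spell out the tail estimates (via Proposition \ref{N27}(b),(c)) that the paper's much terser proof leaves implicit in ``the result follows.''
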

\begin{proof}
  (a). If $\tilde{y}_j\ne 0$ for some $j<0$, then we choose the most negative $j$ with this property, and for $\tilde{y}_j$ to be nonzero there must exist an $m\in\NN$  and a nonzero $c\in\CC$ such that $\tilde{y}_j=c x^{-m}(1+o(1))$. Using Proposition \ref{N27} (a), we see that $\mathcal{LB}\tilde{T}= ce^{\lambda_j x}x^{\beta_j}x^{-m}(1+o(1))$ and the result follows. The proof is very similar if instead $\tilde{y}_j=0$ for all $j<0$ and $P$ or $Q$ is nonzero, or, if $P$ and $Q$ are also zero and for some $ \mathbf{k}$ we have $\tilde{y}_{\bf k}\ne 0$.  

  (b) follows immediately from (a).
  \end{proof}
  
  \begin{Definition}\label{D:decomp-m}
  {\rm  Let $\mathcal{F}_{\mathcal{B}}$ denote the function space  $\{\mathcal{LB}\tilde{T}:\tilde{T}\in\mathbb{T}_{\mathcal{B}}\}$. Mimicking the notation in Definition \ref{NNN} and Proposition \ref{T:uniquedec}, we write:  $\mathcal{F}_{\mathcal{B}}=\mathcal{F}_{+,\mathcal{B}}\oplus \mathcal{F}_{\ell}\oplus \mathcal{F}_{-,\mathcal{B}}$, with $\mathcal{F}_{+,\mathcal{B}}$, $\mathcal{F}_{\ell}$ and $\mathcal{F}_{-,\mathcal{B}}$ understood in the expected manner.}\end{Definition}
 Note that, by Definition \ref{N:log}, Borel summability is the identity on $\mathcal{F}_{\ell}$, and as such we could have written equivalently  $\mathcal{F}_{\ell,\mathcal{B}}$ in place of $\mathcal{F}_{\ell}$.
  \begin{Corollary}\label{C:bijection}{\rm
    \item [(a)] $\mathcal{LB}$ is a  bijection between $\mathbb{T}_{\mathcal{B}}$ and $\mathcal{F}_{\mathcal B}$. \item [(b)] $\mathcal{LB}$ (restricted to $\mathbb{T}_{-,\mathcal{B}}$) is a linear and multiplicative bijection from $\mathbb{T}_{-,\mathcal{B}}$ to $\mathcal{F}_{-,\mathcal{B}}$. 
}\end{Corollary}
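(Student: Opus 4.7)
The plan is to deduce Corollary \ref{C:bijection} as a direct consequence of Proposition \ref{P:bijection} together with the algebraic structure of $\mathcal{B}$ and $\mathcal{L}$ and the convergence estimates of Proposition \ref{N27}. First I would observe that $\mathcal{F}_{\mathcal{B}}$ and $\mathcal{F}_{-,\mathcal{B}}$ are, by definition, the images of $\mathcal{T}_{\mathcal{B}}$ and $\mathcal{T}_{-,\mathcal{B}}$ under $\mathcal{LB}$, so surjectivity in each case is tautological. For injectivity, the linearity of $\mathcal{B}$ (term-by-term inverse Laplace on series) and of $\mathcal{L}$ (on the Borel sums $\mathcal{LB}\tilde{y}_{\mathbf k}$) combine with the direct-sum decomposition $\tilde{T}=\tilde{T}_{-,m}+\tilde{T}_{m,\ell}+\tilde{T}_+$ supplied by Definition \ref{NNN} and Proposition \ref{T:uniquedec} to yield $\RR$-linearity of $\mathcal{LB}$ on $\mathcal{T}_{\mathcal{B}}$. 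Injectivity then follows at once: if $\mathcal{LB}\tilde{T}_1=\mathcal{LB}\tilde{T}_2$, then $\mathcal{LB}(\tilde{T}_1-\tilde{T}_2)=0$, whence $\tilde{T}_1=\tilde{T}_2$ by Proposition \ref{P:bijection}(b).

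For part (b), linearity has already been handled, so only multiplicativity on $\mathcal{T}_{-,\mathcal{B}}$ remains. The plan is to prove the identity componentwise and then pass to the infinite sums. Fix $\tilde{T}^{(1)},\tilde{T}^{(2)}\in\mathcal{T}_{-,\mathcal{B}}$ with component series $\tilde{y}^{(1)}_{\mathbf j},\tilde{y}^{(2)}_{\mathbf k}$ as in Definition \ref{DEBstr}. On each individual pair of transmonomials, I would combine the convolution identity \eqref{eq:convoprod} with the classical Laplace identity $\mathcal{L}(F\ast G)=(\mathcal{L}F)(\mathcal{L}G)$ to obtain
\begin{equation*}
\mathcal{LB}\!\left(x^{\boldsymbol{\beta}\cdot\mathbf{j}}e^{-\mathbf{j}\cdot\boldsymbol{\lambda}x}\tilde{y}^{(1)}_{\mathbf j}\cdot x^{\boldsymbol{\beta}\cdot\mathbf{k}}e^{-\mathbf{k}\cdot\boldsymbol{\lambda}x}\tilde{y}^{(2)}_{\mathbf k}\right)=x^{\boldsymbol{\beta}\cdot(\mathbf{j}+\mathbf{k})}e^{-(\mathbf{j}+\mathbf{k})\cdot\boldsymbol{\lambda}x}\bigl(\mathcal{LB}\tilde{y}^{(1)}_{\mathbf j}\bigr)\bigl(\mathcal{LB}\tilde{y}^{(2)}_{\mathbf k}\bigr).
\end{equation*}
Summing over the index $\mathbf j+\mathbf k=\mathbf n$ reproduces the Cauchy product formula \eqref{eq:defmul} on the transseries side, while on the function side it reproduces the usual product of the two analytic Borel sums. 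The required equality of $\mathcal{LB}(\tilde{T}^{(1)}\tilde{T}^{(2)})$ with $(\mathcal{LB}\tilde{T}^{(1)})(\mathcal{LB}\tilde{T}^{(2)})$ then follows.

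The main obstacle I anticipate is the justification of the interchange of summation, integration and convolution needed to pass from the componentwise identity to its infinite-sum counterpart. Fortunately, Proposition \ref{N27}(d) already establishes that $\mathcal{T}_{-,\mathcal{B}}$ is closed under products and exhibits explicit bounds $|\mathcal{B}(\tilde{y}^{(1)}_{\mathbf j}\tilde{y}^{(2)}_{\mathbf k})(p)|\le c_1^2 e^{(c_3+1)p}(c_2^2 e^N)^{|\mathbf j+\mathbf k|}$ that are of exactly the same geometric form as those in Definition \ref{D26}. Combining these bounds with the estimate of Proposition \ref{N27}(b), the whole double sum is dominated on $[x_0',\infty)$ by a convergent geometric series (after possibly enlarging $x_0$ so that $c_2^2 e^N x^\beta e^{-\lambda x}<1$). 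The Weierstrass $M$-test then legitimizes the interchange of the Laplace integral with both summations and converts the formal identity into an analytic one, finishing the proof of (b) and hence of (a).
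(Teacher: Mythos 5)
Your proposal is correct and follows essentially the same route as the paper: surjectivity is definitional, injectivity reduces via linearity to the zero-kernel statement of Proposition \ref{P:bijection}, and multiplicativity comes from the convolution identity \eqref{eq:convoprod} together with the closure and growth estimates of Proposition \ref{N27}(d). You merely spell out the ``straightforward calculation'' and the interchange-of-limits justification that the paper leaves implicit.
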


\begin{proof}
 (a) is an immediate consequence of Proposition \ref{P:bijection} and Definition \ref{D:decomp-m}. For (b), bijectivity follows from (a) and the definition of $\mathcal{LB}$; and linearity and multiplicativity follow from the fact that $\mathbb{T}_{-,\mathcal{B}}$ is closed under addition and multiplication and a straightforward calculation using the definition of $\mathcal{LB}$ and the fact that $\mathcal{LB}$ is linear and multiplicative on $\mathcal{S}_{\mathcal{B}}$. 
\end{proof}

Note that in virtue of Proposition \ref{T:uniquedec} and Corollary \ref{C:bijection}  the decomposition in Definition \ref{D:decomp-m} is unique.
 
\subsection{Differentiation and antidifferentiation on $\mathbb{T}_{\mathcal{B}}$} \label{S56}
\begin{Lemma}[Differentiation]\label{L:28}{\rm 
   If $\tilde{y}$ is a Borel summable formal power series, then  \begin{enumerate}
    \item $\left(e^{-\lambda x}x^\beta \mathcal{LB}\tilde{y}\right)'=e^{-\lambda x}x^\beta \mathcal{LB}\dot{\tilde{y}}$, where
      \begin{equation}
        \label{eq:eqdefdif}
        \dot{\tilde{y}}=(\beta x^{-1}-\lambda)\tilde{y}+\tilde{y}'\text{ and }\mathcal{B} \dot{\tilde{y}}=\beta  \left(\int_0^p\mathcal{B}\tilde{y}\right)-\lambda \mathcal{B}\tilde{y}-p\mathcal{B}\tilde{y}.
      \end{equation}

    \item Let $\tilde{y}_{\bf k}$ be as in Definition \ref{D26}. Then, for some constants $c'_1,c'_2,c'_3$ depending only on $c_1,c_2,c_3,$ and all $p\in\RR^+$ and ${\bf k}$ we have
      \begin{equation}
        \label{eq:eqfk}
        \left|(\mathcal{B} \dot{\tilde{y}}_{\mathbf{k}})(p)\right|\le c'_1 {c'_2}^{|\mathbf{k}|} e^{c'_3p}.
      \end{equation}
      
    \item The sum
     \begin{equation}
      \label{eq:fk2}
      \sum_{\mathbf{k}\ge 0}x^{\boldsymbol{\beta}\cdot \mathbf{k}}e^{-\mathbf{k} \cdot \boldsymbol{\lambda} x}\mathcal{LB}\dot{\tilde{y}}_{\mathbf{k}}
    \end{equation}
    converges uniformly and absolutely in the analytic sense, for large $x$, and with $\tilde{T}$ as in Definition \ref{D:LEBstr} we have
    \begin{equation}
      \label{eq:BsumTprime}
      (\mathcal{LB}\tilde{T})'= \sum_{j=-M}^{-1}x^{\beta_j}e^{\lambda_j x}\mathcal{LB}{\dot{\tilde{y}}}_j+\sum_{\mathbf{k}\ge 0}x^{\boldsymbol{\beta}\cdot \mathbf{k}}e^{-\mathbf{k} \cdot \boldsymbol{\lambda} x}\mathcal{LB}\dot{\tilde{y}}_{\mathbf{k}}+\tilde{T}_{\ell}'=\mathcal{LB}(\tilde{T})'.
    \end{equation}
     
    \end{enumerate}
 }\end{Lemma}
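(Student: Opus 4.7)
Proof proposal for Lemma \ref{L:28}.

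For Part (1), the plan is to reduce the identity to the standard identity $(\mathcal{LB}\tilde{y})'=\mathcal{LB}(\tilde{y}')$ together with the product rule. First I would verify, directly from the series definitions $\tilde{y}=\sum_{k\ge 0}a_k x^{-k-1}$ and $\mathcal{B}\tilde{y}(p)=\sum_{k\ge 0}a_k p^k/k!$, the three elementary Borel-calculus identities $\mathcal{B}(x^{-1}\tilde{y})(p)=\int_0^p\mathcal{B}\tilde{y}(s)\,ds$, $\mathcal{B}(\tilde{y}')(p)=-p\,\mathcal{B}\tilde{y}(p)$, and $\mathcal{LB}$ preserves scalar multiples; combining these on the definition $\dot{\tilde{y}}=(\beta x^{-1}-\lambda)\tilde{y}+\tilde{y}'$ yields the stated formula for $\mathcal{B}\dot{\tilde{y}}$. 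For the actual derivative, the exponential bounds on $\mathcal{B}\tilde{y}$ and dominated convergence (applied to $\partial_x$ of the integrand $e^{-xp}(\mathcal{B}\tilde{y})(p)$) license differentiation under the integral sign, giving $(\mathcal{LB}\tilde{y})'=-\mathcal{L}(p\mathcal{B}\tilde{y})=\mathcal{LB}(\tilde{y}')$ for $x$ beyond the abscissa of convergence. A direct application of the product rule to $e^{-\lambda x}x^\beta \mathcal{LB}\tilde{y}$ then produces the asserted identity.

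For Part (2), the plan is to bound each of the three constituent pieces of $\mathcal{B}\dot{\tilde{y}}_{\bf k}$ separately starting from the hypothesis $|(\mathcal{B}\tilde{y}_{\bf k})(p)|\le c_1 c_2^{|\mathbf k|}e^{c_3 p}$. The integrated piece is bounded by $(|\boldsymbol{\beta}\cdot\mathbf k|/c_3)\,c_1 c_2^{|\mathbf k|}e^{c_3 p}$, the scalar piece by $|\mathbf k\cdot\boldsymbol{\lambda}|\,c_1 c_2^{|\mathbf k|}e^{c_3 p}$, and the $p$-multiplied piece by $c_1 c_2^{|\mathbf k|}e^{(c_3+1)p}$ (using $p\le e^p$). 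The key observation is that the prefactors $|\boldsymbol{\beta}\cdot\mathbf k|$ and $|\mathbf k\cdot\boldsymbol{\lambda}|$ grow only linearly in $|\mathbf k|$, so they can be absorbed into an enlarged geometric factor by choosing $c'_2$ slightly larger than $c_2$ (using $|\mathbf k|\le\varepsilon^{-|\mathbf k|}$ style bounds for any $\varepsilon<1$, or more simply $|\mathbf k|\le C^{|\mathbf k|}$ for any $C>1$). Setting $c'_3:=c_3+1$ then yields the desired uniform bound \eqref{eq:eqfk}.

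For Part (3), the strategy is to apply Proposition \ref{N27}(b) directly to the family $\{\dot{\tilde{y}}_{\mathbf k}\}$, with $(c_1,c_2,c_3)$ replaced by the $(c'_1,c'_2,c'_3)$ furnished by Part (2); this gives uniform absolute convergence of the series \eqref{eq:fk2} on some interval $[x_1,\infty)$, together with an analogous estimate for the finite sum over $j$ coming from Proposition \ref{N27}(c). Given this, termwise differentiation of the series defining $\mathcal{LB}\tilde{T}$ is justified by the classical theorem that a series of $C^1$ functions converging at one point, whose term-by-term derivative series converges uniformly, may be differentiated term by term, with the sum equal to the derivative. Applying Part (1) term by term to each component of $\tilde{T}$ and recalling that $\mathcal{T}_{\ell}$ is identified with its image and handled by ordinary calculus (Note \ref{N:log}(b)) then produces \eqref{eq:BsumTprime}, and by construction the right-hand side is precisely $\mathcal{LB}(\tilde{T})'$ in view of Definition \ref{DdefP}.

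The main obstacle I anticipate is the bookkeeping in Part (2): one has to be careful that the various $\mathbf k$-dependent constants $\boldsymbol{\beta}\cdot\mathbf k$ and $\mathbf k\cdot\boldsymbol{\lambda}$ are all subsumed into a single geometric constant $c'_2$, independent of $\mathbf k$, so that the uniform bound \eqref{eq:eqfk} has the right form to feed into Proposition \ref{N27}(b). The remaining steps are essentially mechanical once this uniform bound is in hand.
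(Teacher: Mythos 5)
Your proposal is correct and takes essentially the same route as the paper's proof: Part (1) via the Borel-calculus identities and the product rule, Part (2) by bounding the three constituent pieces and absorbing the linear-in-$|\mathbf{k}|$ prefactors into an enlarged geometric constant (the paper uses $|\boldsymbol{\beta}||\mathbf{k}|+|\mathbf{k}||\boldsymbol{\lambda}|\le \exp[|\mathbf{k}|(|\boldsymbol{\beta}|+|\boldsymbol{\lambda}|)]$ and bounds the integral by $pe^{c_3p}\le e^{(c_3+1)p}$ rather than by $e^{c_3p}/c_3$, but these are interchangeable), and Part (3) by feeding the new constants into Proposition \ref{N27} and invoking the classical termwise-differentiation theorem. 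The only cosmetic difference is that you verify the Borel identities and the differentiation under the integral sign explicitly where the paper simply cites the isomorphism of Proposition \ref{PBorel}.
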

\begin{proof}
The fact that $\dot{\tilde{y}}$ is given by the first equation in \eqref{eq:eqdefdif}   follows from the isomorphisms induced by Borel summation (see Proposition \ref{PBorel}), \emph{viz}:   \begin{multline}
        \label{eq:dif1}
        \left(e^{-\lambda x}x^\beta \mathcal{LB}\tilde{y}\right)'=e^{-\lambda x}x^\beta \left[\beta  x^{-1}\mathcal{LB}\tilde{y}-\lambda \mathcal{LB}\tilde{y}+\mathcal{LB}(\tilde{y}')\right]\\=e^{-x}x^\beta \mathcal{L}\left[\beta \left(\int_0^p\mathcal{B}\tilde{y}\right) \lambda \mathcal{B}\tilde{y}-p\mathcal{B}\tilde{y}\right].
      \end{multline}
    For part (2), we note that $\int_0^p |\mathcal{B}y_{\bf k}|\le e^{c_3 p}\int_0^p 1=pe^{c_3 p}\le e^{(c_3+1)p}$. The absolute value of the term $-p\mathcal{B}\tilde{y}$ is also bounded by $p|\mathcal{B}y_{\bf k}|\le e^{(c_3+1)p}$. Next, $|\boldsymbol{\beta}||\mathbf{k}|+|\mathbf{k}||\boldsymbol{\lambda}|\le \exp[|\mathbf{k}|(|\boldsymbol{\beta}|+|\boldsymbol{\lambda}|)]$, and so the result follows. Using (2), uniform and absolute convergence, in the analytic sense, are shown as in the first sentence of the proof of Proposition \ref{N27}. The rest follows from an elementary theorem about sequences of functions \cite[p. 321]{Folland}. (The estimates above can be improved substantially, but we do not need this here.)
\end{proof}
The Corollary below is an immediate consequence of Lemma \ref{L:28} and Corollary \ref{C:bijection}.
\begin{Corollary}[Preservation of differentiation]\label{C:29}{\rm 
  The space $\mathbb{T}_{\mathcal{B}}$ is closed under differentiation and, for $\tilde{T}\in\mathbb{T}_{\mathcal{B}}$, we have $(\mathcal{LB}\tilde{T})'=\mathcal{LB}(\tilde{T})'$; $\mathcal{LB}$ is a differential space isomorphism. Restricted to $\mathbb{T}_{-,\mathcal B}$, $\mathcal{LB}$ is a differential algebra isomorphism.
}\end{Corollary}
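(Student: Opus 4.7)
The proof will be essentially a consolidation of the two results it cites. The plan is to verify each of the four assertions in turn, pulling the technical content out of Lemma \ref{L:28} and Corollary \ref{C:bijection}.

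First I would establish closure of $\mathcal{T}_{\mathcal{B}}$ under differentiation. Given $\tilde{T}\in\mathcal{T}_{\mathcal{B}}$, write it according to Definition \ref{NNN} as $\tilde T = \tilde T_{-} + \tilde T_{\ell} + \tilde T_{+}$, and apply the termwise differentiation of Definition \ref{DdefP} to obtain the component derivatives $\dot{\tilde{y}}_{\mathbf{k}}$ and $\dot{\tilde{y}}_{j}$. The only nontrivial requirement to be checked against Definition \ref{D26} is the uniform exponential bound \eqref{eq:condconv}. But Lemma \ref{L:28}(2) furnishes exactly such bounds, with new constants $c'_1,c'_2,c'_3$ depending only on the original $c_1,c_2,c_3$; the analogous bound for $\dot{\tilde y}_j$ follows by the same computation, since only finitely many $j$ are involved. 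The nonresonance condition \eqref{eq:nonres2} is unchanged under differentiation, as the vectors $\boldsymbol{\lambda},\boldsymbol{\beta}$ are preserved. The $\mathcal{T}_{\ell}$ part is handled by Note \ref{N:log}(b). Hence $\tilde T' \in \mathcal{T}_{\mathcal{B}}$.

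Next I would establish the commutation $(\mathcal{LB}\tilde T)' = \mathcal{LB}(\tilde T')$. This is precisely the content of equation \eqref{eq:BsumTprime} in Lemma \ref{L:28}(3), which combines term-by-term commutation (part (1) of that lemma) with the fact that the differentiated series \eqref{eq:fk2} converges uniformly and absolutely in the analytic sense, justifying interchange of differentiation and summation via the standard theorem cited there.

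Finally, to upgrade to the isomorphism statements, I would invoke Corollary \ref{C:bijection}(a), which already asserts that $\mathcal{LB}\colon \mathcal{T}_{\mathcal{B}} \to \mathcal{F}_{\mathcal{B}}$ is a bijection and, by inspection of Definition \ref{D:LEBstr}, is linear. Combined with the commutation identity just established, this makes $\mathcal{LB}$ a differential space isomorphism. For the restriction to $\mathcal{T}_{-,\mathcal{B}}$, Corollary \ref{C:bijection}(b) supplies multiplicativity, and Proposition \ref{N27}(d) guarantees that both $\mathcal{T}_{-,\mathcal{B}}$ and its image $\mathcal{F}_{-,\mathcal{B}}$ are algebras, so the statement about a differential algebra isomorphism follows.

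There is no serious obstacle here: the proof is a direct bookkeeping exercise that strings together results already in place. The only point one needs to attend to is to check that the three components ($\tilde T_{-}$, $\tilde T_{\ell}$, $\tilde T_{+}$) all behave well under differentiation; the $\tilde T_{+}$ component is handled by essentially the same computation as the $\tilde T_{-}$ component (finitely many exponential terms with positive exponents), and the $\tilde T_{\ell}$ component is elementary by Note \ref{N:log}.
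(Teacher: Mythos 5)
Your proposal is correct and follows exactly the route the paper takes: the paper dispatches this corollary in one line as an immediate consequence of Lemma \ref{L:28} and Corollary \ref{C:bijection}, and your argument is simply a careful unpacking of those two results (plus Note \ref{N:log} and Proposition \ref{N27}(d) for the $\mathcal{T}_{\ell}$ and algebra points, which the paper leaves implicit). Nothing is missing and no step fails.
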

In the following definition we extend the operator $\mathsf{A_\mathcal{B}}$ of Definition \ref{diff} and Proposition \ref{PADpowser} to include  $\mathcal{F}_{\ell}$ and $\mathcal{F}_{-,\mathcal{B}}$.

\begin{Definition}\label{PB2D}
{\rm For  $\tilde{T} \in \mathbb{T}_{\ell}\oplus  \mathbb{T}_{-,\mathcal{B}}$,  $\mathsf{A}_{\mathbb{T}_{\mathcal{B}}}(\mathcal{LB}\tilde{T}):=\mathcal{LB}(\mathsf{A}_{\mathbb{T}}\tilde{T})$.
}\end{Definition}
As the next proposition shows, $\mathsf{A}_{\mathbb{T}_{\mathcal{B}}}$ is well-defined on the image of $ \mathbb{T}_{\ell}\oplus  \mathbb{T}_{-,\mathcal{B}}$ under $ \mathcal{LB}$, that is, on $\mathcal{F}_{\ell}\oplus  \mathcal{F}_{-,\mathcal{B}}$,  and takes values in $\mathcal{F}_{\ell}\oplus  \mathcal{F}_{-,,\mathcal{B}}$. 
\begin{Proposition}[Antidifferentiation]\label{PB2}{\rm
    \begin{enumerate}
      \item[(a)] The operator $\mathsf{A}_{\mathbb{T}_{\mathcal{B}}}:\mathcal{F}_{\ell}\oplus  \mathcal{F}_{-,\mathcal{B}}\to \mathcal{F}_{\ell}\oplus  \mathcal{F}_{-,\mathcal{B}}$ is well defined.
      \item [(b)]  $\mathsf{A}_{\mathbb{T}_{\mathcal{B}}}$ satisfies Properties i--iv and vi of Definition \ref{Dd2}.
         \item [(c)] The space  $\mathcal{F}_{\ell}\oplus \mathcal{F}_{-,\mathcal B}$ is closed under differentiation and antidifferentiation. 
    \end{enumerate}
}\end{Proposition}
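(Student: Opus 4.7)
The overall strategy is to transfer all the required structure from the transseries side to the function side through the (differential) isomorphism $\mathcal{LB}$, using the fact that the analogous statements for $\mathsf{A}_{\mathbb{T}}$ on $\mathcal{T}_1$ have already been established in Section~\ref{S:antider}. The main technical content is verifying that \emph{Borel summability is preserved under $\mathsf{A}_{\mathbb{T}}$}, which is what makes Definition~\ref{PB2D} consistent.

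For part (a), I would first invoke Corollary~\ref{C:bijection} to note that $\mathcal{LB}$ is a bijection from $\mathcal{T}_{\mathcal{B}}$ onto $\mathcal{F}_{\mathcal{B}}$, so if $\mathcal{LB}\tilde{T}_1=\mathcal{LB}\tilde{T}_2$ then $\tilde{T}_1=\tilde{T}_2$; this makes $\mathsf{A}_{\mathcal{T}_{\mathcal{B}}}$ unambiguously defined on $\mathcal{F}_{-,1,\mathcal{B}}\oplus\mathcal{F}_{\ell,1}$. It remains to verify that $\mathsf{A}_{\mathbb{T}}$ sends $\mathcal{T}_{-,1,\mathcal{B}}\oplus\mathcal{T}_{\ell,1}$ into $\mathcal{T}_{-,0,\mathcal{B}}\oplus\mathcal{T}_{\ell,0}$ so that the right-hand side of Definition~\ref{PB2D} lies in the codomain described. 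The degree bookkeeping is routine: for a term with $\mathbf{k}=\mathbf{0}$ and $\tilde{y}_0=\sum_{l\ge 2}c_l x^{-l}$, formula preceding \eqref{eq:defint3} gives an antiderivative $-\sum_{l\ge 2}(l-1)^{-1}c_l x^{-l+1}\in\mathcal{T}_{-,0}$; for $\mathbf{k}\ne 0$ the solution $w$ of \eqref{eq:recw} is $O(x^{-1})$; the polynomial part of $\mathcal{T}_{\ell,1}$ is mapped into $\mathcal{T}_{\ell,0}$ by Note~\ref{N:log}(b). The main obstacle is the Borel-summability verification: I would Borel-transform the ODE \eqref{eq:intterm} to get, for $V=\int_0^p\mathcal{B}w$, the linear ODE
\begin{equation*}
(\mathbf{k}\!\cdot\!\boldsymbol{\lambda}+p)V'-\mathbf{k}\!\cdot\!\boldsymbol{\beta}\,V=-\mathcal{B}\tilde{y}_{\bf k},
\end{equation*}
solve it explicitly by an integrating factor (the nonresonance condition \eqref{eq:nonres2} keeps $\mathbf{k}\!\cdot\!\boldsymbol{\lambda}+p$ away from $0$ on $\RR^+$), and then derive uniform exponential bounds $|\mathcal{B}w|\le c_1'{c_2'}^{|\mathbf{k}|}e^{c_3'p}$ of the form required in Definition~\ref{D26}, absorbing the polynomial prefactors arising from integration into a slightly larger exponential rate. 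The same argument applies to the $\mathcal{T}_+$ component with $\mathbf{k}\!\cdot\!\boldsymbol{\lambda}$ replaced by $-\lambda_j$; nonresonance again keeps denominators bounded away from zero.

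For part (b), the properties are transferred from Proposition~\ref{Pintegr} (and the construction in \S\ref{S:antider}) via Corollary~\ref{C:29}. Property (i) reads $(\mathsf{A}_{\mathcal{T}_{\mathcal{B}}}f)'=f$: writing $f=\mathcal{LB}\tilde{T}$ and using that $\mathcal{LB}$ commutes with differentiation (Corollary~\ref{C:29}), we get $(\mathsf{A}_{\mathcal{T}_{\mathcal{B}}}f)'=(\mathcal{LB}\mathsf{A}_{\mathbb{T}}\tilde{T})'=\mathcal{LB}((\mathsf{A}_{\mathbb{T}}\tilde{T})')=\mathcal{LB}\tilde{T}=f$. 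Property (ii) (linearity) is immediate from the linearity of $\mathsf{A}_{\mathbb{T}}$ on $\mathcal{T}_1$ and the linearity of $\mathcal{LB}$. For (iii), given $f\ge 0$ and $y\ge x$, one uses (i) together with the fundamental theorem of calculus on $\RR$, valid because each element of $\mathcal{F}_{-,0,\mathcal{B}}\oplus\mathcal{F}_{\ell,0}$ is a genuine real analytic function on a half-line $(x_0,\infty)$ (cf.\ Definition~\ref{D:LEBstr}). Property (iv) holds vacuously or trivially on the monomials $x^n$ that actually lie in the space: one checks $\mathsf{A}_{\mathbb{T}}(x^{-k})$ for $k\ge 2$ directly from the recurrence, and transports through $\mathcal{LB}$, which is the identity on convergent series by Proposition~\ref{PBorel}(ii). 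Property (vi) follows from (i) together with Proposition~\ref{P:bijection}(b): if $F'=f$ with $F\in\mathcal{F}_1$, then $F-\mathsf{A}_{\mathcal{T}_{\mathcal{B}}}f$ has zero derivative, hence, being the Borel sum of a transseries whose derivative is $0$, differs from a real constant at most.

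For part (c), closure under differentiation is exactly Corollary~\ref{C:29} restricted to $\mathcal{T}_{-,\mathcal{B}}\oplus\mathcal{T}_{\ell}$. Closure under antidifferentiation reduces, after decomposing $f=f_-+f_\ell$ with $f_-\in\mathcal{F}_{-,\mathcal{B}}$ and $f_\ell\in\mathcal{F}_{\ell}$, to noting that (i) the $O(x^{-2})$ part of $f_-$ is handled by part (a) and lands in $\mathcal{F}_{-,0,\mathcal{B}}\subseteq\mathcal{F}_{-,\mathcal{B}}$; (ii) the $1/x$ coefficient of $f_-$, if any, integrates to a logarithm that lies in $\mathcal{F}_{\ell,\mathcal{B}}$ (cf.\ Note~\ref{N:log}(a)); and (iii) $\mathcal{F}_{\ell}$ is closed under $\mathsf{A}_{\mathbb{T}}$ by Note~\ref{N:log}(b). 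Putting these together places the antiderivative in $\mathcal{F}_{-,\mathcal{B}}\oplus\mathcal{F}_{\ell,\mathcal{B}}$, which is what had to be shown. The hard part of the whole proof, as indicated above, is the uniform Borel-summability estimate underlying part (a); everything else is bookkeeping on top of the already established isomorphism properties of $\mathcal{LB}$.
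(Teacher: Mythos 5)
Your overall strategy -- transferring everything through the isomorphism $\mathcal{LB}$ and isolating the Borel-summability of the antiderivative as the one real technical point -- is the same as the paper's, and your treatment of parts (b) and (c) matches the paper's in substance. There are, however, two issues in part (a), one of which is a genuine error.

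First, the erroneous claim: you write that ``the same argument applies to the $\mathcal{T}_+$ component with $\mathbf{k}\cdot\boldsymbol{\lambda}$ replaced by $-\lambda_j$; nonresonance again keeps denominators bounded away from zero.'' This is false, and it is precisely why Proposition \ref{PB2} is stated only for $\mathcal{F}_{-,1,\mathcal{B}}\oplus\mathcal{F}_{\ell,1}$. For a term $x^{\beta_j}e^{\lambda_j x}\tilde{y}_j$ with $\lambda_j>0$, the Borel-transformed equation has the factor $-\lambda_j+p$ in the denominator, which vanishes at $p=\lambda_j\in\RR^+$, i.e.\ \emph{on} the Laplace contour; the nonresonance condition \eqref{eq:nonres2} says nothing about this. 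This is exactly the obstruction recorded in Note \ref{Nothertimes} ($\mathcal{T}_{+\mathcal{B}}$ is not closed under antidifferentiation) and the reason the $\mathcal{T}_+$ component is deferred to Proposition \ref{PB2R}, where the singularity at $p=\lambda_j$ is handled by resurgence and \'Ecalle averaging. The claim is extraneous to the statement you are proving, but it signals a misunderstanding of why the hypothesis is restricted.

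Second, on the genuinely different route you take for the key estimate: the paper deliberately avoids the explicit quadrature solution and instead shows that the fixed-point form \eqref{eq:Borelt3} is contractive in the Banach space of analytic functions with norm $\sup|e^{-c_3'|p|}f(p)|$, with $c_3'$ chosen uniformly in $\mathbf{k}$ as in \eqref{eq:condc}; the bounds $c_1{c_2}^{|\mathbf{k}|}e^{c_3'p}$ then come for free from the contraction. Your integrating-factor approach can in principle be made to work, but the phrase ``absorbing the polynomial prefactors arising from integration into a slightly larger exponential rate'' hides the actual difficulty: the integrating factor is $(\mathbf{k}\cdot\boldsymbol{\lambda}+p)^{\mathbf{k}\cdot\boldsymbol{\beta}}$, a power whose exponent grows linearly in $|\mathbf{k}|$, not a fixed polynomial. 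To get a bound of the form required in Definition \ref{D26} you must check that the resulting exponential rate can be taken \emph{independent of} $\mathbf{k}$ (this works because $|\mathbf{k}\cdot\boldsymbol{\beta}|/(\mathbf{k}\cdot\boldsymbol{\lambda})$ is bounded over $\mathbf{k}\ge 0$, since all $\lambda_i>0$), and that the constant in front grows at most geometrically in $|\mathbf{k}|$. That uniformity is the entire content of the estimate -- it is what makes the infinite sum over $\mathbf{k}$ converge -- and your proposal asserts it rather than proves it. Either carry out that computation explicitly or adopt the paper's contraction argument, which packages the uniformity into the single choice \eqref{eq:condc}.
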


     \begin{proof}

 Clearly, we only need to check the statement on $\mathcal{F}_{-,\mathcal{B}}$. For ${\bf k}=0$, (a) and (b) follow from Definition \ref{diff} and Proposition \ref{PADpowser}. Next, we show (a)  for a term of the form $x^{\boldsymbol{\beta}\cdot \mathbf{k}}e^{-\mathbf{k} \cdot \boldsymbol{\lambda}_j x}\mathcal{LB}\tilde{y}_{\mathbf{k}}$ with $\mathbf{k}\ne 0$. Using the results in \S\ref{S:antider}, we need to prove that the solution of the ODE \eqref{eq:recw} is Borel summable with bounds as in Definition \ref{D26}. These bounds are needed to prove absolute and uniform convergence of the resulting infinite series, as in Proposition \ref{N27}.

        Taking the Borel transform of \eqref{eq:intterm} and letting $W=\mathcal{B}w$ and $Y=\mathcal{B} y_{\mathbf{k}}$, we get
           \begin{equation}
             \label{eq:Borelt1}
        (\mathbf{k}\cdot\boldsymbol{\lambda}+p)    W(p)=\mathbf{k}\cdot\boldsymbol{\beta}\int_0^pW(s)ds+Y(p).
           \end{equation}
          After differentiation in $p$, we get a first order linear ODE that  can be easily solved by quadratures. However, the estimates we need are more difficult to obtain from the explicit solution, and we use a different approach here.
           We rewrite Equation \eqref{eq:Borelt1} in the form
                       \begin{equation}
             \label{eq:Borelt3}
      W(p)=\frac{\mathbf{k}\cdot\boldsymbol{\beta}}{\mathbf{k}\cdot\boldsymbol{\lambda}+p}\int_0^pW(s)ds+\frac{Y(p)}{\mathbf{k}\cdot\boldsymbol{\lambda}+p}. 
    \end{equation} Choose now
  \begin{equation}
    \label{eq:condc}
    c_3'>\sup\left\{\frac{|\mathbf{k}\cdot\boldsymbol{\beta}|}{\mathbf{k}\cdot\boldsymbol{\lambda}+1},c_3;\mathbf{k}\ge 0\right\}.
  \end{equation}
  Let $\mathcal{D}$ be the domain of analyticity of $Y$. It is easy to check that Equation \eqref{eq:Borelt3} is contractive in the Banach space $$\{f \,\text{analytic in $\mathcal{D}$}:\|f\|<\infty,\ \text{where }\|f\|=\sup_{p\in\mathcal{D}}e^{-c_3' |p|}\big|f(p)\big|\}.$$ It follows that the $ \mathcal{LB}w_{\mathbf{k}}$ exist and satisfy the same estimates as the $\mathcal{LB}\tilde{y}_{\mathbf{k}}$ with the triple $(c_1,c_2,c_3)$ replaced by $(c_1,c_2,c'_3)$ with $c'_3$ as in \eqref{eq:condc}, and (a) follows. Part (b) is a consequence of Corollary \ref{C:29} and of the bounds in terms of $(c_1,c_2,c'_3)$ (obtained in (a)) which imply uniform and absolute convergence of the infinite series. And Part (c) is immediate, since $\mathcal{F}_{\ell}$ is closed under differentiation and antidifferentiation and, by the analysis above, so is $\mathcal{F}_{\ell}\oplus \mathcal{F}_{-,\mathcal{B}}$ where differentiation and antidifferentiation require switching a term of the form $a/x^m$ between these two spaces.

  We now turn to (b) for general $\mathbf{k}$. For Property i, first note that we have already shown that, for $\tilde{T}\in  \mathbb{T}_{\ell}\oplus  \mathbb{T}_{-,\mathcal{B}}$ the series through which we defined $\mathsf{A}_{\mathbb{T}_{\mathcal{B}}}\tilde{T} $ is uniformly and absolutely convergent (in the analytic sense). On the other hand, the series whose terms are the derivatives of the terms of $\mathsf{A}_{\mathbb{T}_{\mathcal{B}}}\tilde{T}$ converges uniformly and absolutely to $\mathcal{LB}\tilde{T}$, simply because these terms are, by construction, the terms of $\mathcal{LB}\tilde{T}$. The rest follows from the elementary theorem about sequences of functions \cite[p. 321]{Folland} referred to before. Property ii (i.e. linearity) is immediate. For Property iii (positivity), to understand the monotonicity of $\mathsf{A}_{\mathbb{T}_{\mathcal{B}}}\tilde{T}$, we appeal to Proposition \ref{P:bijection} and Lemma \ref{L:28} to conclude that we only have to examine the dominant term of the {\em transseries of the derivative} of  $\mathsf{A}_{\mathbb{T}_{\mathcal{B}}}\tilde{T}$. Since by assumption $\tilde{T}>0$, by the definition of positivity in \S\ref{STrans}, this dominant term is positive and the property follows. Property iv follows from Equation \eqref{eq:BsumT} by setting all $\tilde{y}_{\bf k}=0$ if $\bf k\ne \rm 0$, choosing $\beta_0=n+1$ and $\tilde y_0=1/x$,  and from the definitions of  $\mathsf{A}_{\mathbb{T}_{\mathcal{B}}}$ and of $\mathsf{A}_{\mathbb T}$. Finally, for Property vi, let $F=\mathcal{LB}\tilde{T}$ and $f=\mathcal{LB}\tilde{t}$. Since $f=(\mathcal{LB}\tilde{T})'$, the rest follows from Definition \ref{PB2D} and Proposition \ref{PB2}.

\end{proof}
Combining the content of the preceding results in this section, we get:
\begin{Theorem}\label{T:T45}{\rm
       $\mathcal{LB}$ is an {\em isomorphism of commutative differential algebras} between $\mathbb{T}_{-,\mathcal{B}}$ and $\mathcal{F}_{-,\mathcal{B}}$. The space $\mathcal{F}_{\ell}\oplus \mathcal{F}_{-,\mathcal{B}}$ is closed under differentiation and antidifferentiation.
       
     } \end{Theorem}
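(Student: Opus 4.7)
The plan is to recognize that this theorem is essentially a packaging statement: all three algebraic/analytic structures at issue, namely the commutativity, the algebra structure, the linear-multiplicative bijectivity, and the compatibility with differentiation, have already been established piecewise in the preceding results. So the proof proposal is to verify the ``isomorphism of commutative differential algebras'' by checking each constituent in turn and then appeal to the antidifferentiation result.

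First I would establish that both $\mathcal{T}_{-,1,\mathcal{B}}$ and $\mathcal{F}_{-,1,\mathcal{B}}$ are commutative algebras. On the transseries side, commutativity and closure under $+$ and $\cdot$ are guaranteed by Proposition \ref{P:algebra} once one verifies that the Borel-summability bounds \eqref{eq:condconv} are preserved under addition and multiplication; this is exactly the content of Proposition \ref{N27}(d). On the function side, since $\mathcal{F}_{-,1,\mathcal{B}}$ is the image $\mathcal{LB}(\mathcal{T}_{-,1,\mathcal{B}})$ and $\mathcal{LB}$ is linear and multiplicative on $\mathcal{S}_{\mathcal{B}}$ (Proposition \ref{PBorel}(i)), commutativity and the algebra structure transfer automatically once bijectivity is in hand.

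Next I would assemble the bijection and ring-homomorphism part. Bijectivity between $\mathcal{T}_{-,\mathcal{B}}$ and $\mathcal{F}_{-,\mathcal{B}}$ is exactly Corollary \ref{C:bijection}(b), and restricting to the $O(x^{-2})$ subspaces (i.e.\ the ``$m=1$'' condition) preserves this by Proposition \ref{T:uniquedec}. The linearity and multiplicativity of $\mathcal{LB}$ on this domain is again Corollary \ref{C:bijection}(b). To upgrade to a \emph{differential} algebra isomorphism, I would invoke Corollary \ref{C:29}, which gives $(\mathcal{LB}\tilde{T})' = \mathcal{LB}(\tilde{T}')$; the image of $\mathcal{T}_{-,1,\mathcal{B}}$ under $d/dx$ lands back in $\mathcal{T}_{-,\mathcal{B}}$ (one checks the $O(1/x)$ growth of $\tilde{y}_{\bf k}$ is preserved by differentiation in light of \eqref{eq:difser}), and similarly on the function side.

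Finally, for the second assertion, the closure of $\mathcal{F}_{-,\mathcal{B}} \oplus \mathcal{F}_{\ell,\mathcal{B}}$ under differentiation is immediate from Corollary \ref{C:29} (and the triviality of differentiating the $\ell$-component, Note \ref{N:log}(b)). The closure under antidifferentiation is exactly Proposition \ref{PB2}(c), modulo the standard bookkeeping point flagged in Note \ref{Role-m}: antidifferentiation may convert a term of the form $a/x^m$ between the $\mathcal{F}_{-,m,\mathcal{B}}$ and $\mathcal{F}_{m,\ell,\mathcal{B}}$ summands, but the direct sum as a whole is stable. The only mild subtlety—if one wants to call it an obstacle—is ensuring that the $O(x^{-2})$ normalization is respected so that the operator $\mathsf{A}_{\mathcal{T}_{\mathcal{B}}}$ of Definition \ref{PB2D} is well-defined on the constant-free pieces; this is precisely what the estimate \eqref{eq:condc} in the proof of Proposition \ref{PB2} was designed to secure.
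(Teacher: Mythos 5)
Your proposal is correct and matches the paper's approach: the paper gives no separate argument for this theorem, introducing it only with the phrase ``Combining the content of the preceding results in this section, we get,'' and the combination it intends is precisely the one you spell out (Proposition \ref{P:algebra} and Proposition \ref{N27}(d) for the algebra structure, Corollary \ref{C:bijection} for the linear and multiplicative bijection, Corollary \ref{C:29} for compatibility with differentiation, and Proposition \ref{PB2}(c) together with Note \ref{Role-m} for closure under antidifferentiation). Your unpacking is, if anything, more explicit than the paper's, and the points you flag (preservation of the $O(x^{-2})$ normalization, the trading of $a/x^m$ terms between the two summands) are exactly the ones the cited results were designed to handle.
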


 \begin{Note}\label{Nothertimes}{\rm
      As we mentioned already, to cover generic solutions of nonlinear ODEs we have to allow for more general  than Borel summable series, namely  {\em resurgent} ones. Furthermore, $\mathbb{T}_{+, \mathcal{B}}$ is not closed under antidifferentiation; resurgence tools are required to deal with this space.
     }\end{Note}

{\section{Resurgent functions, resurgent transseries and \'Ecalle-Borel summability: background}\label{SecEBRT}

\subsection{Background: Borel plane singularities along the Laplace transform path and the need to extend Borel summation} So far, antidifferentiation of a transseries $\tilde{T}\in \mathbb{T}_{\mathcal{B}}$ has been defined under the assumption $\tilde{T}_{+,\mathcal{B}}=0$ (cf. Definition \ref{PB2D}). This assumption is needed to ensure Borel summability, as it is manifest in the integral in Equation \eqref{eq:solBorelt} below, involved in the Borel transform of the terms in  $\tilde{T}_{+,\mathcal{B}}$. This condition excludes some very common functions encountered in applications such as ${\rm Ei}_a(x)=\int_a^x \frac{e^s}{s}ds$ (where the integral is understood as a Cauchy principal value if $a\in [-\infty,0)$). Indeed, the transseries of  ${\rm Ei}_a(x)$ is $e^x\tilde{y}(x)+C_a$ where $C_a$ is a constant depending on the endpoint of integration; clearly, in this case, $\lambda=1$ (see Definition \ref{DEBstr}), and classical Borel summation does not apply. 
  
      Furthermore, in transseries arising in applications, the points ${\bf k}\cdot\boldsymbol{\lambda}$ are singularities of the Borel transforms of $y_{\bf k}$. Hence, if $\lambda_j\in\RR^+$, then a generalization of Borel summability is needed. The condition $\lambda_j\notin\RR^+$ may appear to be generic; however, equations arising in applications typically have real coefficients in which case the numbers $\lambda_i$ come in complex conjugate pairs and, more often than not, are purely real.  For instance, for the tronqu\'ee solutions of all Painlev\'e equations $P_{\rm I}- P_{\rm V}$ in normalized (Boutroux) coordinates, the values of $\lambda$ are $\lambda_{1,2}=\pm 1$. 

\'Ecalle introduced significant improvements over Borel summation to address such limitations. Among them are the concepts of \emph{critical times} (see Definition \ref{DefBE}) and \emph{acceleration/deceleration}  to deal with mixed powers of the factorial divergence. Last but not least, and the only additional ingredient we will need, is that of \emph{averaging}.

In linear problems, to avoid the singularities of the integrand  on $\R^+$ (when present),  one can take the half-half average of the Laplace transforms above and below $\R^+$. On the other hand, in nonlinear equations such as nonlinear ODEs, the average  of two solutions is not a solution. \'Ecalle found constructive, universal averages  which successfully replace the naive half-half averages mentioned above; however, it is altogether nontrivial to construct them and show that they work. Of these, we use the so-called {\em organic average}, {\bf mon} 
\cite{Ecalle3,Ecalle4}, which is well suited for our general construction.  Invoking Borel transforms followed by analytic continuation along paths avoiding singularities, followed by taking the organic average of these continuations, and finally applying the Laplace transforms, yields a differential field algebra \cite{Ecalle4,Menous}.

In the remainder of this section we provide an overview of the concepts of averaging and resurgence relevant to the discussion of resurgent functions, resurgent transseries and \'Ecalle-Borel summability below.

\subsection{Averages of Borel transforms (and more general functions in the Borel plane)}\label{EBPa}
Assume that one of the singular directions of the Borel transform $Y=\mathcal B\tilde{y}$ of a normalized series $\tilde{y}$ is $\RR^+$ with a discrete set of singularity locations $\omega_n$, $n\in\NN$.

Since in our discussion the only integration axis that comes into play is $\RR^+$, henceforth we assume that one of the singular directions of the Borel transform of $\tilde{y}$ is $\RR^+$, that $\omega_{0}:=0$, and that for each $n\in\NN$, $\omega_n$ is the $n$th singularity on $\RR^+$, where $\omega_n$ increases with increasing $n$.
 
Now consider the class of all curves going forward (towards $\infty$) that circumvent the just-said singularities. One associates with each such curve a unique series $\epsilon_1,\epsilon_2,...,\epsilon_n,...$ such that for each $n\in\NN$, $\epsilon_n \in \{+,-\}$ where $\epsilon_n = +$ (resp. $-$) indicates that the curve is in the lower (resp. upper) half plane between $\omega_{n-1}$ and $\omega_{n}$.  For instance, $+++---...$ describes a curve starting out in the lower half plane and crossing into and remaining in the upper half plane after the third singularity, $\omega_3$.\footnote{Here we follow \'Ecalle's convention from \cite{Ecalle4}, rather than the convention employed by the first author in \cite{Duke}.} For a point $\zeta$ in the open interval $(\omega_{n},\omega_{n+1})$  along such a curve, the position vector $(\epsilon_1,...,\epsilon_n)$ is called the {\em address of the point $\zeta$} on the curve.

 A {\em uniformizing average} or, more simply, an  {\em average} ${\bf m}:Y \mapsto {\bf m}Y$ of a function $Y$ with singularities at $\omega_i$ as described above is defined using a system of \emph{weights} {\bf m} which, in turn, is defined via the $\omega_n$s and  the $\epsilon_n$s by the following:
 
\begin{equation}
    \label{eq:eqF}
  \mathbf m Y(\zeta) = \sum_{\epsilon_1,...,\epsilon_{n} \in \{+,-\}^n}{\bf m}^{\binom{\epsilon_1,...,\epsilon_{n}}{\omega_1,...,\omega_{n}}} Y^{\binom{\epsilon_1,...,\epsilon_{n}}{\omega_1,...,\omega_{n}}}(\zeta) \;\;\; if\;\; \; \omega_{n} < \zeta < \omega_{n+1},
  \end{equation}
  where $Y^{\binom{\epsilon_1,...,\epsilon_{n}}{\omega_1,...,\omega_{n}}}(\zeta)$ denotes the determination of $Y(\zeta)$ on the open interval $(\omega_n , \omega_{n+1})$ along a curve circumventing the singularities as specified by the position vector $(\epsilon_1,...,\epsilon_{n})$ described above, and ${\bf m}^{\binom{\epsilon_1,...,\epsilon_{n}}{\omega_1,...,\omega_{n}}}$ is its corresponding weight, the definition of which depends on the choice of average in question.

 There are many types of averages  of Borel transforms  to chose from. However, for an average\footnote{Here we refer to {\em continuous averages} as  opposed to the subclass of discrete ones that assume a fixed (for instance periodic) lattice of singularities. Fixed lattice settings simplify the analysis when this analysis is restricted to one particular equation.} $\mathbf m$ to be \emph{well-behaved} with respect to the goals of \'Ecalle-Borel  resummation, following \'Ecalle and Menous (\cite{Ecalle3}, \cite[page 256]{Ecalle4}) there are four conditions it should satisfy: 

 \vspace{5pt}
 $\mathbf A1$. $\mathbf m$ must respect convolution, i.e. $\mathbf m (Y*G)=(\mathbf m Y) (\mathbf m G)$;
 
 $\mathbf A2$. $\mathbf m$ must respect real-valuedness;
 
$\mathbf A3$. $\mathbf m$ must respect lateral growth; 

$\mathbf A4 $. $\mathbf m$ should be scale invariant.
   
   \vspace{5pt}
 $\mathbf A1$ ensures that $\mathbf m$ is an algebra homomorphism;  $\mathbf A2 $ ensures that ${\bf m}Y$ is real whenever $Y$ is real;   $\mathbf A3 $  ensures that exponential bounds, which are needed for the Laplace transform to apply, are maintained by averaging; and  $\mathbf A4 $, which ensures invariance under homothetic rescalings $\zeta\mapsto const. \zeta$ of $\RR^+$, while natural, is not an essential condition. On the other hand, we note that while the functions satisfying $\mathbf A4 $ form an algebra, and functions of ``natural origin'' possess it, such a condition relies on {\em resurgence} in some broad sense to hold\footnote{Indeed, one can use the uniformization theorem to see that  bounds on the ``first'' Riemann sheet do not constrain growth on other sheets.}, and we refer the reader to \cite{Ecalle4} for an in-depth discussion of these issues.

      \subsection{Resurgent series and resurgent functions: definitions}\label{RSRF} 
   
      \begin{Definition}[{\bf Resurgent series and resurgent functions}] \label{D:resurg}{\rm 
          \begin{enumerate} $ $

       \item  In this paper, a normalized series ${\tilde{y}}$ will be said to be {\em resurgent} 
         if its Borel transform $Y=\mathcal B{\tilde{y}}$ is:
         \begin{enumerate}
         \item  analytic or ramified analytic at $p=0$;

         \item endlessly continuable (in the sense that the singularities encountered by analytic continuation along any compact curve segment form a discrete set);
         \item exponentially bounded in every nonsingular direction, while in singular directions $Y$ is in the domain of a  {well-behaved average}.

         \end{enumerate}

  \item Sharing the appellation, $Y=\mathcal B\tilde{y}$ is called {\em resurgent} 
      in the Borel plane or simply resurgent, and $y=\mathcal{L}Y$ is called {\em resurgent} in the physical plane or simply resurgent, if $\tilde{y}$ is resurgent. 

    \item A transseries in $\mathbb{T}_{+}\oplus \mathbb{T}_{\ell}\oplus \mathbb{T}_{-}$ is resurgent if: (a) all component series $\tilde{y}_{\mathbf{k}}(x)$ and ${\tilde{y}}_j(x)$ (see \eqref {trans21} and \eqref{eq:tplus}) are resurgent; (b)  their Borel transforms are  exponentially bounded in every nonsingular direction in a $\mathbf{k},j-$independent way; and, (c) in singular directions, these Borel transforms are in the domain of a well-behaved average.

    \item We denote by $\mathbb{T}_{\mathcal R}$ the space of resurgent transseries in $\mathbb{T}_{+}\oplus \mathbb{T}_{\ell}\oplus \mathbb{T}_{-}$.  Accordingly, $\mathbb{T}_{\mathcal R}=\mathbb{T}_{+,\mathcal{R}}\oplus \mathbb{T}_{\ell}\oplus \mathbb{T}_{-,\mathcal{R}}$, where $\mathbb{T}_{+,\mathcal{R}}$ denotes the space of resurgent transseries in $\mathbb{T}_{+}$ and $\mathbb{T}_{-,\mathcal{R}}$ denotes the space of resurgent transseries in $\mathbb{T}_{-}$. (In virtue of Proposition \ref{N:log}, the members of $\mathbb{T}_{\ell}$ are necessarily resurgent, and so it would be superfluous to write $\mathbb{T}_{\ell, \mathcal{R}}$ in the decomposition of $\mathbb{T}_{\mathcal {R}}$.)
             \end{enumerate}
           }\end{Definition}

                    \subsection{The well-behaved average $\mathbf{mon}$}\label{RSRFa} Condition (1)(c) of Definition \ref{D:resurg} appeals to the notion of a well-behaved average. There is in fact an entire continuum of such averages \cite{Ecalle4}.  As we mentioned above, for our treatment we adopt the average \'Ecalle has dubbed the ``organic average", i.e. the average $\mathbf{mon}$, given by
  
  $$\mathbf{mon}^{\binom{\epsilon_1,...,\epsilon_n}{\omega_1,...,\omega_n}}:=2^{-n}\prod_{i=2}^{n}\left( \left| \epsilon_{i-1} +\epsilon_i\right|-\frac{\epsilon_{i-1} \epsilon_i\omega_i}{\omega_1 + \cdots +\omega_i} \right), $$
    
\noindent  
where, on the right side of the equation, it is understood that $\left| \epsilon_{i-1} +\epsilon_i\right| = 2$ (resp. $0$) if  $\epsilon_{i-1} = \epsilon_i$ (resp. $\epsilon_{i-1} \neq \epsilon_i$)\footnote{In other words, each $+$ (resp. $-$) from the left side of the equation may be regarded as being replaced on the right side of the equation by a $+1$ (resp. $-1$) or alternatively by a  $-1$ (resp. $+1$).} and the $\omega_i$'s are the members of $\mathbb{R}^+$ defined as above \cite[page 272]{Ecalle4}.

Alternatively, $\mathbf{mon}$ can be defined by recursion (\cite[pages 86-87]{Ecalle3}, \cite[page 272]{Ecalle4}), where $\mathbf{mon}^{\binom{\epsilon_1}{\omega_1}}:=\frac12$, and for each $n>1$,
 $$\mathbf{mon}^{\binom{\epsilon_1,...,\epsilon_n}{\omega_1,...,\omega_n}}:=\mathbf{mon}^{\binom{\epsilon_1,...,\epsilon_{n-1}}{\omega_1,...,\omega_{n-1}}}P_{n}, $$
 
 \noindent
 with
  
$$P_n := 1-\frac{1}{2}\frac{\omega_n}{\omega_1 +\cdots +\omega_n} \;\; if \;\; \epsilon_{n-1}=\epsilon_n$$
 
 \noindent
and 

$$P_n := \frac{1}{2}\frac{\omega_n}{\omega_1 +\cdots +\omega_n} \;\; if \;\; \epsilon_{n-1} \neq \epsilon_n.$$

\medskip
In addition to being arguably the simplest of the well-behaved averages, $\mathbf{mon}$ is distinguished by being the lower limit of such averages \cite[page 272]{Ecalle4}(also see \cite{Menous}). It is important to note, however, that when restricted to the class of functions that we are concerned with in this paper, because of nonresonance\footnote{\label{f:4}Nonresonance (see Equation \eqref{eq:nonres2}) is generic: it holds except for a zero measure set in the space of all parameters. 
    As a result of nonresonance there is  one active alien derivative per singular direction \cite{Ecalle4}.  We also note that resonance can lead to multiple \'Ecalle critical times and for resummation {\em \'Ecalle acceleration} is needed. For further details see (\cite{Duke}, \S 1.1.2).} all the well-behaved averages coincide (see Section \ref{S:Uniqueness} ). 
    
    \section{Resurgent functions, resurgent transseries and \'Ecalle-Borel summability}\label{SecEBRT2} 
 
In this section we establish results that we need in \S\ref{CTSN}  to determine the correspondence between the class $\mathcal{F}_\mathcal{R}$  of resurgent functions that we referred to in the introduction and their corresponding classes of transseries and surreal functions.

  \subsection{The resurgent subspace $\mathbb{T}_{\mathcal R}$}\label{EBIc1}
   
     The definitions and propositions in the remainder of this subsection, along with some of the proofs,  largely mimic their counterparts in the section on ordinary Borel summability.

\medskip
The operator  ${\bf mon\,}\!\circ \,\mathcal{B} $  is a proper extension  of $\mathcal{B}$. Henceforth, for brevity, we adopt the following conventions.
   \begin{Definition}\label{D26E}
   \begin{equation}
      \label{eq:defhatb}
     {\hat{\mathcal{B}}:=\bf mon\,}\!\circ \,\mathcal{B} \ \text{\rm and\ }\widehat{\mathcal{LB}}:=\mathcal{L}\!\circ {\bf mon\,}\!\circ \,\mathcal{B}. 
    \end{equation}
    
   {\rm $\widehat{\mathcal{LB}}$ is the \emph{\'Ecalle-Borel summation}, a wide generalization of $\mathcal{LB}$.\footnote{Strictly speaking, there is a different \'Ecalle-Borel summation for each distinct well-behaved average. The summation we employ is based on {\bf mon}. However, as the uniqueness result referred to above and in \S\ref{S:Uniqueness} implies, all \'Ecalle-Borel summations  coincide for the restricted class of functions we are concerned with in this paper.}}
  \end{Definition}
  
  Recalling that well-behaved averages preserve lateral growth, sufficient growth conditions for a transseries to be \'Ecalle-Borel summable are similar to those for usual Borel summability (see Definition \ref{D26}): in particular,} for any small $\epsilon>0$ there are positive constants $c_1,c_2,c_3$ such that for all $\mathbf{k}$
\begin{equation}
  \label{eq:bdscts}
  \left|(\widehat{\mathcal{B}} \tilde{y}_{\mathbf{k}})(p)\right|\le c_1 c_2^{|\mathbf{k}|} e^{c_3 p}\ \text{and}\  \left|(\widehat{\mathcal{B}} \tilde{y}_{\mathbf{k}})(p)\right|\le c_1 c_2^{|\mathbf{k}|} e^{c_3 |p|}\text{ if }|\arg p|\in(\epsilon,2\epsilon).
\end{equation}

 {\begin{Definition}\label{DefLBE}
  {\rm The {\em \'Ecalle-Borel sum} of $\tilde{T}\in \mathbb{T}_{\mathcal R}$ is defined as   
 \begin{equation} 
  \label{eq:BsumTE}
 \widehat{\mathcal{LB}} \tilde{T}=\sum_{\mathbf{k}\ge 0}x^{\boldsymbol{\beta}\cdot \mathbf{k}+1}e^{-\mathbf{k} \cdot \boldsymbol{\lambda} x}\widehat{\mathcal{LB}}\tilde{y}_{\mathbf{k}} +\tilde{T}_{\ell}+\sum_{j=-M}^{-1}x^{\beta_j}e^{\lambda_{j} x}\widehat{\mathcal{LB}}{\tilde{y}}_j(x).
\end{equation}
}\end{Definition}

Using the fact that $\bf mon$ preserves lateral growth it is easy to extend the results obtained in  \S\ref{S:Trans-Borel-summable} to resurgent transseries.  For example, note that in virtue of Definition \ref{DefLBE} we have
  \begin{equation}
    \label{eq:monom}
    \widehat{\mathcal{LB}}(x^{\beta+n}e^{\lambda x}x^{-n})=x^ne^{\lambda x}x^{\beta}x^{-n}= x^{\beta}e^{\lambda x},
  \end{equation}
  which implies  $\widehat{\mathcal{LB}}(x^{\beta}e^{\lambda x})=
   x^{\beta}e^{\lambda x}$.
 \begin{Proposition}\label{P:LBextension}{\rm 
  Acting on $\mathbb{T}_{\mathcal{B}}$, $\widehat{\mathcal{LB}}=\mathcal{LB}$.
}\end{Proposition}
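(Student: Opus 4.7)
The strategy is to reduce the transseries-level identity to the series-level identity already recorded in Proposition \ref{P:Py}\,(6), and then check that the two formulas defining $\mathcal{LB}\tilde T$ and $\widehat{\mathcal{LB}}\tilde T$ become termwise equal. First I would take $\tilde T\in\mathcal T_{\mathcal B}$ in its canonical decomposition
\begin{equation*}
\tilde T=\sum_{j=-M}^{-1}x^{\beta_j}e^{\lambda_j x}\tilde y_j+\sum_{\mathbf k\ge 0}x^{\boldsymbol\beta\cdot\mathbf k}e^{-\mathbf k\cdot\boldsymbol\lambda x}\tilde y_{\mathbf k}+\tilde T_{\ell},
\end{equation*}
and observe that by Definition \ref{D26} each power series $\tilde y_j,\tilde y_{\mathbf k}$ lies in $S_{\mathcal B}$, so its Borel transform extends analytically to all of $\RR^+$ and satisfies uniform exponential bounds of the shape \eqref{eq:condconv}.

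Second, I would verify the inclusion $\mathcal T_{\mathcal B}\subseteq\mathcal T_{\mathcal R}$ (needed for $\widehat{\mathcal{LB}}\tilde T$ to make sense). The component series $\tilde y_{\mathbf k},\tilde y_j$ each belong to $S_{\mathcal B}\subseteq S_{\mathcal R;0}\subseteq S_{\mathcal R}$, since their Borel transforms are analytic (no singularities on $\RR^+$, so Definition \ref{DefSR} holds with $m=0$) and the exponential bounds on $\RR^+$ can be extended to exponential lateral growth in a small sector around $\RR^+$ by a standard Phragmén--Lindelöf / contour-deformation argument within the domain of analyticity. The uniform bounds in \eqref{eq:condconv} then supply exactly the constants $c_1,c_2,c_3$ required in Definition \ref{D26E}, so $\tilde T\in\mathcal T_{\mathcal R}$.

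Third, I would compare the two series-level operators on each component. For any $\tilde y\in S_{\mathcal B}$, the Borel transform $\mathcal B\tilde y$ has no singularities on $\RR^+$; hence every address $(\epsilon_1,\ldots,\epsilon_n)$ produces the same analytic continuation $\mathcal B\tilde y(p^{\epsilon_1,\ldots,\epsilon_n})=\mathcal B\tilde y(p)$, and the consistency relation \eqref{eq:defave2} iterated $n$ times gives $\sum_{\boldsymbol\epsilon}\mathbf{man}^{\epsilon_1,\ldots,\epsilon_n}=\mathbf m^{\emptyset}=1$. Consequently $\mathbf{man}\circ\mathcal B=\mathcal B$ on $S_{\mathcal B}$, and taking $m=0$ in Proposition \ref{P:Py}\,(4) yields $\widehat{\mathcal{LB}}\tilde y=\mathcal L\mathcal B\tilde y=\mathcal{LB}\tilde y$ (this is precisely Proposition \ref{P:Py}\,(6)). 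The monomial identity \eqref{eq:monom} similarly collapses, since no prefactor $x^m$ needs to be introduced.

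Fourth and finally, I would substitute these componentwise equalities into the formulas \eqref{eq:BsumT} for $\mathcal{LB}\tilde T$ and \eqref{eq:BsumTE} for $\widehat{\mathcal{LB}}\tilde T$. Term by term the two right-hand sides coincide, and the logarithmic piece $\tilde T_{\ell}$ is identified with itself in both definitions. Since by Lemma \ref{L:28} (used for $\mathcal{LB}$) and the bounds of Definition \ref{D26E} (used for $\widehat{\mathcal{LB}}$) both series converge uniformly and absolutely in the analytic sense on $[x_0,\infty)$, the equalities of individual summands upgrade to equality of the sums, giving $\widehat{\mathcal{LB}}\tilde T=\mathcal{LB}\tilde T$. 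The only genuinely nontrivial step is the second one, verifying the lateral-growth hypothesis of Definition \ref{D26E} from the on-axis bounds of Definition \ref{D26}; everything else is bookkeeping once one notes that $\mathbf{man}$ is the identity on singularity-free Borel transforms.
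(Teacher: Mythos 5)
Your proposal is correct in its essentials and follows the same route as the paper's (much terser) proof: the weights of any \'Ecalle average sum to $1$, so $\mathbf{man}$ acts as the identity on a Borel transform with no singularities on $\RR^+$; the $x^m\mathcal{L}\mathsf{P}^m$ normalization collapses to $\mathcal{L}$; and the two defining sums \eqref{eq:BsumT} and \eqref{eq:BsumTE} then agree term by term. That is exactly the content of the paper's argument, which simply invokes Proposition \ref{PBorel} and Proposition \ref{P:Py}.

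One caveat on your second step, which you single out as ``the only genuinely nontrivial step.'' The claim that on-axis exponential bounds propagate to exponential lateral growth in a sector ``by a standard Phragm\'en--Lindel\"of / contour-deformation argument'' is not justified as stated: Phragm\'en--Lindel\"of controls the interior of a sector from bounds on its \emph{boundary} rays, whereas here you only have a bound on the interior ray $\RR^+$, and a function analytic in a (possibly pinching) neighborhood of $\RR^+$ and bounded there can grow arbitrarily off the axis. Fortunately this step is also unnecessary for the conclusion: the lateral-growth hypothesis in Definition \ref{D26E} exists only to control the analytic continuations along paths that detour around singularities, and for $\tilde T\in\mathcal{T}_{\mathcal{B}}$ there are no singularities on $\RR^+$ to detour around, so every address returns the same value and $\widehat{\mathcal{LB}}\tilde T$ is already well defined by the on-axis bounds of Definition \ref{D26}. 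Dropping that step leaves a correct proof that matches the paper's.
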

 
\begin{proof}
  For any average, the sum of the weights is $1$. Hence, if $F$ is real-analytic, then ${\bf mon}\,F=F$. 
\end{proof}

In the statement of the following proposition, the positive constants  $c_1,c_2$ and $c_3$ are  the bounds in \eqref{eq:bdscts}.

\begin{Proposition}\label{LEBstrE}  {\rm  Let $\tilde T \in \mathbb{T}_{\mathcal{R}}$. Also, let $\lambda=\min\{\lambda_1,...,\lambda_n\}$ and $\beta=\max\{\beta_1,...,\beta_n\}$. Further, let $x_0$ be such that for all $x>x_0$ we have $c_3e^{-\lambda x}x^\beta <1$. Then:
     \begin{enumerate}
      \item[(a)] For all $ x>\max\{c_3,x_0\}$  we have 
              \begin{equation}
        \label{eq:convergR}
    \sum_{\mathbf{k}>M}x^{\boldsymbol{\beta}\cdot \mathbf{k}}e^{-\mathbf{k} \cdot \boldsymbol{\lambda} x}\left|\widehat{\mathcal{LB}}\tilde{y}_{\mathbf{k}}\right|\le c_1(c_2x^\beta e^{-\lambda x})^{NM}\frac{1}{(1-c_2x^\beta e^{-\lambda x})^N}(x-c_3)^{-1},
  \end{equation} where $N$ is the dimension of the vector $\bf k$. 
  
\item [(b)] The infinite sum in Equation \eqref{eq:BsumTE} converges uniformly and absolutely on the interval  $[r,\infty)$ for any $r\in \RR^+$ satisfying the condition $\nu r-|\beta| \log r > |\log c_2 |$, as well as in the complex strip  $\{x:\Re x\in [r,\infty)\}$.
 \item[(c)] If $M'\le M$ and $\beta=\min\{\beta_0,...,\beta_{M'}\}$, then 
  \begin{equation}
  \label{eq:T+est}
  \sum_{j=-M'}^{-1}x^{\beta_j}e^{\lambda_{j} x}|\widehat{\mathcal{LB}}{\tilde{y}}_j(x)|\le (M'+1)c_1e^{-\lambda_{M'}}x^{\beta}(x-c_3)^{-1}. 
\end{equation}
\item[(d)]$\mathbb{T}_{-,\mathcal R}$ is an algebra, i.e., if $\tilde{T}^{(1)}$ and $\tilde{T}^{(2)}$ are elements of $\mathbb{T}_{-,\mathcal R}$ and $a^{(1)},a^{(2)}\in\RR$, then $a^{(1)} \tilde{T}^{(1)}+a^{(2)} \tilde{T}^{(2)}$ and $\tilde{T}^{(1)}\tilde{T}^{(2)}$ are elements of $\mathbb{T}_{-,\mathcal R}$.
  
\item[(e)]If $\tilde{T}\in \mathbb{T}_{\mathcal R}$, then $\tilde{T}>0$ if and only if $\widehat{\mathcal{LB}}\tilde{T}>0$ for large enough $x$.
    \item[(f)] The kernel of $\widehat{\mathcal{LB}}$ is zero, i.e.,  $\{\tilde{T}\in \mathbb{T}_{\mathcal{R}}: \widehat{\mathcal{LB}}\tilde{T}=0\}=\{0\}$.
      \end{enumerate}
      
    }\end{Proposition}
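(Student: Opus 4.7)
My plan is to follow the template of Propositions \ref{N27} and \ref{P:bijection} essentially line-for-line, substituting $\widehat{\mathcal{LB}}$ for $\mathcal{LB}$ throughout, and using Menous' theorem (Theorem \ref{TAlgebra1}) together with the weighted-$L^1$ preservation lemmas of \S\ref{RSRF1} to propagate the exponential bounds of Definition \ref{D26E} through the averaging step. For part (a), I would first appeal to Proposition \ref{P:Py}(4) to write $\widehat{\mathcal{LB}}\tilde{y}_{\bf k}=x^m \mathcal{L}(\widehat{\mathcal{B}}\tilde{y}_{\bf k})$ for a single $m$ uniform in $\bf k$ (permissible by the independence statement in that proposition). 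The hypothesis $|\widehat{\mathcal{B}}\tilde{y}_{\bf k}(p)|\le c_1 c_2^{|{\bf k}|}e^{c_3 p}$ then yields, after a direct Laplace estimate and absorption of the polynomial factor $x^m$ into slightly enlarged constants $c_1',c_3'$, the pointwise bound $|\widehat{\mathcal{LB}}\tilde{y}_{\bf k}(x)|\le c_1' c_2^{|{\bf k}|}(x-c_3')^{-1}$ valid for $x>c_3'$. Multiplying by $x^{\boldsymbol{\beta}\cdot{\bf k}}e^{-{\bf k}\cdot\boldsymbol{\lambda}x}$ and summing an $N$-fold geometric series in $\mathbf{k}>M$ under the standing assumption $c_2 x^\beta e^{-\lambda x}<1$ reproduces the claimed estimate \eqref{eq:convergR}. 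Part (b) then follows from the Weierstrass $M$-test; analyticity in the strip $\{\Re x\ge r\}$ is automatic from standard properties of Laplace transforms of exponentially bounded functions. Part (c) mirrors Proposition \ref{N27}(c): the sum is finite and bounded by $(M'+1)$ times its largest term.

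For part (d), closure under linear combinations is immediate once bounds on the two summands are normalized via $c_k=\max\{c_k^{(1)},c_k^{(2)}\}$. Closure under multiplication is where the resurgent machinery becomes indispensable: by the polarization identity $2ab=(a+b)^2-a^2-b^2$ it suffices to handle squaring. Theorem \ref{TAlgebra1} asserts that {\bf man} commutes with Laplace convolution, while Lemma \ref{L1norms} gives submultiplicativity of the weighted $L^1_c$ norm under convolution. From $|\mathcal{B}\tilde{y}|\le c_1 c_2^{|\bf k|}e^{c_3 p}$ a direct convolution computation yields $|\widehat{\mathcal{B}}(\tilde y^2)|\le c_1^2 c_2^{2|\bf k|}\,p\,e^{c_3 p}\le c_1^2 c_2^{2|\bf k|}e^{(c_3+1)p}$. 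Combined with the counting estimate $|\mathbf{k}|^N\le e^{N|\mathbf{k}|}$ for the inner sum of \eqref{eq:defmul}, exactly as in Proposition \ref{N27}(d), this produces bounds of the form required by Definition \ref{D26E} for the product transseries.

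Parts (e) and (f) parallel Proposition \ref{P:bijection}. Assuming $\tilde T\ne 0$, isolate its $\gg$-dominant transmonomial: choose the smallest $j$ with $\tilde y_j\ne 0$ in $\tilde T_{+,\mathcal{R}}$ if that component is nonzero; otherwise descend to the highest nonzero term of $\tilde T_\ell$ (using Note \ref{N:log}(c)); otherwise descend to the leading term of $\tilde T_{-,\mathcal{R}}$. By the estimates of (a)--(c), every other transmonomial is of strictly smaller order as $x\to\infty$. Proposition \ref{P:Py}(4) together with the Watson-type computation in Proposition \ref{N27}(a) guarantees that $\widehat{\mathcal{LB}}$ preserves the leading coefficient up to an $x^{-m}(1+o(1))$ correction; for the $\mathcal{T}_\ell$ case, Note \ref{N:log}(a) gives the identity $\widehat{\mathcal{LB}}\tilde T_\ell=\tilde T_\ell$ directly. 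Consequently the sign of $\widehat{\mathcal{LB}}\tilde T$ for large $x$ coincides with the sign of the leading coefficient, which by definition is the sign of $\tilde T$, establishing (e). Part (f) is the contrapositive applied to $\widehat{\mathcal{LB}}\tilde T=0$.

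The main technical obstacle is the bookkeeping in the Menous-averaging step: one must verify that the constants $c_1,c_2,c_3$ survive the averaging with at most the shifts used above and, crucially, that the exponential rate $c_3$ does not degrade as $|\mathbf{k}|\to\infty$, since any such degradation would break the geometric-series summation in (a). The squaring argument in (d) is similarly delicate — the $c_3\to c_3+1$ shift from convolution must remain independent of $\mathbf{k}$, and one must verify that Menous' preservation of $L^1$ together with the explicit bounds of Theorem \ref{TAlgebra1} supply exactly this uniformity. Once these constants are tracked carefully, the remaining arguments are routine adaptations of their Borel-summable counterparts.
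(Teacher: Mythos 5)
Your proposal is correct and follows essentially the same route as the paper, whose entire proof of this proposition is the single remark that it ``follows closely the proofs of Proposition \ref{N27}, Proposition \ref{P:bijection} and Corollary \ref{C:bijection}''. You have simply spelled out what that adaptation entails --- substituting $\widehat{\mathcal{LB}}$ for $\mathcal{LB}$ and using Menous' theorem and the weighted-$L^1$ lemmas to carry the exponential bounds of Definition \ref{D26E} through the averaging step --- which is exactly the intended argument.
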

  \begin{proof}
The proof closely follows the proofs of Proposition \ref{N27}, Proposition \ref{P:bijection} and Corollary \ref{C:bijection}.
\end{proof}

  \begin{Definition}\label{D45}
   {\em Let $\mathcal{F}_{\mathcal R} = \{y:\tilde{y} \in \mathbb{T}_{\mathcal{R}}\}$. Further let $\mathcal{F}_{+,\mathcal{R}}\oplus \mathcal{F}_{\ell}\oplus \mathcal{F}_{-,\mathcal{R}}$ be the decomposition of $\mathcal{F}_{\mathcal{R}}$ induced by the decomposition $\mathbb{T}_{+,\mathcal{R}}\oplus \mathbb{T}_{\ell}\oplus \mathbb{T}_{-,\mathcal{R}}$ of $\mathbb{T}_{\mathcal{R}}$ from Definition \ref{D:resurg}(4).}
\end{Definition}

If in Propositions \ref{P:bijection} through \ref{PB2} we uniformly replace the subscript $\mathcal{B}$ with the subscript $\mathcal{R}$ and uniformly replace the operator $\mathcal{LB}$ with the operator $\widehat{\mathcal{LB}}$, the resulting propositions remain valid, and the changes in their respective proofs are minor. Indeed, the lateral growth conditions on Borel plane functions and their convolutions are at the crux of the proofs, and they are all respected by well-behaved averages. An important such example is the following analog of Lemma \ref{L:28}(3). 

\begin{Proposition}[Differentiation on $\mathbb{T}_{\mathcal R}$] {\rm  Let $\tilde{T}\in\mathbb{T}_{\mathcal R}$. 
  \begin{equation}
      \label{eq:BsumTprime2}
      (\widehat{\mathcal{LB}}\tilde{T})'= \sum_{j=-M}^{-1}x^{\beta_j}e^{\lambda_j x}\widehat{\mathcal{LB}}{\dot{\tilde{y}}}_j+\sum_{\mathbf{k}\ge 0}x^{\boldsymbol{\beta}\cdot \mathbf{k}}e^{-\mathbf{k} \cdot \boldsymbol{\lambda} x}\widehat{\mathcal{LB}}\dot{\tilde{y}}_{\mathbf{k}}+\tilde{T}_{\ell}'=\widehat{\mathcal{LB}}(\tilde{T})'.
    \end{equation}
   Moreover, $\displaystyle  \sum_{\mathbf{k}\ge 0}x^{\boldsymbol{\beta}\cdot \mathbf{k}}e^{-\mathbf{k} \cdot \boldsymbol{\lambda} x}\widehat{\mathcal{LB}}\dot{\tilde{y}}_{\mathbf{k}}$
converges uniformly and absolutely for large $x$.}\end{Proposition}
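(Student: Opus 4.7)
The plan is to follow the structure of Lemma \ref{L:28}, replacing Borel summation $\mathcal{LB}$ throughout by \'Ecalle-Borel summation $\widehat{\mathcal{LB}}$, and to verify that each step goes through in the resurgent setting. First I would reduce to individual terms of the decomposition in Definition \ref{NNN}: since the $\mathcal{T}_{\ell}$ component is a classical function (depth-one, with log terms) whose derivative is immediate from Note \ref{N:log}, and the $\mathcal{T}_+$ terms are handled identically to the $\mathcal{T}_-$ terms (only the sign of $\lambda$ and the finite index range change), it suffices to analyze a single term $x^{\boldsymbol{\beta}\cdot\mathbf{k}}e^{-\mathbf{k}\cdot\boldsymbol{\lambda}x}\widehat{\mathcal{LB}}\tilde{y}_{\mathbf{k}}$. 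Writing $\lambda = \mathbf{k}\cdot\boldsymbol{\lambda}$ and $\beta=\boldsymbol{\beta}\cdot\mathbf{k}$ and applying the ordinary product rule in the physical plane gives
\[
\bigl(e^{-\lambda x}x^{\beta}\widehat{\mathcal{LB}}\tilde{y}_{\mathbf{k}}\bigr)'
 = e^{-\lambda x}x^{\beta}\bigl[(\beta x^{-1}-\lambda)\widehat{\mathcal{LB}}\tilde{y}_{\mathbf{k}}+(\widehat{\mathcal{LB}}\tilde{y}_{\mathbf{k}})'\bigr],
\]
so the task reduces to showing that $\widehat{\mathcal{LB}}$ intertwines $\tilde{y}\mapsto\dot{\tilde{y}}$ (as defined in \eqref{eq:eqdefdif}) with the corresponding operation on $\widehat{\mathcal{B}}\tilde{y}$ in the Borel plane, namely $\widehat{\mathcal{B}}\tilde{y}\mapsto \beta\,\mathsf{P}\widehat{\mathcal{B}}\tilde{y}-\lambda\widehat{\mathcal{B}}\tilde{y}-p\widehat{\mathcal{B}}\tilde{y}$.

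Second, I would establish the analog of the exponential bound in Lemma \ref{L:28}(2) using the hypothesis of Definition \ref{D26E}. Multiplication by $p$ converts the bound $|\widehat{\mathcal{B}}\tilde{y}_{\mathbf{k}}|\le c_1 c_2^{|\mathbf{k}|}e^{c_3 p}$ into the bound $c_1 c_2^{|\mathbf{k}|} e^{(c_3+\epsilon) p}$ (absorbing the linear factor into an arbitrarily small increase of the exponential rate), and Proposition \ref{PresNorms} shows the same is true after applying $\mathsf{P}$. The corresponding bound in the sector $|\arg p|\in(\epsilon,2\epsilon)$ follows identically from the second inequality of Definition \ref{D26E}. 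This gives uniform constants $c_1',c_2',c_3'$ such that $|\widehat{\mathcal{B}}\dot{\tilde{y}}_{\mathbf{k}}|\le c_1' (c_2')^{|\mathbf{k}|}e^{c_3' p}$, and hence the derivative series lies in $\mathcal{T}_{\mathcal{R}}$.

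Third, with these bounds in hand, uniform and absolute convergence of $\sum_{\mathbf{k}\ge 0}x^{\boldsymbol{\beta}\cdot\mathbf{k}}e^{-\mathbf{k}\cdot\boldsymbol{\lambda}x}\widehat{\mathcal{LB}}\dot{\tilde{y}}_{\mathbf{k}}$ on any half-line $[r,\infty)$ with $r$ large enough follows by the identical geometric-series majorization used in Proposition \ref{LEBstrE}(a). The equality $(\widehat{\mathcal{LB}}\tilde{T})'=\widehat{\mathcal{LB}}(\tilde{T})'$ then follows from the classical theorem on termwise differentiation of uniformly convergent series (as cited in the proof of Lemma \ref{L:28}), since both the original sum \eqref{eq:BsumTE} and its formal derivative converge uniformly on $[r,\infty)$ to continuous limits.

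The main obstacle is the second step, where one must check that the Borel-plane operation induced by physical-plane differentiation is genuinely compatible with the composite operator $\widehat{\mathcal{B}}={\bf man}\circ\mathsf{P}^{m}\circ\mathcal{B}$. Roughly, one must justify that differentiating past both the regularizing integrations $\mathsf{P}^m$ (needed to tame the singularities of $\mathcal{B}\tilde{y}_{\mathbf{k}}$ on $\RR^+$) and the averaging $\bf man$ yields the correct transform. This is where Theorem \ref{TAlgebra1} (Menous), the independence-of-$m$ statement in Proposition \ref{P:Py}\eqref{consistency}, and the isomorphism property in Proposition \ref{P:Py}\eqref{extension} do the heavy lifting: ${\bf man}$ commutes with $\mathsf{P}$ (which is convolution with the constant $1$) and preserves $L^1_c$ bounds, so the identity $\widehat{\mathcal{LB}}(\tilde{y}')=(\widehat{\mathcal{LB}}\tilde{y})'$ reduces to the standard Laplace-transform calculation on the regularized function $x^{-m}\tilde{y}$, transported back to $\tilde{y}$ via multiplication by $x^m$.
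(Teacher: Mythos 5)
Your proposal is correct and is essentially the argument the paper intends: the paper's own proof consists of the single sentence that it ``follows closely the proof of Lemma \ref{L:28},'' and your write-up is precisely that adaptation, with the one genuinely new ingredient (compatibility of differentiation with $\widehat{\mathcal{B}}={\bf man}\circ\mathsf{P}^m\circ\mathcal{B}$) correctly identified and discharged via Menous' commutation theorem and the $m$-independence in Proposition \ref{P:Py}. The only cosmetic difference is that the paper's Lemma \ref{L:28} bounds the factor $p$ by $e^{p}$ (so $c_3\mapsto c_3+1$) rather than by an arbitrarily small exponential increase, but this changes nothing.
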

   \begin{proof}
   As indicated above, the proof closely follows the proof of Lemma \ref{L:28}(3).
  \end{proof}

\begin{Theorem}\label{T:ThmEcalle}{\rm
    (a)  $\widehat{\mathcal{LB}}$ is an {\em isomorphism} between the spaces $\mathbb{T}_{\mathcal{R}}$ and $\mathcal{F}_{\mathcal{R}}$ that preserves differentiation and antidifferentiation.  
    
  (b)  $\widehat{\mathcal{LB}}$ restricted to $\mathbb{T}_{-,\mathcal{R}}$ is an {\em isomorphism} between the algebras $\mathbb{T}_{-,\mathcal{R}}$ and $\mathcal{F}_{-,\mathcal{R}}$ that preserves differentiation and antidifferentiation.

    } \end{Theorem}
\begin{proof}
The proof of Part (a) follows the same steps as those Corollary \ref{C:bijection} through Theorem  \ref{T:T45} together with the aforementioned fact established by \'Ecalle and Menous \cite{Ecalle3} (also see \cite{Ecalle4}) that {\bf mon} respects lateral growth and convolution (and is clearly linear).

   In the same fashion, the proof of the isomorphism of algebras in (b) mirrors, with obvious adaptations, the proof of Proposition \ref{PB2}.
   
     Since the antidifferentiation properties in (b) are important for us,  we provide more detail. To begin with, 
     \begin{equation}
       \label{eq:tplus1}
        \tilde{T}_{+}=\sum_{-M\le j\le -1;\ l\ge 1}c_{j,l}x^{\beta_j}e^{\lambda_j x}x^{-l}=
     \sum_{j=-M}^{-1}x^{\beta_j}e^{\lambda_j x}{\tilde{y}}_j(x).
     \end{equation}
   For each of the terms $x^{\beta_j}e^{\lambda_j x}{\tilde{y}}_j$ we follow the same calculations as in the proof of Proposition \ref{PB2} to obtain the integral equation \eqref{eq:Borelt1}. Now the integral operator is not contractive because of the pole of the denominator. Instead, since the sum in $\tilde{T}_{+}$ is finite, and there are no convergence concerns, rougher estimates suffice. We analyze each term in $\tilde{T}_{+}$ separately. Differentiating now Equation \eqref{eq:tplus1} and proceeding as in the proof of Proposition \ref{PB2}, we get for each term of the sum (setting $\lambda=\lambda_j$ and $\beta=\beta_j$), 
           \begin{equation}
             \label{eq:Borelt2}
      (\lambda -p)W'   +   (\beta-1) W-Y=0,
    \end{equation}
    with the solution (after integration by parts) being
    \begin{equation}
      \label{eq:solBorelt}
      W(p)=-\int_0^p\frac{(-\lambda +p)^{\beta-1}}{(-\lambda +s)^{-\beta-1}}Y(s)ds-(-\lambda +p)^{-1}Y(p)-(-\lambda) ^{-\beta}(-\lambda +p)^{-1}Y(0).
    \end{equation}
    Analyticity of $W$ away from the zeros of the denominators follows from dominated convergence.  Preservation of lateral growth is established as in the case of ordinary Borel summability (see Proposition \ref{PB2} and its proof). The transformations involved in obtaining $W$ from $Y$ are multiplication by $(\lambda-p)^\alpha$, where $\alpha_\pm=\pm \beta-1$ and $f\mapsto \int_0^p f(s)ds=f*1$.  By ${\bf A}1$, convolution preserves growth, while multiplication by $(\lambda-p)^\alpha$ preserves growth along any smooth curve.

\end{proof}

\begin{Note}\label{N:pureseries}{\rm 
It is worth noting that (as follows from Definition \ref{DefLBE} and the just-proved isomorphism theorem), the transseries of the \'Ecalle-Borel sum of a series is the series itself.}
\end{Note}

 \begin{Definition}\label{DPB2R}
  {\rm For $\tilde{T}\in\mathbb{T}_{\mathcal R}$, let $\mathsf{A}(\widehat{\mathcal{LB}}\tilde{T}):=\widehat{\mathcal{LB}}(\mathsf{A}_{\mathbb{T}}\tilde{T})$. 
  }\end{Definition}
The following Corollary is an immediate consequence of Definition \ref{DPB2R} and Theorem \ref{T:ThmEcalle}.
\begin{Corollary}[Antidifferentiation]\label{PB2R}{\rm 
        $\mathsf{A}$ so defined is an antidifferentiation operator (see Definition \ref{Dd2}) on $\mathcal{F}_{\mathcal{R}}$   and
           $$(\mathsf{A}\tilde{T})'=\mathsf{A}(\tilde{T}')=\widehat{\mathcal{LB}}\tilde{T}.$$
                  }\end{Corollary}

\section{Correspondence between resurgent functions, transseries in $\mathbb{T}_1$ and surreal functions: surreal antidifferentiation}\label{CTSN} 

In \S\ref{Sconstr}  we mentioned that to define integrals on {\bf No} we would invoke a pair of isomorphisms--one between a subclass of resurgent functions and a subspace of transseries, and the other between the just-said subspace of transseries and a class of functions on {\bf No}. These are the maps Tr and $\tau$ respectively. We now consider them in turn. 
 \begin{Proposition}\label{Preal-analytic}
     {\rm For each $f\in \mathcal{F}_{\mathcal R}$ (see Definition \ref{D45}) there exists a $c$ such that $f$ is real-analytic on $(c,\infty)$.
     }\end{Proposition}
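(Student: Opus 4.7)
The plan is to use the explicit representation of $f = \widehat{\mathcal{LB}}\tilde{T}$ given by formula \eqref{eq:BsumTE}, show that each building block is holomorphic on a common right half-plane in $\mathbb{C}$, and then invoke the uniform convergence of Proposition \ref{LEBstrE}(b) together with Weierstrass's theorem to conclude holomorphicity of the full sum on that half-plane. Real-analyticity on $(c,\infty)$ is then the restriction of this holomorphic function to the real axis.

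First I would fix $f\in\mathcal{F}_{\mathcal R}$, write $f=\widehat{\mathcal{LB}}\tilde{T}$ with $\tilde{T}\in\mathcal{T}_{\mathcal R}$ decomposed as in Definition \ref{D26E}, and treat the three summands of \eqref{eq:BsumTE} separately. The purely formal summand $\tilde{T}_\ell$ is, by Note \ref{N:log}(c), a combination $P(x)\log x+Q(x)+R(x)$ with $P,Q,R$ real polynomials, hence real-analytic (indeed, holomorphic) on $(0,\infty)$. Each monomial coefficient $x^{\beta_j}e^{\lambda_j x}$ and $x^{\boldsymbol{\beta}\cdot\mathbf{k}}e^{-\mathbf{k}\cdot\boldsymbol{\lambda}x}$ is entire up to the branch of $x^{\beta}$ along $\mathbb{R}^+$, hence holomorphic on any right half-plane $\{\Re x>0\}$.

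The substantive step is to show that each \'Ecalle-Borel sum $\widehat{\mathcal{LB}}\tilde{y}$ (with $\tilde{y}$ one of the $\tilde{y}_j$ or $\tilde{y}_{\mathbf k}$) is holomorphic on a right half-plane $\{\Re x>c_3\}$. By Proposition \ref{P:Py}(\ref{consistency}) we may write $\widehat{\mathcal{LB}}\tilde{y}(x)=x^m\mathcal{L}\!\left[\mathbf{man}\circ\mathsf{P}^m\circ\mathcal{B}\tilde{y}\right](x)$; by Definition \ref{D26E} and Menous' Theorem \ref{TAlgebra1}, together with Proposition \ref{PresNorms}, the integrand belongs to $L^1_{c_3}$ and has at most exponential lateral growth. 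A standard Morera--Fubini argument then shows the Laplace transform is holomorphic on $\{\Re x>c_3\}$, and multiplication by the entire factor $x^m$ preserves this. Hence every term in the two $\mathbf{k}$- and $j$-sums of \eqref{eq:BsumTE} is holomorphic on $\{\Re x>c_3\}$.

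To finish, I would combine these facts with Proposition \ref{LEBstrE}(b), which provides an $r$ (depending on $c_2,\nu,\beta$) such that both the finite sum over $j$ and the infinite sum over $\mathbf{k}$ converge uniformly and absolutely on the strip $\{x:\Re x\ge r\}$. Choosing $c:=\max\{r,c_3,x_0\}$ and restricting to the open half-plane $\{\Re x>c\}$, uniform convergence of holomorphic functions on compact subsets (Weierstrass) yields that $\widehat{\mathcal{LB}}\tilde{T}$ is holomorphic there. Restricting to $\mathbb{R}$ gives real-analyticity on $(c,\infty)$, as required. The main obstacle is the verification that $\widehat{\mathcal{LB}}\tilde{y}$ defines a holomorphic function past all Borel-plane singularities; the point is that the Catalan average $\mathbf{man}$ produces an honest locally integrable, exponentially bounded function on $\mathbb{R}^+$, so the Laplace transform is unambiguous and the usual analyticity argument applies.
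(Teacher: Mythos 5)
Your proposal is correct and follows essentially the same route as the paper, whose proof is a one-line citation of Proposition \ref{PresNorms}, Proposition \ref{P:Py}, and the fact that the Laplace transform of an $L^1_{loc}$, exponentially bounded function is analytic in the right half plane. You have simply filled in the details the paper leaves implicit, in particular the uniform convergence of the $\mathbf{k}$-sum from Proposition \ref{LEBstrE}(b) and the Weierstrass argument needed to pass analyticity to the infinite sum.
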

   \begin{proof}
   Definition \ref{D:resurg} and Theorem \ref{T:ThmEcalle} imply that, for some positive $x_0$, the series of analytic functions on the right side of \eqref{eq:BsumT} converges uniformly on compact sets in a domain $\mathcal{D}=\{x:|x|>x_0,\,|\arg x|<c<\pi/2\}$. Hence,  $\mathcal{L\!\circ {\bf mon}\!\circ B} \tilde f$ is analytic in $\mathcal{D}$. In particular it is real-analytic.
        \end{proof}
   
  \begin{Definition}\label{DTr}
  {\rm Based on the isomorphism in Theorem \ref{T:ThmEcalle},  we define the operator of {\em transseriation} Tr to be the inverse of $\widehat{\mathcal{LB}} $. 
}\end{Definition}

\noindent{\bf{Example E2: The \'Ecalle-Borel summed transseries of $x^\beta e^{{\lambda} x}$}.}\label{EE2} In virtue of Definition \ref{DefLBE} and the fact that $\widehat{\mathcal{LB}}\, 1=1$, we have Tr$(x^\beta e^{\lambda x})=x^\beta e^{\lambda x}$. The equality is also an immediate consequence of Equation \ref{eq:monom}.

\begin{Proposition}\label{Ppos}
     {\rm If $f\in \mathcal{F}_{\mathcal R}$, then there is an $a\in\RR^+$ such that $f(x)>0$ on $(a,\infty)$ if and only if $\mathrm{Tr}f>0$.
  } \end{Proposition}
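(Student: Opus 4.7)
The plan is to reduce the statement directly to Proposition \ref{LEBstrE}(e), which is the transseries version of exactly this positivity equivalence, together with Definition \ref{DTr}. Since $f\in\mathcal{F}_{\mathcal R}$, Definition \ref{D45} gives a transseries $\tilde T\in\mathcal{T}_{\mathcal R}$ with $f=\widehat{\mathcal{LB}}\,\tilde T$, and by Definition \ref{DTr} we have $\mathrm{Tr}\,f=\tilde T$. So the proposition becomes: there exists $a\in\RR^+$ with $f(x)>0$ for all $x>a$ if and only if $\tilde T>0$, which is precisely Proposition \ref{LEBstrE}(e) after noting that ``for large enough $x$'' is the same as ``on some interval $(a,\infty)$''.

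For the direction $\tilde T>0 \Rightarrow f>0$ on some $(a,\infty)$, I would just invoke \ref{LEBstrE}(e) directly. For the converse, I would argue contrapositively: if $\tilde T\le 0$, then either $\tilde T=0$, in which case by Proposition \ref{LEBstrE}(f) (zero kernel of $\widehat{\mathcal{LB}}$) the function $f$ is identically zero on the domain of analyticity provided by Proposition \ref{Preal-analytic}, contradicting $f>0$ on $(a,\infty)$; or the leading transmonomial of $\tilde T$ (with respect to $\gg$) has a negative coefficient, whence applying \ref{LEBstrE}(e) to $-\tilde T>0$ gives $-f=\widehat{\mathcal{LB}}(-\tilde T)>0$ on some $(a',\infty)$, again contradicting $f>0$ on $(a,\infty)$ for large $x$.

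No step is genuinely difficult here; the work has already been done in \S\ref{SecEBRT2}. The only thing to verify is that the linearity of $\widehat{\mathcal{LB}}$ (part of Theorem \ref{T:iso-resurgent}) legitimates replacing $\tilde T$ by $-\tilde T$ in the contrapositive, and that ``$f$ positive on some tail $(a,\infty)\subset\RR$'' matches the phrase ``for large enough $x$'' used in \ref{LEBstrE}(e). Both are immediate, so the proof is essentially a two-line translation through Definitions \ref{D45} and \ref{DTr}.
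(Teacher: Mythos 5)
Your proof is correct and takes essentially the same route as the paper, which simply cites the positivity biconditional of Proposition \ref{LEBstrE} (misprinted there as part (d); the relevant part is (e)) and points to the proof of Proposition \ref{P:bijection}. Your separate contrapositive treatment of the converse direction is harmless but unnecessary, since Proposition \ref{LEBstrE}(e) is already stated as an ``if and only if.''
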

\begin{proof}
Since well-behaved averages respect lateral growth, the proof mirrors that of Proposition \ref{P:bijection}.
\end{proof}

\subsection{Differentiation of series that are absolutely convergent in the sense of Conway}
\label{SdiffNo} In this subsection, we establish a result that will be needed in the sequel about differentiation of series of surreal functions that converge absolutely in the sense of Conway for any value of the variable belonging to some open interval.

  \begin{Theorem}{\rm 
Let $f_1,...,f_m$ be  twice differentiable infinitesimal functions defined on the positive infinite surreals.   For $n\in\ZZ$, define the function $g$ on each positive infinite surreal $x$  by
    \begin{equation}
      \label{eq:defs}
      g(x)=\sum_{|\mathbf{k}|\ge 0} c_{\bf k} {\bf f}(x)^{\bf k},
    \end{equation}
    where $\{c_{\bf k}\}_{k_i\ge n;i\le m}$ is a sequence of reals.  Then $g$ is differentiable for each such $x$ and its derivative is given as follows by termwise differentiation:
     \begin{equation}
      \label{eq:deri}
      g'(x)=\sum_{|\mathbf{k}|\ge 0} c_{\bf k} \left(\sum_{i=1}^m k_i\frac{f'_i(x)}{f_i(x)}\right){\bf f}(x)^{\bf k},
    \end{equation}
    whereby convention we set $f'_i(x)/f_i(x)=0$ if $f_i(x)=0$. 
  } \end{Theorem}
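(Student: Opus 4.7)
The plan is to reduce the termwise differentiation claim to the chain-rule behavior of a composite power series, exploiting the fact that each $f_i(x)$ is infinitesimal. First I would verify that the right-hand side of \eqref{eq:deri} is a well-defined surreal: rewriting the $i$th summand as $f'_i(x)\cdot \sum_{\mathbf k} k_i c_{\mathbf k} f_i(x)^{k_i-1}\prod_{j\ne i} f_j(x)^{k_j}$ (using the stated convention to handle $f_i(x)=0$, and factoring out $\prod_i f_i(x)^n$ if $n<0$ so that the remaining exponents are nonnegative), each inner sum is a multivariate power series in the infinitesimals $f_1(x),\ldots,f_m(x)$ with real coefficients, and Proposition \ref{sur7} guarantees its absolute convergence in Conway's sense. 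The same consideration shows that $g(x)$ itself is well defined.

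Next I would compute the difference quotient. Fix a positive infinite surreal $x$ and pick an infinitesimal $h$ so small that $h f'_i(x)/f_i(x)$ is infinitesimal for every $i$ with $f_i(x)\ne 0$; this is possible because each $f'_i(x)$ is a fixed surreal. Differentiability of $f_i$ gives $f_i(x+h)=f_i(x)+h f'_i(x)+\rho_i(h)$ with $\rho_i(h)/h$ infinitesimal, and shrinking $h$ further we may also assume each $\delta_i := (h f'_i(x)+\rho_i(h))/f_i(x)$ is infinitesimal (set $\delta_i:=0$ when $f_i(x)=0$). Then $\mathbf f(x+h)^{\mathbf k}=\mathbf f(x)^{\mathbf k}\prod_i(1+\delta_i)^{k_i}$, and expanding each $(1+\delta_i)^{k_i}$ by the binomial theorem and summing against $c_{\mathbf k}$ gives
\[ g(x+h)=\sum_{\mathbf k}c_{\mathbf k}\mathbf f(x)^{\mathbf k}\sum_{\mathbf j\ge \mathbf 0}\Bigl(\prod_i\binom{k_i}{j_i}\Bigr)\boldsymbol\delta^{\mathbf j}. \]
Viewed as a power series in the infinitesimals $f_1(x),\ldots,f_m(x),\delta_1,\ldots,\delta_m$ with real coefficients (the binomials being integers), Proposition \ref{sur7} both guarantees convergence and permits rearrangement by total $\delta$-degree.

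Collecting the $|\mathbf j|=0$ and $|\mathbf j|=1$ parts gives $g(x+h)-g(x)=\sum_i \delta_i f_i(x) H_i(x) + R(h)$, where $H_i(x)=\sum_{\mathbf k} k_i c_{\mathbf k} f_i(x)^{k_i-1}\prod_{j\ne i}f_j(x)^{k_j}$ and $R(h)$ is the $|\mathbf j|\ge 2$ tail. Substituting the definition of $\delta_i$, the $|\mathbf j|=1$ part equals $h\cdot\sum_{\mathbf k}c_{\mathbf k}\bigl(\sum_i k_i f'_i(x)/f_i(x)\bigr)\mathbf f(x)^{\mathbf k}$ plus a $\rho$-contribution that is $o(h)$ by construction. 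Every term of $R(h)$ contains at least two factors of some $\delta_i$ and is therefore $o(h)$ by Proposition \ref{Pcop}(e). Dividing through by $h$ and applying Proposition \ref{Pcop}(f) yields \eqref{eq:deri} in the sense of Definition \ref{DefDeriv}. The main obstacle is justifying the rearrangement step: after substituting $\delta_i=(h f'_i(x)+\rho_i(h))/f_i(x)$ the series is no longer manifestly a real-coefficient series in infinitesimals, so one must first perform the binomial expansion while treating the $\delta_i$ as independent formal variables (so that Proposition \ref{sur7} applies directly), and only then substitute the actual values --- with $h$ chosen small enough that all of $f_i(x), \delta_i$ remain infinitesimal throughout.
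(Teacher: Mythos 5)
Your proposal is correct in its essentials and shares the paper's underlying strategy --- expand to second order in the increment, show the first-order part reproduces termwise differentiation, and control the higher-order remainder through Conway absolute convergence (Propositions \ref{sur7} and \ref{Pcop}) --- but the technical organization is genuinely different. The paper first treats $m=1$ via the \emph{additive} expansion $f(x+\epsilon)^k-f(x)^k=kf(x)^{k-1}f'(x)\epsilon+k^2f(x)^{k-2}F(x,k;\epsilon)\epsilon^2$, with $F$ bounded uniformly in $k$ and in infinitesimal $\epsilon$, and then obtains general $m$ by induction using the decomposition $f_1f_2(x+a)-f_1f_2(x)=f_2(x+a)[f_1(x+a)-f_1(x)]+f_1(x)[f_2(x+a)-f_2(x)]$. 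You instead work directly in $m$ variables through the \emph{multiplicative} substitution $f_i(x+h)=f_i(x)(1+\delta_i)$ and a rearrangement of the resulting $2m$-variable power series by $\delta$-degree. Your route isolates the rearrangement as the single step needing justification and correctly identifies Proposition \ref{sur7} as the tool; the paper's route avoids ever dividing by $f_i(x)$.

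Two caveats. First, the multiplicative decomposition genuinely fails at points where $f_i(x)=0$: setting $\delta_i=0$ there makes the identity $\mathbf{f}(x+h)^{\mathbf{k}}=\mathbf{f}(x)^{\mathbf{k}}\prod_i(1+\delta_i)^{k_i}$ false whenever $f_i(x+h)\neq 0$ (and with $k_i<0$ the factor $f_i(x)^{k_i}$ is undefined), whereas the paper's additive expansion degrades gracefully there; you would need to handle the indices with $f_i(x)=0$ separately and additively. Second, you never use the hypothesis that the $f_i$ are \emph{twice} differentiable, and your claim that $\rho_i(h)/h$ is infinitesimal for a \emph{fixed} infinitesimal $h$ does not follow from Definition \ref{DefDeriv} alone, since the $\delta$ there may shrink with $\epsilon$. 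The paper invokes the second derivative precisely to write the remainder in the form $(\text{bounded})\cdot\epsilon^2$ uniformly in $k$, which is what lets the error survive summation over infinitely many terms; your closing appeal to Proposition \ref{Pcop}(f) needs that uniformity (a bound on the $|\mathbf{j}|\ge 2$ tail that does not secretly depend on $h$) to be spelled out.
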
\begin{proof}
We begin with the following simple observation.

\begin{Observation}\label{N:diff}\
{\rm Suppose $f$ is a function such that $f(a+\epsilon)-f(a)=g(a)\epsilon+h(a,\epsilon)\epsilon^2$ where, for some $c>0$ and sufficiently small $\epsilon$ we have  $|h(a,\epsilon)|\le c$. Then $f$ is differentiable at $a$ and $f'(a)=g(a)$. Based on the binomial formula, it is  easy to check that, if $f$ is twice differentiable, $\epsilon$ is infinitesimal and $k\in\NN$, then $f(x+\epsilon)^k-f(x)^k=kf(x)^{k-1}f'(x)\epsilon +k^2f(x)^{k-2}F(x,k;\epsilon)\epsilon^2$ where $F$ is bounded for $k\in\NN$ and infinitesimal $\epsilon$. (Uniform boundedness in $k$ follows from the fact that $0\le m^j|\epsilon|\le 2$ for all $m,j\in\NN$).}
\end{Observation}

   First note that the sum is absolutely convergent in the sense of Conway by Proposition \ref{sur7} and the assumption that the $f_i$ are infinitesimals. We will prove the result for $m=1$; once having done so, the general result follows by induction and the usual decomposition $f_1(x+a)f_2(x+a)-f_1(x)f_2(x)=f_2(x+a)[f_1(x+a)-f_1(x)]+f_1(x)[f_2(x+a)-f_2(x)]$. 
  
  For $m=1$, using Observation \ref{N:diff} and straightforward calculations, it follows that
   \begin{equation}
    \label{eq:ineq2}
 g(x+\epsilon)-g(x)=\epsilon\sum_{k\ge 0} c_{k} kf(x)^{k-1}f'(x)+\epsilon^2 h(x,\epsilon)
\end{equation}
where, using another application of Observation \ref{N:diff}, we see that  $h(x,\epsilon)$ is an absolutely convergent series which is bounded if $\epsilon\ll 1$. \end{proof}
For the second isomorphism, $\tau$, we require the following definition that trades on the intimate relationship between the members of $\mathbb{T}_1$ and certain surreal functions.

\begin{Definition}\label{D:deftau} { \rm In accordance with Definition \ref{NNN} and Proposition \ref{N:log}, each element $ \tilde{T}$ of  $ \mathbb{T}_1$ is a transseries of the form

    \begin{multline}
      \label{eq:deft+}
     \tilde{T}=\sum_{-M\le j\le -1;\ l\ge 1}c_{j,l}x^{\beta_j}e^{\lambda_j}x^{-l}+ \\P(x)\log(x)+Q(x)+R(x)+\\\sum_{\mathbf{k}\ge 0,l\ge 0}c_{\mathbf{k},l}x^{\boldsymbol{\beta}\cdot \mathbf{k}}e^{-\mathbf{k} \cdot \boldsymbol{\lambda} x}x^{-l},
   \end{multline}
   
\noindent   
   where the first sum is in $\mathbb{T}_{+}$, the second sum, in which $P$ and $Q$ are polynomials and  $R$ is a polynomial without constant term, is in $\mathbb{T}_{\ell }$, and the last sum belongs to $\mathbb{T}_{-}$.
   
   With each such $\tilde{T}$ we associate the function $\tilde{T}^f$ consisting of all ordered pairs $(\nu, \tilde{T}^f(\nu))$, where $\nu$ is a positive infinite member of {\bf No}  and $\tilde{T}^f(\nu)$ is the expression that results from first replacing all occurrences of $x$ on the right side of  \ref{eq:deft+} with occurrences of $\nu$, and then replacing (in the resulting expression) the absolutely convergent sum (with bounds $\mathbf{k}\ge 0,l\ge 0$) with the $\mathop{{\rm Lim}}$ sum to which it absolutely converges. That is:

\begin{multline}
      \label{eq:deftau+}
    \tilde{T}^f(\nu):=\sum_{-M\le j\le -1;\ l\ge 1}c_{j,l}\nu^{\beta_j}e^{\lambda_j \nu}\nu^{-l}+\\P(\nu)\log(\nu)+Q(\nu)+R(\nu)+\\\mathop{{\rm Lim}}_{m\to \infty}\sum_{ |{\bf k}|\le m, |l|\le m}c_{\mathbf{k},l}\nu^{\boldsymbol{\beta}\cdot \mathbf{k}}e^{-\mathbf{k} \cdot \boldsymbol{\lambda} \nu}\nu^{-l}.
   \end{multline}}

{\rm Let $\tau:=\{(\tilde{T}, \tilde{T}^{f}) : \tilde{T}  \in \mathbb{T}_1 \}$, $\No^{\tau}:=\{\tilde{T}^{f}:\tilde{T}\in \mathbb{T}_1\}$  (i.e. the range of the map $\tau$) and let $\mathcal{R}_{\bf No}:=\{\tilde{T}^{f}:\tilde{T}\in \mathbb{T}_{\mathcal{R}}\}$. Finally, let} $\No^\tau_-:= \{\tilde{T}^{f}: \tilde{T} \in \mathbb{T}_- \}$. \end{Definition}

\begin{Theorem}\label{taumap}
 {\rm The map $\tau$ in Definition \ref{D:deftau} is an isomorphism of vector spaces endowed with differentiation  between $\mathbb{T}_1$ and $\No^\tau$ and, when restricted to $\mathbb{T}_-$, an isomorphism of differential algebras between $\mathbb{T}_-$ and $\No^\tau_-$.} 
 \end{Theorem}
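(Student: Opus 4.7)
The plan is to verify each component of the claim in turn, leveraging results established earlier in the paper. First I would note that surjectivity is immediate from the definition of $\No^\tau$ as the range of $\tau$, and that $\tau$ is well defined as a map into $\No$ since, for each positive infinite surreal $\nu$, the final sum in \eqref{eq:deftau+} is absolutely convergent in the sense of Conway: after extracting the innermost $\nu^{-1}$, each term has the form $c_{\mathbf{k},l}\mathbf{h}^{\mathbf{k}}(\nu^{-1})^l$ with $h_i = \nu^{\beta_i}e^{-\lambda_i \nu}$ infinitesimal (since $\lambda_i>0$ and $\nu$ is positive infinite), so Proposition \ref{sur7} applies.

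Linearity of $\tau$ on $\mathcal{T}_1$ is straightforward: the polynomial, logarithmic and finite-sum blocks are obviously linear, while linearity on the infinite-sum block follows from Proposition \ref{Pcop}(a). For injectivity, I would argue by peeling off blocks according to Archimedean class. Evaluating at $\nu = \omega$, the $\tilde{T}_+$ block produces purely infinite surreals whose leaders $\omega^{y}$ are strictly larger in the $\omega$-map ordering than anything appearing in the $P(\omega)\log\omega + Q(\omega) + R(\omega)$ block or in the $\tilde{T}_{-,m}$ block (which is infinitesimal). Reading the Conway normal form of $(\tau\tilde{T})(\omega)$ from the most significant monomial downward and using the strict ordering of the $\lambda_j$ from Definition \ref{D:plus}, each coefficient $c_{j,l}$ must vanish. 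Peeling these off, one is left with $P(\omega)\log\omega + Q(\omega) + R(\omega)$ plus an infinitesimal tail; since $\omega^k \log\omega$ and $\omega^k$ live in pairwise distinct Archimedean classes, $P$, $Q$, $R$ must all vanish. Finally the Limit of the remaining $\tilde{T}_{-,m}$ tail is $0$, and absolute convergence in the sense of Conway (every coefficient stabilizes) forces each $c_{\mathbf{k},l}$ to be zero.

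For compatibility with differentiation, the polynomial, logarithmic and $\tilde{T}_+$ blocks are handled by ordinary calculus term by term, so the substantive content is the $\tilde{T}_{-,m}$ block. Here I would invoke the theorem of \S\ref{SdiffNo} on termwise differentiation of absolutely Conway-convergent series, applied with $f_i(\nu) = \nu^{\beta_i}e^{-\lambda_i \nu}$ and $\nu^{-1}$ (all infinitesimal on the positive infinite surreals and twice differentiable); the logarithmic derivatives $f_i'/f_i = \beta_i\nu^{-1} - \lambda_i$ reproduce on the surreal side exactly the termwise differentiation formula \eqref{eq:eqdif} on the transseries side. Hence $\tau$ commutes with $'$ on each block and thus on their sum.

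For the differential-algebra claim on $\mathcal{T}_-$, with additivity and differential compatibility in hand it remains only to verify multiplicativity. Given $\tilde{T}^{(1)},\tilde{T}^{(2)}\in\mathcal{T}_-$, the product is the Cauchy product \eqref{eq:defmul}, and Proposition \ref{Pcop}(b), combined with a rearrangement of the absolutely Conway-convergent double sum (again justified via Proposition \ref{sur7}), yields $(\tau\tilde{T}^{(1)})(\nu)\,(\tau\tilde{T}^{(2)})(\nu) = \tau(\tilde{T}^{(1)}\tilde{T}^{(2)})(\nu)$. I expect the main obstacle to be the injectivity step: the ``peel-off by Archimedean class'' argument requires identifying precisely which leaders $\omega^y$ appear in the Conway normal form of $(\tau\tilde{T})(\omega)$ and verifying that distinct transmonomials $\nu^{\beta}e^{\pm\lambda\nu}$ cannot collapse onto the same surreal leader -- this hinges on the strict ordering of the $\lambda_i$ built into Definitions \ref{DEBstr} and \ref{D:plus} together with Gonshor's computation of $\exp$ on the positive infinite surreals.
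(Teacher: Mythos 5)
Your proposal is correct and follows essentially the same route as the paper: the paper's proof is a two-line appeal to the fact that the transseries topology and the surreal $\mathrm{Lim}$ both commute with the differential-algebra operations, citing exactly the ingredients you invoke (Proposition \ref{Pcop} and the termwise-differentiation theorem of \S\ref{SdiffNo}). The one piece you supply that the paper leaves entirely implicit is the explicit injectivity argument by peeling off Archimedean classes (including the check that distinct transmonomials cannot collapse onto a single surreal leader); this is a welcome filling-in of a gap rather than a different approach.
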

\begin{proof}
  This follows from the fact that  the transseries topology
  $$ \tilde{T}=\lim_{m\to \infty}\sum_{ |{\bf k}|\le m, |l|\le m}c_{\mathbf{k},l}\nu^{\boldsymbol{\beta}\cdot \mathbf{k}}e^{-\mathbf{k} \cdot \boldsymbol{\lambda} \nu}\nu^{-l} $$
preserves the differential algebra operations (see \S \ref{STrans} and, for proofs, \cite{CostinT}), as does ${\rm Lim}$ (see Proposition \ref{Pcop} and the preceding material in this section).
\end{proof}

To complete the main part of our construction, we need an extension operator $\mathsf{E}$  in the sense of Definition \ref{*E}, and, on restricted domains, a multiplicative extension operator in the sense of Definition \ref{*E10}. It is this to which we now turn. 

\subsection{The extension operator $\mathsf{E}$}\label{EOP}

In the following we introduce an extension operator  $\mathsf{E}$ acting on real-valued functions $f$ on the reals to functions on the finite and positive infinite surreals. Assuming the function $f_+$ defined by $f_+(x)=f(-x)$ is in $\mathcal{F}_{\mathcal{R}}$, the extension of $f$ to negative infinite surreal $x$ is simply defined by $(\mathsf{E}f)(x):=(\mathsf{E}f_+)(-x)$, the subscript + indicting that $f_+$ is defined for $x>0$. In view of this elementary correspondence and to simplify the exposition we will solely focus on finite and positive infinite surreals.
  
   \begin{Definition}\label{defE}{\rm 
    Let $f\in \mathcal{F}_{\mathcal R}$, and let $c\in\RR$ be such that $f$ is real-analytic on $(c, \infty)$ as is assured by Proposition \ref{Preal-analytic}. We extend $f$ to $(c,{\bf On})$ as follows, whereby a \emph{finite} surreal we mean the leading exponent in its normal form is $\leq 0$.
         \begin{enumerate}
         \item  For positive infinite $x\in {\bf No}$ we define  $(\mathsf{E}f)(x)=(\tau\circ\mathrm{Tr}\,\, f)(x)$.

     \item For  finite $x\in {\bf No}$, where $x_0 $ is the real part of $x$ and $\zeta$ is the infinitesimal part of $x $ (see Definition \ref{part}), we define $(\mathsf{E}f)(x)$ by  
  
  \begin{equation}   
    \label{eq:extsr}
    f(x_0+\zeta)=f(x_0)+\sum_{k\ge 1}(k!)^{-1}f^{(k)}(x_0)\zeta^k,
  \end{equation}  
  where the infinite sum is absolutely convergent in the sense of Conway.
               \end{enumerate}

  } \end{Definition}

  Before proceeding further, we offer a couple of observations on the second part of the above definition. To begin with, as above let $f\in \mathcal{F}_{\mathcal R}$ and let $c\in\RR$ be such that $f$ is real-analytic on $(c, \infty)$ as in Proposition \ref{Preal-analytic}. Also let $\epsilon$ be the local radius of convergence of the Taylor series of $f$ at $x_0\in\RR$. For real $|\zeta|<\epsilon$ we have
        \begin{equation}
    \label{eq:extsr1}
    f(x_0+\zeta)=f(x_0)+\sum_{k\ge 1}(k!)^{-1}f^{(k)}(x_0)\zeta^k.
  \end{equation}
  \noindent Substituting $x=1/\zeta$ for the two occurrences of $\zeta$ in Equation \eqref{eq:extsr1}, the right side of the resulting equation is the convergent ({\em a fortiori} Borel and \'Ecalle-Borel summable)  transseries of the left side of the resulting equation  about $x=\infty$.  In particular $f(x_0+x^{-1})$ is resurgent, and Definition \ref{defE} (2) is a special case of (1).  In addition, alternatively and more formally,  we can reduce case (2) of the above definition to case (1) by resorting to  $M$, the M\"obius transformation $x\mapsto x_0+x^{-1}$ (see also Definition \ref {defM})  by defining  $\mathsf{E}$ by $(\mathsf{E}f)(x_0+x^{-1})=[M^{-1}\circ\tau\circ\mathrm{Tr}\circ Mf](x^{-1})$.

 \begin{Theorem}\label{comp} {\rm $\mathsf{E}:\mathcal{F}_{\mathcal R} \rightarrow \mathcal{R}_{\bf No}$ 
 is an isomorphism of linear spaces endowed with differentiation and antidifferentiation. Its restriction to $\mathcal{F}_{-,\mathcal R}$ is an isomorphism of differential algebras.}\end{Theorem}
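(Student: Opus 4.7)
The plan is to obtain the theorem as the composition of the two isomorphisms already established in the paper. By Definition \ref{defE}(1), on positive infinite surreals one has $\mathsf{E} = \tau\circ\mathrm{Tr}$, where $\mathrm{Tr}$ is the inverse of $\widehat{\mathcal{LB}}$ and so, by Theorem \ref{T:iso-resurgent}(a), is a linear bijection $\mathcal{F}_{\mathcal R} \to \mathcal{T}_{\mathcal R}$ intertwining differentiation and antidifferentiation, and where $\tau$, by Theorem \ref{taumap}, is a linear bijection $\mathcal{T}_1 \to \No^\tau$ intertwining differentiation whose restriction to $\mathcal{T}_{\mathcal R}$ lands in $\mathcal{R}_{\bf No}$ by the very definition of $\mathcal{R}_{\bf No}$. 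Linearity, bijectivity and commutation with differentiation therefore follow at once. For the multiplicative statement on $\mathcal{F}_{-,\mathcal{R}}$ one uses the second halves of Theorems \ref{T:iso-resurgent} and \ref{taumap}, which give $\widehat{\mathcal{LB}}$ as a differential algebra isomorphism on $\mathcal{T}_{-,\mathcal{R}}$ and $\tau$ as one on $\mathcal{T}_-$.

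Antidifferentiation is handled by setting $\mathsf{A}_{\bf No} := \tau\circ \mathsf{A}_{\mathbb T}\circ \tau^{-1}$ on $\mathcal{R}_{\bf No}$. Since Definition \ref{DPB2R} reads $\mathsf{A} = \widehat{\mathcal{LB}}\circ \mathsf{A}_{\mathbb T}\circ \mathrm{Tr}$, the intertwining identity $\mathsf{E}\circ\mathsf{A} = \mathsf{A}_{\bf No}\circ\mathsf{E}$ is then tautological. Properties i--iv and vi of Definition \ref{Dd2} for $\mathsf{A}_{\bf No}$ transport through the two isomorphisms directly from Proposition \ref{PB2R}, while Property v is immediate from the normalization $\widehat{\mathcal{LB}}(x^\beta e^{\lambda x}) = x^\beta e^{\lambda x}$ recorded in Example E2 together with the standard computation of $\mathsf{A}_{\mathbb T}(\exp)$.

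The only piece that is not purely a formal transport through a composition of isomorphisms is the second branch of Definition \ref{defE}, namely the extension of $\mathsf{E}f$ to finite surreals $x_0+\zeta$ via the local Taylor series. Here I would invoke the alternative formulation noted just after Definition \ref{defE}, namely $(\mathsf{E}f)(x_0+x^{-1}) = [\mathcal{M}_{x_0}^{-1}\circ\tau\circ\mathrm{Tr}\circ\mathcal{M}_{x_0}\,f](x^{-1})$, where $\mathcal{M}_{x_0}$ is the M\"obius map from Definition \ref{Notherpoints}. This exhibits the finite branch as the same composition of isomorphisms conjugated by a variable change that carries a real-analytic function near $x_0$ to a convergent (hence \'Ecalle--Borel summable) transseries at $\infty$. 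Absolute convergence in Conway's sense of the resulting surreal Taylor series is assured by Proposition \ref{sur7}, and termwise differentiation is justified by the theorem of \S\ref{SdiffNo}; antidifferentiation in the Taylor branch is handled analogously, since the pullback under $\mathcal{M}_{x_0}$ converts antidifferentiation near $x_0$ to antidifferentiation at $\infty$.

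The main obstacle, and the only step that requires genuine care, is the coherence check between the two branches of $\mathsf{E}$. One must verify that the positive-infinite branch via $\tau\circ\mathrm{Tr}$ and the finite-surreal Taylor/M\"obius branch assemble into a single well-defined element of $\mathcal{R}_{\bf No}$ and that its derivative and antiderivative computed in either regime agree on the overlap, so that the intertwining identities hold on all of $\mathcal{F}_{\mathcal R}$ rather than on each branch separately. This coherence reduces, in the end, to the fact that both branches restrict to $f$ and to its genuine real-analytic derivatives on $(c,\infty)\subset\RR$, which pins the two extensions down uniquely and forces them to glue compatibly; once this is in hand, the inverse of $\mathsf{E}$ and the transport of all algebraic, differential and antidifferential structure are immediate.
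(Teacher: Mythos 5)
Your proposal is correct and follows essentially the same route as the paper: the paper's proof likewise reduces the finite-surreal branch to the case at $\infty$ via the remark following Definition \ref{defE}, and then obtains the result at $\infty$ immediately from Theorems \ref{taumap} and \ref{T:iso-resurgent}, since $\mathsf{E}=\tau\circ\mathrm{Tr}$ is a composition of isomorphisms. Your additional coherence check between the two branches and the explicit transport of the antidifferentiation operator are elaborations of details the paper leaves implicit, not a different argument.
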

 
  \begin{proof}
In virtue of the preceding remark, the formula for  $\mathsf{E}$ at an arbitrary point is obtained from the one at $\infty$, hence it is enough to prove the result in the latter case. But at $\infty$ the result follows immediately from Theorems \ref{taumap}  and \ref{T:ThmEcalle}, since $\mathsf{E}$  is a composition of isomorphisms. 
  \end{proof}

 Some special cases of extensions are given below.
\begin{Corollary}\label{P59}{\rm In the following, real functions are assumed to be defined on some interval $(c,\infty)\subset \RR^+$.

  \begin{enumerate}
  \item If $a,b \in\RR$ and $f:\RR^+\to \RR$ is given by $f(x)=x^a e^{bx}$, then $\mathsf{E}f=x^a e^{bx}$ for all positive $x\in \bf No$. 
  
   \item If $P$ is a polynomial and $f:\RR^+\to \RR$ is given by $f=P(x)\log x$, then $\mathsf{E}f=P(x)\log x$ for all positive $x\in {\bf No}$.
   \item  If $f\in \mathcal{F}_{\mathcal R}$ and $f> 0$ on $(c,\infty)$, then  $(\mathsf{E}f)(x) > 0$ for all $x\in {\bf No}$ such that $x>c$.
 
  \end{enumerate}
}\end{Corollary}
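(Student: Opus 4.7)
\medskip
\noindent\textbf{Proof proposal.}

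The plan for items (1) and (2) is to observe that both $x^a e^{bx}$ and $P(x)\log x$ already live in $\mathcal{T}_1$ as transseries: the former (after the trivial bookkeeping used in \eqref{eq:monom}, writing $x^a e^{bx}=x^{a+1}e^{bx}x^{-1}$) sits in $\mathcal{T}_+$ if $b>0$ and in $\mathcal{T}_-$ if $b\le 0$, while the latter lies in $\mathcal{T}_{\ell}$ by Note \ref{N:log}(c). By Example E2 and Note \ref{N:pureseries}, these transseries are their own \'Ecalle--Borel sums, so $\mathrm{Tr}(x^ae^{bx})=x^ae^{bx}$ and $\mathrm{Tr}(P(x)\log x)=P(x)\log x$. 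For positive infinite $x\in\mathbf{No}$, the formula \eqref{eq:deftau+} for $\tau$ then reads off precisely the surreal value $x^ae^{bx}$, respectively $P(x)\log x$, since each transseries monomial $x^{\beta}e^{\lambda x}$, $x^n$, $x^n\log x$ already has a canonical surreal interpretation via Gonshor's $\exp$ and the surreal $\log$.

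For finite $x=x_0+\zeta$ with $x_0\in(c,\infty)\cap\RR$ and $\zeta$ infinitesimal, I would feed these same functions into clause (2) of Definition \ref{defE}. The Taylor series of $x^ae^{bx}$ at $x_0$ has positive radius of convergence, and when $\zeta$ is substituted the resulting Conway-absolutely convergent sum can be rearranged as $x_0^ae^{bx_0}\cdot\sum_{k\ge 0}\binom{a}{k}(\zeta/x_0)^k\cdot\sum_{j\ge 0}(b\zeta)^j/j!$ by Proposition \ref{sur7}, and each factor equals the surreal $(1+\zeta/x_0)^a$, respectively $e^{b\zeta}$, by Proposition \ref{sur5} together with the functional equations for surreal $\exp$ (\S\ref{Sexp}). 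Multiplying and using $e^{b(x_0+\zeta)}=e^{bx_0}e^{b\zeta}$ and $(x_0+\zeta)^a=x_0^a(1+\zeta/x_0)^a$ in $\mathbf{No}$ yields $x^ae^{bx}$. The analogous but simpler calculation, writing $\log(x_0+\zeta)=\log x_0+\log(1+\zeta/x_0)$ and summing the Mercator series, handles $P(x)\log x$.

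For item (3), I would split on whether $x$ is finite or infinite. If $x=x_0+\zeta>c$ is finite, then $x_0>c$ (or $x_0=c$ with $\zeta>0$, in which case analyticity on an open set containing $x_0$ still forces $f(x_0+\varepsilon)>0$ on a real neighborhood and the Taylor expansion applies); the Conway-convergent sum defining $(\mathsf{E}f)(x)$ has real part $f(x_0)>0$ and infinitesimal remainder, so $(\mathsf{E}f)(x)>0$ by Proposition \ref{Pcop}(f). If $x$ is positive infinite, then by Proposition \ref{Ppos} we have $\mathrm{Tr}\,f>0$ as a transseries, meaning its $\gg$-largest term $c^*\mu^*$ has $c^*>0$. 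Writing $(\mathsf{E}f)(x)=\tau\mathrm{Tr}\,f(x)=c^*\mu^*(x)\bigl(1+R(x)\bigr)$, where $R$ is the $\mathrm{Lim}$ of the remaining rescaled monomials, I would show that each such monomial is infinitesimal relative to $\mu^*(x)$ in $\mathbf{No}$ (because for positive infinite $x$ the surreal $\omega$-valuation of $e^{\lambda x}x^\beta$ is strictly monotone in $\lambda$ and then in $\beta$, as follows from the embedding of $\mathcal{T}$ into $\mathbf{No}$ and the behavior of Gonshor's exponential), so $R$ is infinitesimal and hence $1+R>0$, proving positivity.

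The main obstacle is the last positivity argument: one must verify rigorously that the Conway-Limit in \eqref{eq:deftau+} is dominated, in the sense of surreal Archimedean classes, by its single asymptotically largest term. This reduces to checking that the asymptotic order $\gg$ on transseries monomials transfers, under $\tau$, to the relation $\gg$ on $\mathbf{No}$ for positive infinite arguments; equivalently, that the $\omega$-map applied to the purely infinite parts of $\tau(\mu^*)(x)$ and $\tau(\mu)(x)$ respects the ordering inherited from $\mathcal{T}_1$. This is implicit in the construction of $\tau$ and the surreal exponential, but spelling it out cleanly is the crux.
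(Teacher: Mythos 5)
Your proof is correct and follows essentially the same route as the paper's: parts (1) and (2) are read off from the definitions of $\mathrm{Tr}$, $\tau$ and $\mathsf{E}$ (the paper calls them ``immediate'' from Definitions \ref{DefLBE} and \ref{defE}), and part (3) splits into the finite case (positivity of the real part in the Taylor expansion \eqref{eq:extsr}) and the positive infinite case (Proposition \ref{Ppos} followed by positivity of $\tau\,\mathrm{Tr}f$). You supply considerably more detail than the paper does, and the step you single out as the crux---that the transseries dominance relation $\gg$ transfers under $\tau$ to surreal Archimedean dominance for positive infinite arguments, so that the leading monomial controls the sign of the Conway Limit---is precisely the step the paper's proof dispatches with the word ``plainly''.
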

\begin{proof}
     (1) and (2) follow immediately  from Definitions \ref{DefLBE} and \ref{defE}.
     
     For (3), note that if  $f(x)>0$ for all real $x\in (c,\infty)$, then, by Proposition \ref{Ppos} we have Tr$f>0$ and, plainly, $\tau\circ \mathrm{Tr} f>0$ for all positive infinite $x\in \bf No$. 

  \end{proof}

  The following result is an immediate consequence of Theorem \ref{comp} and of Corollary \ref{P59}.

   \begin{Theorem}\label{OPE}
   {\rm   $\mathsf E$ is an extension operator in the sense of Definition \ref{*E}. Moreover, $\mathsf E$ restricted to $ \mathcal{F}_{-,\mathcal{R}}$ is a multiplicative extension operator in the sense of Definition \ref{*E10}.}\end{Theorem}

 \noindent   {\bf Example: the special case of functions with convergent transseries at $\infty$.}

First note that, if a convergent transseries is of the type expressed in Equation \eqref{eq:deft+} and its sum is $f$, then $f\in \mathcal{F}_{\mathcal R}$. Indeed, in this case the Borel transform of a convergent series $\sum_{l\ge 0} c_{\mathbf k,l}x^{-l}$ is an entire function, and  \'Ecalle-Borel summation coincides with Borel summation (since in the Borel plane there are no singularities),  and by Proposition \ref{PBorel} (ii) Borel summation is simply the identity. We denote by  $\mathcal{F}_{\mathcal{R},conv}$ the space of the sums (same as Borel sums) of the transseries in $\mathbb{T}_{conv}$. Observe that for $f\in \mathcal{F}_{\mathcal{R},conv}$ we have
 \begin{equation}\label{defTC}
f=   \lim_{L\to \infty}\sum_{L>\mathbf{k}>-M}c_{\mathbf{k}}\boldsymbol{\mu}^{\mathbf{k}}.
  \end{equation}
\begin{Definition}
 {\rm  Let $f\in \mathcal{F}_{\mathcal{R},conv}$. Then $(\mathsf{E}f)(x)$ is defined for positive infinite surreal $x$ as an absolutely convergent series in the sense of Conway, by replacing the exponentials and logarithms in the transseries of $f$ by their surreal counterparts and $\lim$ by Lim.} 
\end{Definition}

\begin{Proposition}\label{PConvTranss}{\rm 
   The operator $\mathsf E | \mathcal{F}_{\mathcal{R},conv}$ (i.e. $\mathsf E$ restricted to $\mathcal{F}_{\mathcal{R},conv}$) is an isomorphism of algebras between the algebra of convergent transseries $\mathcal{F}_{\mathcal{R},conv}$ and its image $\mathcal{F}_{{\bf No},conv}:=\mathsf{E}\mathcal{F}_{\mathcal{R},conv}$.
}\end{Proposition}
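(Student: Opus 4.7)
The plan is to decompose the statement into three checks: (a) the operator $\mathsf{E}$ is well defined on $\mathcal{F}_{\mathcal R,conv}$, in the sense that the surreal series obtained by replacing $\lim$ with $\mathrm{Lim}$ is absolutely convergent in the sense of Conway; (b) $\mathsf{E}$ is linear and multiplicative; and (c) $\mathsf{E}$ is injective. Surjectivity onto $\mathcal{F}_{\mathbf{No},conv}$ is immediate from the definition of the codomain.

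For (a), fix a positive infinite $x\in\mathbf{No}$ and an element $f\in\mathcal{F}_{\mathcal{R},conv}$ with transseries $\sum_{\mathbf{k}} c_{\mathbf{k}}\boldsymbol{\mu}^{\mathbf{k}}$ as in Equation \eqref{defTC}. Each transmonomial is a product of integer powers of $x$, of $\log x$, and of exponentials $e^{\lambda x}$ (with $\lambda\in\RR$) or $e^{-\mathbf{k}\cdot\boldsymbol\lambda x}$, so by Section \ref{Sexp} and the polynomial/exponential structure on $\mathbf{No}$ each $\boldsymbol\mu^{\mathbf{k}}(x)$ is an explicit surreal, in fact a single leader $\omega^{y_{\mathbf{k}}(x)}$ (up to a positive real factor). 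The crucial geometric fact, forced by the asymptotic ordering $\gg$ on transmonomials (see Section \ref{STrans}) together with the order-isomorphism property of the $\omega$-map (Section \ref{PreS}), is that the map $\mathbf{k}\mapsto y_{\mathbf{k}}(x)$ is injective: distinct transmonomials live in distinct Archimedean classes of $\mathbf{No}$. Consequently the surreal normal form of the partial sum $\sum_{L>\mathbf{k}>-M}c_{\mathbf{k}}\boldsymbol\mu^{\mathbf{k}}(x)$ has disjoint supports across the $\mathbf{k}$'s, so for every $y\in\mathbf{No}$ the coefficient of $\omega^y$ in the $L$th partial sum is either $0$ or equals the single $c_{\mathbf{k}}$ with $y=y_{\mathbf{k}}(x)$ once $L$ exceeds $|\mathbf{k}|$. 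This is exactly the stabilization condition of Notational Convention 1, so the $\mathrm{Lim}$ exists and $(\mathsf{E}f)(x)\in\mathbf{No}$.

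For (b), linearity is immediate from Proposition \ref{Pcop}(a) applied to the partial sums, together with the fact that scalar multiplication and addition of transseries are defined termwise. For multiplicativity, given $f,g\in\mathcal{F}_{\mathcal R,conv}$ with transseries $\sum c_{\mathbf{k}}\boldsymbol\mu^{\mathbf{k}}$ and $\sum d_{\mathbf{l}}\boldsymbol\mu^{\mathbf{l}}$, the Cauchy product is again a convergent transseries representing $fg$, and by Proposition \ref{Pcop}(b) one has
\[
\mathsf{E}(fg)(x)=\mathop{\mathrm{Lim}}_{L\to\infty}\Bigl(\sum_{|\mathbf{k}|<L}c_{\mathbf{k}}\boldsymbol\mu^{\mathbf{k}}(x)\Bigr)\Bigl(\sum_{|\mathbf{l}|<L}d_{\mathbf{l}}\boldsymbol\mu^{\mathbf{l}}(x)\Bigr)=(\mathsf{E}f)(x)\,(\mathsf{E}g)(x),
\]
once one checks that the product of partial sums differs from the $L$th partial sum of the Cauchy product only by transmonomials $\boldsymbol\mu^{\mathbf{m}}(x)$ with $|\mathbf{m}|$ large, which by the disjoint-support argument of (a) contribute nothing to the coefficient of a fixed $\omega^y$ for $L$ large enough.

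For (c), suppose $\mathsf{E}f=0$. By the injectivity of $\mathbf{k}\mapsto y_{\mathbf{k}}(x)$ and the disjointness of supports in the normal form, reading off the coefficient of $\omega^{y_{\mathbf{k}}(x)}$ in the $\mathrm{Lim}$ forces $c_{\mathbf{k}}=0$ for every $\mathbf{k}$. Hence $f=0$ and $\mathsf{E}$ is injective, completing the proof. The main obstacle I expect is the injectivity statement $\mathbf{k}\mapsto y_{\mathbf{k}}(x)$ for \emph{all} positive infinite $x$, which ultimately rests on the elementary behaviour of surreal exponentiation established by Gonshor and reviewed in Section \ref{Sexp}: the exponents $-\mathbf{k}\cdot\boldsymbol\lambda x$ for distinct $\mathbf{k}$ give rise to distinct Archimedean classes through $\exp$, and multiplying by powers of $x$ and $\log x$ only produces strictly smaller perturbations, so the ordering by $\gg$ transfers verbatim to $\mathbf{No}$ via the $\omega$-map.
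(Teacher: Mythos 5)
Your overall decomposition (well-definedness of the $\mathrm{Lim}$, linearity and multiplicativity, injectivity) is sensible and far more detailed than the paper's own proof, which is a one-line observation that both algebras consist of limits, respectively Limits, of finite sums. However, the technical device on which all three of your steps lean --- that each transmonomial $\boldsymbol{\mu}^{\mathbf{k}}(x)$ evaluates to a single leader $\omega^{y_{\mathbf{k}}(x)}$ up to a real factor, so that the normal forms of distinct transmonomials have disjoint supports --- is false for a general positive infinite surreal $x$. Take $x=\omega+1$: then $x^{-1}=\sum_{n\ge 0}(-1)^n\omega^{-n-1}$ and $x^{-2}=\sum_{n\ge 0}(-1)^n(n+1)\omega^{-n-2}$, whose supports overlap in $\{-2,-3,\dots\}$, and the coefficient of $\omega^{-2}$ in $c_1x^{-1}+c_2x^{-2}$ is $-c_1+c_2$, not a single $c_{\mathbf{k}}$. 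Likewise, for $x$ with nonzero infinitesimal part $x_\epsilon$, the factor $e^{-\lambda x}$ is a leader times $e^{-\lambda x_\epsilon}=1-\lambda x_\epsilon+\cdots$ and hence has infinite support. What is true is only that the \emph{leading} exponents $y_{\mathbf{k}}(x)$ are distinct and ordered compatibly with $\gg$.

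Consequently, the stabilization of coefficients of the partial sums (your step (a), and the tail estimate in (b)) cannot be read off from disjointness; it is exactly the content of Neumann's lemma as packaged in Propositions \ref{sur6} and \ref{sur7} (after factoring out the dominant transmonomial, what remains is a power series in the finitely many infinitesimals $\mu_i(x)$), and commutation with sums and products is Proposition \ref{Pcop}. Similarly, injectivity (your step (c)) cannot be obtained by reading off the coefficient of each $\omega^{y_{\mathbf{k}}(x)}$ independently, since lower-order terms of $\gg$-larger transmonomials contaminate that coefficient; the correct argument is the leading-term one used in the proof of Proposition \ref{P:bijection}: if some $c_{\mathbf{k}}\ne 0$, the $\gg$-largest such transmonomial dominates every other contribution in Archimedean class, so the Limit is nonzero. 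With these repairs your argument goes through, and it then essentially reconstructs the machinery (Propositions \ref{sur7}, \ref{Pcop}, \ref{P:bijection} and Theorem \ref{taumap}) that the paper's one-line proof implicitly invokes.
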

\begin{proof}
 This is straightforward since the algebras  $\mathcal{F}_{\mathcal{R},conv}$ and $\mathcal{F}_{{\bf No},conv}$ consist of limits and Limits, respectively, of finite sums.
\end{proof}

 The following result is an immediate consequence of Corollary \ref{P59} and  Proposition \ref{PConvTranss}.

  \begin{Proposition}\label{OPES}
   {\rm  The operator $\mathsf E | \mathcal{F}_{\mathcal{R},conv}$ is an extension operator in the sense of Definition \ref{*E}, and $\mathsf E |\left(\mathcal{F}_{-,\mathcal{R}} \cap  \mathcal{F}_{\mathcal{R},conv}\right)$  is multiplicative in the sense of Definition \ref{*E10}. 
   
  }\end{Proposition}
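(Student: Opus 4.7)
The plan is to check the four defining conditions (i)--(iv) of Definition \ref{*E} in turn, appealing to Corollary \ref{P59} and Proposition \ref{PConvTranss}, and then to observe that multiplicativity of the restriction follows at once from the algebra-isomorphism statement in Proposition \ref{PConvTranss}.

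First I would establish (i): since $(\mathsf{E}f)(x)$ is defined by replacing $\lim$ by $\mathop{\mathrm{Lim}}$ in the convergent transseries representation of $f$, and since for a real argument $x_0\in\RR$ in the domain of $f$ the surreal $\mathop{\mathrm{Lim}}$ of a sequence of reals whose coefficients eventually stabilize in each Archimedean class coincides with the classical limit of the corresponding real series, one recovers $(\mathsf{E}f)(x_0)=f(x_0)$. Condition (ii), linearity, is contained in the assertion of Proposition \ref{PConvTranss} that $\mathsf{E}|\mathcal{F}_{\mathcal{R},conv}$ is an algebra isomorphism onto $\mathcal{F}_{{\bf No},conv}$, so it is in particular $\RR$-linear. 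Condition (iii) for the test functions $x^\beta e^{-\lambda x}$ and $x^n\log x$ is exactly the content of items (1) and (2) of Corollary \ref{P59}, where the surreal exponential of \S\ref{Sexp} and the surreal log are used.

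The main step is (iv), the commutation $\mathsf{E}f'=(\mathsf{E}f)'$. Here I would argue as follows. On the transseries side, derivatives of the exponentials, powers, and logarithms appearing in the convergent transseries of $f$ can be taken termwise (see \S\ref{S-diff-trans}), producing a convergent transseries for $f'$ that again lies in $\mathcal{F}_{\mathcal{R},conv}$. On the surreal side, $(\mathsf{E}f)(x)$ is for positive infinite $x$ an absolutely convergent series in Conway's sense (Proposition \ref{sur7}) in infinitesimal transmonomials, and the termwise differentiation theorem for such Conway-convergent series proved in \S\ref{SdiffNo} applies, since the transmonomials $x^\beta e^{-\lambda x}$ and $\log x$ are twice differentiable surreal functions by Corollary \ref{P59}. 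Matching the two termwise computations---using that the surreal derivatives of the transmonomials equal the $\tau$-images of their real derivatives---yields $(\mathsf{E}f)'=\mathsf{E}(f')$ at infinite arguments. At a finite surreal $x=x_0+\zeta$ the same conclusion follows by applying the already-established infinite-argument case to the M\"obius-transformed function $f\circ\mathcal{M}_{x_0}$, or equivalently by differentiating the Conway-convergent Taylor series in Equation \eqref{eq:extsr} termwise.

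Finally, the multiplicative statement on $\mathcal{F}_{-,\mathcal{R}}\cap\mathcal{F}_{\mathcal{R},conv}$ is immediate: Proposition \ref{PConvTranss} already identifies $\mathsf{E}|\mathcal{F}_{\mathcal{R},conv}$ as an algebra isomorphism onto its image, hence it respects products, and the restriction to the indicated subalgebra inherits this property. I expect the only real technical care to be needed in (iv), where one must verify that the termwise differentiation theorem of \S\ref{SdiffNo} genuinely applies to the mixed form $\mathbf{f}(x)^{\mathbf{k}}$ formed from powers, exponentials and logarithms occurring in a convergent transseries; but these transmonomials are precisely of the type covered by the hypotheses of that theorem, so this reduces to a routine bookkeeping verification rather than a substantive obstacle.
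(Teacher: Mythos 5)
Your proposal is correct and follows the same route as the paper, which simply declares the result an immediate consequence of Corollary \ref{P59} and Proposition \ref{PConvTranss}. Your more careful treatment of condition (iv) via termwise differentiation (\S\ref{S-diff-trans} on the transseries side and \S\ref{SdiffNo} on the surreal side) just fills in detail the paper leaves implicit, since Proposition \ref{PConvTranss} as stated asserts only an algebra isomorphism rather than a differential-algebra isomorphism.
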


\subsection{The main theorem on antidifferentiation: the operator $\mathsf{A}_{\bf No}$}

\begin{Definition}\label{defERno}{\rm 
 Let  $\mathsf{E}(\mathcal{F}_{\mathcal{R}}):=\{\mathsf E f:f\in \mathcal{F}_{\mathcal{R}}\}$.
} \end{Definition}

Since $\mathsf{E}:\mathcal{F}_{\mathcal R} \rightarrow \mathcal{R}_{\bf No}$ (from Theorem \ref{comp}) is a surjection, henceforth we write $\mathsf{E}(\mathcal{F}_{\mathcal R})$ in place of $\mathcal{R}_{\bf No}$.

 \begin{Definition}\label{Def59}
  {\rm  By $\mathsf{A}_{\bf No}$ we mean the operator defined by the following conditions: 
   \begin{enumerate}
 \item  for members of $\mathsf{E}(\mathcal{F}_{\mathcal{R}})$, $\mathsf{A}_{\bf No}=\mathsf{E}\mathsf{A}\mathsf{E}^{-1}$, where $\mathsf{E}$ and $\mathsf{A}$, defined on $\mathcal{F}_{\mathcal R}$, are the extension and antidifferentiation operators from Definition \ref{defE} and Definition \ref{DPB2R};  
 \item for $f\in\mathsf{E}(\mathcal{F}_{\mathcal{R}})$ and $\lambda\in {\bf No}$,  $\mathsf{A}_{\bf No}(\lambda f)=\lambda \mathsf{A}_{\bf No}\, f$. 
 \end{enumerate}
 } \end{Definition}

 \smallskip
\noindent{\bf{Example E3: $\mathsf{A}_{\bf No}(e^x)$.}}\label{EE3} Since $\mathsf{A}_{\mathbb T} e^x=e^x$, we obtain, for positive infinite surreal $x$
  \begin{equation}
    \label{eq:antiderexp}
    \mathsf{A}_{\bf No}(e^x)=e^x.
  \end{equation}

It is easy to check that, for any $\lambda\in{\bf No}$ and $f,g\in \mathsf{E}(\mathcal{F}_{\mathcal{R}})$ we have $\mathsf{A}_{\bf No}[\lambda(f+g)]=\lambda\mathsf{A}_{\bf No}\,f +\lambda\mathsf{A}_{\bf No}\,g $. 

We prove in Theorem \ref{TAntidiff} below that $\mathsf{A}_{\bf No}$ is an antidifferentiation operator in the sense of Definition \ref{Dd2}. To prepare the way, we first prove:

\begin{Proposition}\label{P61}{\rm In the following we assume that $c\in\RR$ and $f$ is defined on $\{x\in {\bf No}:x>c\}$.
  \begin{enumerate}
  \item If $f\in \mathsf{E}(\mathcal{F}_{\mathcal{R}})$, then $(\mathsf{A}_{\bf No}f)'=f$.
  \item If $x,y\in(c,\infty)\cap \RR$ and $f\in \mathcal{F}_{\mathcal R}$, then $(\mathsf{A}_{\bf No}f)(y)-(\mathsf{A}_{\bf No}f)(x)=\int_x^yf(s)ds$.
  \item If $f\in\mathsf{E}(\mathcal{F}_{\mathcal{R}})$ is nonnegative and $y>x>c$, then $(\mathsf{A}_{\bf No}f)(y)-(\mathsf{A}_{\bf No}f)(x) \ge 0.$
  \end{enumerate}
  }\end{Proposition}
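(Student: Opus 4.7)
Parts (1) and (2) follow formally from the isomorphism structure established earlier. For (1), Definition \ref{Def59}(1) gives $\mathsf{A}_{\bf No}f = \mathsf{E}\mathsf{A}\mathsf{E}^{-1}f$; by Theorem \ref{comp}, $\mathsf{E}$ commutes with differentiation (equivalently, clause iv of Definition \ref{*E}), and by Proposition \ref{PB2R}, $\mathsf{A}$ is an antidifferentiation operator on $\mathcal{F}_{\mathcal R}$, so
\[ (\mathsf{A}_{\bf No}f)' = \mathsf{E}\bigl((\mathsf{A}\mathsf{E}^{-1}f)'\bigr) = \mathsf{E}(\mathsf{E}^{-1}f) = f. \]
For (2), I interpret $\mathsf{A}_{\bf No}f$ for $f \in \mathcal{F}_{\mathcal R}$ as $\mathsf{E}(\mathsf{A}f)$. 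Since the extension $\mathsf{E}$ agrees with the original function on real arguments (clause i of Definition \ref{*E}), the surreal difference $(\mathsf{A}_{\bf No}f)(y) - (\mathsf{A}_{\bf No}f)(x)$ collapses to the real difference $(\mathsf{A}f)(y) - (\mathsf{A}f)(x) = \int_x^y f(s)\,ds$ by the classical fundamental theorem of calculus applied to the real antiderivative $\mathsf{A}f$.

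For (3), set $g := \mathsf{E}^{-1}f \in \mathcal{F}_{\mathcal R}$ and $\phi := \mathsf{A}g$, so $\mathsf{A}_{\bf No}f = \mathsf{E}\phi$. Since $f \geq 0$ on all surreals $> c$, its restriction to $\RR$ (where $\mathsf{E}$ acts as the identity) gives $g \geq 0$ on $(c, \infty) \subset \RR$; hence $\phi$ is real-analytic and nondecreasing on that interval by Definition \ref{Dd2}(iii) applied to $\mathsf{A}$. The plan is to propagate this monotonicity to $\mathsf{E}\phi$ on $(c, {\bf On})$ despite the disconnectedness of $\No$, partitioning according to the normal-form decomposition of $x$ and $y$: (a) both real, which reduces to case (2); (b) both in the monad around a common real $x_0$, where, after a re-expansion around $x$, the difference $(\mathsf{E}\phi)(y) - (\mathsf{E}\phi)(x)$ becomes a Conway-absolutely-convergent series of the form $\sum_{j \geq 1}(\mathsf{E}\phi^{(j)})(x)(\eta - \xi)^j/j!$ whose sign is controlled by the leading term $f(x)(\eta - \xi)$, with $\eta - \xi > 0$ infinitesimal and $f(x) \geq 0$ by hypothesis (the degenerate sub-cases where $f(x)=0$ are handled by iterating, using that each zero of the real-analytic $g \geq 0$ on $(c,\infty)$ is a local minimum of even order); (c) distinct real parts, handled by interpolating through an intermediate real point and combining (a) and (b); (d) one or both arguments positive infinite, where one invokes the transseries identity $\mathsf{E}\phi = \tau(\mathsf{A}_{\mathbb T}\mathrm{Tr}\,g)$ together with Propositions \ref{Ppos} and \ref{LEBstrE}(e) to transfer positivity between transseries and their \'Ecalle-Borel sums, using the monotonicity of $\mathsf{A}_{\mathbb T}$ on positive transseries established in Proposition \ref{Pintegr}.

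The principal obstacle is case (d). Because $\No$ admits no classical mean value theorem, the argument cannot simply invoke ``$H' \geq 0 \Rightarrow H$ nondecreasing.'' The proof instead hinges on the sign-determination rule for Conway-absolutely convergent surreal series (the sign matches that of the first non-vanishing term in the asymptotic order $\gg$), combined with a careful accounting of how $\tau$ interacts with the transseries ordering and how $\mathsf{A}_{\mathbb T}$ preserves leading sign data of the elementary transmonomials appearing in $\mathrm{Tr}\,g$. Assembling these pointwise statements into a comparison between the surreal values $(\mathsf{E}\phi)(x)$ and $(\mathsf{E}\phi)(y)$ at two distinct infinite arguments is the step that requires the most bookkeeping beyond what the preceding lemmas provide directly.
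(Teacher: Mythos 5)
Your parts (1) and (2) are correct and coincide with the paper's argument: (1) is the same three\-/step computation using the commutation of $\mathsf{E}$ with differentiation (Theorem \ref{comp}) and $(\mathsf{A}g)'=g$ (Proposition \ref{PB2R}); for (2) the paper makes the fundamental\-/theorem step slightly more explicit (it introduces $g(x,y)=\int_x^y f_{\RR}-[(\mathsf{A}f_{\RR})(y)-(\mathsf{A}f_{\RR})(x)]$, notes $dg/dy=0$ and $g(x,x)=0$), but your appeal to the classical FTC for the real\-/analytic antiderivative $\mathsf{A}f$ on $(c,\infty)$ is the same content. Your case decomposition for (3) -- both arguments real; same real part; distinct finite real parts via an intermediate real point; infinite arguments -- also matches the paper's.

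The genuine gap is in your case (d), and you have in effect flagged it yourself. Knowing that $\mathrm{Tr}\,g>0$, that $\mathsf{A}_{\mathbb T}$ sends positive infinitely large transseries to positive transseries (Proposition \ref{Pintegr}), and that $\widehat{\mathcal{LB}}$ and $\tau$ preserve sign, only tells you that the \emph{value} $F(\nu)=(\mathsf{A}_{\bf No}f)(\nu)$ has the right sign at each infinite $\nu$; it does not compare $F(y_1)$ with $F(y_2)$ for $y_1<y_2$ both infinite, and, as you note, no mean value theorem is available to convert $F'=f\ge 0$ into monotonicity. Your writeup stops at ``this is the step that requires the most bookkeeping,'' which is precisely the step that constitutes the proof. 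The paper closes it with a concrete computation on the dominant transmonomial (say $Cx^{\beta-l}e^{\lambda x}$ in the $\mathcal{T}_+$ component, so that $F(\nu)$ is a Conway\-/absolutely convergent series with dominant term $\lambda^{-1}C\nu^{\beta-l}e^{\lambda\nu}$): if $y_2-y_1=\epsilon\ll 1$, one re\-/expands $e^{\lambda(\nu+\epsilon)}(\nu+\epsilon)^{\beta-l}$ as a truncated multivariate Conway series in $\epsilon$ and $\epsilon\nu^{-1}$ and reads off that the dominant term of $F(\nu+\epsilon)-F(\nu)$ is $C\nu^{\beta-l}e^{\lambda\nu}\epsilon>0$; if $y_2-y_1=a$ is finite, one instead estimates the ratio $F(\nu+a)/F(\nu)=(1+a/\nu)^{\beta-l}e^{\lambda a}(1+s(\nu+a))/(1+s(\nu))>e^{\lambda a^{\circ}}>1$ with $s$ infinitesimal. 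Without one of these two explicit leading\-/term computations (or an equivalent), the infinite\-/argument case of (3) is not established, so your proposal as written is an outline rather than a proof at exactly the point where the statement is nontrivial.
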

  \begin{proof} Assume $f\in\mathsf{E}(\mathcal{F}_{\mathcal{R}})$ and let $f_{\RR}:=\mathsf{E}^{-1}f$. By the definition of $\mathsf{E}$, $f(x)=f_{\RR}(x)$ for any real $x\in (c,\infty)$.

    (1)  By Theorem \ref{OPE} and using the construction of $\mathsf{A}_{\bf No}$, we have
    $$(\mathsf{A}_{\bf No}f)'=(\mathsf{E}\mathsf{A}f_{\RR})'=\mathsf{E}(\mathsf{A}f_{\RR})'=\mathsf{E}f_{\RR}=f.$$
    
    (2) The function $g(x,y)=\int_x^y f_{\RR}-[(\mathsf{A}f_{\RR})(y)-(\mathsf{A}f_{\RR})(x)]$ is real-analytic in $(x,y)$ and $\frac{dg}{dy}=0$. Hence $g(x,\cdot)$ is a constant. Since $g(x,x)=0$, $g$ is the zero function.

    (3) Since, as noted, $f_{\RR}$ coincides with $f$ on $(c,\infty)\subset \RR$, we see that $f_{\RR}$ is nonnegative. Let $F_{\RR}=\mathsf{A}f_{\RR}$. By elementary calculus,  for $y>x$ in $(c,\infty)$ we have $F_{\RR}(y)\ge F_{\RR}(x)$. Hence, fixing $x\in (c,\infty)$ with $y$ being the variable, and using the properties of $\mathsf{E}$ and the definition of $\mathsf{A}_{\bf No}$ we obtain ${\rm sign}(y-x)[\mathsf{A}f_{\RR}(y)-\mathsf{A}f_{\RR}(x)]\ge 0$ for all $y\in{\bf No}$. For finite $y_1\le y_2\in{\bf No}$  for which there is an $x\in (c,\infty)$ such that $y_1\le x\le y_2$, we insert an intermediate term to obtain $\mathsf{A}f_{\RR}(y_2)-\mathsf{A}f_{\RR}(x)+\mathsf{A}f_{\RR}(x)-\mathsf{A}f_{\RR}(y_1)\ge 0$. If instead $y_1\le y_2\in{\bf No}$ are finite but there is no such real $x$ satisfying the just-said condition, then the standard parts of $y_1$ and $y_2$ coincide with some $x$ and the property follows from the Taylor expansions of $\mathsf{A}f_{\RR}(x+[y_{1,2}-x])$ around $x$.

      We are left with the analysis of the case when $y_1\le y_2$ are both positive infinite. Let $f_{\RR}=\widehat{\mathcal{LB}}\tilde{T}$. We only analyze the case where the component of $\tilde{T}$ in $\mathbb{T}_+$ is nonzero, say $Cx^{\beta-l}e^{\lambda x}$, as the case where $\tilde{T}\in\mathbb{T}_-\oplus \mathbb{T}_{\ell}$ is  similar. The property that needs to be established is that, if  $\tilde{T}\ge 0$, then $F=\mathsf{A}\widehat{\mathcal{LB}}\tilde{T}$ is an increasing function. From the construction of $\mathsf{A}$, for positive infinite $\nu$, $F(\nu)$ is an absolutely convergent Conway power series with dominant term $ \lambda^{-1}C\nu^{\beta-l}e^{\lambda \nu}$. We distinguish two cases. If $0<y_2-y_1=\epsilon\ll 1$, then each term in the Conway expansion of $F$ can be reexpanded in $\epsilon$. By this we mean the following. Letting $y_1=\nu$ and $\epsilon_1=\epsilon \nu^{-1}$ we have
      \begin{equation}
        \label{eq:expan}
       e^{\lambda_j(\nu+\epsilon)}(\nu+\epsilon)^{\beta_j-l}=e^{\lambda_j \nu}\nu^{\beta_j-l} {\rm Lim}_{m\to \infty}\sum_{r=0}^m\sum_{k=0}^m\frac{\lambda_j^r\epsilon^r}{r!}\binom{\beta-l}{k}\epsilon_1^k,
      \end{equation}
  which we insert in the Lim term above in the first sum in Equation \eqref{eq:deftau+}, and we similarly reexpand the other terms to obtain the Lim as $m\to\infty$ of an $N+2$-dimensional truncated power series.  This expansion shows that the dominant term of $F(\nu+\epsilon)-F(\nu)$ is  $C\nu^{\beta-l}e^{\lambda \nu}\epsilon>0$. If instead, $0<y_2-\nu=a$ is finite, then, with $s$ being some infinitesimal function and $a^\circ$ being the standard part of $a$,  we have $F(\nu+a)/F(\nu)=(1+a/\nu)^{\beta-l}e^{\lambda a}(1+s(\nu+a))/(1+s(\nu))>e^{\lambda a^\circ} >1$, as it is easy to check.
  \end{proof}

It is worth noting that, in virtue of the construction of $\mathsf{A}_{\bf No}=\mathsf{E}\mathsf{A}\mathsf{E}^{-1}$ we have obtained more than just antidifferentiation; in particular, we have obtained the operator $\mathsf{E}$, which in turn is used in the construction of $\mathsf{A}_{\bf No}$.  $\mathsf{A}_{\bf No}$ provides the solutions of  equations of the form $f'=g$, whereas, by virtue of the fact that $\mathsf{E}$ preserves the operations of differential algebra, $\mathsf{E}$ can be used to solve classes of nonlinear equations, such as  ODEs, and difference equations.
}
This brings us to the main theorem on antidifferentiation.

\begin{Theorem}\label{TAntidiff}{\rm 
  $\mathsf{A}_{\bf No}$ is an antidifferentiation operator in the sense of Definition \ref{Dd2}.
}\end{Theorem}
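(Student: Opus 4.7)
The plan is to verify the six properties in Definition \ref{Dd2} for $\mathsf{A}_{\bf No}$ on $\mathcal{R}_{\bf No}$ by reducing each to the corresponding property of the real operator $\mathsf{A}$ on $\mathcal{F}_{\mathcal R}$ (established in Proposition \ref{PB2R}) via the linear isomorphism $\mathsf{E}:\mathcal{F}_{\mathcal R}\to\mathcal{R}_{\bf No}$ of Theorem \ref{comp}. Properties (i) and (iii) are already in hand from Proposition \ref{P61}(1) and (3) respectively, so the work is to handle (ii), (iv), (v) and (vi).

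For property (ii), the scalar identity $\mathsf{A}_{\bf No}(\lambda f)=\lambda\mathsf{A}_{\bf No}f$ for $\lambda\in{\bf No}$ is built into Definition \ref{Def59}(2). For additivity, given $f,g\in\mathcal{R}_{\bf No}$ I write
\begin{equation*}
\mathsf{A}_{\bf No}(f+g)=\mathsf{E}\mathsf{A}\mathsf{E}^{-1}(f+g)=\mathsf{E}\mathsf{A}\bigl(\mathsf{E}^{-1}f+\mathsf{E}^{-1}g\bigr)=\mathsf{E}\bigl(\mathsf{A}\mathsf{E}^{-1}f+\mathsf{A}\mathsf{E}^{-1}g\bigr)=\mathsf{A}_{\bf No}f+\mathsf{A}_{\bf No}g,
\end{equation*}
using the $\RR$-linearity of $\mathsf{E}^{\pm 1}$ from Theorem \ref{comp} and of $\mathsf{A}$ from Proposition \ref{PB2R}. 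For (iv), the monomial $x^n$ with $n\in\NN$ sits in the polynomial piece of $\mathcal{T}_\ell\subset\mathcal{T}_{\mathcal R}$ (Note \ref{N:log}(c)), where $\mathsf{A}_{\mathbb T}(x^n)=\frac{1}{n+1}x^{n+1}$; since polynomials are convergent transseries preserved termwise by $\widehat{\mathcal{LB}}$ and $\tau$, Corollary \ref{P59}(1) gives $\mathsf{E}(x^n)=x^n$ and $\mathsf{E}(x^{n+1})=x^{n+1}$, whence $\mathsf{A}_{\bf No}(x^n)=\frac{1}{n+1}x^{n+1}$. Property (v) is exactly the content of Example E3, obtained by tracing $e^x$ through $\mathrm{Tr}$, $\mathsf{A}_{\mathbb T}$ and $\tau$, each of which fixes $e^x$.

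For (vi), suppose $F,f\in\mathcal{R}_{\bf No}$ with $F'=f$. Setting $F_\RR=\mathsf{E}^{-1}F$ and $f_\RR=\mathsf{E}^{-1}f$, the fact that $\mathsf{E}$ commutes with differentiation (Theorem \ref{comp}) yields $F_\RR'=f_\RR$ in $\mathcal{F}_{\mathcal R}$. Property (vi) for $\mathsf{A}$ acting on $\mathcal{F}_{\mathcal R}$, established in Proposition \ref{PB2R}, then produces a real constant $C$ with $\mathsf{A}f_\RR=F_\RR+C$. Applying $\mathsf{E}$ and using its $\RR$-linearity, together with the fact that $\mathsf{E}$ restricts to the identity on the canonical copy $\RR\hookrightarrow{\bf No}$, gives $\mathsf{A}_{\bf No}f=\mathsf{E}F_\RR+\mathsf{E}C=F+C$, with $C\in\RR\subset{\bf No}$ as required.

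The main obstacle, insofar as there is one, is a bookkeeping issue rather than a genuine technical difficulty: one must check that the compositions $\mathsf{E}\mathsf{A}\mathsf{E}^{-1}$ land back in $\mathcal{R}_{\bf No}$ (so that $\mathsf{A}_{\bf No}$ indeed has the right codomain for the output to be fed into $\mathsf{E}^{-1}$ again when verifying (vi)), and that the identification of polynomials, exponentials and real constants under $\mathsf{E}$ given by Corollary \ref{P59} is consistent with the evaluation at finite surreals from Definition \ref{defE}(2). Both points follow from the fact that, on convergent transseries, the definitions of $\widehat{\mathcal{LB}}$ and $\tau$ reduce to termwise substitution, so no subtle compatibility check is needed.
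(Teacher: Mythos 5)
Your proposal is correct and follows essentially the same route as the paper: each clause of Definition \ref{Dd2} is verified by transporting the corresponding property of $\mathsf{A}$ on $\mathcal{F}_{\mathcal R}$ (Proposition \ref{PB2R}) through the isomorphism $\mathsf{E}$, with (i) and (iii) taken from Proposition \ref{P61} and (iv), (v) from Corollary \ref{P59}(1). The only cosmetic difference is in clause (vi), where the paper argues that a function with vanishing derivative is constant while you push Property vi of $\mathsf{A}$ directly through $\mathsf{E}$; both rest on the same facts and are equivalent.
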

\begin{proof} The satisfaction of condition i of Definition \ref{Dd2} follows from Proposition \ref{P61} (1); the satisfaction of ii follows from Definition \ref{Def59} (1) and (2) and the linearity of $\widehat{\mathcal{LB}} $; the satisfaction of  iii follows from  Proposition \ref{P61} (2); and the satisfaction of iv and v follows from Proposition \ref{P59} (1). For the satisfaction of vi, let $f=Ef_{\RR}$, and note that $f'=0$ means $(\mathsf{E}f_{\RR})'=0$, thereby implying $f'_{\RR}=0$. Hence there is a $C\in\RR$ such that $f_\RR=C$, implying $f=C$.
\end{proof}

In virtue of Equation \eqref{eq:defint1}, Proposition \ref{existint} and Theorem \ref{TAntidiff}, we now have:
\begin{Corollary}{\rm
$\int_{x}^{y} f=\mathsf{A}_{\bf No}(f)(y)-\mathsf{A}_{\bf No}(f)(x)$ is an integral operator on the domain of $\mathsf{A}_{\bf No}$.}
\end{Corollary}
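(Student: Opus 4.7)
The plan is essentially to invoke the machinery that has already been set up: the corollary is a direct specialization of Proposition \ref{existint} to the antidifferentiation operator $\mathsf{A}_{\bf No}$ just constructed. First I would recall that by Notational Convention 2, for any antidifferentiation operator $A$ on $\mathcal{F}_1 \subseteq \mathcal{F}$ we have defined $\int_x^y f := A(f)(y) - A(f)(x)$. Applying this convention with $A = \mathsf{A}_{\bf No}$ and $\mathcal{F}_1 = \mathcal{R}_{\bf No}$ (extended by the scalar multiplication rule in Definition \ref{Def59}(2)) gives precisely the expression $\int_x^y f = \mathsf{A}_{\bf No}(f)(y) - \mathsf{A}_{\bf No}(f)(x)$ appearing in the statement.

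Next I would observe that the hypothesis of Proposition \ref{existint} is exactly that $A$ be an antidifferentiation operator in the sense of Definition \ref{Dd2}. This is precisely the content of Theorem \ref{TAntidiff}, which has just been proved: $\mathsf{A}_{\bf No}$ satisfies properties i--vi of Definition \ref{Dd2}. Thus the hypotheses of Proposition \ref{existint} are met on the domain of $\mathsf{A}_{\bf No}$.

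The conclusion of Proposition \ref{existint} then says that the function of three variables $(x, y, f) \mapsto \int_x^y f$ satisfies properties (a)--(g) listed there, which is exactly what it means to be an integral operator. Therefore $\int_x^y f$, as defined above, is an integral operator on the domain of $\mathsf{A}_{\bf No}$, completing the proof.

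There is no genuine obstacle here: all the hard work has been done in establishing Theorem \ref{TAntidiff} (which in turn relied on Proposition \ref{P61} and the isomorphism properties of $\mathsf{E}$, $\tau$, and $\widehat{\mathcal{LB}}$). The corollary is simply the packaging step that converts the antidifferentiation operator into an integral operator via the formula in Equation \eqref{eq:defint1}. The only thing worth being careful about is making sure the scalar-multiplication extension in Definition \ref{Def59}(2) does not interfere with linearity clause ii of Definition \ref{Dd2} when surreal (as opposed to real) scalars are used; but this is built into the statement of Definition \ref{Dd2} and was verified in Theorem \ref{TAntidiff}, so it goes through without incident.
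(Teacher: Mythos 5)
Your proposal matches the paper's own argument exactly: the paper derives the corollary "in virtue of Equation \eqref{eq:defint1}, Proposition \ref{existint} and Theorem \ref{TAntidiff}," which is precisely the chain you spell out. The proof is correct and takes essentially the same route.
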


\subsection{Uniqueness}\label{S:Uniqueness}The existence of a continuum of nonequivalent well-behaved averages induces an apparent nonuniqueness of the operators $\mathsf{E}$ and $\mathsf{A}_{\bf No}$. However, as we mentioned above, when restricted to the class of functions with which we are concerned with in this paper (see the introduction as well as Footnote \ref{f:4} on nonresonance and the remark it is appended to) all such averages coincide, thereby resulting in unique operators when $\mathsf{E}$ and $\mathsf{A}_{\bf No}$ are thus restricted. A detailed analysis of this will be the subject of a different paper. 

\section{The extension and antidifferentiation operators $\mathsf{E}^*$ and $\mathsf{A}_{\bf No}^*$}\label{EA} 

Often singular behavior occurs in other limits than $x\to\infty$. For instance, for a modular form such as the elliptic theta function $\theta_3$, the unit circle in $\CC$ is a natural boundary, and the limits of interest on the real line are $\pm 1$ (see \S\ref{Stheta3}).   Here, by changes of variable,  we expand the domain of our extension and antidifferentiation operators to handle arbitrary points.

The extension operator $\mathsf{E}^*$ is constructed in two stages. We begin by defining $\mathsf{E}_{\infty}$ acting on functions that are resurgent at $x_0=\infty$, with values in surreal functions defined for positive infinite surreals, namely we define  $(\mathsf{E}_{\infty}f)(x)=(\tau\circ\mathrm{Tr}\,\, f)(x)$. For functions that are resurgent at finite $x_0$, or $x_0=-\infty$ we simply change variables to bring the case to $x_0=\infty$. For example, the function $t\mapsto \exp(-1/t)$ is real-analytic on  $(0,1)$ but not at zero; extending it to positive infinitesimal $t$ is done by writing $t=1/x$ and extending the new function $e^{-x}$ to positive infinite values of $x$ using $\mathsf{E}_{\infty}$. That is, $ (\mathsf{E} f)(1/t):=(\mathsf{E}_{\infty} f)(x)$ with $x=1/t$. For the sake of completeness we formalize this  process in the paragraphs below.

\begin{Definition}\label{defM}{\rm 
Let $x_0\in\RR,a\in \RR^+$ and $f:D(f)\to\RR$ be a real-analytic function.  If  $D(f)=(a,\infty)$, then we let $m(x)=x$, the identity. If $D(f)=(-\infty,-a)$, we let $m(x)=-x $; if $D(f)=(x_0,x_0+a)$, then we let $m(x)=x_0+1/x$, and if $D(f)= (x_0-a,x_0)$, then $m(x)=x_0-1/x$. We then define $Mf=f\circ m$.  The domain of $Mf$ is $(a^{-1},\infty)$ in the first two cases and $(a,\infty)$ in the last two.}  \end{Definition}

 The class of functions we have heretofore been concerned with that we call ``resurgent" are the members of $\mathcal{F}_{\mathcal R}$; see Definition \ref{D45}. The following definition expands the class of functions we subsume under this appellation.
 
    \begin{Definition}\label{defEE}{\rm 
If  $Mf\in \mathcal{F}_{\mathcal R}$ with $M$ and $f$ as in Definition \ref{defM}, we say $f$ is {\em resurgent}. Let  $\mathcal{F}_\mathcal{R}^{*}$ be the set of all resurgent functions in the just-said sense. If  $y \in \mathcal{F}_{\mathcal R}^*$, we say $y$ is {\em resurgent at} $x_0=\infty$  (resp. $-\infty$) if $x\mapsto y(x)$ (resp. $x\mapsto y(-x)\in \mathcal{F}_{\mathcal{R}}^*$, and we say $y$ is  {\em resurgent to the right} (resp. {\em left}) of $x_0\in\RR$ if $x\mapsto y(x_0+1/x)$ (resp. $x\mapsto y(x_0-1/x) \in \mathcal{F}_{\mathcal{R}}^*$.} 
 \end{Definition}   
       
\begin{Definition}\label{defEE1}{\rm 
       Suppose $f$ is resurgent (in the sense of Definition \ref{defEE}). If $M$ is the identity (i.e. if $f$ is resurgent at $\infty$)  we define $\mathsf{E}_{\infty}^*f$ for $\infty<x\in \bf No$ by $(\mathsf{E}_{\infty}^*f)(x):=(\tau\circ\mathrm{Tr}\,\, f)(x)$ and let $\mathsf E^*=\mathsf E_{\infty}^*$. More generally, in all four cases of Definition \ref{defM}, we set $\mathsf E^*:= M^{-1}\mathsf{E}_{\infty}^* M$. Also set  $\mathsf E^*(\mathcal{F}_{\mathcal{R}}^*):=\{\mathsf E^*f: f \in \mathcal{F}_{\mathcal{R}}^*\}$. }
\end{Definition}

Notice that $\mathcal{F}_{\mathcal{R}} \subset \mathcal{F}_{\mathcal{R}}^*$ and, hence,  $\mathsf E(\mathcal{F}_{\mathcal{R}}) \subset \mathsf E^*(\mathcal{F}_{\mathcal{R}}^*)$.

 \begin{Theorem}\label{OPE*}
   {\rm   $\mathsf E^*$ is an extension operator in the sense of Definition \ref{*E}. Moreover, $\mathsf E^*$ restricted to $\{f:Mf\in \mathcal{F}_{-,\mathcal{R}}^*\}$ is a multiplicative extension operator in the sense of Definition \ref{*E10}.}\end{Theorem}
 \begin{proof}
Proposition \ref{OPE} shows that $\mathsf E^*_{\infty}$ has the properties stated in the theorem.  Conjugation through $M$,  $M^{-1} (\cdot) M$ is an obvious  structural isomorphism, ensuring preservation of the required properties.   
 \end{proof}
 
 \subsection{Definition of  $\mathsf{A}_{\bf No}^*$} 
  We  now make use of Definition \ref{defM} to define integration by changes of variable via $M$. Recall that $\mathsf{A}_{\bf No}$ has the intuitive interpretation  $(\mathsf{A}_{\bf No}f )(x)=\int_{\infty}^x f$. To extend $\mathsf{A}_{\bf No}$ to other points $x_0 \in \RR$ (or $-\infty$), we change the variable of integration to map $x_0 \in \RR$ (resp. $-\infty$) to $\infty$. In the change of variable $Mf=f\circ m$, $m$ is a one-to-one rational function which coincides with its surreal extension.

  For example, if $D(f)=(0,\epsilon)$ we have  $m(s)=1/s$ and we note that, heuristically, $$\displaystyle \int_0^s f(u) du=\int_{\infty}^{1/s}\left(-\frac{1}{v^2}\right)f\left(\frac1{v}\right)dv.$$ More generally, we obtain (still heuristically) $(\mathsf{A}_{\bf No}^* f)m(x)= (\mathsf{A}_{\bf No}m' f\circ m )(x)$, which motivates the following definition.

\begin{Definition}\label{RD}{\rm 
For $Mf\in \mathcal{F}_{\mathcal{R}}$, let  $(\mathsf{A}_{\bf No}^* f)\circ m:=\mathsf{A}_{\bf No}(m' Mf)$.
}\end{Definition}

\begin{Theorem}\label{TAntidiff*}{\rm 
  $\mathsf{A}_{\bf No}^*$ is an antidifferentiation operator on  $\mathsf{E}^*(\mathcal{F}_{\mathcal{R}}^*)$.
}\end{Theorem}
\begin{proof}
  Using the properties of  $\mathsf{A}_{\bf No}$ and of differentiation it is straightforward to check that the properties listed in Definition \ref{Dd2} hold. We check only i.; the others being similar and in fact simpler.

    We rewrite the  definition as $[(\mathsf{A}_{\bf No}^* f)](s)=[\mathsf{A}_{\bf No}(m'\,\, f\circ m) ](m^{-1}(s))$ and, using the chain rule  together with the fact that $\mathsf{A}_{\bf No}$  is an antidifferentiation operator, we get
  \begin{multline}
    [(\mathsf{A}_{\bf No}^* f)]'=\frac{[\mathsf{A}_{\bf No} (m'\,\, f\circ m)]' (m^{-1}(s))}{m'(m^{-1}(s))}=\frac{m'(m^{-1}(s))\,\, (f\circ m)(m^{-1}(s))}{m'(m^{-1}(s))}=f(s).
  \end{multline}

\end{proof}
\begin{Note}{\rm 
  The reader can see that  $\mathsf{A}_{\bf No}^*$ is obtained from $ \mathsf{A}_{\bf No}$ simply by changes of variables, in the same way  $\mathsf{E}^*$ was obtained from $\mathsf{E}$, with the consequence that the antiderivative of the extension of $f \in \mathcal{F}_{\mathcal{R}}^*$ is the extension of the antiderivative of $f$.
}\end{Note}
\begin{Corollary}{\rm
$\int_{x}^{y} f=\mathsf{A}_{\bf No}^*(f)(y)-\mathsf{A}_{\bf No}^*(f)(x)$ is an integral operator on  $\mathsf{E}^*(\mathcal{F}_{\mathcal{R}}^*)$.}
\end{Corollary}

\section{Illustrations of extensions and antidifferentiations}\label{ILL} 

Many of the familiar functions have convergent expansions at points on the real line, where the actions of the extension and antidifferentiation operators are easy to obtain, as in Equation \eqref{eq:extsr} and the cases mentioned in the comments following Definition \ref{defE}. Accordingly, in the first subsection, we focus on calculating functions for positive infinite surreal values when these functions have a singularity at $\infty$. In illustrations (i) and (ii), we go over all details of the analysis. 

\subsection{Functions having a  singularity at $\infty$}
(i) First consider the \emph{exponential function} $x\mapsto e^{x}$. Using Example E3, we have $\mathsf{A}_{\No}(e^x)=e^x$. Hence, by Equation \eqref{eq:defint1}, we have 
\begin{equation}\label{eq:expf}
 \int_0^{x} e^sds=e^{x}-1,
 \end{equation} 
 as expected.  This stands in contrast to Norton's aforementioned proposed definition of integration which was shown by Kruskal to integrate $e^x$ over the range $[0,\omega]$ to the wrong value $e^\omega$ \cite[p. 228]{CO2}. We note that \cite{RSAS} also obtains Equation \eqref{eq:expf}.

\medskip
(ii) Next consider $t^{-1}e^t$. Its antiderivative is the {\em exponential integral} Ei.

By changes of variables, we have
\begin{equation}\label{eq:expint}
\text{Ei}(x):={\rm PV}\int_{-\infty}^x t^{-1}e^t dt =e^x {\rm PV}\int_0^\infty\frac{e^{-xp}}{1-p}dp=e^x {\rm PV}\mathcal{L}\frac{1}{1-p}.
\end{equation}
We have $\widehat{\mathcal{LB}}\tilde{y}= {\rm PV}\mathcal{L}(1-p)^{-1}$.
Hence, Tr$(\text{Ei}(x))=e^x\tilde{y}=\mathsf{A}_{\mathbb T}[t^{-1}e^t]$. Using Definition \ref{Def59}, this gives, for all positive infinite surreal $x$, 
\begin{equation}
  \label{eq:finresEi}
  (\mathsf{A}_{\bf No}[t^{-1}e^t])(x)=e^x\sum_{k=0}^\infty\frac{k!}{x^{k+1}}.
\end{equation}

The values of Ei for positive finite surreal $x$ are obtained simply from the local Taylor series at the real part of $x$ (see Definition \ref{part}), as explained in Equation \eqref{eq:extsr} contained in Definition \ref{defE}.

The fact that Equation \eqref{eq:finresEi} should hold up to an additive constant was known to Conway and Kruskal but the value of this constant resisted their years long effort\footnote{Oral communication by Martin Kruskal to the first author.}. As \eqref{eq:finresEi} shows, this constant is zero.

\medskip
(iii) The \emph{imaginary error function} erfi. To calculate 
\begin{equation}
\label{eq:erfi1}
f(x)=\int_0^x e^{s^2}ds=\frac{\sqrt{\pi}}{2}\mathrm{erfi}(x)
\end{equation}
for positive infinite surreal $x$, we first find the \'Ecalle-Borel summed transseries of $f$. In this example, the \'Ecalle critical time is not the original variable. By applying integration by parts to the integral in Equation \eqref{eq:erfi1} we obtain:
\begin{equation}
\label{eq:erfi}
\int_0^x e^{s^2}ds\sim e^{s^2}\left(\frac{1}{2x}+\frac{1}{4x^2}+\cdots\right).
\end{equation}
We notice here that the exponent is $t=s^2$, which is the \'Ecalle critical time (see Definition \ref{DefBE}), and therefore  we have to pass to the variable $t$. We then observe that $f'(x)=e^{x^2}$, where $f(0)=0$.  With the substitution  $f(x)=x\exp(x^2)g(x^2)$ with $x^2=t$ we get 
\begin{equation}
\label{eq:eqgt}
g'+\left(1+\frac{1}{2t} \right)g =\frac{1}{2t}.
\end{equation}
The transseries of $g$ is simply a power series whose Borel transform $G$ satisfies, in accordance with \eqref{eq:eqgt},
\begin{equation}
\label{eq:eqGp}
(p-1)G'+\frac12 G=0,\  
\end{equation}
where $G(0)=1/2$. In this case {\bf mon} gives
\begin{equation}\label{eq:eqGp}
  \frac14\left(\int_0^{\infty-0i}+\int_0^{\infty+0i}\right)\frac{e^{-tp}}{\sqrt{1-p}}dp\\=\frac12\int_0^1\frac{e^{-tp}}{\sqrt{1-p}}dp,
\end{equation}
which leads to
\begin{equation}
  \label{eq:finEi}
  f(x)=\frac12 xe^{x^2}\int_0^1\frac{e^{-x^2 p}}{\sqrt{1-p}}dp.
\end{equation}
We note that $f(0)=0$, and so this expression satisfies both the differential equation and the initial condition. The transseries of $f$ is easily obtained from \eqref{eq:eqGp} and Lemma \ref{L:Watson} combined with the binomial formula. Ultimately, we arrive at
\begin{equation}
\label{eq:trerfi}
{\rm Tr}(f)(x)= \frac{1}{\sqrt{\pi}}e^{x^2}\sum_{n=0}^\infty \frac{\Gamma(n+\frac12)}{x^{2n+1}}
\end{equation}
and, hence,
\begin{equation}
  \label{eq:surrerfi}
  \int_0^x e^{s^2}ds= \frac{1}{\sqrt{\pi}}e^{x^2}\sum_{n=0}^\infty \frac{\Gamma(n+\frac12)}{x^{2n+1}}\ \ \ {\rm for \; all\; surreal\;} x>\infty.
\end{equation}
We note that the expression above is {\bf not} valid for $x<\infty$, let alone at $x=0$, and we don't expect it to satisfy $f(0)=0$. As is indicated above, for finite $x$ the values of $f$ for Ei are obtained from the local Taylor series.

(iv) The {\em Airy functions} ${\rm Ai}$ and ${\rm Bi}$. These are two special solutions of the Airy equation
  $$y''=z y.$$
  Their asymptotic expansions for large  $z\in\RR^+$ are 
\begin{equation}
    \label{eq:Ai}
{\rm Ai}(z)  \displaystyle\sim\frac{e^{-\zeta}}{2\sqrt{\pi}z^{1/4}}\sum_{k=0}^{\infty}(-1)^%
{k}\frac{u_{k}}{\zeta^{k}};\ \ \zeta:=\frac23 z^{3/2}
\end{equation}
and
\begin{equation}
  \label{eq:Bi}
{\rm Bi}(z)\sim\displaystyle\frac{e^{\zeta}}{\sqrt{\pi}z^{1/4}}\sum_{k=0}^{\infty}\frac{u%
_{k}}{\zeta^{k}},  
\end{equation}
respectively (see \cite{DLMF}). We note that the asymptotic expansions are given in \cite{DLMF} in terms of $\zeta$, which is precisely the \'Ecalle critical time (see Definition \ref{DefBE}).

Changing the variable $z$ to $\zeta$, the inverse Laplace transform of $\exp(-\zeta) $Ai$(\zeta) \zeta^{-1/6}$ is the hypergeometric function
    $_2F_1\left(\frac{7}{6},\frac{11}{6};2;-\frac{p}{2}\right)$ which is analytic on $\RR^+$, and hence usual Borel summabilty applies.

(v) The {\em Gamma} and log-{\em gamma} functions. The Borel summed transseries of the log-gamma function, $\log\Gamma(x)$, is\footnote{This is a simplification of the form given in  \cite[p. 96]{Book}.}
\begin{equation}
  \label{eq:loggamma}
  \log\Gamma(x)=x(\log(x)-1)-\frac12\log\frac{x}{2\pi}+\int_0^\infty e^{-xp}\frac{p\coth(p/2)-2}{2p^2}dp.
\end{equation}
Using the generating function of the Bernoulli numbers we get, for small $p$,
\begin{equation}
  \label{eq:bern2}
\frac{p\coth(p/2)-2}{2p^2} =\sum_{n=0}^\infty\frac{ B_{2 n}}{(2 n)!}p^n.
\end{equation}
Hence, the transseries of $\log\Gamma(x)$ is given by  
\begin{equation}
  \label{eq:transloggamma}
  \log\Gamma(x)=x(\log(x)-1)-\frac12\log\frac{x}{2\pi}+\sum_{n=0}^\infty\frac{B_{2 n}n!}{(2 n)!x^{n+1}}.
\end{equation}
Then, for positive infinite surreal $x$, $ \log\Gamma(x)$ is given by the right side of Equation \eqref{eq:transloggamma} where the infinite series are now interpreted as absolutely convergent series in the sense of Conway.

The Gamma function is simply obtained from the log-gamma function by exponentiation.

\subsection{An example of a function whose singularities are at finite points: Jacobi's elliptic function $\theta_3$ } \label{Stheta3} Jacobi's elliptic function $\theta_3$ is defined by
   \begin{equation}
     \label{eq:theta3}
     \theta_3(q)=\sum_{n\in\ZZ} q^{n^2}=1+2 \sum_{n\in\NN} q^{n^2};\ \ |q|<1
   \end{equation}
(see e.g. \cite{RW20}).  Clearly $\theta_3$ is analytic in the complex unit disk, in particular, it is real-analytic on $(-1,1)$. Since the series $ \sum_{n\in\NN} q^{n^2}$ is {\em lacunary}, the unit circle is a natural boundary (see e.g. \cite{Mandelbrojt}). In particular, the points $\pm 1 $ are singular. However,  as we will now see $\theta_3\in \mathcal{F}^*_{\mathcal R}$ and therefore it extends to a left (resp. right) surreal neighborhood of $1$ (resp. $-1$).
   Define $t$ by  $q=e^{-t}$ and let $q_1=e^{-\pi^2/t}$.  We note that $q\to 1$ is equivalent to $t\to 0$.  Jacobi's modular transformations applied to $\theta_3$ give:
   \begin{equation}
     \label{eq:Jacobi}
    \theta_3(q)=\sqrt {\frac{\pi}{t}} \theta_3(q_1)=\sqrt {\frac{\pi}{t}}\left(1+2 \sum_{n\in\NN} e^{-{n^2\pi^2/t}}\right).
   \end{equation}
   This is a convergent transseries, and applying the definition of $\mathsf E^*$ we obtain, for positive infinitesimal $\zeta$,
    \begin{equation}
     \label{eq:Jacobi2}
  (\mathsf E^*  \theta_3)(e^{-\zeta})=\sqrt {\frac{\pi}{\zeta}}\left(1+2 \sum_{n\in\NN} e^{-{n^2\pi^2/\zeta}}\right).
   \end{equation}
   For $q\to -1$ we have similar formulas, and we omit the intermediate steps. Indeed, $\theta_3(q)=\theta_4(-q)=\sqrt {\frac{\pi}{t}}\theta_2(-q_1)=2 q_1^{1/4}\sum_{n\ge 0} q_1^{n(n+1)}$ (see \cite{RW20}), which implies for $q$ infinitesimally greater than $-1$, 
  \begin{equation}
     \label{eq:Jacobi3}
  (\mathsf E^*  \theta_3)(-e^{-\zeta})=2\sqrt {\frac{\pi}{\zeta}}e^{-\frac14\pi^2/\zeta}\sum_{n\ge 0} e^{-{n(n+1)\pi^2/\zeta}}.
   \end{equation}

\section{The theory of surreal integration: a generalization}\label{GEN}

It is natural to inquire in which ordered exponential subfields $(K, \exp_K)$ of $(\No, \exp)$ the above theory of surreal integration restricted to $K$ continues to be applicable. In this section we show that a sufficient condition is that $K$ is \emph{closed under absolute convergence in the sense of Conway}, that is, for each formal power series $f$ in $n\ge 0$ variables with  coefficients in $\RR$, $f\left (a_1,..., a_n \right)$ is absolutely convergent in the sense of Conway in $K$ for every choice of infinitesimals $a_1,..., a_n$ in $K$. After having demonstrated this, we will exhibit ordered exponential subfields of $(\No, \exp)$ that are closed in this sense. 

Note that, for $n=0$ the ring of power series in $n$ variables with coefficients in $\RR$ is $\RR$ itself,
and so henceforth we may assume that all references to reals are references to members of $\RR \subseteq K \subseteq \No$ and furthermore that $(\RR, e^x)\subseteq (K, \exp_K)\subseteq (\No, \exp)$.

\begin{Lemma}\label{GENERAL1}{\rm
If $(K, \exp_K)$ is an ordered exponential subfield of $(\No,\exp)$ that is closed under absolute convergence in the sense of Conway, then for each $f \in  \mathsf{E^*}(\mathcal{F}_{\mathcal{R}}^*)$ (see Definition \ref{defEE1}) and each $x\in {\rm dom}(f)\cap K$, $f(x) \in K$. }
\end{Lemma}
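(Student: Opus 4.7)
The plan is to verify the conclusion separately for the two cases governed by Definition \ref{defE}: $x$ finite and $x$ positive infinite (the negative infinite case reduces to the positive infinite one via the substitution $f_+(x)=f(-x)$ indicated at the start of \S\ref{EOP}).

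For finite $x\in K$ I would write $x=x_0+\zeta$ with $x_0\in\RR$ the real part of $x$ and $\zeta$ its infinitesimal part; both lie in $K$, since $\RR\subseteq K$ and $K$ is a subfield of $\No$. Then Definition \ref{defE}(2) expresses $(\mathsf{E}f)(x)$ as a Conway-absolutely convergent univariate power series in $\zeta$ with real coefficients $f^{(k)}(x_0)/k!$, so closure of $K$ under absolute convergence in the sense of Conway (the $n=1$ instance of the hypothesis, equivalently Proposition \ref{sur5} applied in $K$) places $(\mathsf{E}f)(x)$ in $K$.

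For positive infinite $\nu\in K$ I would unfold $(\mathsf{E}f)(\nu)=(\tau\circ\mathrm{Tr}\,f)(\nu)$ using Equation \eqref{eq:deftau+}. Since $(K,\exp_K)$ is an ordered exponential subfield of $(\No,\exp)$, $\exp_K$ is a bijection of $K$ onto $K_{>0}$ whose inverse $\log_K$ agrees with the surreal logarithm on $K_{>0}$; hence $\log\nu\in K$, and for each real $\beta$ the power $\nu^\beta=\exp(\beta\log\nu)$ lies in $K$. Consequently the first two summands of \eqref{eq:deftau+}, namely the logarithmic-polynomial piece $P(\nu)\log\nu+Q(\nu)+R(\nu)$ and the finite exponential sum $\sum_{-M\le j\le -1,\,l\ge 1}c_{j,l}\nu^{\beta_j}e^{\lambda_j\nu}\nu^{-l}$, belong to $K$. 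For the remaining $\mathrm{Lim}$ contribution I introduce $\mu_i:=\nu^{\beta_i}e^{-\lambda_i\nu}\in K$ for $i=1,\dots,N$; each $\mu_i$ is infinitesimal because $\lambda_i>0$ and $\nu$ is positive infinite, and $\nu^{-1}\in K$ is infinitesimal as well. The individual terms $c_{\mathbf{k},l}\nu^{\boldsymbol{\beta}\cdot\mathbf{k}}e^{-\mathbf{k}\cdot\boldsymbol{\lambda}\nu}\nu^{-l}$ coincide with $c_{\mathbf{k},l}\mu_1^{k_1}\cdots\mu_N^{k_N}(\nu^{-1})^{l}$, so the $\mathrm{Lim}$ is the evaluation at the infinitesimals $\mu_1,\dots,\mu_N,\nu^{-1}\in K$ of the formal multivariate power series $\sum_{\mathbf{k},l\ge 0}c_{\mathbf{k},l}\,X_1^{k_1}\cdots X_N^{k_N}Y^{l}$ with real coefficients. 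By the hypothesis on $K$ (equivalently, Proposition \ref{sur7} read inside $K$), this value lies in $K$, completing the argument.

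The step I expect to be the main obstacle is reconciling the particular truncation used in Definition \ref{D:deftau} (the simultaneous $|\mathbf{k}|\le m,\,|l|\le m$ cutoff) with the unordered $(N+1)$-variable formal power series to which Proposition \ref{sur7} applies. This is handled by the coefficient-wise characterization of Conway absolute convergence recalled in \S\ref{S5}: the $\mathrm{Lim}$ is determined $\omega^y$-coefficient by $\omega^y$-coefficient, and both partial-sum sequences eventually agree on every such coefficient, so they represent the same element of $\No$, which by the closure hypothesis belongs to $K$.
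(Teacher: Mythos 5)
Your proof is correct and follows essentially the same route as the paper's: split into the finite case (Definition \ref{defE}(2) plus closure under univariate absolute convergence), the positive infinite case (the decomposition of Equation \eqref{eq:deftau+}, with the polynomial-logarithmic piece and the finite exponential sum landing in $K$ via $x^a=e^{a\log x}$, and the $\mathrm{Lim}$ term handled by the multivariate closure hypothesis), and the reduction of the negative infinite case via $f_+$. Your explicit identification of the infinitesimals $\mu_i=\nu^{\beta_i}e^{-\lambda_i\nu}$ and $\nu^{-1}$ and your remark reconciling the $|\mathbf{k}|\le m,\ |l|\le m$ truncation with Proposition \ref{sur7} are welcome elaborations of steps the paper leaves implicit, but they do not change the argument.
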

\begin{proof}

Suppose the hypothesis holds and further suppose $f \in  \mathsf{E^*}(\mathcal{F}_{\mathcal{R}}^*)$ and $x\in {\rm dom}(f)\cap K$.

{\bf Case 1}. $f \in  \mathsf{E}(\mathcal{F}_{\mathcal{R}})$ and, hence, $\mathsf{E}^{-1}f$ is resurgent at $\infty$.

If $x=x_0 +\zeta$, where $x_0$ is in the real-analyticity domain of $\mathsf{E}^{-1} f$ and $\zeta$ is infinitesimal, then in virtue of Definitions \ref{defM} and \ref{defE} and Corollary \ref{P59}(4),\footnote{Since  $f$ is an extension of $\mathsf{E}^{-1}f$, $(\mathsf{E}^{-1}f)(x_0)=f(x_0)$ and $(\mathsf{E}^{-1}f)^{(k)}(x_0)=f^{(k)}(x_0)$.} 
   $$\mathsf{E}f(x)=\mathsf{E}f(x_0+\zeta)=(\mathsf{E}^{-1}f)(x_0)+\sum_{k\ge 1}(k!)^{-1}(\mathsf{E}^{-1}f)^{(k)}(x_0)\zeta^k. $$

\noindent
 As such, since $K$ is closed under absolute convergence in the sense of Conway, $(\mathsf{E}f)(x) \in K$.

If $x$ is positive infinite, then in virtue of Definitions \ref{D:deftau} and the first part of Definition \ref{defE}, $(\mathsf{E}f)(x)$ assumes the form

  \begin{multline*}
     \sum_{-M\le j\le -1;\ l\ge 1}c_{j,l}x^{\beta_j}e^{\lambda_j}x^{-l}+ \\P(x)\log(x)+Q(x)+R(x)+\\\sum_{\mathbf{k}\ge 0,l\ge 0}c_{\mathbf{k},l}x^{\boldsymbol{\beta}\cdot \mathbf{k}}e^{-\mathbf{k} \cdot \boldsymbol{\lambda} x}x^{-l},
   \end{multline*}
  
\noindent 
where $P$, $Q$ and $R$ are polynomials, $R$ being without constant term, $M$ is a natural number, the coefficients and powers are real numbers, and the terms of the form $x^{\beta_j'}$, etc., (or their multiplicative inverses), can be written as exponentials (or multiplicative inverses of exponentials) using the identity $x^{a}=e^{a\log x}$. Accordingly, since $P(x)\log(x)+Q(x)+R(x)$ and the finite sum over $M$ are clearly both in $K$, to show the entity denoted by the full expression is in $K$ it remains to note that the $\mathop{{\rm Lim}}$ term is in $K$ in virtue of $K$'s closure under absolute convergence in the sense of Conway.

{\bf Case 2.}  $f \notin \mathsf{E}(\mathcal{F}_{\mathcal{R}})$. The extension operator $\mathsf E^*$ is defined as $\mathsf E^*:= M^{-1}{\mathsf E}_{\infty}^* M$ where $(Mf)(x)=f(m(x))$. This clearly preserves the range of $f$:  the values of $Mf$ are in $K$ if and only if the values of $f$ are in $K$.  
\end{proof}

For each $f \in  \mathcal{F}_{\mathcal{R}}^*$, 
let $\mathsf{E}_{K}^*(f):=\mathsf{E}^*(f)|K$, i.e. $\mathsf{E}^*(f)$ restricted to $K$, and for each $f \in  \mathsf{E}^*(\mathcal{F}_{\mathcal{R}}^*)$, let  $\mathsf{A}_{\bf No}^{*K}(f):=\mathsf{A}_{\bf No}^*(f)|K$. Also, let $\mathsf{E}^*(\mathcal{F}_{\mathcal{R}}^*)|K:=\{f|K: f \in \mathsf{E}^*(\mathcal{F}_{\mathcal{R}}^*)\}$.

\begin{Theorem}\label{GENERAL2}{\rm 
Let $(K, \exp_K)$ be an ordered exponential subfield of $(\No,\exp)$ that is closed under absolute convergence in the sense of Conway, and let $x,y \in K$. Then: 
\begin{enumerate}
\item $\mathsf{E}_{K}^*$ is an extension operator on $\mathcal{F}_{\mathcal{R}}^*$ in the sense of Definition \ref{*E};
\item $\mathsf{A}_{\bf No}^{*K}$ is an antidifferentiation operator on $\mathsf{E}^*(\mathcal{F}_{\mathcal{R}}^*)|K$ in the sense of Definition  \ref{Dd2};
\item $\int_{x}^{y} f=\mathsf{A}_{\bf No}^{*K}(f)(y) - \mathsf{A}_{\bf No}^{*K}(f)(x)$ is an integral operator on $\mathsf{E}^*(\mathcal{F}_{\mathcal{R}}^*)|K$ in the sense of Proposition \ref{existint}.

\end{enumerate}
 }
\end{Theorem}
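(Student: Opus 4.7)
The plan is to deduce each of the three claims by restricting the corresponding operators on $\mathbf{No}$ (from Proposition \ref{OPE} and Theorem \ref{TAntidiff}) to $K$-valued arguments and verifying that each defining axiom survives the restriction. Lemma \ref{GENERAL1} is the crucial enabling fact: it ensures that $\mathsf{E}f$ restricted to $K$ actually takes values in $K$, so that $\mathsf{E}_K f : \mathrm{dom}(f) \cap K \to K$ is genuinely a $K$-valued function and $\mathsf{E}_K^{-1}$ makes sense on the image (its injectivity being inherited from that of $\mathsf{E}$, by Theorem \ref{comp}).

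For claim (1), I would check the four conditions of Definition \ref{*E} in turn. The first three---extension property, linearity, and correct behavior on the building-block functions $x^\beta e^{-\lambda x}$ and $x^n \log x$---are immediate by restriction from Proposition \ref{OPE}, since $\RR \subseteq K$ and the canonical surreal representatives of those building blocks lie in $K$ by the hypothesis that $(K, \exp_K)$ is an exponential subfield of $(\No, \exp)$. The substantive step is condition iv, the commutation with differentiation in the $K$-sense of Definition \ref{DefDeriv}. For this I would observe that the derivative of $\mathsf{E}f$ is computed via the explicit termwise formulas of \S\ref{SdiffNo}, Definition \ref{DdefP}, and the Taylor/transseries expansions of Definitions \ref{D:deftau} and \ref{defE}; these involve only the field operations together with Limits of series absolutely convergent in the sense of Conway. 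Since $K$ is by hypothesis closed under such Limits, the same formula produces the same value in $K$ as in $\mathbf{No}$ at any $a \in K$, which, combined with Lemma \ref{GENERAL1}, yields $(\mathsf{E}_K f)' = \mathsf{E}_K(f')$ in the $K$-sense.

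For claim (2) the plan is to verify the six conditions of Definition \ref{Dd2} for $\mathsf{A}_K = \mathsf{E}_K \mathsf{A}^K \mathsf{E}_K^{-1}$. Condition i uses precisely the same termwise-differentiation argument as condition iv of claim (1). Conditions ii, iv, and v are direct inheritances from the corresponding conditions for $\mathsf{A}_{\mathbf{No}}$ in Theorem \ref{TAntidiff}, via restriction. Condition iii (monotonicity on nonnegative integrands) is preserved because the ordering and positivity on $K$ are inherited from $\mathbf{No}$. For condition vi one must exhibit an integration constant $C \in K$ relating $\mathsf{A}_K f$ to a given antiderivative; this follows from recalling that $\mathsf{A}$ is normalized (see the remark at the end of \S\ref{MainDef} concerning $\lim_{x \to \infty}(\mathsf{A} f)(x) = 0$, and the construction in Definition \ref{DPB2R}) so that $C$ is determined by real values and thus lies in $\RR \subseteq K$.

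Claim (3) is then immediate by applying Proposition \ref{existint} to $\mathsf{A}_K$, since the proof of that proposition used only the axioms of an antidifferentiation operator, which have just been verified for $\mathsf{A}_K$ on $\mathsf{E}_K(\mathcal{F}_\mathcal{R})$. The main obstacle throughout is the careful handling of condition iv of Definition \ref{*E} and condition i of Definition \ref{Dd2}: namely, that the $K$-derivative of a restricted function agrees with the restriction of its $\mathbf{No}$-derivative at points of $K$. Closure of $K$ under absolute convergence in the sense of Conway is exactly the structural hypothesis that makes this go through, because it guarantees that the explicit power-series and transseries expansions defining $\mathsf{E}f$ and its derivatives remain meaningful within $K$ and produce the same values there as in $\mathbf{No}$.
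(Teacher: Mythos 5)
Your proposal is correct and follows essentially the same route as the paper: Lemma \ref{GENERAL1} guarantees that the restricted operators are $K$-valued, claim (1) then follows from Proposition \ref{OPE}, claim (2) from (1) together with Theorem \ref{TAntidiff} and the fact that the antiderivative of an extension is the extension of the antiderivative, and claim (3) from Proposition \ref{existint}. The paper's proof is far terser, leaving implicit the axiom-by-axiom verification (in particular the compatibility of the $K$-derivative with the restriction of the $\No$-derivative) that you spell out.
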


\begin{proof}
(1) follows from Lemma \ref{GENERAL1} and Theorem \ref{OPE*}. In addition, since the antiderivative of the extension of $f \in  \mathcal{F}_{\mathcal{R}}^*$ is the extension of the antiderivative of $f$, to establish (2) we need only further appeal to (1) and Theorem \ref{TAntidiff*}. (3) follows from (2) together with Equation \eqref{eq:defint1} and Proposition \ref{existint}.
\end{proof}

Our first example of a structure that satisfies the hypothesis of Theorem \ref{GENERAL2} is given by:

\begin{Theorem}\label{ILL1}
{\rm The ordered exponential subfield $\mathbb{R}((\omega))^{LE}$ of $(\No, \exp)$ is closed under absolute convergence in the sense of Conway.}
\end{Theorem}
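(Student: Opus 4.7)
The strategy is to exploit the fact that $\mathbb{R}((\omega))^{LE}$, as constructed by Berarducci and Mantova, is (up to isomorphism) the field of log-exp transseries sitting inside $\mathbf{No}$, and as such carries a Hahn-series-type normal form in which absolutely convergent Conway sums are realized as genuine Hahn sums that never leave the field. Since $\mathbb{R}((\omega))^{LE}$ is a subfield of $\mathbf{No}$ containing $\mathbb{R}$ and closed under $\exp$ and $\log$, every finite partial sum $\sum_{|\mathbf{k}|\le N}c_{\mathbf{k}}\mathbf{a}^{\mathbf{k}}$ of a power series $f$ in infinitesimals $a_1,\dots,a_n\in \mathbb{R}((\omega))^{LE}$ already lies in $\mathbb{R}((\omega))^{LE}$. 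By Proposition~\ref{sur7} the full sum $f(\mathbf{a})=\sum_{|\mathbf{k}|\ge 0}c_{\mathbf{k}}\mathbf{a}^{\mathbf{k}}$ exists in $\mathbf{No}$; what remains is to check it lies in the subfield.

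The key step is to recognize that each infinitesimal $a_i\in\mathbb{R}((\omega))^{LE}$ has a normal form
$$a_i=\sum_{\alpha<\beta_i}r_{i,\alpha}\,\omega^{y_{i,\alpha}}$$
whose support $S_i=\{y_{i,\alpha}:r_{i,\alpha}\neq 0\}$ is a reverse-well-ordered subset of $\mathbb{R}((\omega))^{LE}$ consisting of strictly negative surreals (this is guaranteed because $\mathbb{R}((\omega))^{LE}$ is stable under the $\omega$-map applied to its own elements, by Berarducci--Mantova's construction, and because $a_i$ is infinitesimal). A Neumann-style lemma then applies to the additive subgroup $G$ generated by $S_1\cup\cdots\cup S_n$: the set of finite $\mathbb{Z}_{\ge 0}$-linear combinations of elements of $S_1\cup\cdots\cup S_n$ that lie above any fixed $y\in G$ is finite, and the full set of such combinations is itself reverse-well-ordered in $G$. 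Consequently, for each $\mathbf{k}$ the support of $c_{\mathbf{k}}\mathbf{a}^{\mathbf{k}}$ is contained in $G$, and only finitely many $\mathbf{k}$ contribute nonzero terms to any fixed exponent $y\in G$.

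Using this, I would define the Hahn sum $\sigma=\sum_{y\in G} s_y\,\omega^{y}$, where $s_y\in\mathbb{R}$ is the finite sum of the coefficients of $\omega^y$ in the expansions of the $c_{\mathbf{k}}\mathbf{a}^{\mathbf{k}}$. Because $G$ is reverse-well-ordered and each $y\in G$ lies in $\mathbb{R}((\omega))^{LE}$, the sum $\sigma$ is a well-defined element of $\mathbb{R}((\omega))^{LE}$: this is precisely the closure under reverse-well-ordered $\mathbb{R}$-sums with exponents in $\mathbb{R}((\omega))^{LE}$ that is built into the Berarducci--Mantova construction (at whatever level of the inductive hierarchy contains the $y_{i,\alpha}$). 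Finally, to identify $\sigma$ with $f(\mathbf{a})$ as elements of $\mathbf{No}$, I would check that they have the same coefficient of $\omega^y$ for every $y\in\mathbf{No}$: both coincide with $s_y$ for $y\in G$ (since only finitely many $\mathbf{k}$ contribute, so absolute convergence in the sense of Conway reduces to a finite sum at each coordinate) and both vanish elsewhere.

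The main obstacle is the bookkeeping in the second step: one must justify that the support $G$ really lives inside $\mathbb{R}((\omega))^{LE}$ and that summing over $G$ remains inside $\mathbb{R}((\omega))^{LE}$, because $\mathbb{R}((\omega))^{LE}$ is not globally a single Hahn series field over a fixed value group but rather an inductive union that is closed under Hahn summation at each level. The verification that a Neumann-type well-ordering lemma produces a $G$ captured at some finite stage of this hierarchy is the technically delicate point; once it is in place, the coefficient-by-coefficient identification of $\sigma$ with the Conway Limit is routine.
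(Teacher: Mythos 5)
Your overall strategy (Neumann's theorem plus the Hahn-field structure of $\mathbb{R}((\omega))^{LE}\cong\mathcal{T}$) is in the right spirit, but the proof as written has a genuine gap, and it is exactly the one you flag yourself at the end. You reduce everything to the claim that the Hahn sum $\sigma=\sum_{y\in G}s_y\,\omega^{y}$ lies in $\mathbb{R}((\omega))^{LE}$ because that field is ``closed under reverse-well-ordered $\mathbb{R}$-sums with exponents in $\mathbb{R}((\omega))^{LE}$.'' Taken literally this is false: $\mathcal{T}$ is \emph{not} the full Hahn field over its monomial group (if it were, it would contain various infinite nested sums of transmonomials that are not log-exp transseries); it is only a union of Hahn fields over smaller monomial groups, closed under Hahn summation level by level. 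So the burden of showing that your support group $G$ is ``captured at some finite stage of the hierarchy'' is not routine bookkeeping that can be deferred --- it is the entire content of the theorem, and your proposal does not supply it.

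The paper closes this gap by citing a structural fact rather than chasing supports: by \cite[Appendix A]{ADH1}, $\mathcal{T}=\bigcup_{i\in I}H_i$ is a \emph{directed} union of Hahn fields ($H_i,H_j\subseteq H_k$ for some $k$). Hence any finitely many infinitesimals $a_1,\dots,a_n$ already lie in a single Hahn field $H_m$, and Neumann's classical theorem \cite{N} --- which is precisely the statement that a Hahn field is closed under substituting its own infinitesimals into formal power series over $\RR$ --- gives $f(a_1,\dots,a_n)\in H_m\subseteq\mathcal{T}$ with no further analysis of supports. If you replace your hand-rolled support-group argument with this directedness observation, your proof collapses into the paper's; without it, the key step remains unproved.
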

\begin{proof}
As was mentioned in \S\ref{STrans}, $\mathbb{R}((\omega))^{LE}$ is an ordered exponential subfield of $\No$ that is isomorphic to the ordered exponential field $\mathbb{T}$ of transseries. Moreover, $\mathbb{T}$ is equal to the union $\bigcup_{i \in I}H_i$ of a family $\{H_{i}: i \in I\}$ of Hahn fields having the property: for all $i,j \in I$ there is a $k \in I$ such that $H_{i}, H_{j} \subseteq H_k$ (see \cite[Appendix A]{ADH1}). Plainly then, for each finite set $a_1,..., a_n$ of infinitesimals in $\bigcup_{i \in I}H_i$, there is an $m \in I$ such that  $a_1,..., a_n \in H_m$. Moreover, since $H_m$ is a Hahn field, by a classical result of B. H. Neumann \cite{N}, for each $f(x_1,..., x_n) \in \RR[[X_1,\dots,X_n]]$, $f(a_1,..., a_n) \in H_{m}$. But then $f(a_1,..., a_n) \in  \bigcup_{i \in I}H_i$, which proves the proposition. 
\end{proof}

Our next group of examples of structures that satisfy the hypothesis of Theorem \ref{GENERAL2} comes from work of van den Dries and the second author \cite{vdDE}. The demonstration that these structures do indeed satisfy the said hypothesis rests largely on Propositions \ref{DE1}, \ref{DE2} and \ref{DE3} below, the formulations of which require the following definitions.
\begin{sloppypar}
If $\mathit{\Gamma}$ is a subgroup of $\No$ whose universe is a set, then
there is a canonical isomorphism $f$ of the Hahn field $\RR((\tau^{\mathit{\Gamma}}))$ into $\No$ for which $$f(\sum\limits_{\alpha  < \beta } {\tau^{y_\alpha  } .r_\alpha})= 
\sum\limits_{\alpha  < \beta } {\omega ^{y_\alpha  } .r_\alpha}.$$
The image of $f$, denoted $\RR((\omega^{\mathit{\Gamma}}))$, is the \emph{Hahn field in $\No$ generated by} $\mathit{\Gamma}$. By $\RR((\omega^{\mathit{\Gamma}}))_\lambda$ we mean the set consisting of all elements of $\RR((\omega^{\mathit{\Gamma}}))$ having supports $(y_{\alpha})_{\alpha < \beta <\lambda}$, where $\lambda$ is a fixed ordinal. Moreover, for each ordinal $\lambda$, let  $\No(\lambda):=\{x \in \No: {\rm tree\; rank\; of\;} x < \lambda \}$ (see \S\ref{PreS}). Finally, as the reader will recall, an \emph{additively indecomposable} ordinal is an ordinal of the form $\omega^{\alpha}$ for some $\alpha \in {\bf On}$, and an \emph{$\epsilon$-number} is an ordinal $\lambda$ such that $\omega^{\lambda}=\lambda$. 
\end{sloppypar}

\begin{Proposition}{\rm (\cite[Corollary 3.1]{vdDE})\label{DE1}}
{\rm $\No(\lambda)$ (with sums and order inherited from $\No$) is an ordered abelian group whenever $\lambda$ is an additively indecomposable ordinal.}
\end{Proposition}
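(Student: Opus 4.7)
The plan is to verify three properties of $\No(\lambda)$: containment of $0$, closure under negation, and closure under addition. The remaining group-theoretic and order-theoretic axioms are inherited from the ambient structure of $\No$.

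First, the sign expansion of $0$ has length $0$, so $0\in\No(\lambda)$ whenever $\lambda>0$, which holds since $\lambda$ is additively indecomposable. Second, negation in the sign-expansion model of $\No$ corresponds to flipping every sign of the expansion, so writing $\ell(x)$ for the sign-expansion length of $x$ (which coincides with its tree rank), we have $\ell(-x)=\ell(x)$; consequently $\No(\lambda)$ is closed under negation.

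The central step is closure under addition. I would establish it by simultaneous transfinite induction on $\ell(x)$ and $\ell(y)$ using Conway's recursive definition $x+y=\{x^L+y,\,x+y^L\mid x^R+y,\,x+y^R\}$, aiming for a tree-rank estimate of the form $\ell(x+y)\le \ell(x)\oplus\ell(y)$, where $\oplus$ denotes the Hessenberg natural sum. Granted such an estimate, closure is immediate from the defining property of additively indecomposable ordinals: writing $\lambda=\omega^\alpha$ and expressing any $\mu,\nu<\omega^\alpha$ in Cantor normal form, all exponents appearing are strictly below $\alpha$, so their Hessenberg combination $\mu\oplus\nu$ also has Cantor normal form with every exponent below $\alpha$, hence $\mu\oplus\nu<\omega^\alpha=\lambda$.

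The hard part is the tree-rank estimate. A direct analysis of how the sign expansion of $x+y$ is built from those of $x$ and $y$ is delicate because the recursion defining $+$ can introduce unexpected cancellations that both shorten and lengthen intermediate expansions. A cleaner route, which I would pursue, passes through the normal form $x=\sum_{\beta<\gamma_x}\omega^{y_\beta}r_\beta$ discussed in \S\ref{PreS}: addition of two normal forms is carried out by merging like terms as for formal polynomials, and one then invokes the Gonshor-style bookkeeping that expresses $\ell(x)$ in terms of the tree ranks of the exponents $y_\beta$ together with the lengths of the dyadic sign expansions of the coefficients $r_\beta$. Closure of $\No(\omega^\alpha)$ under addition then reduces to verifying that none of these ordinal ingredients escapes $\omega^\alpha$ after the merging operation, which is exactly the content of additive indecomposability and therefore completes the argument.
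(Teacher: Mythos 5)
The paper offers no proof of this proposition at all: it is quoted directly from van den Dries and Ehrlich \cite[Corollary 3.1]{vdDE}, so the only comparison available is with that source. Your overall strategy --- reduce the problem to closure under addition, establish a length estimate of the form $\ell(x+y)\le \ell(x)\oplus\ell(y)$, and then use the fact that an additively indecomposable ordinal $\omega^{\alpha}$ is closed under the Hessenberg natural sum --- is exactly the strategy behind the cited corollary, and your treatment of $0$ and of negation (sign reversal preserves length) is correct.

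The gap is that the central estimate $\ell(x+y)\le\ell(x)\oplus\ell(y)$ is announced but never proved, and the route you propose for it is the harder of the two you mention. The normal-form route requires Gonshor's explicit sign-expansion formula for $\sum\omega^{y_{\beta}}r_{\beta}$, which involves the reduced sign expansions of the exponents relative to their predecessors and a separate bound on the length of a sum of real coefficients; asserting that ``none of these ordinal ingredients escapes $\omega^{\alpha}$'' is not a proof, and it is not simply the content of additive indecomposability. By contrast, the direct induction you set aside goes through with no sign-expansion analysis at all: one needs only the standard cut lemma that if every member of $L\cup R$ has tree rank $<\mu$ then $\ell(\{L|R\})\le\mu$ (equivalently, $\ell(\{L|R\})\le\sup\{\ell(z)+1:z\in L\cup R\}$), together with strict monotonicity of $\oplus$ in each argument. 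From $x+y=\{x^{L}+y,\ x+y^{L}\mid x^{R}+y,\ x+y^{R}\}$ and the inductive hypothesis, every option has length strictly below $\ell(x)\oplus\ell(y)$, since for instance $\ell(x^{L})<\ell(x)$ gives $\ell(x^{L})\oplus\ell(y)<\ell(x)\oplus\ell(y)$; the cut lemma then yields the bound. The ``unexpected cancellations'' you worry about are invisible to this argument because it never inspects signs. You should either carry out this induction or cite the length bound from the source, rather than leaving the normal-form bookkeeping as a promissory note.
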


\begin{Proposition} {\rm (\cite[Lemma 4.6]{vdDE})\label{DE2}}
{\rm Let $\lambda$ be an $\epsilon$-number and let $\mathit{\Gamma}$ be a subgroup of $\No$. Then
$\RR((\omega^{\mathit{\Gamma}}))_\lambda$ (with sums, products and order inherited from $\No$) is an ordered field closed under absolute convergence in the sense of Conway. }
\end{Proposition}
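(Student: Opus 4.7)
\textbf{Proof proposal for Proposition \ref{DE2}.} The plan is to verify closure of $\RR((\omega^{\mathit{\Gamma}}))_\lambda$ under each of the field operations of $\No$ and then under Conway-absolute convergence, with the latter being the main new ingredient and also subsuming the argument for multiplicative inverses. First I would dispose of sums: if $x$ and $y$ have supports of order types $\alpha,\beta<\lambda$, then $\mathrm{supp}(x\pm y)\subseteq\mathrm{supp}(x)\cup\mathrm{supp}(y)$ is well-ordered of order type at most $\alpha+\beta$; since $\lambda$ is an $\epsilon$-number it is \emph{a fortiori} additively indecomposable, so $\alpha+\beta<\lambda$. For products I would use that $\mathrm{supp}(xy)$ lies in the Minkowski sum $\mathrm{supp}(x)+\mathrm{supp}(y)$ in $\mathit{\Gamma}$, whose order type is bounded by $\alpha\cdot\beta$, again $<\lambda$ because $\epsilon$-numbers are closed under ordinal multiplication below them. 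For inverses, I would write a nonzero $x\in\RR((\omega^{\mathit{\Gamma}}))_\lambda$ in the canonical form $x=r\omega^{g}(1+\zeta)$ with $r\in\RR^{\times}$, $g\in\mathit{\Gamma}$, and $\zeta$ infinitesimal in $\RR((\omega^{\mathit{\Gamma}}))_\lambda$; then $x^{-1}=r^{-1}\omega^{-g}\sum_{n\ge 0}(-\zeta)^{n}$, so closure under inversion is reduced to closure under Conway-absolute convergence applied to the geometric series.

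For the main step, let $f(X_1,\dots,X_m)\in\RR[[X_1,\dots,X_m]]$ and let $a_1,\dots,a_m\in\RR((\omega^{\mathit{\Gamma}}))_\lambda$ be infinitesimals whose supports have order types $<\lambda$. Proposition \ref{sur6} already guarantees that $f(a_1,\dots,a_m)$ exists as an element of $\No$, and its support lies in the well-ordered set $\langle S\rangle$ of all finite sums of elements of $S:=\mathrm{supp}(a_1)\cup\cdots\cup\mathrm{supp}(a_m)\subseteq\mathit{\Gamma}^{<0}$; Neumann's theorem then identifies this element with a member of $\RR((\omega^{\mathit{\Gamma}}))$. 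The remaining task is to bound $\mathrm{otp}(\langle S\rangle)$ by $\lambda$. Since $\lambda$ is additively indecomposable, $\mathrm{otp}(S)<\lambda$, and Neumann's combinatorial analysis yields an explicit bound on $\mathrm{otp}(\langle S\rangle)$ that is a transfinite expression in $\mathrm{otp}(S)$ of the shape $\omega^{\omega\cdot\mathrm{otp}(S)}$ (or a similar polynomial in the $\omega$-exponential hierarchy). Because $\lambda$ is an $\epsilon$-number, the class of ordinals $<\lambda$ is closed under ordinary ordinal arithmetic and under $\beta\mapsto\omega^{\beta}$, so this bound stays strictly below $\lambda$, and therefore $f(a_1,\dots,a_m)\in\RR((\omega^{\mathit{\Gamma}}))_\lambda$.

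The main obstacle will be securing a concrete ordinal bound on $\mathrm{otp}(\langle S\rangle)$ in terms of $\mathrm{otp}(S)$ that provably stays $<\lambda$. Neumann's proof that $\langle S\rangle$ is well-ordered already proceeds by transfinite induction on $\mathrm{otp}(S)$; the extra work is to revisit that induction carefully enough to bookkeep a bound built only from sums, products, and $\omega$-exponentiation of ordinals already $<\lambda$, at which point the $\epsilon$-number property of $\lambda$ closes off the argument. Once this combinatorial lemma is in place, everything else in the proposition is formal manipulation together with the routine observations about $+,-,\cdot$, and $\,^{-1}$ recorded above.
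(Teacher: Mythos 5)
The paper does not actually prove this proposition: it is imported verbatim from van den Dries--Ehrlich as their Lemma 4.6, with the footnoted remark that the closure under absolute convergence is established there in the course of proving closure under restricted analytic functions. Measured against that source, your argument is essentially the same one: reduce everything (including multiplicative inverses, via $x=r\omega^{g}(1+\zeta)$ and the geometric series) to the single claim that if $S\subseteq\mathit{\Gamma}^{<0}$ is well-ordered of type $<\lambda$ then the additive semigroup $\langle S\rangle$ it generates is well-ordered of type $<\lambda$, and then close off using the fact that the ordinals below an $\epsilon$-number are stable under ordinal sum, product, and $\beta\mapsto\omega^{\beta}$. Two small points. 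First, the order type of a union (resp.\ Minkowski sum) of well-ordered sets is bounded by the natural (Hessenberg) sum $\alpha\oplus\beta$ (resp.\ natural product), not by $\alpha+\beta$ (resp.\ $\alpha\cdot\beta$); this is harmless here because additively indecomposable ordinals are closed under natural sums and $\epsilon$-numbers under natural products, but the bound as you wrote it is literally false (e.g.\ $\alpha=1$, $\beta=\omega$ can give a union of type $\omega+1>1+\omega$). Second, the quantitative lemma you defer --- an explicit ordinal bound on $\mathrm{otp}(\langle S\rangle)$ in terms of $\mathrm{otp}(S)$ built only from ordinal arithmetic and $\omega$-exponentiation --- is the entire content of the proposition; you correctly identify it as the crux and correctly describe how it is obtained (bookkeeping Neumann's transfinite induction), which is precisely what the cited source does, so your proposal is a faithful reconstruction rather than a complete self-contained proof.
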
 

It should be noted that of the portion of the above result concerned with closure under absolute convergence in the sense of Conway is not explicitly stated as a result in \cite{vdDE}, but rather is proved (without the current terminology) in the course of proving the weaker condition of closure under restricted analytic functions (see \cite[page 11]{vdDE}\footnote{More specifically, the authors write: ``Finally, let $F(X_1,\dots,X_n)\in \RR[[X_1,\dots,X_n]]$ be
a formal power series in the indeterminates $X_1, \dots,X_n$
with real coefficients. Let $\epsilon_1, \dots,\epsilon_n$ be infinitesimals
in $\RR(( \tau^\mathit{\Gamma} ))_\lambda$. Since $F$ is not assumed to be a
convergent power series, we actually prove more than
closure under restricted analytic functions by showing that
$F(\epsilon_1,\dots,\epsilon_n)\in \RR(( \tau^\mathit{\Gamma} ))_\lambda$."}).

\begin{Proposition}{\rm (\cite[Proposition 4.7 (1) and (2)]{vdDE})\label{DE3}}
{\rm Let $\lambda$ be an $\epsilon$-number. Then: 
\begin{enumerate}

\item $\No(\lambda)$ (with sums, products and order inherited from $\No$) is a real-closed field closed under exponentiation and under taking logarithms of positive elements. Indeed, $\No(\lambda)$ equipped with restricted analytic functions (defined via Taylor expansions) and exponentiation induced by $\No$ is an elementary substructure of $(\No_{\rm an}, \exp)$ and an elementary extension of $(\mathbb{R}_{\rm an}, e^x)$. 
\item $\No(\lambda)=\bigcup_{\mu}\RR((\omega^{\No(\mu)}))_\lambda$, where $\mu$ ranges over all additively indecomposable ordinals less than $\lambda$. 
\end{enumerate}}
\end{Proposition}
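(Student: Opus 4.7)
My plan is to prove part (2) first, since it supplies the concrete representation of $\No(\lambda)$ that drives part (1). For the inclusion $\No(\lambda) \subseteq \bigcup_{\mu} \RR((\omega^{\No(\mu)}))_\lambda$, I would write an arbitrary $x \in \No(\lambda)$ in its normal form $x = \sum_{\alpha < \beta} \omega^{y_\alpha} r_\alpha$ and invoke the standard bound that the tree rank of $x$ dominates both the ordinal length $\beta$ and every tree rank $\rho(y_\alpha)$ (a standard feature of surreal normal forms, see \cite{EH5}). Since $\lambda$ is an $\epsilon$-number and $\beta < \lambda$, the collection $\{\rho(y_\alpha) : \alpha < \beta\}$ is a bounded set of ordinals below $\lambda$; because the additively indecomposable ordinals are cofinal in any $\epsilon$-number, I can pick an additively indecomposable $\mu < \lambda$ that majorizes these tree ranks, so $y_\alpha \in \No(\mu)$ for every $\alpha$ and hence $x \in \RR((\omega^{\No(\mu)}))_\lambda$. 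The reverse inclusion is dual: an element of $\RR((\omega^{\No(\mu)}))_\lambda$ has support of length $<\lambda$ whose exponents live in $\No(\mu) \subseteq \No(\lambda)$, and the tree rank of such an element can be bounded by a recursive argument on the construction of its sign-expansion from the normal-form data, yielding a tree rank below $\lambda$ precisely because $\lambda$ is an $\epsilon$-number.

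For part (1), I would separate the algebraic-closure properties from the elementary substructure claim. By (2), $\No(\lambda)$ is a directed union of the ordered fields $\RR((\omega^{\No(\mu)}))_\lambda$ supplied by Proposition \ref{DE2}, so $\No(\lambda)$ is itself an ordered subfield of $\No$. Real-closure can be established piecewise via the classical Hahn-series criterion: the value group $\No(\mu)$ is divisible for additively indecomposable $\mu$ (divisions by positive integers do not push tree ranks past the indecomposable threshold), the residue field is $\RR$, and each $\RR((\omega^{\No(\mu)}))_\lambda$ is therefore real-closed; a directed union of real-closed ordered fields is real-closed. Closure under exponentiation and logarithm is then obtained by inspecting Gonshor's inductive definition of $\exp$ in \cite{GO}: one checks that $\exp$ and $\log$ applied to elements of $\No(\lambda)$ remain in $\No(\lambda)$ by bounding the tree rank of the output in terms of the tree rank of the input, again exploiting that $\lambda = \omega^\lambda$ absorbs the recursive blow-up.

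The most delicate point, and the one I expect to be the main obstacle, is the elementary substructure assertion $(\No(\lambda),\ldots) \preceq (\No_{\rm an}, \exp)$. My plan is to apply the Tarski--Vaught test, producing, for each formula $\varphi(x, \bar a)$ with parameters $\bar a$ from $\No(\lambda)$ that has a witness in $\No_{\rm an}$, a witness already lying in $\No(\lambda)$. The natural route is to invoke the model-completeness and o-minimality of $(\RR_{\rm an}, \exp)$ due to van den Dries, Macintyre, and Marker, which reduces the search for witnesses to evaluating compositions of the field operations, exponentiation, logarithm, and restricted analytic functions at tuples from $\No(\lambda)$. Closure under all these operations is exactly what was established in the previous paragraph, together with the observation from Definition \ref{defE}(2) that restricted analytic functions map $\No(\lambda)$ into itself via Taylor expansions absolutely convergent in the sense of Conway (compare also Theorem \ref{GENERAL2}). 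The elementary extension statement over $(\RR_{\rm an}, e^x)$ then follows by composing $(\RR_{\rm an}, e^x) \preceq (\No_{\rm an}, \exp)$ with $(\No(\lambda),\ldots) \preceq (\No_{\rm an}, \exp)$ via the elementary-sandwich lemma: if $A \subseteq B$, $A \preceq C$, and $B \preceq C$, then $A \preceq B$.
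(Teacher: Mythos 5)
This proposition is not proved in the paper at all: it is quoted verbatim from \cite{vdDE} (Proposition 4.7 there), so the only thing to compare your argument against is that source. Your overall architecture --- prove (2) first via length/tree-rank bounds on normal forms, then get the field structure of $\No(\lambda)$ as a directed union of the $\RR((\omega^{\No(\mu)}))_\lambda$, handle $\exp$ and $\log$ by length estimates on Gonshor's construction, and obtain elementarity from the Tarski--Vaught test together with model completeness (in fact quantifier elimination) of $(\RR_{\rm an},\exp,\log)$ plus closure of $\No(\lambda)$ under all the primitives --- is essentially the route taken in \cite{vdDE}, and the sandwich lemma at the end is fine.

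There is, however, one genuine error. You assert that ``the value group $\No(\mu)$ is divisible for additively indecomposable $\mu$ (divisions by positive integers do not push tree ranks past the indecomposable threshold)'' and conclude that each $\RR((\omega^{\No(\mu)}))_\lambda$ is real closed. This is false: $\mu=\omega$ is additively indecomposable, and $\No(\omega)$ is the group of dyadic rationals, which is not divisible ($1\in\No(\omega)$ but $1/3$ has tree rank $\omega$). Division by $3$ is multiplication by $1/3$, and the length bounds for multiplication involve an exponential blow-up of the form $\omega^{(\cdot)}$ that additive indecomposability does not absorb --- that is exactly why the paper's Proposition \ref{DE1} only claims $\No(\mu)$ is an ordered abelian \emph{group} for such $\mu$, and why Proposition \ref{DE2} only asserts that $\RR((\omega^{\Gamma}))_\lambda$ is an ordered \emph{field}, not a real closed one. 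So ``a directed union of real closed fields is real closed'' is not available as stated. The real-closedness of $\No(\lambda)$ has to be argued for the union itself: e.g., a root of a polynomial with coefficients in some $\RR((\omega^{\No(\mu)}))_\lambda$ exists in $\No$ (which is real closed), its support lies in the divisible hull of the group generated by the supports of the coefficients, and one then uses the length estimates (with $\lambda=\omega^\lambda$ absorbing the blow-up) to place it in $\RR((\omega^{\No(\mu')}))_\lambda$ for a larger additively indecomposable $\mu'<\lambda$. The remaining sketches (the cofinality of additively indecomposable ordinals in $\lambda$, the uniform bound of the $\rho(y_\alpha)$ by the tree rank of $x$ so that regularity of $\lambda$ is not needed, and the reduction of elementarity to closure under $\exp$, $\log$ and restricted analytic functions) are sound in outline, though the $\exp$/$\log$ closure is carried out in \cite{vdDE} via the explicit normal-form formulas for $\exp$ rather than directly from the genetic definition.
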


It follows from much the same argument employed in the proof of Theorem \ref{ILL1} that the union of a chain of ordered fields that are closed under absolute convergence in the sense of Conway is itself closed under absolute convergence in the sense of Conway. In virtue of this and Propositions \ref{DE1}, \ref{DE2} and \ref{DE3}, we now have:

\begin{Theorem}\label{ILL2}
{\rm For each $\epsilon$-number $\lambda$, the ordered exponential subfield $\No(\lambda)$ of $(\No, \exp)$ is closed under absolute convergence in the sense of Conway.}
\end{Theorem}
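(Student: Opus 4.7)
The plan is to mirror the proof of Theorem \ref{ILL1}, with Proposition \ref{DE2} playing the role that B.~H.~Neumann's closure theorem played there. By Proposition \ref{DE3}(2),
\[
\No(\lambda) = \bigcup_{\mu} \RR((\omega^{\No(\mu)}))_{\lambda},
\]
where $\mu$ ranges over the additively indecomposable ordinals less than $\lambda$. Each $\No(\mu)$ is an ordered abelian subgroup of $\No$ by Proposition \ref{DE1}, and since $\lambda$ is an $\epsilon$-number, every summand $\RR((\omega^{\No(\mu)}))_{\lambda}$ in this family is already closed under absolute convergence in the sense of Conway by Proposition \ref{DE2}.

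Next I would observe that the family is directed by inclusion: whenever $\mu_1 \le \mu_2$, the definition of $\No(\cdot)$ as the class of elements of tree-rank below the indicated ordinal gives $\No(\mu_1) \subseteq \No(\mu_2)$, and hence $\RR((\omega^{\No(\mu_1)}))_{\lambda} \subseteq \RR((\omega^{\No(\mu_2)}))_{\lambda}$. The main step is then the \emph{cofinality argument}: given finitely many infinitesimals $a_1, \ldots, a_n \in \No(\lambda)$, each $a_i$ belongs to some $\RR((\omega^{\No(\mu_i)}))_{\lambda}$, and since the additively indecomposable ordinals are cofinal in the $\epsilon$-number $\lambda$, I can find a single additively indecomposable $\mu < \lambda$ exceeding every $\mu_i$. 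Then $a_1, \ldots, a_n$ all lie in the common subfield $\RR((\omega^{\No(\mu)}))_{\lambda}$.

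Given any $f(X_1, \ldots, X_n) \in \RR[[X_1, \ldots, X_n]]$, Proposition \ref{DE2} applied with $\mathit{\Gamma} = \No(\mu)$ now yields that $f(a_1, \ldots, a_n)$ is absolutely convergent in the sense of Conway, with value in $\RR((\omega^{\No(\mu)}))_{\lambda} \subseteq \No(\lambda)$. Since absolute convergence in the sense of Conway is defined intrinsically in $\No$ via normal forms (see \S\ref{S5}), the property transfers automatically to any ordered subfield that contains the relevant coefficient data, so nothing is lost by restricting attention to $\No(\lambda)$. The only point requiring real care is the cofinality step, and even this is light: it works precisely because the power series involves finitely many variables, and an attempt to extend the same argument to infinitary operations would require an additional hypothesis on the cofinality of $\lambda$.
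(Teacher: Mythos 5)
Your proposal is correct and follows essentially the same route as the paper: the paper likewise invokes Proposition \ref{DE3}(2) to write $\No(\lambda)$ as the directed union of the fields $\RR((\omega^{\No(\mu)}))_\lambda$, uses Propositions \ref{DE1} and \ref{DE2} to see each of these is closed under absolute convergence in the sense of Conway, and then applies the same union-of-a-chain argument (finitely many infinitesimals land in a single member of the chain) that was used for Theorem \ref{ILL1}. Your write-up merely makes the cofinality step explicit, which the paper leaves implicit.
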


Like $\No$ and the extension theory developed in the earlier sections of the paper, Lemma \ref{GENERAL1} and Theorem \ref{GENERAL2} as well as the existence of $\mathbb{R}((\omega))^{LE}$ and the $\No(\lambda)$ are provable in ${\rm NBG^-}$, and are therefore constructive in this sense.\footnote{The second author wishes to thank Elliot Kaplan for helpful comments on an earlier version of this section of the paper, and especially for his observation that the proofs of Lemma \ref{GENERAL1} and Theorem \ref{GENERAL2} given above continue to hold without the previously stated additional assumption that $(K, \exp_K)$ is initial.}

\section{Some open questions and a remaining problem}\label{OQ}

We draw the positive portion of the paper to a close by stating a problem and two open questions that naturally
arise from the material in preceding sections.

The mathematical theory of resurgent functions for height one transseries has long been worked out in great detail. In a far ranging recent work \cite{Ecalle5}, however, \'Ecalle has provided what he describes as an ``exploratory rather than systematic" presentation of an extension of his theory of resurgent functions, including \'Ecalle-Borel summability, beyond height one transseries to transseries having arbitrary heights and depths. This naturally suggests:

\medskip
 {\bf Problem 1.} Based on a rigorous theory of arbitrary height and depth transseries, generalize our ``constructive'' treatment of extension and antidifferentiation operators to all resurgent functions. 
\medskip

A related and perhaps much deeper issue is broached by:
\medskip

{\bf Question 1.} Do well-behaved extension operators exist for broad classes of functions that cannot be obtained  from the inductive construction yielding transseries? More specifically, do well-behaved extension operators exist for broad classes of functions defined on surreals of arbitrary length (or at least having lengths larger than $\omega^\omega$).

\medskip
The answer to this question would shed light on the very important but much less understood subject of {\em formalizability} of functions.\footnote{``[Reducibility] to a properly structured set of real coefficients'' \cite[p. 75]{Ecalle2}.} We note that Jean \'Ecalle offered the following very interesting observation in response to this question.

\begin{quote}
... under the (reasonable) assumption that the limits for extending operations such as integration on functions of $\No$  to $\No$  roughly coincide with the
limits for the effective (bi-constructive) formalization for real germs at $\infty$, one
falls back on a subject on which much thought has already been spent, and I
think one can confidently predict the broad outline of the answer. The ultimate
constructive extensions would:

\medskip
\noindent
1. \emph{include} all formal transfinite iterates of order $\alpha < \omega^\omega$ of the exponential,
together with a coherent system of incarnations as real germs (and while
the search for \emph{one} privileged system of incarnations is hopeless, comfort
may be taken from the fact that all coherent incarnations are isomorphic). 

\medskip
\noindent
2. \emph{exclude} the full set of so-called nested expansions (even well-nested ones),
for there mutual compatibility conditions would have to be met, which
could not possibly be ensured \emph{constructively}, i.e. without massive recourse to
AC. 

\end{quote}

The answer to the following question will shed more light on the deeper structure of the surreal universe.

\medskip
{\bf Question 2.} Can the theory of extension, antidifferentiation and integral operators presented in the previous sections of the paper be given a genetic (simplicity-hierarchical) formulation in the {\emph{inductive} sense (mentioned in the introduction) that was sort after by Conway, Kruskal and Norton? 

\medskip
The authors do in fact know how to provide a simplicity-hierarchical account for much of the theory in terms of Conway's $\{L|R\}$ notation and hope to present it in a future paper. However, the definitions in terms of Conway's $\{L|R\}$ notation employed in the account are not inductive, and therefore are not genetic in Conway's sense (see Footnote \ref{f1}).

\section{Negative and independence results}\label{Neg}
With our positive results now at hand, we switch directions by showing that a constructive proof of the existence of analogous extensions and integrals of substantially more general types of functions than those treated above is obstructed by considerations from the foundations of mathematics. These considerations apply not only to the surreals, but to any non-Archimedean ordered field $\mathbb{F}$ that extends $\RR$ and whose existence can be proved in ${\rm NBG}^-$. To establish the result, we will direct our attention to a list of very basic properties of antidifferentiation and a space $\mathcal{H}$ of functions with ``very good properties''.

By a classical result of Mostowski \cite{Most}, Wang \cite {HW}, Novack \cite{Nov}, Rosser and Wang \cite{RW} and Schoenfield \cite{SH}, ${\rm NBG^-}$ is a \emph{conservative extension} of ZF; that is, every theorem of ZF is a theorem of ${\rm NBG}^-$, and every theorem of ${\rm NBG}^-$ that can be expressed in ZF (i.e. in the language of sets) is itself a theorem of ZF. Consequently, ${\rm NBG^-}$ is not only equiconsistent with ZF, but if $T$ is a theory obtained from ZF by supplementing it with a set of axioms $A$ which involve only sets, and $T'$ is obtained from ${\rm NBG^-}$ by supplementing it with the same set of axioms $A$, then $T$ is consistent if and only if $T'$ is consistent (see, for example, \cite[p. 132]{FR}). As a result, the above said relations holding between ${\rm NBG^-}$ and ZF also hold between  ${\rm NBG}^- +\,$C (the Axiom of Choice for sets) and ZFC as well as between ${\rm NBG^-}$+DC and ZF+DC, where DC is the Axiom of Dependent Choice (\cite{SH}, \cite{Most}, \cite{Nov}, \cite{RW}). Accordingly, though the main result in this section is concerned with arbitrary non-Archimedean ordered fields that extend $\RR$ whose existence can be proved in ${\rm NBG}^-$, including $\mathbf{No}$ itself, the preliminary results are about subsets of these structures and as such, when appropriate, to prove these results we freely make use of techniques and results established about or in ZFC, ZF+DC or ZF+DC supplemented with other assertions about sets.

As usual, let $\ell^\infty$ denote the space of all bounded real-valued sequences, whose members we write as $\{s_n\}$ in place of $\{s_n\}_{n \in \NN}$.  As the reader will recall,  $\phi:\ell^\infty \rightarrow  \RR$ is said to be a \emph{Banach limit} if it is a continuous linear functional satisfying the following conditions: (a) ({\em positivity}) if $\{s_n\}$ is a nonnegative sequence, then  $\phi(\{s_n\})\ge 0$; (b)  ({\em shift-invariance}) for any sequence $\{s_n\}\in\ell^\infty$, we have  $\phi(\{s_n\})=\phi(\{s_{n+1}\})$; and (c) ($\phi$ {\em is a limit}) if $\{s_n\}$ is convergent with limit $l$, then  $\phi(\{s_n\})=l$.

To establish our negative result we will make use of one direction of the following metamathematical result concerning  the existence of Banach limits (EBL) which is a simple consequence of results from the literature.\footnote{The second author greatfully acknowledges helpful discussions of this matter with Emil Je\u{r}\'abek \cite{EJ}, Wojowu (Wojtek Wawr\'ow) \cite{WW} and others on MathOverflow.} 

\begin{Proposition}\label{EBL}
{\rm EBL is independent of
 ${\rm NBG^-}$+DC (if  ${\rm NBG}^-$ is consistent).} 
\end{Proposition}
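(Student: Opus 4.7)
The plan is to establish both consistency statements, $\mathrm{Con}(\mathrm{NBG}^-+\mathrm{DC}+\mathrm{EBL})$ and $\mathrm{Con}(\mathrm{NBG}^-+\mathrm{DC}+\neg\mathrm{EBL})$, relative to $\mathrm{Con}(\mathrm{NBG}^-)$. By the conservativity of $\mathrm{NBG}^-$ and its DC/C-extensions over the corresponding set theories (ZF, $\mathrm{ZF}+\mathrm{DC}$, ZFC) recalled in the paragraph immediately preceding the proposition, each statement reduces to a consistency statement over a system of ZF. I would thus aim to exhibit a model of $\mathrm{ZF}+\mathrm{DC}+\mathrm{EBL}$ and a model of $\mathrm{ZF}+\mathrm{DC}+\neg\mathrm{EBL}$, each under the assumption $\mathrm{Con}(\mathrm{ZF})$.

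The first direction is straightforward: relativizing to G\"odel's constructible universe yields $\mathrm{Con}(\mathrm{ZF}) \Rightarrow \mathrm{Con}(\mathrm{ZFC})$, so it suffices to prove EBL in ZFC by the standard Hahn--Banach argument. Explicitly, the limit functional $L:\{s_n\}\mapsto \lim_n s_n$, defined on the subspace $c\subseteq \ell^\infty$ of convergent sequences, is dominated on all of $\ell^\infty$ by the shift-invariant sublinear functional $p(\{s_n\}):=\limsup_{n\to\infty}\tfrac{1}{n}\sum_{k=0}^{n-1}s_k$. Applying Hahn--Banach extends $L$ to a linear $\phi:\ell^\infty \to \RR$ with $\phi\le p$; shift-invariance follows by applying $\phi \le p$ to $\{s_n-s_{n+1}\}$ and $\{s_{n+1}-s_n\}$, both of which have $p$-value zero, while positivity and the ``is a limit'' property follow directly from the domination. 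Since AC implies DC, every model of ZFC models $\mathrm{ZF}+\mathrm{DC}+\mathrm{EBL}$.

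The second direction is the harder one and is where the main obstacle lies. I would invoke one of the standard choiceless-model constructions in which weak-choice principles of EBL's strength fail. The cleanest option is Shelah's theorem that $\mathrm{ZF}+\mathrm{DC}+\text{``every subset of }\RR\text{ has the Baire property''}$ is consistent relative to $\mathrm{Con}(\mathrm{ZF})$, refining Solovay's earlier measurability-based model by removing the need for an inaccessible cardinal. In any such model no Banach limit exists: given a hypothetical $\phi$, the map $\mu(A):=\phi(\chi_A)$ is a shift-invariant finitely additive probability measure on $\mathcal{P}(\NN)$ extending asymptotic density, which under the identification $\mathcal{P}(\NN)\leftrightarrow \{0,1\}^{\NN}$ combines with the universal Baire property to contradict the uniqueness of Haar measure on $\{0,1\}^{\NN}$ via a Steinhaus-type analysis of shifts of $\mu$. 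An alternative route that avoids Shelah's construction is to build a Fraenkel--Mostowski permutation model of $\mathrm{ZFA}+\mathrm{DC}+\neg\mathrm{EBL}$ using Pincus's techniques for obstructing weak-choice principles, then apply the Jech--Sochor transfer theorem to pass to a model of $\mathrm{ZF}+\mathrm{DC}+\neg\mathrm{EBL}$. Either path produces the required model; the subtle step in both is pinning down exactly why EBL (which is strictly weaker than Hahn--Banach and strictly weaker than the existence of a free ultrafilter on $\NN$) nevertheless fails in the chosen model, since only finite additivity of $\mu$ is available to leverage against the regularity property of sets of reals.
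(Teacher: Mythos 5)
Your proposal is correct and follows essentially the same route as the paper: reduce to ${\rm ZF}+{\rm DC}$ by conservativity, get the consistency of EBL from Hahn--Banach, and get the consistency of $\neg$EBL from Shelah's model of ${\rm ZF}+{\rm DC}+{\rm BP}$ together with the fact that BP refutes Banach limits (which the paper simply cites from Karagila rather than sketching). The only divergence is in the positive direction, where you pass to ZFC via G\"odel's $L$ instead of invoking the Pincus--Solovay consistency of ${\rm ZF}+{\rm DC}+{\rm HB}$; both variants ultimately rest on the same Hahn--Banach derivation of EBL.
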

\begin{proof}
Since ${\rm NBG}^-$+DC is a conservative extension of ZF+DC, it suffices to prove the proposition for ZF+DC. EBL is consistent with  ZF+DC, if  ZF is consistent, since HB (the Hahn-Banach theorem) implies EBL (e.g. \cite[Theorem III.7.1]{JBC}) and ZF+DC+HB is consistent, if ZF is consistent (\cite{PS}, \cite[p. 516]{Hand}). Moreover, -EBL is consistent with ZF+DC, if ZF is consistent, since BP (the assertion every set of reals has the Baire Property) implies -EBL \cite[Theorem 44]{Karagila}, and there is a model of ZF+DC+BP, if ZF is consistent  (\cite{Shelah}, \cite[p. 516]{Hand}).
\end{proof}

As we shall see, the following concept is closely related to a Banach limit.

 \begin{Definition}\label{SL}{\rm 
    We call $L$ a  \emph{sublimit} on $\ell^\infty$ if: 
    \begin{enumerate}
    \item $L$ is linear, i.e., if $s,t\in\ell^\infty$ and $a,b\in\RR$, then $L(as+bt)=aL(s)+bL(t)$;
      
    \item for every  $\{s_n\}\in \ell^\infty$, $L(\{s_n\})\le \limsup_{n\to\infty}s_n$. 
    \end{enumerate}
      }\end{Definition}
    Using linearity and the fact that $\liminf_{n\to\infty}s_n=-\limsup_{n\to\infty}(-s_n)$, we see that condition (2) of Definition \ref{SL} is equivalent to
    \begin{equation}
      \label{eq:dblelimit}
       \liminf_{n\to\infty} s_n\le L(\{s_n\})\le \limsup_{n\to\infty}s_n.
    \end{equation}

  \begin{Lemma}\label{L:equiv-lim}
  {\rm ZF proves that a Banach limit exists if and only if a sublimit exists.} 
  \end{Lemma}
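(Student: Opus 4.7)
The plan is to prove the two implications separately within ZF.

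\emph{Forward direction} (Banach limit $\Rightarrow$ sublimit). I would verify that any Banach limit $\phi$ automatically satisfies the sublimit axioms. Linearity is given, so the content is the bound $\phi(\{s_n\})\le \limsup s_n$. Set $M=\limsup s_n$ and fix $\epsilon>0$; pick $N$ so that $s_{n+N}\le M+\epsilon$ for all $n\ge 1$. Then $\{M+\epsilon-s_{n+N}\}$ is nonnegative, so by positivity, linearity, shift-invariance applied $N$ times, and the fact that $\phi$ sends the constant sequence $\{M+\epsilon\}$ to $M+\epsilon$ (since it extends ordinary limits), one obtains $M+\epsilon-\phi(\{s_n\})\ge 0$. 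Letting $\epsilon\downarrow 0$ gives the bound.

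\emph{Reverse direction} (sublimit $\Rightarrow$ Banach limit). Given a sublimit $L$, I would define
\begin{equation*}
\phi(\{s_n\}):=L(\{\sigma_n\}),\qquad \sigma_n:=\frac{1}{n}\sum_{k=1}^{n}s_k,
\end{equation*}
that is, apply $L$ to the Cesàro averages. A key preliminary (already indicated after Definition \ref{SL}) is that every sublimit in fact satisfies the full sandwich $\liminf s_n\le L(\{s_n\})\le \limsup s_n$, obtained by combining axiom (2) with axiom (2) applied to $-\{s_n\}$ and linearity; from this it follows that $L$ agrees with $\lim$ on convergent sequences, satisfies $|L(\{s_n\})|\le \|s\|_\infty$, and vanishes on null sequences. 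Using this, the Banach limit properties for $\phi$ are routine: boundedness of $\{s_n\}$ transfers to $\{\sigma_n\}$ so $\phi$ is a bounded linear functional with $|\phi(\{s_n\})|\le \|s\|_\infty$ (hence continuous); if $s_n\to l$ then $\sigma_n\to l$ and $\phi(\{s_n\})=l$; and $s_n\ge 0$ forces $\sigma_n\ge 0$, whence $\phi(\{s_n\})\ge \liminf\sigma_n\ge 0$.

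The main obstacle --- though mild --- is shift-invariance. Writing $\sigma_n'$ for the Cesàro mean of $\{s_{n+1}\}$, one computes $\sigma_n'-\sigma_n=\frac{1}{n}(s_{n+1}-s_1)$, which tends to $0$ because $\{s_n\}$ is bounded. By the sandwich above, $L$ annihilates null sequences, so $L(\{\sigma_n'-\sigma_n\})=0$, and linearity of $L$ yields $\phi(\{s_{n+1}\})=\phi(\{s_n\})$. The Cesàro trick is the essential ingredient, chosen precisely so that shifting the input alters the output by a null sequence while preserving the sublimit sandwich. No form of choice is invoked at any stage, so the entire argument goes through in ZF as required.
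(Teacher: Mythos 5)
Your proof is correct and follows essentially the same route as the paper's: the forward direction uses shift-invariance plus positivity to push $\phi(\{s_n\})$ below $\limsup s_n + \epsilon$, and the reverse direction defines the Banach limit as the sublimit composed with Cesàro averaging, with shift-invariance coming from the fact that the Cesàro means of a sequence and its shift differ by a null sequence, which the sandwich inequality forces the sublimit to annihilate. Your write-up is, if anything, slightly more explicit than the paper's about why the sublimit kills null sequences and agrees with ordinary limits.
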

  \begin{proof}
  
      Let $B$ be a Banach limit. We show that $B$  is a sublimit. Indeed, by definition, $B$ is linear. Now assume $\limsup_{n\to \infty} s_n=l$. Let $\epsilon>0$  and let $N\in \NN$ be such that for all $m\ge N$ we have $s_m\le l+\epsilon$.   Let $S$ be the shift operator, $S(\{s_n\})=\{s_{n+1}\}$. We have, using positivity,  $B (\{s_n\})=B S^N (\{s_n\})\le l+\epsilon$ where, as usual,  $S^N$ is $S$ applied $N$ times. Since $\epsilon >0$ is arbitrarily chosen, $B (\{s_n\})\le l$.
  
Now let $L$ be a sublimit. We define the Ces\`aro summation operator by $C(\{s_n\})=\{n^{-1}\sum_{j=1}^n s_n\}$. Note that $C$ is a continuous operator on $\ell^\infty$ of norm $1$, and so is $L$, by \eqref{eq:dblelimit}. We claim that $LC$ is a Banach limit. We just showed that $LC$ is continuous. Moreover, clearly $C$ is a positive operator and so is $L$ by \eqref{eq:dblelimit}.  Since $\big[C(\{s_n\})-CS(\{s_n\})\big]_m=m^{-1}(s_1-s_{m+1})$, we have $\liminf_{n\to\infty}\big[LC(\{s_n\})-LCS(\{s_n\})\big]=0$ and $\limsup_{n\to\infty}\big[LC(\{s_n\})-LCS(\{s_n\})\big]=0$, and hence, $LC=LCS$, thereby proving shift-invariance. Finally, if $\{s_n\}$ is convergent with limit $l$, then plainly $C(\{s_n\})$ is convergent with limit $l$, and then \eqref{eq:dblelimit} shows that $LC(\{s_n\})$ is convergent with limit $l$, completing the proof. 
  \end{proof}

  The extension and antidifferentiation operators introduced in \S\ref{MainDef} are linear,  positive and satisfy a number of other desirable properties. However, the \emph{proto-antidifferentiation}  and \emph{proto-extension} operators defined below are only assumed to be linear, positive and satisfy a compatibility condition. By showing that the existence of these proto-operators can neither be proved nor disproved in ${\rm NBG}^-$+DC, we show that the existence of any extension and antidifferentiation operators having even these minimal properties can neither be proved nor disproved in ${\rm NBG}^-$+DC, let alone in ${\rm NBG}^-$. As such, any such operators whose existence can be established in NBG would necessarily be less constructive in nature  than $\mathsf{E}$ and $\mathsf{A}_{\bf No}$.

 Henceforth, let $\mathbb{F}$ be a non-Archimedean ordered field that extends $\RR$ whose existence can be proved in ${\rm NBG}^-$, and let $\rho$ be an arbitrarily selected and fixed positive infinite element of $\mathbb{F}$.  Also let $\mathcal{H}$ be the space of functions which are real-analytic, that
  extend to entire functions (holomorphic in $\CC$), and decay at least as rapidly as $x^{-2}$ in the sense that $f\in\mathcal{H}$ if and only if $\sup_{x\in\RR^+}x^2 |f(x)|<\infty$.

 \begin{Definition}\label{D:defle}
    \begin{sloppypar}
    {\rm 
   Let $ A_{\rho}:=\{(x,y) \in (\RR^+\cup\{\rho,\rho^2\})^2: x<y\}$. A proto-antidifferentiation operator on  $\mathcal{H}\times A_{\rho}$ is an operator $\lambda$ having the following properties for all  $f,g\in \mathcal{H}$, all $(x,y)\in A_{\rho}$, and all $\alpha,\beta \in\RR^+$. 
      \begin{enumerate}
      \item Linearity: $\lambda(\alpha f+\beta g,x,y)=\alpha\lambda( f,x,y)+\beta \lambda(g,x,y)$.
         \item Positivity: If for some $c\in\RR^+$ and all $x\in (c,\infty)$ we have $f(x)\ge 0$, then for all $(x,y)\in A_{\rho}$ with $x\in (c,\infty)$ we have $\lambda(f,x,y)\ge 0$.
     \item Compatibility with the weight of $\mathcal{H}$: $\lambda(x^{-2},x,y)=x^{-1}-y^{-1}$. 
        \end{enumerate}}
           \end{sloppypar}
    \end{Definition}
  It is easy to see that the operator $(f,x,y)\mapsto \int_x^yf(s)ds$ satisfies the above properties for all real $x>0$.
 
     \begin{Definition}\label{D:deflee}{\rm 
    Let $ E_{\rho}:=\RR^+\cup\{\rho\}$. A proto-extension operator on $\mathcal{H}\times E_{\rho}$ is an operator $\Lambda$ having the following properties for all $f \in \mathcal{H}$, all $x\in E_{\rho}$ and all $\alpha,\beta \in\RR^+$.
      \begin{enumerate}  
    \item Linearity: $\Lambda(\alpha f+\beta g,x)=\alpha\Lambda( f,x)+\beta \Lambda(g,x)$.  
    \item Positivity: If for some $c\in\RR^+$ and all $x\in (c,\infty)$ we have $f(x)\ge 0$, then for all $x\in (c,\infty)$, we have  $\Lambda( f,x)\ge 0$. 
   \item  Compatibility with the weight of $\mathcal{H}$: $\Lambda(x^{-2},x)=x^{-2}$.
          \end{enumerate}
         } \end{Definition}

Henceforth, let EPA and EPE be the following statements with $\mathbb{F}$ and $\rho$ understood as above: ``There exists a proto-anti-differentiation operator as in Definition \ref{D:defle}, and ``There exists a proto-extension operator as in  Definition \ref{D:deflee} '', respectively. Moreover, henceforth by $x^\circ$ we mean the \emph{standard part} of a finite member $x$ of an ordered field. When $x$ is a finite surreal number, $x^\circ$ is the real part of $x$ (see Definition \ref{part}).

\begin{Lemma}\label{X}
{\rm EPA implies EBL and EPE implies EBL in ${\rm NBG}^-$}
\end{Lemma}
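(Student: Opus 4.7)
The plan is to construct from $\lambda$ (respectively from $\Lambda$) a sublimit on $\ell^{\infty}$ in the sense of Definition \ref{SL}; Lemma \ref{L:equiv-lim} then converts it to a Banach limit, yielding EBL. The encoding of bounded sequences into $\mathcal{H}$ is the same in both cases: fix the Gaussian $\phi(x):=e^{-x^{2}}$ and for $\{s_n\}\in\ell^{\infty}$ put
$$F_s(x):=\sum_{n=1}^{\infty}\frac{s_n}{n^{2}}\,\phi(x-n).$$
The series converges uniformly on compact subsets of $\mathbb{C}$, so $F_s$ is entire, and a routine estimate yields $\sup_{x>0}x^{2}|F_s(x)|\le C\,\|s\|_{\infty}$, so $F_s\in\mathcal{H}$. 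The map $s\mapsto F_s$ is linear, and if $s_n\ge 0$ for every $n$ then $F_s\ge 0$ on $\mathbb{R}^{+}$.

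For EPA set $C:=\bigl(\rho\,\lambda(F_{\{1\}},\rho,\rho^{2})\bigr)^{\circ}$ and define
$$L(\{s_n\}):=\frac{1}{C}\bigl(\rho\,\lambda(F_s,\rho,\rho^{2})\bigr)^{\circ}.$$
Retaining only the term $n=\lfloor x\rfloor$ of the defining series gives $F_{\{1\}}(x)\ge e^{-1}/x^{2}$ for $x\ge 1$, so by positivity and the compatibility clause $\lambda(x^{-2},\rho,\rho^{2})=1/\rho-1/\rho^{2}$ one has $C\ge e^{-1}>0$; in particular $L$ is well-defined, linear, and $L(\{1\})=1$. For the sublimit inequality, put $l:=\limsup_n s_n$, fix $\epsilon>0$, and choose $N$ with $s_n\le l+\epsilon$ for all $n\ge N$. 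Split $F_s=F_s^{<N}+F_s^{\ge N}$ according to whether the summation index is less than or at least $N$. The finite sum $F_s^{<N}$ decays faster than any polynomial at infinity, so for every $\delta>0$ there is $R\in\mathbb{R}^{+}$ with $|F_s^{<N}(x)|\le\delta/x^{2}$ on $(R,\infty)$; applying positivity to $(\delta/x^{2})\pm F_s^{<N}$ together with the compatibility clause yields $|\rho\,\lambda(F_s^{<N},\rho,\rho^{2})|\le\delta$, and since $\delta$ is arbitrary this quantity is infinitesimal. For the tail, the pointwise inequality $(l+\epsilon-s_n)\phi(x-n)/n^{2}\ge 0$ for $n\ge N$ implies $(l+\epsilon)F_{\{1\}^{\ge N}}-F_s^{\ge N}\ge 0$ on $\mathbb{R}^{+}$; positivity of $\lambda$ combined with the same infinitesimal argument applied to $F_{\{1\}^{<N}}$ then gives $\bigl(\rho\,\lambda(F_s^{\ge N},\rho,\rho^{2})\bigr)^{\circ}\le(l+\epsilon)\,C$. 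Dividing by $C$ yields $L(\{s_n\})\le l+\epsilon$, and letting $\epsilon\downarrow 0$ finishes the sublimit verification.

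The EPE implication is strictly parallel, replacing $\rho\,\lambda(F_s,\rho,\rho^{2})$ by $\rho^{2}\Lambda(F_s,\rho)$ and using $\Lambda(x^{-2},x)=x^{-2}$ in place of the compatibility clause for $\lambda$. The main obstacle in both arguments is proving $C>0$: one must extract from the axioms a concrete noninfinitesimal evaluation of $\lambda$ or $\Lambda$, which is possible only because the weight compatibility clause pins the value of $\lambda(x^{-2},\rho,\rho^{2})$ (resp.\ $\Lambda(x^{-2},\rho)$) to a specific noninfinitesimal element of $\mathbb{F}$, against which the essentially positive function $F_{\{1\}}$ can be axiomatically squeezed via positivity. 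A secondary subtlety is that linearity and positivity must be applied to combinations of functions in $\mathcal{H}$ with the weight $x^{-2}$, which requires reading the axioms as extending $\lambda$ and $\Lambda$ linearly to the span $\mathcal{H}+\mathbb{R}\cdot x^{-2}$.
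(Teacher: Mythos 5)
Your proof is correct, and while it follows the paper's overall strategy --- encode each bounded sequence as a member of $\mathcal{H}$, evaluate the proto-operator at the infinite point $\rho$ with the appropriate prefactor, take standard parts to obtain a sublimit, and invoke Lemma \ref{L:equiv-lim} --- the encoding at its technical heart is genuinely different. The paper interpolates $\{s_n\}$ piecewise linearly to a bounded $f_s$, mollifies with a Gaussian of width governed by $\epsilon$ to reach the analytic class, attaches the weight $x^{-2}$, and must then prove a Cauchy estimate in $\epsilon$ so that the limit defining $B_L(s)$ exists; because that encoding is squeezed exactly between $x^{-2}\inf_{n\ge m}s_n$ and $x^{-2}\sup_{n\ge m}s_n$, no normalization is needed. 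Your Gaussian comb $F_s=\sum_n s_n n^{-2}e^{-(x-n)^2}$ is entire and $O(x^{-2})$ on the nose, so the mollification parameter and the limit over $\epsilon$ disappear entirely; the price is that the comb's density is only comparable to $x^{-2}$ rather than equal to it, forcing the normalizing constant $C$, whose positivity you correctly extract by squeezing $F_{\{1\}}$ from below against $e^{-1}x^{-2}$. Two small remarks. First, for the standard parts you take to exist you also need the matching upper bound: $\rho\,\lambda(F_s,\rho,\rho^2)$ is finite because $\pm F_s\le K\|s\|_{\infty}x^{-2}$ on $(1,\infty)$, whence positivity and the compatibility clause give $|\rho\,\lambda(F_s,\rho,\rho^2)|\le K\|s\|_{\infty}$; you prove only the lower bound for $C$ explicitly, but the upper bound is the same one-line squeeze, so this is a routine omission rather than a gap. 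Second, the domain issue you flag --- applying $\lambda$ and $\Lambda$ to combinations of members of $\mathcal{H}$ with $x^{-2}$, which is not entire --- is equally present in the paper's own proof (its compatibility clause already places $x^{-2}$ in the domain, and it applies $\Lambda$ to $x^{-2}f_{s;\epsilon}(x)$), so your reading of the axioms matches the paper's.
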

  {
\begin{proof}
  Let $\mathbb{F}$ be an ordered field extending $\RR$ that exists in ${\rm NBG}^-$, $\rho$ be a positive infinite member of $\mathbb{F}$ and $\Lambda $ be a proto-extension operator.  

For each $s=\{s_n\}\in \ell^\infty$ define $f_s:\RR\to\RR$ by  $f_s(x)=s_1$ for $x\le 1$, and for $x=(1-t)n+t (n+1)$, where $t\in [0,1]$ and $n\in\NN$, by $f_s(x)=(1-t)s_n+t s_{n+1}$. Now let $m\in\NN$ and suppose $x>m$. It is easy to check that
     \begin{equation}
       \label{eq:ineqs0}
x^{-2}   \inf_{n\ge m } s_n\le x^{-2} f_s(x) \le    x^{-2} \sup_{n\ge m }   s_n.
     \end{equation}
   We are now going to approximate $f_s$ by entire functions.   For $\epsilon>0$ let
\begin{equation}
  \label{eq:defnu}
 \nu_{\epsilon}:= \nu= 2\|s\|_{\infty}\pi^{-\frac12}\epsilon^{-1}
\end{equation}
and consider the  mollification $ f_{s;\epsilon}(x):=\pi^{-\frac12}\nu \int_{-\infty}^{\infty} e^{-\nu^2(x-t)^2}f_s(t)dt$.   By standard complex analysis,  $f_{s;\epsilon}$ is entire, and straightforward estimates show that $\sup_{z\in\CC}|e^{-\nu^2|z|^2}f_{s;\epsilon}(z)|<\infty$. Note that,  by construction,  $|f_s(t)-f_s(x)|\le 2\|s\|_{\infty}|t-x|$. Thus, \eqref{eq:defnu} implies
\begin{multline}\label{eq:mestdif}
  |f_{s;\epsilon}(x)-f_s(x)|=\pi^{-\frac12}\nu\left|\int_{-\infty}^{\infty} e^{-\nu^2(x-t)^2}(f_s(t)-f_s(x))dt\right|\\\le 2\|s\|_{\infty}\pi^{-\frac12}\nu\left|\int_{-\infty}^{\infty} e^{-\nu^2v^2}|v|dv\right|= 2\|s\|_{\infty}\pi^{-\frac12}\nu^{-1}\le\epsilon.
\end{multline}
Conditions (2) and (3) of Definition \ref{D:deflee} imply that for any $x\in\RR^+$ and $\epsilon,\epsilon'>0$ we have
\begin{equation}
  \label{eq:diffs}
 |x^{-2} f_{s;\epsilon}(x)-x^{-2}f_{s;\epsilon'}(x)|\le x^{-2}(\epsilon+\epsilon'),
\end{equation}
and hence,
\begin{equation}
  \label{eq:diffs}
 |\rho^2\Lambda (\rho^{-2} f_{s;\epsilon},\rho)-\rho^2\Lambda (\rho^{-2} f_{s;\epsilon'},\rho)|\le (\epsilon+\epsilon').
\end{equation}
Since Equation \eqref{eq:mestdif} and the triangle inequality imply that $|\rho^2\Lambda (\rho^{-2} f_{s;\epsilon},\rho)|\le \epsilon+\|s\|_{\infty}$, $\left(\rho^2\Lambda (\rho^{-2} f_{s;\epsilon},\rho)\right)^\circ$ exists, and, by the same argument, so does $\left(\rho^2\Lambda (\rho^{-2} f_{s;\epsilon'},\rho)\right)^\circ$.  Hence, by taking standard parts in \eqref{eq:diffs}, we get
\begin{equation}
  \label{eq:diffs2}
 \left|\left(\rho^2 \Lambda (\rho^{-2}f_{s;\epsilon},\rho)\right)^\circ-\left(\rho^2 \Lambda (\rho^{-2}f_{s;\epsilon'},\rho)\right)^\circ\right|\le \epsilon+\epsilon',
\end{equation}
which in turn implies that
\begin{equation}
  \label{eq:diffs3}
B_{L}(s):=\lim_{\epsilon\to 0}\left(\rho^2 \Lambda (\rho^{-2}f_{s;\epsilon},\rho)\right)^\circ
\end{equation}
exists.
Moreover, it follows from Equation \eqref{eq:mestdif} that  for any $n\in\NN$ and $x\ge n+1$, we have 
  \begin{equation}
       \label{eq:ineqs0}
x^{-2} f_{s,\epsilon}(x) \le    x^{-2} (\sup_{n\ge m }   s_n+\epsilon).
\end{equation}
Hence, since $\rho\ge n$ for any $n\in\NN$, we have
 \begin{equation}
       \label{eq:ineqs0}
\rho^2 \Lambda (\rho^{-2}f_{s,\epsilon},\rho) \le    \limsup_{n\to\infty}   s_n+\epsilon,
\end{equation}
which implies
\begin{equation}
  \label{eq:diffs4}
B_L(s)\le \limsup_{n\to \infty } s_n.
\end{equation}
And so a sublimit exists, and hence, by Lemma \ref{L:equiv-lim}, a Banach limit exists.

For the portion of the theorem concerned with proto-antidifferentiations (in place of proto-extensions) we change $\Lambda(\rho^{-2}f_{s;\epsilon},\rho)$ to $\lambda(\rho^{-2}f_{s;\epsilon},\rho,\rho^2)$ and the prefactor $\rho^2$ in front of $\Lambda$ to a prefactor $\rho$ in front of $\lambda$. Since for $a\in\RR$,  $\rho(a\rho^{-1}-\rho^{-2})^\circ$= $\rho(a\rho^{-1})^\circ=a$, the proof is, mutatis mutandis, the same.
\end{proof}

\begin{Lemma}\label{L:converse}
{\rm EBL implies EPA and EPE in ${\rm NBG}^-$}.
\end{Lemma}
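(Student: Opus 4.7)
The plan is to use a Banach limit (guaranteed by EBL and equivalent, via Lemma \ref{L:equiv-lim}, to a sublimit) to supply the extra data needed to define $\Lambda$ at $\rho$ and $\lambda$ at pairs involving $\rho$ or $\rho^{2}$. Fix a Banach limit $B:\ell^{\infty}\to\RR$. For each $f\in\mathcal{H}$ the hypothesis $\sup_{x\in\RR^{+}}x^{2}|f(x)|<\infty$ gives that $F_f(x):=\int_{x}^{\infty}f(s)\,ds$ is real-analytic on $\RR^{+}$ and satisfies $F_f(x)=O(x^{-1})$, so the sequences $\{n^{2}f(n)\}$, $\{nF_f(n)\}$, $\{n^{2}F_f(n^{2})\}$ and $\{n(F_f(n)-F_f(n^{2}))\}$ all lie in $\ell^{\infty}$ and may be fed to $B$.

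First I would dispose of EPE by setting $\Lambda(f,x):=f(x)$ for $x\in\RR^{+}$ and
\[\Lambda(f,\rho):=\rho^{-2}B(\{n^{2}f(n)\}).\]
Linearity is inherited from $B$; the weight condition is $\Lambda(x^{-2},\rho)=\rho^{-2}B(\{1\})=\rho^{-2}$, since $B$ agrees with the ordinary limit on convergent sequences; and for positivity, if $f\ge 0$ on $(c,\infty)$ then taking an integer $N>c$ and using shift-invariance yields $B(\{n^{2}f(n)\})=B(\{(n+N)^{2}f(n+N)\})\ge 0$, so $\Lambda(f,\rho)\ge 0$, while $\Lambda(f,x)=f(x)\ge 0$ for real $x>c$ is trivial.

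For EPA I would define $\lambda$ on $A_{\rho}$ by cases: for real $x<y$ use $\lambda(f,x,y):=\int_{x}^{y}f$; for $x\in\RR^{+}$ and $y\in\{\rho,\rho^{2}\}$ put
\[\lambda(f,x,\rho):=F_f(x)-\rho^{-1}B(\{nF_f(n)\}),\qquad \lambda(f,x,\rho^{2}):=F_f(x)-\rho^{-2}B(\{n^{2}F_f(n^{2})\});\]
and for the remaining pair set
\[\lambda(f,\rho,\rho^{2}):=\rho^{-1}(1-\rho^{-1})\,B(\{n(F_f(n)-F_f(n^{2}))\}).\]
Linearity in $f$ is immediate from linearity of $B$, of integration, and of $f\mapsto F_f$. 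For $f(x)=x^{-2}$ one has $F_f(x)=x^{-1}$, and the four relevant sequences are $\{1\}$, $\{1\}$, $\{1\}$ and $\{1-1/n\}$, all with $B$-value $1$; a direct substitution yields $\lambda(x^{-2},x,y)=x^{-1}-y^{-1}$ in every case, confirming the weight compatibility.

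The genuine obstacle is positivity when $f\ge 0$ on $(c,\infty)$; in that regime $F_f$ is nonnegative and decreasing on $(c,\infty)$. For real $x>c$ paired with $y\in\{\rho,\rho^{2}\}$, the Banach-limit summand is infinitesimal with nonnegative real coefficient (shift-invariance past $c$) while $F_f(x)\ge 0$ is a nonnegative real: if $F_f(x)>0$ it dominates any infinitesimal, and if $F_f(x)=0$ then $f\equiv 0$ on $(x,\infty)$, hence $f\equiv 0$ by analytic continuation, which annihilates the infinitesimal piece. The hard case is $(\rho,\rho^{2})$: the seemingly natural choice $\rho^{-1}B(\{nF_f(n)\})-\rho^{-2}B(\{n^{2}F_f(n^{2})\})$ can fail to be nonnegative, because a generic Banach limit need not send a subsequence of a sequence with Banach-limit $0$ back to $0$. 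The chosen formula sidesteps this by feeding $B$ the sequence $n(F_f(n)-F_f(n^{2}))$, which is already nonnegative for $n>c$ by monotonicity of $F_f$; positivity then follows from shift-invariance of $B$ together with $\rho^{-1}(1-\rho^{-1})>0$, and the specific prefactor is precisely what the $x^{-2}$ compatibility requires. This verifies EPA and completes the proof.
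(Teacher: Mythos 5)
Your construction is correct, and for EPE it coincides with the paper's: $\Lambda(f,x)=f(x)$ on $\RR^+$ and $\Lambda(f,\rho)=\rho^{-2}B(\{n^2f(n)\})$, with positivity via shift-invariance and the weight condition via $B(\{1\})=1$. For EPA the strategy is the same (ordinary integration on real pairs, a Banach limit supplying the data at the infinite arguments), but your explicit formulas differ from the paper's. The paper simply sets $\lambda(f,x,\rho)=x^2f(x)(x^{-1}-\rho^{-1})$ and $\lambda(f,\rho,\rho^2)=B(\{n^2f(n)\})(\rho^{-1}-\rho^{-2})$; since Definition \ref{D:defle} demands only linearity, positivity and the weight identity, nothing forces the mixed pairs to look like integrals, and with these choices positivity is immediate ($x^2f(x)\ge 0$ and $x^{-1}-\rho^{-1}>0$, resp.\ shift-invariance of $B$). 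Your version routes everything through the tail integral $F_f(x)=\int_x^\infty f$, which is closer in spirit to genuine antidifferentiation but costs you two extra arguments that you correctly supply: the analytic-continuation step to kill the infinitesimal correction when $F_f(x)=0$, and the replacement of the ``na\"ive'' difference $\rho^{-1}B(\{nF_f(n)\})-\rho^{-2}B(\{n^2F_f(n^2)\})$ by $B$ applied to the single nonnegative sequence $\{n(F_f(n)-F_f(n^2))\}$ — a real pitfall, since a Banach limit need not behave coherently on a sequence and its subsequence along squares. A minor point in your favor: you define $\lambda(f,x,\rho^2)$ for real $x$ explicitly, a case the paper's proof leaves implicit. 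Both constructions verify all three axioms, so the lemma is established either way.
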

\begin{proof}
  Let $B_L$ be a Banach limit.  To show EPA, define $\lambda(f,x,y)=\int_{x}^y f(s)ds$ if $(x,y)\in (\RR^+)^2$, $\lambda(f,x,\rho)=x^2f(x)(x^{-1}-\rho^{-1})$ for $x\in\RR^+$,  and $\lambda(f,\rho,\rho^2)=B_L[n^2f(n)](\rho^{-1}-\rho^{-2})$. It is clear that $\lambda$ is linear and, if $f\ge 0$ is positive and $(x,y)\in A_{\rho}$, then $\lambda(f,x,y)\ge 0$. For condition (3), we note that if $f(x)=x^{-2}$ then $\lim_{n\to\infty} n^2 f(n)=1=B_L(n^2f(n))$ by the definition of a Banach limit, and the property follows.

  To show EPE, define $\Lambda(f,x)=f(x)$ if $x\in \RR^+$ and $\Lambda(f,\rho)=B_L[n^2f(n)]\rho^{-2}$. It is clear that $\Lambda$ is linear and, if $f\ge 0$ is positive and $x\in E_{\rho}$, then $\Lambda(f,x)\ge 0$. For condition (3), we note that if $f(x)=x^{-2}$ then $\lim_{n\to\infty} n^2 f(n)=1=B_L(n^2f(n))$ by the definition of a Banach limit, and the property follows.
\end{proof}

\begin{Theorem}\label{T:equiv} {\rm 
     \begin{enumerate}
     \item ${\rm NBG}^-$ proves that proto-antidifferentiation operators exist if and only if Banach limits exist.
     \item ${\rm NBG}^-$proves that proto-extension operators exist if and only if Banach limits exist.
       
     \item EPA and EPE are independent of ${\rm NBG}^-$+DC (if  ${\rm NBG}^-$+DC is consistent).
         \end{enumerate}

  } \end{Theorem}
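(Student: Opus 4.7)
The plan is to observe that this theorem is essentially an assembly of the preceding lemmas, with part (3) being a transfer of the metamathematical independence of EBL (Proposition \ref{EBL}) across the equivalences established in parts (1) and (2). So the proof will be short and essentially bookkeeping.

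First I would dispatch parts (1) and (2) directly. Lemma \ref{X} shows, in ${\rm NBG}^-$, that both EPA $\Rightarrow$ EBL and EPE $\Rightarrow$ EBL, via the mollification construction that produces an entire bounded function $f_{s;\epsilon}$ from a given bounded sequence $s\in\ell^\infty$ and then reads off a sublimit from $\Lambda$ or $\lambda$ evaluated at the infinite element $\rho$, finally using Lemma \ref{L:equiv-lim} to convert the sublimit into a Banach limit. Conversely, Lemma \ref{L:converse} supplies the reverse implications EBL $\Rightarrow$ EPA and EBL $\Rightarrow$ EPE by defining the proto-operators on the infinite argument $\rho$ (and $\rho^2$) through $B_L[n^2 f(n)]$, checking linearity, positivity, and the compatibility condition on $x^{-2}$ using the fact that a Banach limit agrees with the ordinary limit on convergent sequences. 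Composing these two implications gives the biconditionals asserted in (1) and (2), both proved in ${\rm NBG}^-$.

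For part (3), I would argue as follows. Since ${\rm NBG}^- \subseteq {\rm NBG}^- + {\rm DC}$, the biconditionals EPA $\Leftrightarrow$ EBL and EPE $\Leftrightarrow$ EBL proved in parts (1) and (2) remain theorems of ${\rm NBG}^- + {\rm DC}$. By Proposition \ref{EBL}, EBL is independent of ${\rm NBG}^- + {\rm DC}$ provided ${\rm NBG}^-$ is consistent; equivalently, neither EBL nor its negation is a theorem of ${\rm NBG}^- + {\rm DC}$. If EPA were a theorem of ${\rm NBG}^- + {\rm DC}$, then by the equivalence it would entail EBL as a theorem, contradicting one half of the independence; if $\neg\,$EPA were a theorem, then again via the equivalence $\neg\,$EBL would be a theorem, contradicting the other half. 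The same reasoning applies verbatim with EPE in place of EPA. Hence both EPA and EPE are independent of ${\rm NBG}^- + {\rm DC}$.

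There is really no hard step here; the work was done in Lemmas \ref{X} and \ref{L:converse} and Proposition \ref{EBL}. The only point requiring any care is ensuring that the equivalences in (1) and (2) are stated as theorems of the base theory ${\rm NBG}^-$ (without DC), so that they transfer to the stronger system ${\rm NBG}^- + {\rm DC}$ used in the independence statement; inspection of the proofs of Lemmas \ref{X} and \ref{L:converse} confirms that neither invokes choice principles beyond those available in ${\rm NBG}^-$, so this transfer is unproblematic.
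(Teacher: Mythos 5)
Your proposal is correct and follows the paper's own proof exactly: parts (1) and (2) are obtained by combining Lemmas \ref{X} and \ref{L:converse}, and part (3) follows from (1), (2) and Proposition \ref{EBL}. The only difference is that you spell out the routine transfer of the independence of EBL across the equivalences, which the paper leaves implicit.
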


 \begin{proof}
 (1) and (2) follow from Lemmas \ref{X} and \ref{L:converse}, and (3) is an immediate consequence  of (1), (2) and Proposition \ref{EBL}.

 \end{proof}
 }
 \begin{Note}{\rm 
   Other types of negative results can be obtained via Pettis' theorem of automatic continuity, whereby the existence of various other types of desirable extensions would imply the existence of Baire non-measurable sets. This will be explored further in the future paper referred to at the end of the introduction.
 }\end{Note}

  \section{Acknowledgments} The authors wish to express their gratitude to Jean \'Ecalle for his important comments and suggestions on an earlier version of this work. We also wish to thank him
for granting us permission to quote in \S\ref{OQ} his comments regarding the possible extension of our
theory.

    The work of OC is supported in part by the U.S. National Science Foundation, Division of Mathematical Sciences, Award NSF DMS-2206241.


\begin{thebibliography}{99}



\bibitem{AL}
N. Alling, \emph{Foundations of Analysis Over Surreal Number Fields}, North-Holland Publishing Company, Amsterdam, 1987.

\bibitem{AE1} 
N. Alling and P. Ehrlich, Sections 4.02 and 4.03 of \cite{AL}.

\bibitem{AE2}
N. Alling and P. Ehrlich, \emph{An alternative construction of Conway's surreal numbers}, C. R. Math. Rep. Acad. Sci. Canada VIII (1986), pp. 241--46.

\bibitem{ADH1}
M. Aschenbrenner, L. van den Dries and J. van der Hoeven, \emph{Asymptotic Differential Algebra and Model Theory of Transseries}, Annals of Mathematics Studies, 195, Princeton University Press, Princeton, NJ, 2017.

\bibitem{ADH2}
M. Aschenbrenner, L. van den Dries and J. van der Hoeven, \emph{Numbers, germs and transseries}, in \emph{Proceedings of the International Congress of Mathematicians, Rio De Janeiro, 2018, Volume 2}, ed. by B.~Sirakov, P.~N.~de Souza and M.~Viana, World Scientific Publishing Company, NJ, 2018, pp. 19--43.

\bibitem{ADH3}
M. Aschenbrenner, L. van den Dries and J. van der Hoeven, \emph{Surreal numbers as a universal $H$-field},  J. Eur. Math. Soc. 21 (2019), no. 4, pp. 1179--1199.

\bibitem{BV}
V. Bagayoko, \emph{Hyperexponentially closed fields}, preprint (2022), https://hal.archives-ouvertes.fr/hal-03686767. 

\bibitem{BH1}
V. Bagayoko and J. van der Hoeven, \emph{The hyperserial field of surreal numbers}, arXiv:2310.14873 (2023).

\bibitem{BH2}
V. Bagayoko and J. van der Hoeven, \emph{Surreal numbers as hyperseries}, arXiv:2310.14879 (2023).

\bibitem{BHK}
V.~Bagayoko, J.~van der Hoeven and E.~Kaplan, \emph{Hyperserial fields}, preprint (2021), https://hal.archives-ouvertes.fr/hal-03196388.

\bibitem{BHM}
V.~Bagayoko, J.~van der Hoeven and V.~Mantova, \emph{Defining a surreal hyperexponential}, preprint (2020), https://hal.archives-ouvertes.fr/hal-02861485.

\bibitem{BKM}
A. Berarducci, S.  Kuhlmann, V. Mantova and  M. Matusinski, \emph{Exponential fields and Conway's omega-map}, Proc. Amer. Math. Soc. 151 (2023), no. 6, pp. 2655--2669. 


\bibitem{BM}
A. Berarducci and V. Mantova, \emph{Surreal numbers, derivations and transseries}, J. Eur. Math. Soc. 20 (2018), pp. 339--390.


\bibitem{BM2}
A. Berarducci and V. Mantova, \emph{Transseries as germs of surreal functions}, Trans. Amer. Math. Soc. 371 (2019), no. 5, pp. 3549--3592.


\bibitem{B}
A. Berenbeim, \emph{Foundations for the Analysis of Surreal-Valued Genetic Functions}, Ph.D Thesis, University of Illinois, Chicago, 2022.





\bibitem{Bo} E. Bottazzi, \emph{An incompatibility result on non-Archimedean integration}, p-Adic Numbers, Ultrametric Anal. Appl. 13 (2021), no. 4,  pp. 316--319.




\bibitem{JBC}
J. B. Conway,  \emph{A Course in Functional Analysis, 2nd Edition}, Graduate Texts in Mathematics. Vol. 96, Springer, New York, 1994.

 \bibitem{CO1}
 J. H. Conway, \emph{On Numbers and Games}, Academic Press, London, 1976. 

 \bibitem{CO2}
J. H. Conway, \emph{On Numbers and Games, Second Edition}, A K Peters, Ltd., Natick, Massachusetts, 2001.





\bibitem{Duke} O. Costin, {\em On Borel summation and Stokes phenomena of
nonlinear differential systems}, Duke Math. J. 93 (1998), no. 2, pp. 289--344.

\bibitem{CostinT} 
 O. Costin,  {\em Topological construction of transseries and introduction to generalized Borel summability}, in \emph{Analyzable Functions and Applications}, Contemporary Mathematics, 373, ed. by O. Costin, M.D. Kruskal and A. Macintyre, Amer. Math. Soc., Providence, RI, 2005, pp. 137--175.

\bibitem{Book} O. Costin, \emph{Asymptotics and Borel Summability}, Chapmann \& Hall, New York, 2009.


\bibitem{CEF}
O. Costin, P. Ehrlich and H. M. Friedman, \emph{Integration on the surreals: a conjecture of Conway, Kruskal and Norton}, arXiv:1505.02478v3, 24 Aug 2015.  



\bibitem{DG} B. Dahn and P. G\"oring, \emph{Notes on exponential-logarithmic terms}, Fund. Math.
127 (1987), no. 1, pp. 45--50.

\bibitem{DLMF} DLMF, The NIST Digital Library of Mathematical Functions, https://dlmf.nist.gov.

\bibitem{VdDries} L. van den Dries,  \emph{A generalization of the Tarski-Seidenberg theorem,
and some nondefinability results}, Bull. Amer. Math. Soc. (N.S.) 15 (1986), pp. 189--193.

\bibitem{vdDE}
L. van den Dries and P. Ehrlich, \emph{Fields of surreal numbers and exponentiation}, Fund. Math.167 (2001), pp. 173--188; erratum, ibid. 168 (2001), pp. 295--297. 
 
 \bibitem{vdDE2}  
L. van den Dries and P. Ehrlich, \emph{Homogeneous universal $H$-fields}, Proc. Amer. Math. Soc. 147 (2019), no. 5, pp. 2231--2234.

\bibitem{vvK}
L. van den Dries, J. van der Hoeven and E. Kaplan, \emph{Logarithmic hyperseries}, Trans. Am. Math. Soc.
372 (7) (2019), pp. 5199--5241.

\bibitem{DMM}
 L. van den Dries, A. Macintyre and D. Marker, \emph{Logarithmic-exponential series}, Ann. Pure Appl. Logic, 111 (2001), no. 1-2, pp. 61--113. 

\bibitem{Ecalle1} J. \'Ecalle, \emph{Les Fonctions R\'esurgentes, Tome I,II et III}, Publ. Math. Orsay,  81, Vol. 5 and Vol. 6, Universit\'e de Paris-Sud, Depart\'ement de Math\'ematique, Orsay, 1981, 1981, 1985.


\bibitem{Ecalle1a} 
J. \'Ecalle, \emph{Introduction aux fonctions analysables et preuve constructive de la conjecture de Dulac
Introduction to analyzable functions and constructive proof of the Dulac conjecture}, Actualit\'es Math\'ematiques,
Hermann, Paris, 1992.

\bibitem{Ecalle2} J. \'Ecalle,  \emph{Six lectures on transseries, analysable functions and the constructive proof of Dulac's conjecture}, in \emph{Bifurcations and Periodic Orbits of Vector Fields}, ed. by D. Schlomiuk, NATO Adv. Sci. Inst. Ser. C Math. Phys. Sci. 408, Kluwer, Dordrecht, 1993, pp. 75-184.



\bibitem{Ecalle4} J. \'Ecalle, \emph{Twisted Resurgence monomials and
canonical-spherical synthesis of local objects}, in \emph{Analyzable Functions and Applications}, Contemporary Mathematics, 373, Amer. Math. Soc., ed. by O. Costin, M.D. Kruskal and A. Macintyre, Providence, RI, 2005, pp. 207--315.

\bibitem{Ecalle5} J. \'Ecalle, \emph{The natural growth scale}, in \emph{Algebraic Combinatorics, Resurgence, Moulds and Applications (CARMA)}. Vol. 1, IRMA Lect. Math. Theor. Phys., 31, ed. by F. Chapoton, F. Fauvet, C. Malvenuto and J-Y. Thibon,  EMS Publ. House, Berlin, 2020, pp. 93--223.




\bibitem{Ecalle3} J. \'Ecalle, F. Menous, \emph{Well-behaved convolution averages and the non-accumulation theorem for limit-cycles}, in \emph{The Stokes Phenomenon and Hilbert's 16th Problem}, ed. by B. L. J. Braaksma, G. K. Immink and M. van der Put, World Scientific Publishers, 1996, pp. 71--101.

\bibitem{Edgar} G. A. Edgar, {\em Transseries for beginners}, Real Anal. Exchange,
35 (2009), no. 2,  pp. 253--310.

\bibitem{EH1}
P. Ehrlich, \emph{An alternative construction of Conway's ordered field No}, Algebra
Universalis 25 (1988), pp. 7--16; errata, ibid. 25 (1988), p. 233.


\bibitem{EH4}
P. Ehrlich, \emph{All number great and small}, in \emph{Real Numbers, Generalizations of the Reals, and Theories of Continua}, ed. by P. Ehrlich, Kluwer Academic Publishers, Dordrecht, Netherlands, 1994, pp. 239--258.

\bibitem{EH5}
P. Ehrlich, \emph{Number systems with simplicity hierarchies: a generalization of Conway's theory of surreal numbers}, J. Symb. Log. 6 (2001), pp. 1231--1258; corrigendum, ibid. 70 (2005), p. 1022.

\bibitem{EH6}
P. Ehrlich, \emph{Conway names, the simplicity hierarchy and the surreal number tree}, J. Log. Anal. 3 (2011), pp. 1--26.

\bibitem{EH7}
P. Ehrlich, \emph{The absolute arithmetic continuum and the unification of all numbers great and small}, Bull. Symb. Log. 18 (2012), pp. 1--45.

\bibitem{EK0}
P. Ehrlich and E. Kaplan, \emph{Number systems with simplicity hierarchies: a generalization of Conway's theory of surreal numbers II}, J Symb. Log. 83 (2018), no. 2, pp. 617--633.


\bibitem{EK}
P. Ehrlich and E. Kaplan, \emph{Surreal ordered exponential fields}, J Symb. Log. 83 (2021), pp. 1066--1115; erratum, ibid. 87 (2022), p. 871.


\bibitem{Folland} G. B. Folland, {\em Advanced Calculus, 1st edition}, Pearson, 2001.

\bibitem{FO}
A. Fornasiero, \emph{Integration on Surreal Numbers}, Thesis, Pisa, 2004.


\bibitem{Fornasiero08} A. Fornasiero,
\emph{Recursive definitions on surreal numbers}, arXiv:0612234v1, 9 Dec 2006.
 
\bibitem{FR} 
 A. Fraenkel, Y. Bar-Hillel and A. Levy, \emph{Foundations of Set Theory, Second Revised Edition, with the collaboration of Dirk van Dalen}, North-Holland Publishing Co., Amsterdam, 1973. 

 \bibitem{GO} 
H. Gonshor, \emph{An Introduction to the Theory of Surreal Numbers}, Cambridge University Press, Cambridge, 1986. 


\bibitem{Hardy} G. H. Hardy, {\em Orders of Infinity}, Cambridge University Press, 1910; Second Edition, Cambridge University Press, 1924.

\bibitem{Joris} J. van der Hoeven,  \emph{Transseries and Real Differential Algebra}, Lecture Notes in Mathematics, no.1888, Springer-Verlag, Berlin, 2006.
 
\bibitem{EJ} 
E. Je\u{r}\'abek, https://mathoverflow.net/questions/421132/a-question-regarding-the-hahn-banach-theorem-and-banach-limits, April 26, 2020.

\bibitem{Kaiser 1} 
 T. Kaiser, \emph{Integration of semialgebraic functions and integrated nash functions}, Math. Z. 275 (2013), pp. 349--366.

\bibitem{Kaiser 2} 
T. Kaiser, \emph{Lebesgue measure and integration theory on
arbitrary real closed fields}, Proc. Lond. Math. Soc. (3) 116 (2018), no. 2, pp. 209--247. 

\bibitem{Kaiser 3} 
T. Kaiser, \emph{Logarithms, constructible functions and integration on non-archimedean models of the theory of the real field with restricted analytic functions with value group of finite archimedean rank}, Fund. Math. 256 (2022), no. 3, pp. 285--306.

\bibitem{Karagila}
A. Karagila, \emph{Zornian functional analysis or: how I learned to stop worrying and love the Axiom of Choice}, arXiv:2010.15632, 29 Oct 2020.

\bibitem{Mandelbrojt}

S. Mandelbrojt, \emph{Lacunary series}, Rice Institute Pamphlet--Rice University Studies, 14, no. 4 (1927).

\bibitem{ME}
E.~Mendelson, \emph{An Introduction to Mathematical Logic}, fifth ed., CRC Press, Boca Raton, 2010.

\bibitem{Menous}  F. Menous, 
\emph{Les bonnes moyennes uniformisantes et une application \`a la resommation r\'{e}elle}, 
Ann. Fac. Sci. Toulouse Math. (6) 8  (1999), no. 4, pp. 579--628.


\bibitem{Sauzin-Book}
C. Mitschi and D. Sauzin, 
\emph{Divergent Series, Summability and Resurgence I, 
Monodromy and Resurgence}, Springer, 2016.

\bibitem{Most}
A. Mostowski, \emph{Some impredicative definitions in axiomatic set theory}, Fund. Math. 37 (1951), pp. 111-124; corrections, ibid, 38 (1952), p. 238.

\bibitem{N}
B. H. Neumann, \emph{On ordered division rings}, Trans. Am. Math. Soc. 66 (1949), pp. 202--252.

\bibitem{Nov}
I. L. Novak, 
\emph{A construction for models of consistent systems},
Fund. Math. 37 (1950), pp. 87--110.

\bibitem{PS} 
D. Pincus and R. Solovay, \emph{Definability of measures and ultrafilters}, J. Symb. Log. 42 (1977), no. 2, pp. 179--190. 

\bibitem{RW20} 

W. P. Reinhardt and P. L. Walker, \emph{Theta Functions, Chapter 20}, https://dlmf.nist.gov/ Chapter 20,
University of Washington, Seattle, Washington.




\bibitem{RW}
J. B. Rosser and H. Wang, \emph{Non-standard models for formal Logic},  J. Symb. Log. 15 (1950), pp. 113-129).

\bibitem{RSAS}
S. Rubinstein-Salzedo and A. Swaminathan, \emph{Analysis on surreal numbers}, J. Logic Anal. 6 (2014), pp. 1--39.


 
\bibitem{Hand} 
E. Schechter, \emph{Handbook of Analysis and its Foundations}, Academic Press, New York, 1997.

\bibitem{SC}
M. Schmeling, \emph{Corps de transs\'eries}, Ph.D. thesis, Universit\'e de Paris 7,
(2001).



\bibitem{SB} K. Shamseddine and M. Berz, \emph{Measure theory and integration on the Levi-Civita field}, \emph{Ultrametric functional analysis (Nijmegen, 2002)}, Contemporary Mathematics, 319, Amer. Math. Soc., Providence, RI, 2003, pp. 369--387.  

\bibitem{S} K. Shamseddine, \emph{New results on integration on the Levi-Civita field}, Indag. Math. (N.S.)  24 (2013), no.1, pp. 199--211.




\bibitem{Shelah}
 S. Shelah, \emph{Can you take Solovay's inaccessible away?}, Israel J. Math.
48 (1984), no. 1, pp. 1--47.

\bibitem{SH}
J. R. Shoenfield, \emph{A relative consistency proof}, J. Symb. Log. 19 (1954), pp. 21-28.

\bibitem{SI}
A. Siegel, \emph{Combinatorial Game Theory}, American Mathematical Society, Providence RI, 2013.


 
\bibitem{HW}
H. Wang, 
\emph{On Zermelo's and von Neumann's axioms for set theory},
Proc. Nat. Acad. Sci. U.S.A. 35 (1949), pp. 150--155.

\bibitem{WW}  
Wojowu (W. Wawr\'ow), https://mathoverflow.net/questions/417300/is-the-existence-of-banach-limits-independent-of-zfdc, March 2, 2020.


\end{thebibliography}
\end{document}